\newcommand{\f}{\frac}
\newcommand{\p}{\partial}
\newcommand{\me}{\mathcal{ E }}
\newcommand{\mt}{\mathcal{ T }}
\newcommand{\mg}{\mathcal{G}}
\newcommand{\mm}{\mathcal{M}}
\numberwithin{equation}{section}
\newtheorem{theorem}{Theorem}[section]
\newtheorem{lemma}[theorem]{Lemma}
\theoremstyle{definition}
\theoremstyle{remark}
\newtheorem{remark}{Remark}
\journal{Journal of Computational Physics}
\begin{document}
	

\begin{frontmatter}


\title{High-order nonuniform time-stepping and MBP-preserving linear schemes for the time-fractional Allen--Cahn equation }

\author[ouc]{Bingyin Zhang} \ead{zhangbingyin@stu.ouc.edu.cn}
\author[ouc,lab]{Hongfei Fu\corref{cor1}} \ead{fhf@ouc.edu.cn}

\address[ouc]{School of Mathematical Sciences, Ocean University of China, Qingdao, Shandong 266100, China}
\address[lab]{Laboratory of Marine Mathematics, Ocean University of China, Qingdao, Shandong 266100, China}
\cortext[cor1]{Corresponding author.}

\begin{abstract}
In this paper, we present a class of nonuniform time-stepping, high-order linear stabilized schemes that can preserve both the discrete energy stability and maximum-bound principle (MBP) for the time-fractional Allen--Cahn equation. To this end, we develop a new prediction strategy to obtain a second-order and MBP-preserving predicted solution, which is then used to handle the nonlinear potential explicitly. Additionally, we introduce an essential nonnegative auxiliary functional that enables the design of an appropriate stabilization term to dominate the predicted nonlinear potential, and thus to preserve the discrete MBP. Combining the newly developed prediction strategy and auxiliary functional, we propose two unconditionally energy-stable linear stabilized schemes, $L1$ and $L2$-$1_{\sigma}$ schemes. We show that the $L1$ scheme unconditionally preserves the discrete MBP, whereas the $L2$-$1_{\sigma}$ scheme requires a mild time-step restriction. Furthermore, we develop an improved $L2$-$1_{\sigma}$ scheme with enhanced MBP preservation for large time steps, achieved through a novel unbalanced stabilization term that leverages the boundedness and monotonicity of the auxiliary functional. Representative numerical examples validate the accuracy, effectiveness, and physics-preserving of the proposed methods.
\end{abstract}

\begin{keyword}
 Time-fractional Allen--Cahn equation, High-order linear stabilized scheme, Energy stability, Maximum-bound principle, Nonuniform time-stepping

\MSC[2020] 35K58 \sep 35R11 \sep 65M06 \sep 65M12 \sep 65M50
\end{keyword}

\end{frontmatter}

\section{Introduction}
As a diffuse interface model, the classic phase-field model has been widely applied across various research areas, including material sciences \cite{Acta_Allen_1979,JCP_CH_1958}, hydrodynamics \cite{ARFM_Anderson_1998,PRE_Qian_2003}, biology and tumor growth \cite{JCP_Du_2004,IJNMBE_Oden_2012,JTB_Wise_2008,CMAME_Huang_2024}. More recently, there has been growing interest in nonlocal phase-field models \cite{SISC_Tang_2019,JCP_Wang_2017,JDE_Akagi_2016,SINUM_Ainthworth_2017,SINUM_Du_2019,PASMA_Inc_2018,CMA_Wang_2019}. The incorporation of nonlocal operators into phase-field equations has been shown to substantially alter the diffusive dynamics. For example, in space-fractional Allen--Cahn phase field model, the fractional order can regulate the sharpness of the interface \cite{CMAME_Xu_2016}; and in time-fractional phase-field models, the fractional order significantly influences the coarsening behavior \cite{JCP_Wang_2017,SISC_Tang_2019,CPC_Chen_2019}.
Moreover, fractional models have been shown to provide more accurate descriptions of complex phenomena involving anomalous diffusion and memory effects than traditional integer-order models; see, e.g., \cite{JCP_Wang_2017,SISC_Hou_2021,PRL_Golding_2006,Nature_Kirchner_2000,PSS_Nigmatullin_1986}.
In heterogeneous porous media, diffusive transport of solute particles is influenced by the strong interactions between fluids and solids, where a large quantity of solute particles may get absorbed to the solid formation \cite{MS_Sharma_2015}. As a result, the travel times of these adsorbed particles can differ substantially from those of particles that move freely within the bulk phase \cite{JCP_Zhokn_2017}, giving rise to anomalous subdiffusive transport. This regime is characterized by a sublinear growth of the particle mean square displacement $ < r^2 > $  
with respect to the time $t$ \cite{PRL_Chepizhko_2013}. Such anomalous transport behavior can be naturally modeled using time-fractional diffusion models \cite{PR_Metzler_2000,JMAA_Wang_2019}, where the fractional-order parameter quantifies the strength of subdiffusive behavior.

In this paper, we focus on the following phase-field model involving special time nonlocaity, namely the time-fractional Allen--Cahn (tFAC) equation
\begin{equation}\label{Model:tAC}
		{}_0^C D^{\alpha}_t \phi = \mm  ( \varepsilon^2 \Delta \phi +  f(\phi) ) , \quad t >0, \  \mathbf{x} \in \Omega;~~
        \phi(\mathbf{x}, 0)=\phi_{\text {init}}(\mathbf{x}), \  \mathbf{x} \in \bar{\Omega},
\end{equation}
where the spatial domain is denoted by $ \Omega \subset \mathbb{R}^d $ with $ d \le 3$, and the Caputo fractional-order derivative operator $ {}_0^C D^{\alpha}_t $ of order $ \alpha \in (0,1) $ is defined as
\begin{equation*}
		{}_{0}^{C}D^{\alpha}_{t} v = \int^{t}_{0} \omega_{1-\alpha} (t-s)\partial_s v (s) ds,~~
        \omega_{\mu}(t) := \frac{t^{\mu-1}}{\Gamma(\mu)}.
\end{equation*}
In \eqref{Model:tAC}, $ \phi( \mathbf{x}, t ) $ is the unknown phase-field function, $ \mm $  is a  positive constant mobility,  $ \varepsilon > 0 $ is the interaction length that describes the thickness of the transition boundary between materials, and $ f(\phi)= -F'(\phi) $ is a continuously differentiable nonlinear potential function.

It is well known that when $\alpha \rightarrow 1 $, the tFAC equation recovers the classical Allen--Cahn equation, which satisfies the so-called \textit{energy dissipation law}, i.e.,
\begin{equation}\label{AC:energy}
E[ \phi ]( t ) \leq E[ \phi ]( s ),  \quad  \forall t > s \quad \text{with} \quad E[\phi](t) := \int_{\Omega} \Big( \frac{\varepsilon^2}{2} \vert \nabla \phi(\mathbf{x}) \vert^2 
+ F( \phi(\mathbf{x}) ) \Big) d \mathbf{x},
\end{equation}
and the \textit{maximum-bound principle} (MBP), i.e.,
\begin{equation}\label{AC:MBP}
	\max_{\mathbf{x} \in \bar{\Omega}} |\phi_{\text {init }}(\mathbf{x})| \leq \beta \quad \Longrightarrow 
	\quad \max_{\mathbf{x} \in \bar{\Omega}}|\phi(\mathbf{x},t)| \leq \beta, \quad \forall t>0,
\end{equation}
see \cite{SIREV_Du_2021} for details. Regarding the energy stability of the tFAC model,
the authors \cite{SISC_Tang_2019} provided the first theoretical conclusion, i.e.,
\begin{equation}\label{tFAC:energy}
	E[ \phi ]( t ) \leq E[ \phi ]( 0 ), \quad  \forall t > 0,
\end{equation}
which differs from the classical energy dissipation law \eqref{AC:energy}. 
Moreover, it was demonstrated in \cite{JSC_Du_2020} that the double-well tFAC equation also admits the MBP \eqref{AC:MBP}.

Another key feature of the tFAC equation is that its evolution process often requires a long time to reach a steady state and typically involves multiple time scales (see \cite{JCP_Liao_2020,SISC_Liao_2021,JSC_Liao_2024}). 
Therefore, an adaptive time-stepping strategy \cite{SISC_2011_Qiao,JCP_Gomez_2011,JCP_Wodo_2011} serves as a heuristic and available approach to enhance computational efficiency without compromising accuracy. 
Moreover, it is well known that solutions to subdiffusion problems, including the tFAC equation, exhibit weak singularities near the initial time, although they would be smooth away from $ t = 0 $, see \cite{IMA_Jin_2016,SINUM_Martin_2017}.
The initial weak singularity may cause loss of convergence order in numerical simulations based upon a uniform temporal mesh. However, this issue can be alleviated by employing a special nonuniform temporal mesh, e.g., the graded mesh \cite{SINUM_Martin_2017,SINUM_Liao_2018}. These two characteristics of the tFAC equation jointly motivate the development of effective and accurate \textit{nonuniform time-stepping} numerical methods.

To achieve stable numerical simulations and avoid nonphysical solutions, it is hence necessary to develop numerical schemes that can preserve both the discrete energy stability and MBP for the tFAC model \eqref{Model:tAC}. 
In \cite{SISC_Tang_2019}, Tang et al. presented a first-order method combining the uniform $L1$ formula \cite{JCP_Xu_2007} and the stabilization technique \cite{SINUM_Xu_2006}, which preserves the energy stability \eqref{tFAC:energy} and MBP \eqref{AC:MBP} at the discrete level. 
By discretizing the fractional derivative using backward Euler convolution quadrature, three energy-stable and MBP-preserving schemes of order $ O(\tau^{\alpha}) $ were proposed and analyzed in \cite{JSC_Du_2020}.
To achieve high-order (more than first-order) and MBP-preserving numerical methods, the $L1$ scheme \cite{ACM_Ji_2020}, the $L1_{R}$ scheme \cite{SISC_Liao_2021}, the $L2$-$1_{\sigma}$ scheme \cite{JCP_Liao_2020,JSC_Liao_2024}, and the SFTR (shifted fractional trapezoidal rule) scheme \cite{JSC_Huang_2023} were developed. 
However, there are basically two limitations to the above mentioned high-order numerical schemes: 
(i) they treat the nonlinear term either fully or partially implicit, which implies that their unique solvability is uncertain, especially for Flory--Huggins potential case, and a nonlinear iteration must be implemented at each time step; 
(ii) these schemes always require certain restrictions on the time-step to preserve the discrete MBP. 
These motivate us to develop a novel class of high-order linear schemes that are more efficient and can preserve the discrete MBP unconditionally. Although inherently nonlocal, the scalar auxiliary variable (SAV) method \cite{JCP_Shen_2018,SINUM_Shen_2018} has been widely applied to phase-field models \cite{SISC_Hou_2021,SINUM_Qiao_2022,SISC_Zhao_2024,JCP_Quan_2022,SISC_Ji_2020}, owing to its ability to facilitate the design of linear and unconditionally 'modified' energy-stable schemes. To make the modified energy closer to the original one, several improved SAV-type techniques have been proposed recently, including the relaxation methods \cite{JCP_Shen_2022,JCP_Jiang_2022} and energy-optimized (EOP) strategies \cite{JSC_Liu_2024,ANM_2025_Zhang}. Moreover, due to the uncertainty in the signs of the nonlinear term coefficients (typically functions with respect to the SAV), the construction  of MBP-preserving SAV-type schemes are nontrivial and compelling, particularly for high-order time discretization methods or time-fractional model problems \cite{SISC_Hou_2021,JCP_Quan_2022}. 

An inspiring approach is the combinations of prediction-correction method and stabilization technique proposed in \cite{JSC_Qiao_2022,SINUM_Qiao_2022,MOC_Hou_2023} for classical Allen--Cahn type equations. 
They first adopted a first-order scheme to provide a predicted solution with local second-order accuracy and preservation of the MBP, and then presented their second-order schemes by controlling the nonlinear term using an artificial stabilization term. 
Through this strategy, second-order linear schemes with conditional \cite{JSC_Qiao_2022,MOC_Hou_2023} or unconditional \cite{SINUM_Qiao_2022} preservation of the MBP were successfully established for the classical Allen--Cahn equation. 
However, it should be noted that this approach is difficult to directly apply to the tFAC model \eqref{Model:tAC}, as a first-order method no longer provides sufficiently accurate predicted solutions, although this is feasible in the integer-order case. Thus, in the current work, we shall propose a general prediction strategy capable of providing satisfactory predicted solutions. We argue that it can also serve as a more efficient alternative approach for the prediction step in the numerical methods developed in \cite{JSC_Qiao_2022,SINUM_Qiao_2022,MOC_Hou_2023}.

In this paper, we aim to design a class of linear, energy-stable, and MBP-preserving high-order nonuniform time-stepping schemes for the tFAC equation \eqref{Model:tAC}, by leveraging the stabilized exponential SAV approach, namely the sESAV.
To achieve this, we introduce a novel nonnegative auxiliary functional in Section \ref{Sec:L1}, which serves twofold functions: (i) it ensures that the first-order approximation of the SAV does not compromise the temporal accuracy of $\phi$; 
and (ii) it enables the development of an effective stabilization term, allowing us to establish an MBP-preserving scheme.
In combination with the newly developed prediction strategy and auxiliary functional, we first propose a linear stabilized $L1$-sESAV nonuniform time-stepping scheme, which possesses the following remarkable advantages:
\begin{itemize}
	\item it is unconditionally energy-stable;
	
	\item it can unconditionally preserve the discrete MBP, which seems to be the first time such a linear, MBP-preserving, and $(2-\alpha)$th order $L1$ scheme is established;
	
	\item it is computationally efficient, in fact, only systems of linear algebraic equations require to be solved for $ n \geq 2 $.
\end{itemize}
Then, a linear stabilized $L2$-$1_{\sigma}$-sESAV scheme is also investigated and proven to be unconditionally energy-stable, but conditionally MBP-preserving. Moreover, to further improve the $L2$-$1_{\sigma}$-sESAV scheme for large time-step computation, we present an unbalanced $L2$-$1_{\sigma}$-sESAV scheme by introducing a novel stabilization term based on the boundedness and monotonicity of the novel auxiliary functional. To the best of our knowledge, so far no such linear second-order scheme with provable energy stability and MBP-preserving in the discrete setting has been developed. Furthermore, we demonstrate that the proposed method can be coupled with the EOP technique to enhance the modified energy, while simultaneously retaining the discrete energy stability and the discrete MBP; see Remark \ref{rem:eop} and Example \ref{Ex4_2}.

The remainder of the paper is organized as follows. 
In Section \ref{Sec:L1}, we first prove the MBP for the tFAC equation under either double-well or Flory--Huggins potential. Then, we propose and analyze the fully-discrete linear stabilized $L1$ type scheme, including the unique solvability, unconditional MBP-preservation and energy-stability. Numerical experiments are conducted in Section \ref{Sec:L1_Numer} to illustrate the performance of the proposed method. In Section \ref{Sec:L21}, we develop two linear stabilized $L2$-$1_{\sigma}$-ESAV schemes, incorporating two different stabilization terms: one balanced and one unbalanced, and unconditional energy-stability and conditional MBP-preservation are also discussed and tested. Some concluding remarks are finally drawn in the last section.

\section{Maximum-bound principle and stabilized $L1$ scheme}\label{Sec:L1}
For simplicity, the whole discussion below is confined to the two-dimensional (2D) square domain $ \Omega = (0,L)^2 $ with periodic boundary conditions. It is worth noting that the extensions to the three-dimensional (3D) case and/or homogeneous Neumann boundary conditions present no significant difficulties.
In this section, we first discuss the MBP for the tFAC equation \eqref{Model:tAC}. Assume that there exists a positive constant $ \beta $ such that $f(\beta) = f(-\beta) = 0$ and $ F(\phi) $ satisfies the monotone conditions away from $ (-\beta, \beta) $, i.e.,
\begin{equation}\label{Condi2:MBP}
	\begin{aligned}
		f(\phi) > 0, \  \forall \phi \in ( -\infty, -\beta ) \cap \text{Dom}( f ); \quad f(\phi) < 0, \  \forall \phi \in ( \beta, \infty ) \cap \text{Dom}( f ).
	\end{aligned}
\end{equation}
Two such typical potentials are widely used in the classical Allen--Cahn equation: one is the \textit{double-well} potential
\begin{equation}\label{poten:dw}
	F( \phi ) = \frac{1}{4} ( 1 - \phi^2 )^2, \quad f(\phi) = -F'( \phi ) = \phi - \phi^3 ,
\end{equation}
where $ \text{Dom}( f ) = \mathbb{R} $ and $ \beta = 1 $, and the other is the \textit{Flory--Huggins} potential
\begin{equation}\label{poten:fh}
	F( \phi ) = \frac{\theta}{2} [ ( 1 + \phi ) \ln ( 1 + \phi ) + ( 1 - \phi ) \ln ( 1 - \phi ) ] - \frac{ \theta_{c} }{2} \phi^2, \quad f(\phi)   = \frac{\theta}{2} \ln \frac{ 1 - \phi }{ 1 + \phi } + \theta_{c} \phi,
\end{equation}
with $ \theta_{c} > \theta > 0  $, where $ \text{Dom}( f ) = ( -1, 1 ) $ and $ \beta $ is the positive root of $ f( \rho ) = 0 $. For instance, if we set $\theta =0.8$ and $ \theta_{c} = 1.6 $, we have $ \beta \approx 0.9575 $.

The following lemma is originally presented in Lemma 2.3 of Ref. \cite{JSC_Du_2020}, we provide a detailed proof  in \ref{App:A} for completeness.
\begin{lemma}\label{lem:frac}
	Assume that $ v \in C[0,T] \cap C^{1} ( 0, T ]  $ and attains its minimum (maximum) at $ t_{*} \in ( 0, T ] $. Then there holds
	$$
	{}_{0}^{C}D^{\alpha}_{t} v( t_{*} ) \le ( \ge )~ 0.
	$$
\end{lemma}

Then, based on Lemma \ref{lem:frac}, we now proceed to establish the main MBP conclusion for the tFAC equation \eqref{Model:tAC}.
\begin{theorem}\label{thm:mbp_GtFAC}  
Suppose that $ \phi \in C( [0,T]; C^{2} ( \bar{ \Omega } ) ) \cap C^{1} ( ( 0, T ], C ( \bar{ \Omega } ) ) $ is a solution to the model problem \eqref{Model:tAC} with double-well or Flory--Huggins potential. If the  initial value satisfies $ \max_{\mathbf{x} \in \bar{\Omega}} |\phi_{\text {init }}(\mathbf{x})| \leq \beta $, there holds
\begin{equation}\label{mbp:GtFAC}
	\max_{\mathbf{x} \in \bar{\Omega}}|\phi(\mathbf{x},t)| \leq \beta, \quad \forall t>0.
\end{equation}
\end{theorem}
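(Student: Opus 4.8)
The plan is to prove the upper bound $\phi(\mathbf{x},t)\le\beta$ for every $t>0$; the lower bound $\phi\ge-\beta$ then follows by applying the same argument to $-\phi$, which solves \eqref{Model:tAC} with the same nonlinearity since both the double-well and Flory--Huggins potentials have odd $f$ (so $f(-\phi)=-f(\phi)$), while the sign condition \eqref{Condi2:MBP} is symmetric. The two analytic tools I would rely on are a spatial maximum principle and a temporal (fractional) extremum principle. For the former, under periodic boundary conditions a smooth function attains its spatial maximum at a point where $\nabla\phi=0$ and the Hessian is negative semidefinite, so $\Delta\phi\le 0$ there. For the latter, I would use the identity ${}_0^C D^{\alpha}_t w(t_0)=\frac{1}{\Gamma(1-\alpha)}\bigl[\,t_0^{-\alpha}(w(t_0)-w(0))+\alpha\int_0^{t_0}(t_0-s)^{-1-\alpha}(w(t_0)-w(s))\,ds\,\bigr]$, obtained by integration by parts, from which it is immediate that if $w$ attains its maximum over $[0,t_0]$ at the endpoint $t_0$ then ${}_0^C D^{\alpha}_t w(t_0)\ge 0$.

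The essential difficulty is the degeneracy at the threshold: since $f(\beta)=0$, a direct argument at a point where $\phi=\beta$ yields only a non-strict inequality, and the strict monotonicity in \eqref{Condi2:MBP} is available only for $\phi>\beta$. To force a strict contradiction I would introduce, for $\eta>0$ and a fixed horizon $T>0$, the perturbed function $v_\eta(\mathbf{x},t):=\phi(\mathbf{x},t)-\beta-\eta t^\alpha$, exploiting that ${}_0^C D^{\alpha}_t(t^\alpha)=\Gamma(1+\alpha)$ is a positive constant and ${}_0^C D^{\alpha}_t(\beta)=0$, hence ${}_0^C D^{\alpha}_t v_\eta={}_0^C D^{\alpha}_t\phi-\eta\Gamma(1+\alpha)$.

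By continuity $v_\eta$ attains its maximum on the compact set $\bar\Omega\times[0,T]$. Since $v_\eta(\cdot,0)=\phi_{\text{init}}-\beta\le 0$, if this maximum were positive it would be attained at some $(\mathbf{x}_0,t_0)$ with $t_0>0$, and in particular the restriction $t\mapsto v_\eta(\mathbf{x}_0,t)$ attains its maximum over $[0,t_0]$ at $t_0$. At that point the spatial maximum principle gives $\Delta\phi(\mathbf{x}_0,t_0)\le 0$, while the temporal extremum principle gives ${}_0^C D^{\alpha}_t v_\eta(\mathbf{x}_0,t_0)\ge 0$, so that ${}_0^C D^{\alpha}_t\phi(\mathbf{x}_0,t_0)\ge\eta\Gamma(1+\alpha)>0$. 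On the other hand, positivity of the maximum gives $\phi(\mathbf{x}_0,t_0)>\beta+\eta t_0^\alpha>\beta$ (and $\phi(\mathbf{x}_0,t_0)<1$ remains in $\text{Dom}(f)$ in the Flory--Huggins case), so \eqref{Condi2:MBP} yields $f(\phi(\mathbf{x}_0,t_0))<0$, and \eqref{Model:tAC} gives ${}_0^C D^{\alpha}_t\phi(\mathbf{x}_0,t_0)=m\bigl(\varepsilon^2\Delta\phi+f(\phi)\bigr)<0$ --- a contradiction. Hence $v_\eta\le 0$ on $\bar\Omega\times[0,T]$, i.e.\ $\phi\le\beta+\eta t^\alpha$; letting $\eta\to0^+$ and then $T\to\infty$ delivers \eqref{mbp:GtFAC}.

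I expect the main obstacle to be the careful justification of the fractional extremum principle under only the stated regularity $\phi\in C([0,T];C^{2}(\bar\Omega))\cap C^{1}((0,T];C(\bar\Omega))$ --- in particular confirming that the Caputo derivative of $v_\eta(\mathbf{x}_0,\cdot)$ is well defined despite the mild $t^\alpha$ singularity of the perturbation at $t=0$, and that the boundary terms in the integration by parts vanish. The perturbation $\eta t^\alpha$ is the crucial device, since it is exactly what upgrades the degenerate equality $f(\beta)=0$ into the strict inequality needed to close the contradiction.
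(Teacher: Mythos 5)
Your proof is correct, and it reaches the conclusion by a genuinely different device than the paper. Both arguments share the same two pillars — the spatial maximum principle ($\Delta\phi\le 0$ at a spatial maximum under periodic boundary conditions) and the Caputo extremum principle (nonnegative Caputo derivative at a temporal maximum, which the paper imports from \cite{JSC_Du_2020} and you derive self-containedly from the integration-by-parts identity). Where you diverge is in how the degeneracy $f(\pm\beta)=0$ is broken. The paper shifts the \emph{level}: it fixes $\epsilon>0$, defines the first time $t_*(\epsilon)$ at which $\max_{\bar\Omega}|\phi|$ reaches $\beta+\epsilon$, and derives a contradiction there because $f$ is strictly signed at the shifted level; it also treats only the Flory--Huggins case this way, citing \cite{JSC_Du_2020} for the double-well potential. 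You instead shift in \emph{time}: the barrier $\eta t^{\alpha}$, whose Caputo derivative is the positive constant $\eta\Gamma(1+\alpha)$, converts the non-strict inequalities at a global space-time maximum of $v_\eta=\phi-\beta-\eta t^{\alpha}$ into two independent strict ones (${}_0^C D^{\alpha}_t\phi\ge\eta\Gamma(1+\alpha)>0$ and $f(\phi)<0$ since $\phi>\beta$ there), then let $\eta\to0^+$. Your route buys a uniform treatment of both potentials, avoids defining a first-crossing time, and reduces the lower bound to the upper bound via oddness of $f$ (valid for both \eqref{poten:dw} and \eqref{poten:fh}, though note the paper's direct two-sided argument does not need oddness and so is slightly more general with respect to the hypothesis \eqref{Condi2:MBP}). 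Two small points you should make explicit when writing this up: in the Flory--Huggins case the membership $\phi(\mathbf{x}_0,t_0)\in\mathrm{Dom}(f)=(-1,1)$ is automatic because $\phi$ is assumed to be a solution, so $f(\phi)$ is defined pointwise and \eqref{Condi2:MBP} applies on $(\beta,1)$; and the integration-by-parts identity should be justified by a limiting argument $\int_\delta^{t_0}\cdots$ with $\delta\to0^+$, which goes through under the stated regularity $\phi\in C^{1}((0,T],C(\bar\Omega))\cap C([0,T];C^{2}(\bar\Omega))$ together with the fact that ${}_0^C D^{\alpha}_t\phi$ exists by the very definition of a solution, exactly the concern you flagged yourself.
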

\begin{proof}
The conclusion for the double-well potential with $\beta=1$ can be found in \cite[Theorem 2.3]{JSC_Du_2020}. For completeness, we briefly revisit its proof here. Assume that the maximum of $ \phi $ is smaller than $-1$ and is achieved at $ ( \mathbf{x}_{*}, t_{*} ) \in \Omega \times (0,T] $. Then, by the regularity of $ \phi $ and Lemma \ref{lem:frac}, we have 
\begin{equation}\label{mbp1:GtFAC_dw}
	{}_0^C D^{\alpha}_t \phi( \mathbf{x}_{*}, t_{*} ) \leq  0.
\end{equation}
Moreover, owing to the periodic boundary conditions, the following estimate holds by Taylor expansion
\begin{equation}\label{mbp2:GtFAC_dw}
	\Delta \phi( \mathbf{x}_{*}, t_{*} ) \geq 0.
\end{equation}
Thus, we obtain
$$
0 \geq {}_0^C D^{\alpha}_t \phi ( \mathbf{x}_{*}, t_{*}  ) - \mm \varepsilon^{2} \Delta \phi ( \mathbf{x}_{*}, t_{*}  ) =  \mm f( \phi ( \mathbf{x}_{*}, t_{*} ) ) > 0,
$$
which is a contradiction. The upper bound of the solution can be proved analogously.
 
It remains to prove the MBP \eqref{mbp:GtFAC} for the Flory--Huggins potential \eqref{poten:fh}. To this aim, for any $ \epsilon \in ( 0, \frac{ 1 - \beta }{ 2 } ) $ such that $\beta+\epsilon <1$, we define 
$$
t_{*}( \epsilon ) = \max\Bigl\{  0 < t \leq T \big\vert \max_{\mathbf{x} \in \bar{\Omega}} |\phi(\mathbf{x},t)| = \beta + \epsilon \  \text{and} \  \max_{\mathbf{x} \in \bar{\Omega}}|\phi(\mathbf{x},s)| < \beta + \epsilon, \forall s \in [0,t) \Bigr\}. 
$$
It is clear that $ t_{*} ( \epsilon ) $ is well-defined by the continuity assumption on $ \phi $. Without loss of generality, assume that  $ \phi ( \mathbf{x}, t_{*} (\epsilon) ) $ achieves the maximum absolute value at $ \mathbf{x}_{*}(\epsilon) \in \bar{\Omega} $ and $ \phi ( \mathbf{x}_{*}(\epsilon), t_{*} (\epsilon) ) = -\beta - \epsilon $. 
Similar to \eqref{mbp1:GtFAC_dw}--\eqref{mbp2:GtFAC_dw}, we have 
\begin{equation*}\label{mbp1:GtFAC}
{}_0^C D^{\alpha}_t \phi( \mathbf{x}_{*}(\epsilon), t_{*} (\epsilon) ) \leq  0 \quad \text{and} \quad \Delta \phi( \mathbf{x}_{*}(\epsilon), t_{*} (\epsilon) ) \geq 0.
\end{equation*}
Therefore, we arrive at the result
\begin{equation*}\label{mbp4:GtFAC}
	0 \geq {}_0^C D^{\alpha}_t \phi ( \mathbf{x}_{*}(\epsilon), t_{*} (\epsilon) ) - \mm \varepsilon^{2} \Delta \phi ( \mathbf{x}_{*}(\epsilon), t_{*} (\epsilon) ) =  \mm f( \phi ( \mathbf{x}_{*}(\epsilon), t_{*} (\epsilon) ) ) > 0,
\end{equation*}
where we have used \eqref{Condi2:MBP} in the last inequality. This contradiction shows that no point $ ( \mathbf{x}_{*}(\epsilon), t_{*} (\epsilon) ) \in \bar{\Omega} \times (0,T] $ satisfies $ \phi ( \mathbf{x}_{*}(\epsilon), t_{*} (\epsilon) ) = -\beta - \epsilon $ for any given $ T $ and $ \epsilon $. A similar conclusion holds for the upper bound of $ \phi $. Thus, the MBP \eqref{mbp:GtFAC} is proved for the Flory--Huggins potential, as both $ T > 0 $ and $ \epsilon \in ( 0, \frac{ 1 - \beta }{ 2 } ) $ are arbitrary.
\end{proof}

Introduce an auxiliary variable $ R(t) = E_1[\phi] := \int_{\Omega} F(\phi) d\mathbf{x} $, and define the modified energy
\begin{equation}\label{def:energy_1}
\me[\phi, R] :=  \frac{\varepsilon^2}{2} \int_{\Omega}  \vert \nabla \phi(\mathbf{x}) \vert^2  d \mathbf{x}  + R,
\end{equation}
 which is equivalent to the original energy $E[\phi]$ defined in \eqref{AC:energy} at the continuous level.
Moreover, let $g(\phi, R):=\frac{\exp \left\{R\right\}}{ \exp \left\{E_{1}[\phi]\right\} }$, which is always equal to 1 at the continuous level. Then, model \eqref{Model:tAC} can be rewritten into the following equivalent system:
\begin{equation}\label{sav_1}
	\begin{aligned}
		{}_{0}^{C}D^{\alpha}_{t} \phi  = \mm ( \varepsilon^2 \Delta \phi + V( g(\phi, R) ) f(\phi) ) ,
\quad
		R_t  = - V( g(\phi, R) ) \left(f(\phi), \phi_t\right) ,
	\end{aligned}
\end{equation}
where $V(\cdot)$ is an auxiliary functional satisfying the following three assumptions:

\begin{itemize}
	\item[(\bf A1).] The auxiliary functional $ V(\cdot) \in C^{1} ( \mathbb{R} ) \cap W^{2,\infty} ( \mathbb{R} )$ such that $ V(1) = 1 $, $ V'(1) = 0 $, and 
	there exists a positive constant $ K_{1} $ such that $ | V' \left( \cdot \right) | \leq K_{1}  $;
	\item[(\bf A2).] The auxiliary functional is nonnegative and bounded by a positive constant $ K_{2} $, i.e., $ 0 \leq V \left( \cdot \right) \leq K_{2}  $;
	\item[(\bf A3).] The auxiliary functional satisfies the following monotonicity, i.e., for any $ z_{1}, z_{2} \in \mathbb{R}$ satisfy $ | z_{1} - 1 | \leq | z_{2} - 1 | $, we have $ | V( z_{1} ) - 1 | \leq | V( z_{2} ) - 1 | $.
\end{itemize}
By \cite[Corollary 2.1]{SISC_Tang_2019}, it is straightforward to verify that the new SAV system \eqref{sav_1} satisfies energy stablity with respect to $ \me[\phi, R]$, i.e.,
$	\me[\phi, R](t) \leq \me[\phi, R](0)$ for $ t > 0$.

\subsection{ Stabilized $L1$-ESAV scheme }\label{subsec:af}
Given a positive integer $ M $, let $ h = L/M $ be the spatial grid length and set $ \Omega_{h} := \{ \mathbf{x}_{h} = ( ih, jh ) \mid 0 \leq i,j \leq M \} $. Let $ \mathbb{V}_h $ be the set of all $M$-periodic real-valued grid functions on $ \Omega_{h} $, i.e.,
$$
\mathbb{V}_h:=\bigl\{v \mid v= \{v_{i,j}\}_{i,j=1}^{M}~ \text{and}~  v ~ \text{is periodic}\bigr\}.
$$ 
Notice that $ \mathbb{V}_h $ is a finite-dimensional linear space, thus any grid function in $ \mathbb{V}_h $ and any linear operator $ P: \mathbb{V}_h \rightarrow \mathbb{V}_h $ can be treated as a vector in $ \mathbb{R}^{M^2} $ and a matrix
in $ \mathbb{R}^{M^2 \times M^2 } $, respectively.

Define the following discrete inner product, and discrete $ L^2 $ and $ L^{\infty} $ norms
$$
\langle v, w\rangle=h^2 \sum_{i, j=1}^M v_{i j} w_{i j}, \quad \|v\|=\sqrt{\langle v, v\rangle}, \quad \|v\|_{\infty}=\max _{1 \leq i, j \leq M} |v_{i j}|
$$
for any $v, w \in \mathbb{V}_h$. 
Moreover, we define a maximum-norm function space with the positive constant $ \beta $ for the underlying MBP problem, that is
$$
\mathbb{V}_\beta:=\left\{v \mid v \in \mathbb{V}_{h} ~ \text{with} ~ \| v \|_{\infty} \leq \beta \right\} .
$$ 
Next, we consider a central finite difference discretization for the spatial differential operators. For any $ v \in \mathbb{V}_h $, the discrete Laplace operator $\Delta_h$ is defined by
\begin{equation}\label{Formula:Laplace}
\Delta_h v_{i j}=\frac{1}{h^2}\left(v_{i+1, j}+v_{i-1, j}+v_{i, j+1}+v_{i, j-1}-4 v_{i j}\right), \quad 1 \leq i, j \leq M,
\end{equation}
and the discrete gradient operator $\nabla_h$ is defined by
$$
\nabla_h v_{i j}=\left(\frac{v_{i+1, j}-v_{i j}}{h}, \frac{v_{i, j+1}-v_{i j}}{h}\right)^\top, \quad 1 \leq i, j \leq M .
$$

Consider (generally nonuniform) time levels $ 0 = t_{0} < t_{1} < \cdots < t_{N} = T $ with time steps  $ \tau_{k} := t_{k} - t_{k-1} $ for $ 1 \leq k \leq N $. Denote by 
$ r_{k} := \tau_{k}/\tau_{k-1} (k\geq 2)$ the adjacent time-step ratio. Let $v^k = v\left(t_k\right)$ and the temporal difference operator $\nabla_\tau v^k=v^k-v^{k-1}$ and $ \mathbb{D}_{\tau} v^{k} = \nabla_\tau v^k/\tau_{k}$ for $k \geq 1$. The nonuniform $L1$ formula of the Caputo derivative is given by
\begin{equation}\label{Formula:L1}
		\mathbb{D}_\tau^\alpha v^n 
		= \sum_{k=1}^n \int_{t_{k-1}}^{t_k} \frac{\omega_{1-\alpha} (t_n-s)}{\tau_k} \nabla_\tau v^k  ds 
		= \sum_{k=1}^n A_{n-k}^{(n)} \nabla_\tau v^k,
\end{equation}
where the time-dependent convolution kernels 
$
A_{n-k}^{(n)}:=\frac{1}{\tau_k}\int_{t_{k-1}}^{t_k} {\omega_{1-\alpha}(t_n-s)} ds 
$
satisfy the following two main properties (see \cite{SINUM_Liao_2018,SCM_Liao_2024}):\\

\textbf{(P1).} The discrete kernels are positive, i.e., $ A_{n-k}^{(n)} > 0 $ for $ 1 \leq k \leq n $;

\textbf{(P2).} The discrete kernels are monotone, i.e., $ A_{n-k+1}^{(n)} < A_{n-k}^{(n)} $ for $ 1 \leq k \leq n $.\\

Now, let $\{\phi^{n}, R^{n} \}$ be the fully discrete approximations of the exact solutions $ \{\phi( t_{n} ), R( t_{n} ) \}$ to the original continuous problem \eqref{sav_1}. We apply the nonuniform $L1$ formula \eqref{Formula:L1} and central difference approximation \eqref{Formula:Laplace} to derive the following stabilized $ (2 - \alpha) $th order ESAV scheme (denoted as $L1$-sESAV):

\paragraph{\indent \bf Step 1}
  Let $\hat{\phi}^{0} = \phi^{0}:=\phi_{\text {init }}$. For $ n = 1 $, first solve a predicted solution $ \hat{\phi}^{1} $ from the fully-implicit difference scheme:
\begin{equation}\label{sch:L1_non_1}
		\mathbb{D}_\tau^\alpha \hat{\phi}^{1}:=  A^{(1)}_{0} ( \hat{\phi}^{1} - \phi^{0} )  = \mm \bigl( \varepsilon^2 \Delta_h \hat{\phi}^{1} + f(\hat{\phi}^{1}) \bigr) ,
\end{equation}
and then, find $ \{ \phi^{1}, R^{1} \} \in \mathbb{V}_{h} \times \mathbb{R} $ by 
\begin{equation}\label{sch:L1_1_1}
	\begin{aligned}
		\mathbb{D}^{\alpha}_{\tau} \phi^{1}  = \mm \bigl( \varepsilon^2 \Delta_h \phi^{1} + V( g_{h}(\hat{\phi}^{1}, R^{0}) ) f(\hat{\phi}^{1}) - \kappa V( g_{h}(\hat{\phi}^{1}, R^{0}) ) ( \phi^{1} - \hat{\phi}^{1} ) \bigr),
	\end{aligned}
\end{equation}
\begin{equation}\label{sch:L1_1_2}
	\begin{aligned}
		\mathbb{D}_{\tau} R^{1} = V( g_{h}(\hat{\phi^{1}}, R^{0}) ) \, \big\langle - f(\hat{\phi}^{1}) + \kappa\, ( \phi^{1} - \hat{\phi}^{1} ), \mathbb{D}_{\tau} \phi^{1} \big\rangle.
	\end{aligned}
\end{equation}
where $ \kappa \geq 0 $ is a stabilizing constant, and $g_h$, a discrete version of $g$, denoted by $ g_h(v, w):= \frac{\exp \left\{w\right\}}{ \exp \left\{E_{1h}[v]\right\} } $ with $E_{1 h}$  the discrete counterpart of $E_1$, i.e., $E_{1 h}[v]:=\langle F(v), 1\rangle$.  

\paragraph{\indent \bf Step 2} For $ n \geq 2 $,  given $ \phi^{n-1}$ and $ \phi^{n-2} $, first predict a MBP-preserving solution $ \hat{\phi}^{n} $ by pre-processing the standard linear extrapolation, i.e.,
\begin{equation}\label{sch:L1_0}
		\hat{\phi}^{n} = \min\left\{  \max\left\{ ( 1 + r_{n} ) \phi^{n-1} - r_{n} \phi^{n-2}  , - \beta \right\} , \beta \right\},
\end{equation}
and then, find $ \{ \phi^{n}, R^{n} \} \in \mathbb{V}_{h} \times \mathbb{R} $ such that
\begin{equation}\label{sch:L1_n_1}
	\begin{aligned}
		\mathbb{D}^{\alpha}_{\tau} \phi^{n}  = \mm \bigl( \varepsilon^2 \Delta_h \phi^{n} + V( g_{h}(\hat{\phi}^{n}, R^{n-1}) ) f(\hat{\phi}^{n}) - \kappa V( g_{h}(\hat{\phi}^{n}, R^{n-1}) ) ( \phi^{n} - \hat{\phi}^{n} )  \bigr),
	\end{aligned}
\end{equation}
\begin{equation}\label{sch:L1_n_2}
	\begin{aligned}
		\mathbb{D}_{\tau} R^{n}  = V( g_{h}(\hat{\phi^{n}}, R^{n-1}) ) \big\langle - f(\hat{\phi}^{n}) + \kappa( \phi^{n} - \hat{\phi}^{n} ), \mathbb{D}_{\tau} \phi^{n} \big\rangle.
	\end{aligned}
\end{equation}

\begin{remark}
    In the peoposed $L1$-sESAV scheme \eqref{sch:L1_non_1}--\eqref{sch:L1_n_2}, the assumption (\textbf{A1}) on the functional $ V(\cdot) $ ensures that the first-order approximation of the auxiliary variable $ R $ does not affect the temporal convergence of the phase-field function $ \phi $. Actually, a direct application of Taylor expansion, together with the assumption (\textbf{A1}), gives us
\begin{equation*} 
		V( z )  = 1 + \int_{1}^{ z } (  z - s ) V^{\prime\prime}  ( s ) ds,
\end{equation*}
which implies that if $ z $ is a first-order approximation to $1$, then $ V( z ) $ will be a second-order approximation to $ 1 $. Moreover, the non-negativity and boundedness assumption (\textbf{A2}) allows us to develop effective stabilization term to dominate the nonlinear potential term, leading to MBP-preserving numerical methods, see Theorems \ref{thm:MBP_L1} and \ref{thm:MBP_L21} for details.
\end{remark} 

\begin{remark}\label{rem:iter}
To effectively solve the nonlinear scheme \eqref{sch:L1_non_1} in \textbf{Step 1} for $n=1$, we propose the following simple iteration scheme with stabilization:
\begin{equation}\label{sch:L1_non_2}
	\begin{aligned}
		A^{(1)}_{0} ( \hat{\phi}^{1}_{(\mathfrak{s} )} - \phi^{0} )  = \mm  \bigl( \varepsilon^2 \Delta_h \hat{\phi}^{1}_{(\mathfrak{s} )} + f(\hat{\phi}^{1}_{(\mathfrak{s} -1)}) - \kappa \, ( \hat{\phi}^{1}_{(\mathfrak{s} )} - \hat{\phi}^{1}_{(\mathfrak{s} -1)} ) \bigr) , \quad \mathfrak{s}  \geq 1,
	\end{aligned}
\end{equation}
where the subscript $ \mathfrak{s} $ denotes the iteration index, and the initial iteration value is taken as $ \hat{\phi}^{1}_{(0)} = \phi^{0} $. In the next section, we will show that the iteration scheme \eqref{sch:L1_non_2} unconditionally preserves the discrete MBP, and meanwhile, the solution $\hat{\phi}^{1}_{(\mathfrak{s} )}$ converges to the unique solution $\hat{\phi}^{1}$ of the nonlinear scheme \eqref{sch:L1_non_1}. The detailed implementation of the $L1$-sESAV scheme is summarized as below. 
\end{remark}
\begin{algorithm} [!ht]
 \caption{Implementation of the $L1$-sESAV scheme}
 \label{alg:L1}
 \begin{algorithmic} [1]

   \STATE Given initial value $ \phi^{0}=\phi_{\text {init }} $ and $ R^{0} =R(t_0)$.
  
  \IF{$n=1$}
  \STATE \textbf{Step 1:} Compute $\hat{\phi}^{1}:=\hat{\phi}^{1}_{(\mathfrak{s} )}$ by the iteration scheme \eqref{sch:L1_non_2} under a given termination tolerance $ tol $ and initial guess  $ \hat{\phi}^{1}_{(0)} = \phi^{0} $ until $\|\hat{\phi}^{1}_{(\mathfrak{s} )}-\hat{\phi}^{1}_{(\mathfrak{s-1} )}\|_\infty \le tol$.
  \STATE \textbf{Step 2:} Compute $\phi^{1}$ by \eqref{sch:L1_1_1} via $\hat{\phi}^{1}$ and $ R^{0} $.
  \STATE \textbf{Step 3:} Compute $R^{1}$ by \eqref{sch:L1_1_2} via $\phi^{1}$, $\hat{\phi}^{1}$ and $ R^{0} $.
  \ENDIF

   \FOR{$n=2$ to $N$}
  \STATE \textbf{Step 4:} Compute $\hat{\phi}^{n}$ by \eqref{sch:L1_0} via $\phi^{n-1}$ and $\phi^{n-2}$.
  \STATE \textbf{Step 5:} Compute $\phi^{n}$ by \eqref{sch:L1_n_1} via $\hat{\phi}^{n}$ and $ R^{n-1} $.
  \STATE \textbf{Step 6:} Compute $R^{n}$ by \eqref{sch:L1_n_2} via $\phi^{n}$, $\hat{\phi}^{n}$ and $ R^{n-1} $.
  \ENDFOR 
 \end{algorithmic}
\end{algorithm}

In the end of this subsection, we discuss the linear extrapolation error $ \mathcal{R}^{n} [v] := v( t_{n} ) - \hat{v}( t_{n} ) $  for $ n \geq 2 $ under the nonuniform mesh, where $ \hat{v}( t_{n} ) := ( 1 + r_{n} ) v( t_{n-1} ) - r_{n} v( t_{n-2} ) $ denotes the linear extrapolation. It is well known that if $v$ is sufficiently smooth, the extrapolation can achieve second-order temporal accuracy. However, for the tFAC model that exhibits an initial weak singularity, the temporal accuracy may deteriorate. The following lemma provides an estimate of $ \mathcal{R}^{n} [v]$, see also \ref{App:B} for detailed proof.
\begin{lemma}\label{lem:Trun_L1} 
	Assume that $ v \in C^{2}( 0, T ] $ and $ \vert  v^{\prime\prime}(t) \vert \leq C ( 1 + t^{\iota-2} )  $ for any regularity parameter $ 0 < \iota < 1 $. 
	Then, there exists a constant $ C_{v} $ that depends only on $ v $ such that
	\begin{equation}\label{TrunL1:00}
		\begin{array}{l}
			\vert \mathcal{R}^{n} [v] \vert \leq
			\left\{
			\begin{aligned}
				& C_{v} \bigl( ( \tau_{1} + \tau_{2} )^{\iota}/\iota + t_{1}^{\iota-2} \tau_{2}^{2} \bigr), & n=2 ,\\
				& C_{v} \bigl( t_{n-2}^{\iota-2} ( \tau_{n-1} + \tau_{n} )^{2} + t_{n-1}^{\iota-2} \tau_{n}^{2} \bigr),  &  3 \leq n \leq N.
			\end{aligned}
			\right.
		\end{array}
	\end{equation}
	Moreover, if the graded temporal grids $ t_{k} = T(k/N)^{\gamma} $ with grading parameter $ \gamma \geq 1 $ are employed, it follows that
    \begin{equation}\label{TrunL1:02}
		\vert \mathcal{R}^{n} [v] \vert \leq C_{v,\gamma} N^{-\min\{2,\gamma\iota\}}, \quad 2 \leq n \leq N,
    \end{equation}
where $ C_{v,\gamma} $ is a constant depending only on $ v $ and $ \gamma $.
\end{lemma}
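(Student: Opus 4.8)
The plan is to derive an exact integral (Peano--kernel) representation of the extrapolation error and then read off both estimates from the prescribed growth $|v_{tt}(t)|\le C(1+t^{\iota-2})$. Since $\hat v(t_n)$ is precisely the value at $t_n$ of the affine polynomial interpolating $v$ at $t_{n-2}$ and $t_{n-1}$, the functional $v\mapsto\mathcal{R}^{n}[v]$ annihilates all affine functions. First I would Taylor-expand $v(t_n)$ and $v(t_{n-2})$ about $t_{n-1}$ with integral remainders; after inserting $r_n\tau_{n-1}=\tau_n$ the zeroth- and first-order terms cancel, leaving the clean splitting
\begin{equation*}
\mathcal{R}^{n}[v]=\int_{t_{n-1}}^{t_n}(t_n-s)\,v_{tt}(s)\,ds+r_n\int_{t_{n-2}}^{t_{n-1}}(s-t_{n-2})\,v_{tt}(s)\,ds .
\end{equation*}
The same decomposition follows from the Peano kernel $K(s)=(t_n-s)_{+}-(1+r_n)(t_{n-1}-s)_{+}$ (with $(\cdot)_{+}$ the positive part), which vanishes outside $[t_{n-2},t_n]$ and equals $r_n(s-t_{n-2})$ on $[t_{n-2},t_{n-1}]$ and $t_n-s$ on $[t_{n-1},t_n]$. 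This representation is the backbone of the whole argument.

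For $3\le n\le N$ both integration intervals stay away from the origin, so I would simply insert $|v_{tt}(s)|\le C(1+s^{\iota-2})$ and use that $s^{\iota-2}$ is decreasing to replace it by its value at the left endpoint of each interval. Combined with $\int_{t_{n-1}}^{t_n}(t_n-s)\,ds=\tau_n^2/2$ and $r_n\int_{t_{n-2}}^{t_{n-1}}(s-t_{n-2})\,ds=\tau_n\tau_{n-1}/2$, this produces the terms $t_{n-1}^{\iota-2}\tau_n^2$ and $t_{n-2}^{\iota-2}\tau_n\tau_{n-1}$. Bounding $\tau_n\tau_{n-1}\le(\tau_{n-1}+\tau_n)^2$ and absorbing the harmless constant contributions via $1\le T^{2-\iota}t_{n-2}^{\iota-2}$ (valid since $t_{n-2}\le T$) then yields exactly the second line of \eqref{TrunL1:00}.

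The main obstacle is the case $n=2$, where $t_{n-2}=t_0=0$ is precisely the point at which $v_{tt}$ blows up: because $\iota<1$, the bound $s^{\iota-2}$ is \emph{not} integrable at the origin, so the classical $C^2$ interpolation-error formula is unavailable. The saving feature is that in the corresponding remainder the kernel weight vanishes linearly, $r_2\int_0^{t_1}s\,v_{tt}(s)\,ds$, so that $s\cdot s^{\iota-2}=s^{\iota-1}$ \emph{is} integrable and $\int_0^{t_1}s^{\iota-1}\,ds=t_1^{\iota}/\iota$. This is the source of both the exponent $\iota$ and the factor $1/\iota$ in the first line of \eqref{TrunL1:00}; the remaining integral over $[t_1,t_2]$ is nonsingular and contributes the $t_1^{\iota-2}\tau_2^2$ term exactly as in the generic case. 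Here one uses $r_2\tau_1=\tau_2$ together with mild control of the step ratio (automatic for the graded meshes of interest) to present the near-origin contribution in the stated form $(\tau_1+\tau_2)^{\iota}/\iota$.

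Finally, for the graded grid $t_k=T(k/N)^{\gamma}$ I would substitute the elementary estimates $t_k\sim T(k/N)^{\gamma}$ and $\tau_k\sim\gamma T\,k^{\gamma-1}N^{-\gamma}$ (so that every $r_k$ is uniformly bounded) into \eqref{TrunL1:00}. Each term on the right then collapses to a multiple of $n^{\gamma\iota-2}N^{-\gamma\iota}$, and maximizing this over $2\le n\le N$ gives $N^{-\gamma\iota}$ when $\gamma\iota\le 2$ (worst case near the origin, $n=O(1)$) and $N^{-2}$ when $\gamma\iota\ge 2$ (worst case at $n=N$). This is precisely the rate $N^{-\min\{2,\gamma\iota\}}$ claimed in \eqref{TrunL1:02}, with the balance between initial-singularity resolution ($\gamma\iota$) and smooth-regime accuracy ($2$) made explicit.
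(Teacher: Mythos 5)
Your argument is correct and is essentially the paper's own proof (given in \ref{App:A}): the same exact Taylor/Peano-kernel representation --- your two-interval form is identical to the paper's form $(1+r_n)\int_{t_{n-1}}^{t_n}v_{ss}(s)(t_{n-1}-s)\,ds + r_n\int_{t_{n-2}}^{t_n}v_{ss}(s)(s-t_{n-2})\,ds$, since the two kernels coincide on all of $[t_{n-2},t_n]$ (using $r_n\tau_{n-1}=\tau_n$) --- the same handling of the $n=2$ singularity via the linearly vanishing weight that renders $s^{\iota-1}$ integrable (the source of the factor $1/\iota$), and the same graded-mesh conclusion, which you obtain by direct substitution of $\tau_k\sim\gamma T k^{\gamma-1}N^{-\gamma}$ and maximization of $n^{\gamma\iota-2}N^{-\gamma\iota}$ over $n$, where the paper instead invokes the abstract mesh properties $\tau_k\le C_\gamma t_k^{1-1/\gamma}N^{-1}$ and $t_k\le C_\gamma t_{k-1}$ together with an exponent-splitting estimate; both routes are valid.

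Two remarks on details. First, for $3\le n\le N$ your split is actually cleaner than the paper's: since $r_n\int_{t_{n-2}}^{t_{n-1}}(s-t_{n-2})\,ds=\tau_n\tau_{n-1}/2\le(\tau_{n-1}+\tau_n)^2/2$, no step-ratio factor survives in your bounds, whereas the paper's estimates carry $(1+r_n)$ and $r_n$ and silently absorb them into $C_v$. Second, your appeal to ``mild control of the step ratio'' in the $n=2$ case is unnecessary, and as written it slightly weakens the first claim \eqref{TrunL1:00}, which is stated for arbitrary nonuniform meshes. The problematic quantity is $r_2 t_1^{\iota}=\tau_2\tau_1^{\iota-1}$, and a two-line case distinction removes the hypothesis: if $\tau_2\le\tau_1$ then $r_2\le 1$ and $r_2 t_1^{\iota}\le(\tau_1+\tau_2)^{\iota}$, while if $\tau_2>\tau_1$ then $\tau_2\tau_1^{\iota-1}\le\tau_1^{\iota-2}\tau_2^{2}=t_1^{\iota-2}\tau_2^{2}$, which is precisely the other term already present in \eqref{TrunL1:00}. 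With this fix your proof covers general meshes; for the graded meshes relevant to \eqref{TrunL1:02} the point is moot in any case, since there $r_2=2^{\gamma}-1$ is a constant depending only on $\gamma$.
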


\begin{remark} It is well known that under the initial weak singularity assumption
\begin{equation}\label{Assu:weak}
v \in C^{3}( 0, T ]  \quad \text{and} \quad \vert \partial_t^{(\ell)} v(t)  \vert \leq C ( 1 + t^{\iota-\ell} ), \  \ell\le 3,\ \iota \in (0,1),
\end{equation}
the global consistency error, which is commonly used in error analysis, attained by the $L1$ and $L2$-$1_{\sigma}$ formulas achieves the order of $\min{ \{ 2 - \alpha, \gamma \iota \} }$ and $\min{ \{ 2, \gamma \iota \} }$ on graded temporal grids, respectively; see \cite{SINUM_Liao_2018,CiCP_Liao_2021} for more details. Together with the interpolation error $ \mathcal{R}^{n}[v] $  established in Lemma \ref{lem:Trun_L1}, this indicates that, to compensate for the accuracy reduction caused by the weak singularity of the solution to the tFAC model \eqref{Model:tAC}, it is sufficient to select the mesh grading parameter $ \gamma = (2-\alpha)/\iota$ for the $L1$ scheme and $ \gamma = 2/\iota$ for the $L2$-$1_{\sigma}$ scheme introduced in Section \ref{Sec:L21}. The numerical results presented in Examples \ref{subsec:L1_TimeCon} and \ref{subsec:Ls_TimeCon} have confirmed the validity of this statement.
\end{remark}

\subsection{MBP-preservation and unique solvability of the prediction strategy }\label{subsec:iter}
In this subsection, we shall show that the predicted solution $ \hat{\phi}^{n} $ yielded by \textbf{Step 1} or \textbf{Step 2} is MBP-preserving and uniquely solvable. From \eqref{sch:L1_0}, it can be seen that the conclusion is clearly valid for $ n \geq 2 $. It remains to consider the case $ n = 1 $. The following two lemmas will be used later.

\begin{lemma}[\cite{JCM_Tang_2016}]\label{lem:MBP_left}  
	For any $ \lambda > 0 $, we have $ \| ( \lambda I - \Delta_{h} )^{-1} \|_{\infty} \leq \lambda ^{-1} $, where $ I $ represents the identity operator.
\end{lemma}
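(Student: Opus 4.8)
The plan is to read $\|(\lambda I - \Delta_h)^{-1}\|_\infty$ as the operator norm induced by the discrete maximum norm $\|\cdot\|_\infty$, and to reduce the desired bound to a discrete maximum principle for the operator $A := \lambda I - \Delta_h$. Concretely, for an arbitrary $f \in \mathbb{V}_h$ I would set $u := A^{-1} f$, i.e. the unique grid function solving $\lambda u - \Delta_h u = f$, and aim to show $\|u\|_\infty \leq \lambda^{-1}\|f\|_\infty$; taking the supremum over $f \neq 0$ then yields the claim.

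Before the estimate I would confirm that $A$ is invertible. From \eqref{Formula:Laplace}, each row of $A$ has diagonal entry $\lambda + 4/h^2$ and exactly four off-diagonal entries equal to $-1/h^2$ (the periodic boundary conditions merely relabel which neighbors appear, not their values). Hence the diagonal strictly dominates the absolute off-diagonal row sum, $\lambda + 4/h^2 > 4/h^2$, so $A$ is strictly diagonally dominant and therefore nonsingular by the Gershgorin/Levy--Desplanques theorem.

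For the norm bound I would invoke the discrete maximum principle. Let $(i_0,j_0)$ be an index at which $|u_{ij}|$ attains its maximum, and assume first that $u_{i_0 j_0} = \|u\|_\infty \geq 0$ (the opposite sign is handled symmetrically by replacing $u$ with $-u$). Since $u_{i_0 j_0}$ is maximal, all four neighbors satisfy $u_{i_0\pm1,j_0}, u_{i_0,j_0\pm1} \leq u_{i_0 j_0}$, so that
\[
\Delta_h u_{i_0 j_0} = \frac{1}{h^2}\big( u_{i_0+1,j_0}+u_{i_0-1,j_0}+u_{i_0,j_0+1}+u_{i_0,j_0-1} - 4 u_{i_0 j_0}\big) \leq 0 .
\]
Evaluating $\lambda u - \Delta_h u = f$ at $(i_0,j_0)$ and using $\Delta_h u_{i_0 j_0}\leq 0$ gives
\[
\lambda\,\|u\|_\infty = \lambda\, u_{i_0 j_0} = f_{i_0 j_0} + \Delta_h u_{i_0 j_0} \leq f_{i_0 j_0} \leq \|f\|_\infty ,
\]
and dividing by $\lambda>0$ yields $\|u\|_\infty \leq \lambda^{-1}\|f\|_\infty$. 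As $f$ is arbitrary, $\|A^{-1}\|_\infty \leq \lambda^{-1}$.

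I do not expect a genuine obstacle here; the only points requiring care are the sign bookkeeping at the extremal index (handled by the symmetry $u \mapsto -u$) and checking that periodicity preserves the row structure of $A$. As an alternative route that even yields equality, one can write $A = (\lambda + 4/h^2)I - N$, where $N \geq 0$ is the nonnegative neighbor-averaging operator satisfying $N\mathbf{1} = (4/h^2)\mathbf{1}$; the Neumann series $A^{-1} = \sum_{k\geq0} N^k/(\lambda+4/h^2)^{k+1}$ converges since $\rho(N)=4/h^2 < \lambda+4/h^2$, shows $A^{-1}\geq 0$, and gives $A^{-1}\mathbf{1} = \lambda^{-1}\mathbf{1}$, whence the induced $\infty$-norm (the maximal absolute row sum of the nonnegative matrix $A^{-1}$) equals exactly $\lambda^{-1}$.
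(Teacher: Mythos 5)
Your proof is correct. Note that the paper does not actually prove this lemma: it is imported verbatim from the cited reference \cite{JCM_Tang_2016}, so there is no in-paper argument to compare against. Your route — reducing the resolvent bound to a discrete maximum principle by evaluating $\lambda u - \Delta_h u = f$ at an index where $|u|$ is maximal, where $\Delta_h u \le 0$ — is the standard argument for this type of estimate and is essentially what the cited source does; both steps (invertibility via strict diagonal dominance, and the sign bookkeeping via $u \mapsto -u$) check out, and the argument survives the degenerate cases $M=1,2$ where periodic neighbors coincide, since merging entries only reduces the off-diagonal row sum and the inequality $\Delta_h u_{i_0 j_0} \le 0$ holds regardless. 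Your alternative via the splitting $A = (\lambda + 4/h^2)I - N$ and the Neumann series is a nice bonus: since $A^{-1} \ge 0$ entrywise and $A^{-1}\mathbf{1} = \lambda^{-1}\mathbf{1}$, it upgrades the stated inequality to the equality $\|(\lambda I - \Delta_h)^{-1}\|_{\infty} = \lambda^{-1}$, which is slightly stronger than what the lemma (and the paper's subsequent use of it in Theorems \ref{thm:cover_L1_iter}, \ref{thm:MBP_L1} and \ref{thm:MBP_L21}) requires.
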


\begin{lemma}[\cite{SIREV_Du_2021}]\label{lem:MBP_right} 
	If $\kappa \ge \|f^{\prime} \|_{C[-\beta, \beta]}$ holds for some positive constant $\kappa$, then we have $| f( \xi ) + \kappa \xi | \leq \kappa \beta $ for any $\xi \in[-\beta, \beta]$.
\end{lemma}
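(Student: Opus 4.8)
The plan is to reduce the claim to a monotonicity statement about the shifted function $G(\xi) := f(\xi) + \kappa\xi$ on the closed interval $[-\beta,\beta]$. As a preliminary step I would record that, for both the double-well potential \eqref{poten:dw} and the Flory--Huggins potential \eqref{poten:fh}, the threshold $\beta$ lies strictly inside $\text{Dom}(f)$ (indeed $\beta = 1$ with $\text{Dom}(f)=\mathbb{R}$ in the former, while $\beta \approx 0.9575 < 1$ in the latter), so that $f \in C^{1}[-\beta,\beta]$ and the quantity $\|f'\|_{C[-\beta,\beta]}$ is finite. This ensures both that the hypothesis $\kappa \ge \|f'\|_{C[-\beta,\beta]}$ is meaningful and that the derivative argument below is legitimate.

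The core of the argument is to show that $G$ is nondecreasing on $[-\beta,\beta]$. Differentiating gives $G'(\xi) = f'(\xi) + \kappa$, and since $\kappa \ge \|f'\|_{C[-\beta,\beta]} \ge -f'(\xi)$ for every $\xi \in [-\beta,\beta]$, we obtain $G'(\xi) \ge 0$. Consequently $G$ attains its minimum at $\xi=-\beta$ and its maximum at $\xi=\beta$.

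It then remains to evaluate the endpoints. Using the defining property $f(\beta)=f(-\beta)=0$ from \eqref{Condi2:MBP}, I compute $G(\beta)=\kappa\beta$ and $G(-\beta)=-\kappa\beta$. Combining this with the monotonicity just established yields $-\kappa\beta = G(-\beta) \le G(\xi) \le G(\beta) = \kappa\beta$ for all $\xi \in [-\beta,\beta]$, which is exactly the desired bound $|f(\xi)+\kappa\xi| \le \kappa\beta$.

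I do not anticipate a genuine obstacle: the argument is elementary once $G$ is introduced, and the only point warranting a line of care is the differentiability/domain remark in the first step (particularly for Flory--Huggins, where one must confirm $\beta<1$ so that $f'$ remains bounded on the closed interval and $\|f'\|_{C[-\beta,\beta]}$ is finite). As an alternative to the monotone-envelope route, one could instead apply the mean value theorem to each increment $f(\xi)-f(\beta)$ and $f(\xi)-f(-\beta)$ and bound it by $\|f'\|_{C[-\beta,\beta]}$ times $|\xi\mp\beta|$; this recovers the same two-sided estimate, but introducing $G$ and exploiting its monotonicity delivers both inequalities simultaneously and more transparently.
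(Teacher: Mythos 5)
Your proof is correct and coincides with the standard argument behind this lemma (the paper itself states it by citation to \cite{SIREV_Du_2021} without reproving it): one shows $G(\xi)=f(\xi)+\kappa\xi$ is nondecreasing on $[-\beta,\beta]$ because $G'=f'+\kappa\ge 0$, then evaluates the endpoints using $f(\pm\beta)=0$ to get $-\kappa\beta\le G(\xi)\le\kappa\beta$. Your preliminary remark that $\beta$ lies strictly inside $\mathrm{Dom}(f)$ (so that $\|f'\|_{C[-\beta,\beta]}$ is finite, notably for Flory--Huggins where $\beta\approx 0.9575<1$) is a worthwhile point of care, but otherwise nothing differs from the referenced proof.
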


Next, we use the well-known Banach’s fixed point theorem to prove that the nonlinear scheme \eqref{sch:L1_non_1} is uniquely solvable and preserves the discrete MBP.

\begin{theorem}\label{thm:cover_L1_iter} 
	If $\kappa \ge \|f^{\prime} \|_{C[-\beta, \beta]}$, the simple iterative scheme \eqref{sch:L1_non_2} unconditionally preserves the MBP for $\{ \hat{\phi}^1_{(\mathfrak{s} )} \}$, that is,
\begin{equation}\label{MBP:L1_inner}
		\text{if}~~\| \phi^{0} \|_{\infty} \leq \beta \quad  \Longrightarrow \quad 
        \|\hat{\phi}^1_{(\mathfrak{s} )}\|_{\infty} \leq \beta \quad \text{for} \  \mathfrak{s}  \geq 1.
\end{equation}
Furthermore, if the first level time-step $ \tau_{1} $ is sufficiently small such that
\begin{equation}\label{Cover:L1_tau}
		\tau_{1} \le 1/\sqrt[\alpha]{ \kappa \mm \Gamma( 2 - \alpha ) },
\end{equation}
	then (i) the iterative scheme \eqref{sch:L1_non_2} converges to the unique solution $ \hat{\phi}^{1} $ of the nonlinear scheme \eqref{sch:L1_non_1} in the maximum norm such that $\|\hat{\phi}^1\|_{\infty} \leq \beta$, and (ii) the $L1$-sESAV scheme \eqref{sch:L1_non_1}--\eqref{sch:L1_n_2} is uniquely solvable.
\end{theorem}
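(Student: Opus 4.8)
The plan is to dispatch the three assertions in turn, after reformulating the linear iteration \eqref{sch:L1_non_2} as a fixed-point map on $\mathbb{V}_\beta$, which is a closed (hence complete) subset of the finite-dimensional space $\mathbb{V}_h$. Collecting the $\hat\phi^1_{(\mathfrak{s})}$-terms on the left, \eqref{sch:L1_non_2} reads $\big((A^{(1)}_0 + m\kappa)I - m\varepsilon^2\Delta_h\big)\hat\phi^1_{(\mathfrak{s})} = A^{(1)}_0\phi^0 + m\big(f(\hat\phi^1_{(\mathfrak{s}-1)}) + \kappa\hat\phi^1_{(\mathfrak{s}-1)}\big)$, so that, writing $\lambda := (A^{(1)}_0 + m\kappa)/(m\varepsilon^2) > 0$, the iteration becomes $\hat\phi^1_{(\mathfrak{s})} = \Phi(\hat\phi^1_{(\mathfrak{s}-1)})$ with $\Phi(w) := \tfrac{1}{m\varepsilon^2}(\lambda I - \Delta_h)^{-1}\big[A^{(1)}_0\phi^0 + m(f(w) + \kappa w)\big]$. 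This single map carries both the MBP bound and the contraction estimate. Throughout I will use the explicit value $A^{(1)}_0 = \tau_1^{-\alpha}/\Gamma(2-\alpha)$, obtained by evaluating the defining integral with $t_0 = 0$ and $t_1 = \tau_1$ (recall $\Gamma(2-\alpha) = (1-\alpha)\Gamma(1-\alpha)$).

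For the MBP claim \eqref{MBP:L1_inner} I would induct on $\mathfrak{s}$, the base case being $\hat\phi^1_{(0)} = \phi^0 \in \mathbb{V}_\beta$. Assuming $\|\hat\phi^1_{(\mathfrak{s}-1)}\|_\infty \le \beta$, I apply $\|\cdot\|_\infty$ to $\Phi(\hat\phi^1_{(\mathfrak{s}-1)})$ and use Lemma \ref{lem:MBP_left} to get $\|(\lambda I - \Delta_h)^{-1}\|_\infty \le \lambda^{-1}$; the bracket then splits into $A^{(1)}_0\|\phi^0\|_\infty \le A^{(1)}_0\beta$ and $m\,\|f(\hat\phi^1_{(\mathfrak{s}-1)}) + \kappa\hat\phi^1_{(\mathfrak{s}-1)}\|_\infty \le m\kappa\beta$, the latter being exactly Lemma \ref{lem:MBP_right} under $\kappa \ge \|f'\|_{C[-\beta,\beta]}$. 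Since $\lambda^{-1} = m\varepsilon^2/(A^{(1)}_0 + m\kappa)$, the prefactor $\tfrac{1}{m\varepsilon^2}\lambda^{-1}(A^{(1)}_0 + m\kappa)$ collapses to $1$ and yields $\|\hat\phi^1_{(\mathfrak{s})}\|_\infty \le \beta$, closing the induction. Crucially, no restriction on $\tau_1$ enters here, which is the source of the \emph{unconditional} character of the bound.

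For part (i) I would show $\Phi$ is a contraction on $\mathbb{V}_\beta$ and invoke Banach's theorem. Subtracting two applications of $\Phi$ and using Lemma \ref{lem:MBP_left} again gives $\|\Phi(w_1) - \Phi(w_2)\|_\infty \le \tfrac{1}{m\varepsilon^2}\lambda^{-1} m\,\|g(w_1) - g(w_2)\|_\infty$ with $g(\xi) := f(\xi) + \kappa\xi$; for $w_1, w_2 \in \mathbb{V}_\beta$ the mean value theorem and $|g'| = |f' + \kappa| \le \kappa + \|f'\|_{C[-\beta,\beta]}$ on $[-\beta,\beta]$ bound the contraction constant by $m(\kappa + \|f'\|_{C[-\beta,\beta]})/(A^{(1)}_0 + m\kappa)$. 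The main obstacle is matching this to the stated threshold: inserting $A^{(1)}_0 = \tau_1^{-\alpha}/\Gamma(2-\alpha)$, the condition \eqref{Cover:L1_tau} is equivalent to $A^{(1)}_0 \ge m\kappa$, which makes the constant strictly less than one (using that $\kappa$ dominates $\|f'\|_{C[-\beta,\beta]}$, so $A^{(1)}_0 \ge m\kappa > m\|f'\|_{C[-\beta,\beta]}$). Banach's theorem then produces a unique fixed point $\hat\phi^1 \in \mathbb{V}_\beta$ to which the iterates converge in $\|\cdot\|_\infty$; because the stabilization term $-\kappa(\hat\phi^1_{(\mathfrak{s})}-\hat\phi^1_{(\mathfrak{s}-1)})$ vanishes at a fixed point, $\hat\phi^1$ satisfies the nonlinear predictor \eqref{sch:L1_non_1} verbatim and obeys $\|\hat\phi^1\|_\infty \le \beta$.

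Finally, for part (ii) I would propagate unique solvability by induction on $n$. The predictor $\hat\phi^n$ is unique — for $n=1$ by part (i), and for $n \ge 2$ by the explicit cutoff formula \eqref{sch:L1_0}. Given $\hat\phi^n$ and $R^{n-1}$, the scalar $V(g_h(\hat\phi^n, R^{n-1})) \ge 0$ is known, so after moving the history terms $\sum_{k<n} A^{(n)}_{n-k}\nabla_\tau\phi^k$ to the right, \eqref{sch:L1_n_1} (and likewise \eqref{sch:L1_1_1}) is the linear system $\big((A^{(n)}_0 + m\kappa V(\cdot))I - m\varepsilon^2\Delta_h\big)\phi^n = (\text{known data})$. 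Since $A^{(n)}_0 > 0$ by property (P1), $V(\cdot)\ge 0$ by (\textbf{A2}), and $-\Delta_h$ is symmetric positive semidefinite, the system matrix is symmetric positive definite and hence invertible, giving a unique $\phi^n$; the update $R^n$ is then explicit from \eqref{sch:L1_n_2}. The only genuinely delicate estimate in the whole argument is the contraction constant in part (i); everything else reduces to the max-norm resolvent bound of Lemma \ref{lem:MBP_left} together with the positivity of the convolution kernels.
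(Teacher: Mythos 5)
Your proposal is correct and follows essentially the same route as the paper: the same reformulation of the iteration with the operator $\big((A^{(1)}_0+\kappa m)I-m\varepsilon^2\Delta_h\big)$, Lemmas \ref{lem:MBP_left}--\ref{lem:MBP_right} for the unconditional MBP by induction on $\mathfrak{s}$, Banach's fixed point theorem on $\mathbb{V}_\beta$ under the equivalent condition $A^{(1)}_0\ge \kappa m$ for part (i), and symmetric positive definiteness of the linear systems (using $A^{(n)}_0>0$ and $V(\cdot)\ge 0$) for part (ii). The only cosmetic difference is that you bound $f+\kappa\,\cdot$ by the mean value theorem to get the marginally sharper contraction constant $m(\kappa+\|f'\|_{C[-\beta,\beta]})/(A^{(1)}_0+\kappa m)$ instead of the paper's $2\kappa m/(A^{(1)}_0+\kappa m)$, and both arguments share the same harmless looseness of claiming strict contraction at the boundary case of equality in \eqref{Cover:L1_tau}.
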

\begin{proof}
For any $ \mathfrak{s}  \geq 1 $, we assume that
$ 
	\| \hat{\phi}^{1}_{ (\mathfrak{s}-1) } \|_{\infty} \leq \beta.
$ 
Rewrite \eqref{sch:L1_non_2} equivalently as
\begin{equation*}
		\bigl( ( A^{(1)}_{0} + \kappa \mm  ) I - \mm \varepsilon^{2} \Delta_h \bigr) \hat{\phi}^{1}_{(\mathfrak{s} )}  
         = A^{(1)}_{0} \phi^{0} + \mm  \bigl(  f(\hat{\phi}^{1}_{(\mathfrak{s} -1)}) +\kappa \hat{\phi}^{1}_{(\mathfrak{s} -1)} \bigr),
\end{equation*}
which together with Lemmas \ref{lem:MBP_left}--\ref{lem:MBP_right} and $ \| \phi^{0} \|_{\infty} \leq \beta $ yields
\begin{equation*}
	( A^{(1)}_{0} + \kappa \mm ) \| \hat{\phi}^{1}_{(\mathfrak{s} )} \|_{\infty} \le ( A^{(1)}_{0} + \kappa \mm ) \beta.
\end{equation*}
Thus, the conclusion \eqref{MBP:L1_inner} is proved.

Next, we discuss the MBP-preservation and unique solvability of the nonlinear scheme \eqref{sch:L1_non_1}. To this aim, for any grid function $ v \in \mathbb{V}_{\beta} $, we define the mapping $ \mt : = \mathbb{V}_{\beta} \rightarrow \mathbb{V}_{\beta} $ through \eqref{sch:L1_non_2} by $ \mt[v] = w \in \mathbb{V}_{\beta} $, i.e.,
\begin{equation}\label{Cover:L1_1}
	\begin{aligned}
		A^{(1)}_{0} ( w - \phi^{0} )  = \mm \bigl(  \varepsilon^2 \Delta_h w + f(v) - \kappa ( w - v ) \bigr).
	\end{aligned}
\end{equation}
We shall show that $\mt$ is a contraction mapping under the time-step restriction \eqref{Cover:L1_tau} in what follows. For any $ v_{1}, v_{2} \in \mathbb{V}_{\beta} $, let $ w_{1} = \mt[ v_{1} ], w_{2} = \mt[ v_{2} ] $, and define $ \delta w = w_{1} - w_{2} $. Then it follows from \eqref{Cover:L1_1} that
\begin{equation*}
	\big( ( A^{(1)}_{0} + \kappa \mm  ) I - \mm \varepsilon^{2} \Delta_h \big) \delta w  = \kappa \mm ( v_{1} - v_{2} ) + \mm ( f(v_{1}) - f(v_{2}) ).
\end{equation*}
Further, Lemma \ref{lem:MBP_left} and the condition $ \kappa \ge \| f' \|_{ C[-\beta,\beta] } $ gives us
\begin{equation*}
	\begin{aligned}
		( A^{(1)}_{0} + \kappa \mm  ) \| \delta w \|_{\infty} \leq 2 \kappa \mm \| v_{1} - v_{2} \|_{\infty}.
	\end{aligned}
\end{equation*}
As a consequence, it holds that
\begin{equation}\label{Contra_fact}
	\begin{aligned}
		\| \mt[ v_{1} ] - \mt[ v_{2} ] \|_{\infty} = \| \delta w \|_{\infty} \leq \ell_{\mt} \| v_{1} - v_{2} \|_{\infty} \quad  \text{with} \  \ell_{\mt} = \frac{ 2 \kappa \mm }{ A^{(1)}_{0} + \kappa \mm }.
	\end{aligned}
\end{equation}
Since $A_{0}^{(1)} = \frac{1}{\tau_1} \int_{t_{0}}^{t_1}  \omega_{1-\alpha} (t_1-s) ds = \frac{ \tau_{1}^{-\alpha} }{ \Gamma( 2 - \alpha ) }$, we conclude that if $ \tau_{1} \leq \sqrt[\alpha]{1/(\kappa \mm \Gamma( 2 - \alpha ) )} $, it follows that $ \ell_{\mt} < 1 $, ensuring that $ \mt $ is a contractive mapping. By Banach’s fixed point theorem, the mapping $ \mt $ admits a unique fixed point $ \hat{\phi}^{1} \in \mathbb{V}_{\beta} $. Consequently, the iterative scheme \eqref{sch:L1_non_2} is
convergent in the maximum norm, and the fixed point $\hat{\phi}^{1}$ is just the unique solution of the nonlinear scheme \eqref{sch:L1_non_1}.

For the simplicity of presentation, below we denote $ V^n := V( g_{h}(\hat{\phi}^{n}, R^{n-1}) ) $ for $ n \geq 1 $. Then, equations \eqref{sch:L1_1_1}--\eqref{sch:L1_1_2} and \eqref{sch:L1_n_1}--\eqref{sch:L1_n_2} can be equivalently and uniformly rewritten as:
\begin{equation}\label{sch:L1_3}
	\big[ ( A^{(n)}_{0} + \kappa \mm V^{n} ) I - \mm \varepsilon^2 \Delta_h \big] \phi^{n}  = A^{(n)}_{0} \phi^{n-1} - \sum^{n-1}_{k=1} A^{(n)}_{n-k} \nabla_{\tau} \phi^{k} + \mm V^{n} \bigl(  f(\hat{\phi}^{n})+  \kappa \hat{\phi}^{n}\bigr),
\end{equation}
\begin{equation}\label{sch:L1_4}
		R^{n}  = R^{n-1} + V^{n} \big\langle - f(\hat{\phi}^{n}) + \kappa( \phi^{n} - \hat{\phi}^{n} ), \nabla_{\tau}\phi^{n}  \big\rangle,
\end{equation}
for $n \ge 1$. 
It follows from $ A^{(n)}_{0} > 0 $ and assumption (\textbf{A2}) that 
the coefficient matrix of \eqref{sch:L1_3} is symmetric and positive definite, and thus \eqref{sch:L1_3}--\eqref{sch:L1_4} is uniquely solvable, which further implies the uniquely solvablity of the $L1$-sESAV scheme \eqref{sch:L1_non_1}--\eqref{sch:L1_n_2}. The proof is completed.
\end{proof}

\begin{remark} 
Note that a useful estimate \eqref{Contra_fact} for the contraction factor $ \ell_{ \mt } $ is provided in the proof of Theorem \ref{thm:cover_L1_iter}, indicating that the iteration error decreases by at least a factor of $ \ell_{ \mt } $ per iteration. Let the first step iteration error be denoted by $ e_{(1)}:=\| \hat{\phi}^{1}_{(1)} - \phi^{0} \|_{\infty} \leq 2 \beta $. Then, for a given termination tolerance $ tol $, a maximum number of iterations
$
\mathfrak{s}^{*} = \big\lceil \frac{ \ln ( tol/ e_{(1)} ) }{ \ln(\ell_{ \mt }) } \big\rceil + 1 \le \big\lceil \frac{ \ln ( tol/ (2\beta) ) }{ \ln(\ell_{ \mt }) } \big\rceil + 1
$
is sufficient to ensure $ \| \hat{\phi}^{1}_{(\mathfrak{s}^{*}+1)} - \hat{\phi}^{1}_{(\mathfrak{s}^{*})} \|_{\infty} \le tol $. Moreover, a smaller stabilization constant $\kappa$ leads to faster convergence of the simple iterative scheme \eqref{sch:L1_non_2}.
\end{remark}

\subsection{ Discrete energy stability and MBP of the $L1$-sESAV scheme}
Define the discrete version energy that corresponding to \eqref{def:energy_1}:
\begin{equation*}
	\begin{aligned}
		\me_h[ \phi^n, R^n ] := \frac{\varepsilon^2}{2} \|\nabla_h \phi^n \|^2 + R^n, \quad n \geq 0.
	\end{aligned}
\end{equation*}
Next, we will establish the energy stability of the $L1$-sESAV scheme.

\begin{lemma}[\cite{SCM_Liao_2024}]\label{lem:DC_positive} 
	For fixed $n \geqslant 1$, the convolution kernels $\{A_{n-k}^{(n)}\}$ are positive semi-definite in the sense that
	$$
	\sum_{k=1}^n w_k \sum_{j=1}^k A_{k-j}^{(k)} w_j \geqslant 0 \quad \text { for any sequence }\left\{w_k\right\}_{k=1}^n .
	$$
\end{lemma}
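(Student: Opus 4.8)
The plan is to recognize that the stated quadratic form is the discrete counterpart of the continuous positivity $\int_0^{t_n}\partial_t v\cdot {}_0^C D_t^\alpha v\,dt\ge 0$, and to exploit the fact that the generating kernel $\omega_{1-\alpha}(t)=t^{-\alpha}/\Gamma(1-\alpha)$ is completely monotone on $(0,\infty)$. By the Bernstein representation I would write
\[
\omega_{1-\alpha}(t)=\frac{1}{\Gamma(\alpha)\Gamma(1-\alpha)}\int_0^\infty e^{-\lambda t}\lambda^{\alpha-1}\,d\lambda=\int_0^\infty e^{-\lambda t}\,d\mu(\lambda),
\]
with the nonnegative measure $d\mu(\lambda)=\lambda^{\alpha-1}/(\Gamma(\alpha)\Gamma(1-\alpha))\,d\lambda\ge 0$. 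Substituting this into $A_{k-j}^{(k)}=\frac{1}{\tau_j}\int_{t_{j-1}}^{t_j}\omega_{1-\alpha}(t_k-s)\,ds$ and applying Tonelli (all integrands are nonnegative) yields the factorized form $A_{k-j}^{(k)}=\int_0^\infty e^{-\lambda t_k}\,c_j(\lambda)\,d\mu(\lambda)$, where $c_j(\lambda):=\frac{1}{\tau_j}\int_{t_{j-1}}^{t_j}e^{\lambda s}\,ds>0$. Since the target sum is finite, I may interchange it with the $\lambda$-integral, so it suffices to prove that the single-exponential quadratic form $S_\lambda:=\sum_{k=1}^n w_k e^{-\lambda t_k}\sum_{j=1}^k c_j(\lambda)w_j$ is nonnegative for every fixed $\lambda\ge 0$; integrating against $d\mu\ge 0$ then delivers the claim.

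For fixed $\lambda$, I would introduce the partial sums $P_k:=\sum_{j=1}^k c_j(\lambda)w_j$ (with $P_0=0$), so that $c_k(\lambda)w_k=P_k-P_{k-1}$ and the positive weights $d_k:=e^{-\lambda t_k}/c_k(\lambda)$ give $S_\lambda=\sum_{k=1}^n d_k(P_k-P_{k-1})P_k$. The elementary identity $(P_k-P_{k-1})P_k=\tfrac12(P_k^2-P_{k-1}^2)+\tfrac12(P_k-P_{k-1})^2$ splits $S_\lambda$ into a manifestly nonnegative square term $\tfrac12\sum_k d_k(P_k-P_{k-1})^2$ and the piece $\tfrac12\sum_k d_k(P_k^2-P_{k-1}^2)$, which, after summation by parts with $P_0=0$, equals $\tfrac12 d_n P_n^2+\tfrac12\sum_{k=1}^{n-1}(d_k-d_{k+1})P_k^2$. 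Thus everything reduces to the monotonicity $d_{k+1}\le d_k$.

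The crux, and essentially the only real obstacle, is this monotonicity. Here $e^{-\lambda t_k}$ is nonincreasing, and $c_k(\lambda)$, being the average of the increasing function $e^{\lambda s}$ over the successive, disjoint, left-to-right ordered intervals $(t_{k-1},t_k)$, satisfies $c_k(\lambda)\le e^{\lambda t_k}\le c_{k+1}(\lambda)$ and is therefore nondecreasing; the product of the two nonincreasing nonnegative sequences $e^{-\lambda t_k}$ and $1/c_k(\lambda)$ is then nonincreasing, so $d_k-d_{k+1}\ge 0$. Consequently both pieces of $S_\lambda$ are nonnegative, whence $S_\lambda\ge 0$ for all $\lambda$, and finally $\sum_{k=1}^n w_k \sum_{j=1}^k A_{k-j}^{(k)} w_j=\int_0^\infty S_\lambda\,d\mu(\lambda)\ge 0$. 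I would emphasize that the argument is entirely mesh-independent: the nonuniformity of the time grid enters only through the interval averages $c_k(\lambda)$, and since these remain nondecreasing in $k$ for every $\lambda$, no time-step ratio restriction is needed.
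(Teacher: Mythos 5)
Your proposal is correct, but it takes a genuinely different route from the paper: the paper does not prove this lemma at all — it imports it from \cite{SCM_Liao_2024}, whose argument is algebraic, establishing easy-to-check sufficient conditions on abstract discrete convolution kernels via a constructive strategy based on discrete orthogonal and complementary convolution kernels, and then verifying those conditions for the variable-step $L1$ kernels. Your argument instead exploits the complete monotonicity of $\omega_{1-\alpha}$ analytically: the Bernstein/Laplace representation factorizes each kernel as $A^{(k)}_{k-j}=\int_0^\infty e^{-\lambda t_k}c_j(\lambda)\,d\mu(\lambda)$ with $d\mu\ge 0$, reducing the claim to nonnegativity of the one-parameter exponential forms $S_\lambda$, which you settle by the substitution $P_k=\sum_{j\le k}c_j(\lambda)w_j$, the splitting $(P_k-P_{k-1})P_k=\tfrac12(P_k^2-P_{k-1}^2)+\tfrac12(P_k-P_{k-1})^2$, Abel summation, and the weight monotonicity $d_{k+1}\le d_k$, which indeed follows from $c_k(\lambda)\le e^{\lambda t_k}\le c_{k+1}(\lambda)$. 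All steps check out: Tonelli applies since the integrands are nonnegative, the interchange of the finite double sum with the $\lambda$-integral is legitimate because each $A^{(k)}_{k-j}$ is finite, the integrable singularity at $s=t_k$ (when $j=k$) and the case $\lambda=0$ cause no trouble, and the product of the two positive nonincreasing sequences $e^{-\lambda t_k}$ and $1/c_k(\lambda)$ is indeed nonincreasing. What your route buys is a short, self-contained, mesh-ratio-free proof that generalizes verbatim to any kernel that is the Laplace transform of a nonnegative measure, i.e., any completely monotone kernel; what the cited algebraic route buys is applicability to discrete kernels that are not exact interval averages of a completely monotone function (for instance $L1$-type kernels with correction or fast-evaluation perturbations), since it only requires verifiable algebraic properties of the coefficients $A^{(n)}_{n-k}$ themselves.
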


\begin{theorem}\label{thm:energy_L1}
    Assume that the conditions in Theorem \ref{thm:cover_L1_iter} hold, the $L1$-sESAV scheme \eqref{sch:L1_non_1}--\eqref{sch:L1_n_2} is unconditionally energy-stable in the sense that $\me_h[\phi^{n}, R^{n}] \leq \me_h[\phi^0, R^0]$ for $ n \geq 1 $.
\end{theorem}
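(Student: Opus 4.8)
The plan is to test the phase-field equation with the temporal increment $\nabla_\tau \phi^n$, use the $R$-equation to absorb the nonlinear and stabilization contributions exactly, and then exploit the positive semi-definiteness of the $L1$ kernel (Lemma \ref{lem:DC_positive}) after summing over all time levels. Concretely, using the equivalent form \eqref{sch:L1_n_1} (and \eqref{sch:L1_1_1} for $n=1$), I would take the discrete inner product with $\nabla_\tau \phi^n = \phi^n-\phi^{n-1}$ to obtain
\begin{equation*}
\langle \mathbb{D}^{\alpha}_{\tau}\phi^{n}, \nabla_\tau\phi^n\rangle
= m\varepsilon^2\langle \Delta_h\phi^n,\nabla_\tau\phi^n\rangle
+ m V^{n}\big\langle f(\hat{\phi}^{n})-\kappa(\phi^n-\hat{\phi}^n),\,\nabla_\tau\phi^n\big\rangle .
\end{equation*}
For the diffusion term I would apply summation by parts under the periodic boundary conditions together with the elementary identity $\langle a, a-b\rangle\ge \tfrac12(\|a\|^2-\|b\|^2)$, giving $m\varepsilon^2\langle \Delta_h\phi^n,\nabla_\tau\phi^n\rangle \le -\tfrac{m\varepsilon^2}{2}(\|\nabla_h\phi^n\|^2-\|\nabla_h\phi^{n-1}\|^2)$.

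The key observation, and the real point of the (s)ESAV construction, is that the $R$-equation \eqref{sch:L1_n_2} is precisely engineered so that its increment $\nabla_\tau R^n$ (obtained by multiplying \eqref{sch:L1_n_2} through by $\tau_n$, since $\mathbb{D}_\tau R^n=\nabla_\tau R^n/\tau_n$ and $\mathbb{D}_\tau\phi^n=\nabla_\tau\phi^n/\tau_n$) equals $-V^n\langle f(\hat{\phi}^n)-\kappa(\phi^n-\hat{\phi}^n),\nabla_\tau\phi^n\rangle$. Hence the entire nonlinear-plus-stabilization bracket above collapses to $-m\,\nabla_\tau R^n$, with no sign condition on $V^n$ required. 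Combining these two facts and dividing by $m>0$ yields the per-step inequality
\begin{equation*}
\me_h[\phi^{n},R^{n}] - \me_h[\phi^{n-1},R^{n-1}]
= \f{\varepsilon^2}{2}\big(\|\nabla_h\phi^n\|^2-\|\nabla_h\phi^{n-1}\|^2\big) + \nabla_\tau R^n
\le -\f{1}{m}\,\langle \mathbb{D}^{\alpha}_{\tau}\phi^{n}, \nabla_\tau\phi^n\rangle .
\end{equation*}

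I would then sum this inequality from $n=1$ to any $N$, telescoping the left-hand side to $\me_h[\phi^{N},R^{N}]-\me_h[\phi^{0},R^{0}]$, so that it remains only to show $\sum_{n=1}^{N}\langle \mathbb{D}^{\alpha}_{\tau}\phi^{n}, \nabla_\tau\phi^n\rangle\ge 0$. Writing $\mathbb{D}^{\alpha}_{\tau}\phi^{n}=\sum_{k=1}^{n}A^{(n)}_{n-k}\nabla_\tau\phi^k$ and expanding the discrete inner product as a sum over grid points, this quadratic sum decomposes, for each fixed node $(i,j)$, into exactly the scalar form $\sum_{n=1}^{N} w_n\sum_{k=1}^{n}A^{(n)}_{n-k}w_k$ with $w_n=(\nabla_\tau\phi^n)_{ij}$, which is nonnegative by Lemma \ref{lem:DC_positive}; summing over all nodes with the positive weight $h^2$ preserves nonnegativity. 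This gives $\me_h[\phi^{N},R^{N}]\le\me_h[\phi^{0},R^{0}]$ for every $N\ge 1$, as claimed.

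The main obstacle, and the feature distinguishing this from the classical integer-order argument, is that the fractional term does \emph{not} produce a per-step monotone energy decay; instead one must work with the \emph{global} sum and invoke the positive semi-definiteness of the nonuniform $L1$ convolution kernels, applied node-by-node to the vector-valued increments $\nabla_\tau\phi^k$. Once that structural property is in hand the argument is routine, and it is worth noting that energy stability holds unconditionally in the time step and for any admissible auxiliary functional $V(\cdot)$, since the sign/bound assumptions (\textbf{A2}) are needed only for the MBP analysis, not here.
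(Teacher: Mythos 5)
Your proposal is correct and follows essentially the same route as the paper's own proof: test the scheme with $\nabla_\tau\phi^n$, use the $R$-equation to cancel the nonlinear and stabilization terms exactly, telescope the modified energy, and conclude via the positive semi-definiteness of the nonuniform $L1$ kernels (Lemma \ref{lem:DC_positive}). The only cosmetic difference is that you drop the nonpositive term $-\tfrac{\varepsilon^2}{2}\|\nabla_h\nabla_\tau\phi^k\|^2$ immediately and spell out the node-by-node application of the scalar kernel lemma, both of which the paper handles implicitly.
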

\begin{proof}
Let $ n = k $ in \eqref{sch:L1_n_1}, and then taking the inner product with $\nabla_{\tau}\phi^{k}=\phi^{k}-\phi^{k-1}$, we have
\begin{equation*} 
	\big\langle \mathbb{D}^{\alpha}_{\tau} \phi^{k}, \nabla_{\tau}\phi^{k} \rangle  = \mm \varepsilon^2 \big\langle \Delta_h \phi^{k}, \phi^{k}-\phi^{k-1} \big\rangle  - \mm V^{k} \big\langle - f(\hat{\phi}^{k}) + \kappa ( \phi^{k} - \hat{\phi}^{k} ), \nabla_{\tau}\phi^{k} \big\rangle ,
\end{equation*}
which, together with \eqref{sch:L1_4}, leads to
\begin{equation}\label{energy1_f1}
	\me_h[\phi^{k}, R^{k}]-\me_h[\phi^{k-1}, R^{k-1}] \le -\frac{ \varepsilon^2}{2}\left\|\nabla_h (\phi^{k}- \phi^{k-1})\right\|^2 - \frac{1}{\mm} \big\langle \mathbb{D}^{\alpha}_{\tau} \phi^{k}, \nabla_{\tau}\phi^{k} \big\rangle, 
\end{equation}
for $k \geq 2.$
Similarly, it follows from \eqref{sch:L1_1_1} and \eqref{sch:L1_4} that \eqref{energy1_f1} also holds for $ k = 1 $.

Now, sum the above inequality from $ k = 1 $ to $ n $, we obtain from \eqref{Formula:L1} and Lemma \ref{lem:DC_positive} that
\begin{equation*} 
  \me_h[\phi^{n}, R^{n}]-\me_h[\phi^{0}, R^{0}] \le  - \frac{1}{\mm} \sum_{k=1}^{n}  \big\langle \mathbb{D}^{\alpha}_{\tau} \phi^{k}, \nabla_{\tau} \phi^{k}  \big\rangle = - \frac{1}{\mm}  \sum_{k=1}^{n} \Big\langle  \sum_{j=1}^k A_{k-j}^{(k)} \nabla_\tau \phi^j, \nabla_{\tau} \phi^{k}  \Big\rangle \leq 0,
\end{equation*}
 which completes the proof.
\end{proof}

In the following theorem, we are ready to establish the discrete MBP for the proposed $L1$-sESAV scheme.
\begin{theorem}\label{thm:MBP_L1}
	Assume that the conditions in Theorem \ref{thm:cover_L1_iter} hold, the $L1$-sESAV scheme \eqref{sch:L1_non_1}--\eqref{sch:L1_n_2} unconditionally preserves the MBP for $\left\{\phi^n\right\}$, that is,
   \begin{equation*}\label{MBP:L1}
	\text{if}~~\|\phi^{0} \|_{\infty} \leq \beta \quad  \Longrightarrow \quad  \|\phi^n \|_{\infty} \leq \beta, \quad \forall n \geq 1.
   \end{equation*}
\end{theorem}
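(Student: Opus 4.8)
The plan is to argue by induction on $n$, working throughout with the equivalent one-step form \eqref{sch:L1_3} that was derived in the proof of Theorem \ref{thm:cover_L1_iter}. Assume $\|\phi^j\|_{\infty} \leq \beta$ for every $0 \leq j \leq n-1$; the base case $n=1$ will fall out of the same computation, since the history sum in \eqref{sch:L1_3} is then empty and $\|\hat{\phi}^{1}\|_{\infty} \leq \beta$ is already supplied by Theorem \ref{thm:cover_L1_iter}. First I would estimate the solution operator on the left of \eqref{sch:L1_3}: writing it as $m\varepsilon^{2}(\lambda I - \Delta_h)$ with $\lambda = (A^{(n)}_{0} + \kappa m V^{n})/(m\varepsilon^{2})$, Lemma \ref{lem:MBP_left} gives
\begin{equation*}
\|\phi^{n}\|_{\infty} \leq \frac{1}{A^{(n)}_{0} + \kappa m V^{n}}\,\|\mathrm{RHS}\|_{\infty},
\end{equation*}
so it suffices to prove the right-hand side of \eqref{sch:L1_3} satisfies $\|\mathrm{RHS}\|_{\infty} \leq (A^{(n)}_{0} + \kappa m V^{n})\beta$.

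For the nonlinear contribution $m V^{n}\big(\kappa\hat{\phi}^{n} + f(\hat{\phi}^{n})\big)$, I would combine three facts: the nonnegativity $V^{n} \geq 0$ from assumption (\textbf{A2}); the bound $\|\hat{\phi}^{n}\|_{\infty} \leq \beta$ on the predicted solution, which holds by the cutoff \eqref{sch:L1_0} for $n \geq 2$ and by Theorem \ref{thm:cover_L1_iter} for $n=1$; and Lemma \ref{lem:MBP_right}, which under $\kappa \geq \|f'\|_{C[-\beta,\beta]}$ yields $\|\kappa\hat{\phi}^{n} + f(\hat{\phi}^{n})\|_{\infty} \leq \kappa\beta$. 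Together these give $\big\|m V^{n}(\kappa\hat{\phi}^{n} + f(\hat{\phi}^{n}))\big\|_{\infty} \leq \kappa m V^{n}\beta$.

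The crux is the history part $A^{(n)}_{0}\phi^{n-1} - \sum_{k=1}^{n-1} A^{(n)}_{n-k}\nabla_{\tau}\phi^{k}$. Here I would perform a summation by parts to rewrite it as a single linear combination $\sum_{m=0}^{n-1} c_m \phi^m$, with coefficients $c_{n-1} = A^{(n)}_{0} - A^{(n)}_{1}$, $c_m = A^{(n)}_{n-1-m} - A^{(n)}_{n-m}$ for $1 \leq m \leq n-2$, and $c_0 = A^{(n)}_{n-1}$. The decisive observation is that the monotonicity property (\textbf{P2}) forces every interior difference $c_m > 0$ and $c_{n-1} > 0$, while positivity (\textbf{P1}) gives the endpoint $c_0 = A^{(n)}_{n-1} > 0$; moreover a telescoping computation shows $\sum_{m=0}^{n-1} c_m = A^{(n)}_{0}$ exactly. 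Since all $c_m \geq 0$ and $\|\phi^m\|_{\infty} \leq \beta$ by the induction hypothesis, this produces
\begin{equation*}
\Big\|A^{(n)}_{0}\phi^{n-1} - \sum_{k=1}^{n-1} A^{(n)}_{n-k}\nabla_{\tau}\phi^{k}\Big\|_{\infty} \leq \Big(\sum_{m=0}^{n-1} c_m\Big)\beta = A^{(n)}_{0}\beta.
\end{equation*}

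Adding the two estimates gives $\|\mathrm{RHS}\|_{\infty} \leq (A^{(n)}_{0} + \kappa m V^{n})\beta$, and hence $\|\phi^{n}\|_{\infty} \leq \beta$, which closes the induction. The main obstacle is precisely the summation-by-parts bookkeeping for the history terms, together with verifying that the rearranged kernel coefficients are all nonnegative and sum to exactly $A^{(n)}_{0}$; this is where properties (\textbf{P1}) and (\textbf{P2}) of the $L1$ convolution kernels are indispensable. Everything else reduces to the resolvent bound of Lemma \ref{lem:MBP_left}, the potential bound of Lemma \ref{lem:MBP_right}, and the nonnegativity of $V^{n}$ from (\textbf{A2}).
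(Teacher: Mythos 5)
Your proposal is correct and follows essentially the same route as the paper's proof: induction using the reformulation \eqref{sch:L1_3}, the resolvent bound of Lemma \ref{lem:MBP_left}, the bound $\|\kappa\hat{\phi}^{n}+f(\hat{\phi}^{n})\|_{\infty}\leq\kappa\beta$ from Lemma \ref{lem:MBP_right} combined with (\textbf{A2}), and the summation-by-parts rearrangement of the history term whose coefficients are nonnegative by (\textbf{P1})--(\textbf{P2}) and telescope to exactly $A^{(n)}_{0}$. The paper's estimate \eqref{Formu:L1_MBP_2} is precisely your coefficient bookkeeping, so there is nothing substantive to distinguish the two arguments.
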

\begin{proof}
This claim will be verified by the complete mathematical induction argument. 
Assume that $ \| \phi^{k} \|_{\infty} \leq \beta $ for $ 0 \leq k \leq n-1 $. From \eqref{sch:L1_0} and Theorem \ref{thm:cover_L1_iter}, we know that $ \| \hat{ \phi }^{n} \|_{\infty} \leq \beta $, which together with \eqref{sch:L1_3} and Lemmas \ref{lem:MBP_left}--\ref{lem:MBP_right} leads to
\begin{equation}\label{Formu:L1_MBP_1}
	( A^{(n)}_{0} + \kappa \mm V^{n} ) \| \phi^{n} \|_{\infty}  
    \le \bigl\| A_{0}^{(n)} \phi^{n-1} - \sum_{k=1}^{n-1} A_{n-k}^{(n)} \nabla_\tau \phi^k \bigr\|_{\infty}    + \kappa \mm V^{n} \beta,
\end{equation}
where the assumption (\textbf{A2}) has been used. Due to the positivity and monotonicity of the discrete kernels $\{ A^{(n)}_{n-k} \}$, it holds that
\begin{equation}\label{Formu:L1_MBP_2}
  \begin{aligned}
  \big \| A^{(n)}_{0} \phi^{n-1} - \sum^{n-1}_{k=1} A^{(n)}_{n-k} \nabla_{\tau} \phi^{k}  \big\|_{\infty} 
    & = \bigl\| \sum^{n-1}_{k=1} ( A^{(n)}_{n-k-1} - A^{(n)}_{n-k} ) \phi^{k} + A^{(n)}_{n-1} \phi^{0} \bigr\|_{\infty} \\
	& \leq \sum^{n-1}_{k=1} ( A^{(n)}_{n-k-1} - A^{(n)}_{n-k} ) \, \| \phi^{k} \|_{ \infty } + A^{(n)}_{n-1}\, \| \phi^{0} \|_{ \infty } 
     \leq A_{0}^{(n)} \beta.
  \end{aligned}
\end{equation} 
Therefore, inserting \eqref{Formu:L1_MBP_2} into \eqref{Formu:L1_MBP_1} yields the conclusion  for $ k =n $, and thereby the proof is completed.
\end{proof}

\begin{remark}\label{rem:eop}  
The linear $L1$-sESAV scheme developed in this section for the tFAC model \eqref{Model:tAC} is MBP-preserving. However, as noted in the introduction, the SAV approach only guarantees a modified form of energy stability. To further reduce the gap between the modified and original energies, motivated by the EOP strategy \cite{JSC_Liu_2024}, an improved strategy, referred to as the $L1$-sESAV-EOP scheme, is proposed as follows: Let $ \tilde{R}^{n}$ be the solution of \eqref{sch:L1_n_2} in Step 2, the auxiliary variable $R^{n}$ is then updated by
\begin{equation}\label{sch:L1_R_n_3}
		R^{n} = \min\Bigl\{ \me_h[ \phi^0, R^0 ] - \frac{\varepsilon^2}{2} \|\nabla_h \phi^n \|^2, E_{1 h}[ \phi^{n} ] \Bigr\},
\end{equation}
which is a solution to the following linear optimization problem
\begin{equation}\label{sch:L1_R_n_4}
\eta_{0} = \min_{\eta \in [0,1]} \eta, \quad \text{s.t.} \  \me_h[ \phi^n, R^n ] \leq \me_h[ \phi^0, R^0 ],~ R^{n} = \eta_{0} \tilde{R}^{n} + ( 1- \eta_{0} ) E_{1 h}[ \phi^{n} ].
\end{equation}
It is clear that the resulting $L1$-sESAV-EOP scheme remains energy-stable with respect to a modified energy, which, notably, closely approximates the original energy as $\eta_{0} \in[0,1]$. In particular, when $R^{n} =  E_{1 h}[ \phi^{n} ]$ in \eqref{sch:L1_R_n_3}, we have $\eta_{0}=0$ in  \eqref{sch:L1_R_n_4}. In this case, the modified energy coincides exactly with the original energy. Moreover, by the same argument as in Theorem \ref{thm:MBP_L1}, the improved scheme also preserves the discrete MBP.
\end{remark}

\section{ Numerical experiments }\label{Sec:L1_Numer}
In this section, ample numerical tests are provided to illustrate the accuracy and structure-preserving properties of the $L$1-sESAV scheme for the tFAC model \eqref{Model:tAC}. Note that the auxiliary functional introduced in Section \ref{Sec:L1} plays a crucial role in the construction and analysis of the MBP-preserving $L$1-sESAV scheme. In the numerical implementation, we choose the following piecewise polynomial function that satisfies the
assumptions (\textbf{A1})--(\textbf{A3}) with $ K_{1} = 49/24 $ and $ K_{2} = 1 $:
\begin{equation}\label{f_cut}
	\begin{array}{l}
		V(z) :=
		\left\{
		\begin{aligned}
			& 0, & z \in (-\infty, 0] ,\\
			& - 8 z^3 + 7 z^2,  &  z \in (0,1/2),          \\
			& 2z - z^2,  &    z \in [1/2, 3/2],\\				
			& 8 z^3 - 41 z^2 + 68 z - 36,  &  z \in (3/2,2).       \\
			& 0, &  z \in [2,\infty).
		\end{aligned}
		\right.
	\end{array}
\end{equation}
In all the following tests, we always set the stabilization constant $ \kappa = \| f' \|_{C[-1,1]} = 2 $ for the double-well potential \eqref{poten:dw}, and $ \kappa = \| f' \|_{C[-\beta,\beta]} \approx 8.02 $ for the Flory--Huggins potential \eqref{poten:fh}, for which the parameters are chosen as $\theta =0.8$ and $ \theta_{c} = 1.6 $.

Moreover, to address the initial weak singularity, we adopt two types of mixed nonuniform temporal meshes composed of a graded mesh in the initial subinterval. Specifically, for given $ \hat{T} $ and $ \hat{N} $, we split the time interval $ [0,T] $ into two parts $ [ 0, \hat{T} ] $ and $ [\hat{T}, T] $ with total $N$ subintervals, and employ the graded mesh to subdivide $ [ 0, \hat{T} ] $ as follows
\begin{equation}\label{mesh:graded}
t_{k} = \hat{T} ( k/\hat{N} )^{\gamma} \  \  \text{and} \  \  \tau_{k} = t_{k} - t_{k-1}, \quad 1 \leq k \leq \hat{N},
\end{equation}
where $ \gamma \geq 1 $ is the mesh grading parameter. For the remainder interval $ [\hat{T}, T] $, the following two different types of temporal girds are considered
\begin{itemize}
	\item uniform mesh:
	\begin{equation}\label{mesh:uni}
	\tau_{k} = \frac{ T - \hat{T} }{ N - \hat{N} }, \quad \hat{N}+1 \leq k \leq N;
    \end{equation}
	
	\item adaptive mesh  based on the energy variation \cite{SISC_Liao_2021,SISC_2011_Qiao}:
	\begin{equation}\label{Alg1:adaptive}
			\tau_{k} =  \max \Bigl\{ \tau_{\min} , \f{\tau_{\max}}{ \sqrt{ 1 + \eta \vert \p_{\tau} E^{k-1} \vert^{2} } } \Bigr\},
	\end{equation}
	where $ \tau_{\max}, \tau_{\min} $ are predetermined maximum and minimum time steps, and $ \eta $ is a tunable parameter.
\end{itemize}

\begin{remark}
The energy-based adaptive time-stepping strategy \eqref{Alg1:adaptive} employed in this work adjusts  time steps based on variations in the discrete energy \cite{SISC_Liao_2021,SISC_2011_Qiao}, which significantly enhances the computational efficiency for long-term simulations as shown in Table \ref{tab1_Ex3}. In practice, the error-based adaptive time-stepping strategy \cite{JCP_Wodo_2011,JCP_Gomez_2011,JCP_Liao_2020} also offers an effective alternative for the efficient simulation of phase-field models. Note that both strategies inherently generate nonuniform temporal meshes. Since this paper focuses on the development and analysis of MBP-preserving numerical schemes for general nonuniform time grids, our theoretical results provide a unified foundation to ensure their reliable application. 

Furthermore, owing to the well-structured profile of the phase variable---taking distinct (close-to-constant) values in the bulk phases and exhibiting large gradients only within the thin diffuse interface \cite{JSC_Feng_2005}---the development and analysis of numerical schemes on nonuniform spatial meshes are also highly meaningful, as they furnish theoretical support for space adaptivity \cite{JCP_Provatas_1999,JCP_Mackenzie_2002,JCP_Feng_2006}. In addition, combining the proposed numerical method with $h$-refinement and local time-stepping (LTS) techniques---where each grid point is allowed to adopt its own locally time steps \cite{SISC_Fernando_2022,SISC_Mehlin_2015}---may further enhance computational efficiency and accuracy. Nevertheless, preserving key physical properties, such as the discrete energy stability and the discrete MBP, within nonuniform spatial meshes and LTS frameworks remains highly challenging and requires further investigation in future.
\end{remark}

In the following numerical tests, the error is measured in the spatio-temporal maximum-norm that $ e(N) := \max_{1\leq n \leq N} \| \phi(t_{n}) - \phi^{n} \|_{\infty} $.
\subsection{Temporal convergence}\label{subsec:L1_TimeCon}
Consider the exterior-forced tFAC model
\begin{equation}\label{Model:tAC_source}
		{}_0^C D^{\alpha}_t \phi =  \mm ( \varepsilon^2 \Delta \phi + f( \phi ) ) + g( \mathbf{x}, t ), \quad (\mathbf{x}, t)\in (0, 2\pi)^2 \times ( 0, 0.5 ], 
\end{equation}
with parameters $ \mm = 0.01 $ and $ \varepsilon = 1 $. Here, the linear part $ g( \mathbf{x}, t ) $ is chosen such that the exact solution is given by $ \phi( \mathbf{x}, t ) = \omega_{1+\iota} (t) \sin(x) \sin(y) $ with parameter $ \iota \in (0,1) $. It is clear that the solution exhibits a weak singularity as indicated in \eqref{Assu:weak}.

For a given mesh grading parameter $ \gamma \geq 1 $, we set $ \hat{T} = \min\{ 1/\gamma, T \} $ and $ \hat{N} = \lceil \frac{ 1 }{ T + 1 + \gamma^{-1} }  \rceil $ in \eqref{mesh:graded}. 
The uniform temporal mesh \eqref{mesh:uni} is employed in the remainder interval $ [\hat{T}, T] $. 
The spatial domain is partitioned into $ M = 400 $ uniform elements along each spatial direction. We test the proposed $L$1-sESAV scheme for solving \eqref{Model:tAC_source} with different nonlinear potentials and a fixed regularity parameter $ \iota = 0.4 $. For fractional orders $ \alpha = 0.4 $ and $ 0.8 $, different mesh grading parameters $ \gamma = 2,3,4 $ are considered, respectively. The corresponding numerical results are presented in Tables \ref{Ex1:tab1}--\ref{Ex1:tab2}, from which we can observe that the expected temporal accuracy $ \mathcal{O}( N^{- \min\{ 2-\alpha, \gamma \iota \} } ) $ is achieved.
\begin{table}[!ht]
\vspace{-10pt}
	\caption{Time accuracy of the $L$1-sESAV scheme with $ \alpha = 0.4 $ and $ \iota = 0.4$ \label{Ex1:tab1}}%
	{\footnotesize
     \begin{tabular*}{\columnwidth}{@{\extracolsep\fill}cccccccc@{\extracolsep\fill}}
			\toprule
			\multicolumn{1}{c}{\multirow{2}{*}{potential}} & \multicolumn{1}{c}{\multirow{2}{*}{$ N $}} & \multicolumn{2}{c}{$\gamma=2$} & \multicolumn{2}{c}{$\gamma=3$} & \multicolumn{2}{c}{$\gamma=4$ $(=(2-\alpha)/\iota)$} \\
			\cmidrule{3-4} 
			\cmidrule{5-6}
			\cmidrule{7-8}
			&      & $ e(N) $ & Order & $ e(N) $ & Order & $ e(N) $ & Order \\
			\midrule
			 &  20     & $5.06 \times 10^{-2}$   &  ---    &  $1.68 \times 10^{-2}$ & ---    & $1.25 \times 10^{-2}$   &  ---    \\
			\multicolumn{1}{c}{\multirow{2}{*}{double-well}} &  40     & $2.91 \times 10^{-2}$   &  0.80   &  $7.64 \times 10^{-3}$ & 1.13   & $4.35 \times 10^{-3}$   &  1.52   \\
			  &  80     & $1.67 \times 10^{-2}$   &  0.80   &  $3.40 \times 10^{-3}$ & 1.17   & $1.50 \times 10^{-3}$   &  1.54   \\
			&  160    & $9.59 \times 10^{-3}$   &  0.80   &  $1.48 \times 10^{-3}$ & 1.20   & $5.07 \times 10^{-4}$   &  1.56   \\
			\midrule
			&  20     & $5.06 \times 10^{-2}$   &  ---    &  $1.61 \times 10^{-2}$ & ---       & $1.23 \times 10^{-2}$   &  ---    \\
			\multicolumn{1}{c}{\multirow{2}{*}{Flory--Huggins}} 
			&  40     & $2.91 \times 10^{-2}$   &  0.80   &  $7.50 \times 10^{-3}$ & 1.10      & $4.12 \times 10^{-3}$   &  1.58   \\			 
			&  80     & $1.67 \times 10^{-2}$   &  0.80   &  $3.37 \times 10^{-3}$ & 1.15      & $1.43 \times 10^{-3}$   &  1.52   \\
			&  160    & $9.59 \times 10^{-3}$   &  0.80   &  $1.48 \times 10^{-3}$ & 1.19      & $4.91 \times 10^{-4}$   &  1.54   \\
			\midrule
			\multicolumn{2}{c}{$ \min\{ 2-\alpha, \gamma \iota \} $ }   
			       &       & 0.80       &    & 1.20 &   & 1.60 \\
			\bottomrule
	\end{tabular*}}
\end{table}
\begin{table}[!ht]
\vspace{-12pt}
	\caption{Time accuracy of the $L$1-sESAV scheme with $ \alpha = 0.8 $ and $ \iota = 0.4 $ \label{Ex1:tab2}}%
	{\footnotesize\begin{tabular*}{\columnwidth}{@{\extracolsep\fill}cccccccc@{\extracolsep\fill}}
			\toprule
			\multicolumn{1}{c}{\multirow{2}{*}{potential}} & \multicolumn{1}{c}{\multirow{2}{*}{$ N $}} & \multicolumn{2}{c}{$\gamma=2$} & \multicolumn{2}{c}{$\gamma=3$  $(=(2-\alpha)/\iota)$} & \multicolumn{2}{c}{$\gamma=4$} \\
			\cmidrule{3-4} 
			\cmidrule{5-6}
			\cmidrule{7-8}
			&      & $ e(N) $ & Order & $ e(N) $ & Order & $ e(N) $ & Order \\
			\midrule
			&  20     & $1.38 \times 10^{-1}$   &  ---   &  $1.00 \times 10^{-1}$ & ---    & $1.05 \times 10^{-1}$   &  ---    \\
			\multicolumn{1}{c}{\multirow{2}{*}{double-well}} 
            &  40     & $7.91 \times 10^{-2}$   &  0.80  &  $4.59 \times 10^{-2}$ & 1.13   & $4.72 \times 10^{-2}$   &  1.15  \\
			  &  80     & $4.55 \times 10^{-2}$   &  0.80  &  $2.09 \times 10^{-2}$ & 1.14   & $2.09 \times 10^{-2}$   &  1.18   \\
			&  160    & $2.61 \times 10^{-2}$   &  0.80  &  $9.28 \times 10^{-3}$ & 1.17   & $9.15 \times 10^{-3}$   &  1.19 \\
			\midrule
			&  20     & $1.37 \times 10^{-1}$   &  ---    &  $1.00 \times 10^{-1}$ & ---       & $1.04 \times 10^{-1}$   &  ---    \\
			\multicolumn{1}{c}{\multirow{2}{*}{Flory--Huggins}} 
			&  40     & $7.91 \times 10^{-2}$   &  0.80   &  $4.58 \times 10^{-2}$ & 1.13      & $4.71 \times 10^{-2}$   &  1.15   \\			 
			&  80     & $4.55 \times 10^{-2}$   &  0.80   &  $2.08 \times 10^{-2}$ & 1.14      & $2.08 \times 10^{-2}$   &  1.18   \\
			&  160    & $2.61 \times 10^{-2}$   &  0.80   &  $9.27 \times 10^{-3}$ & 1.17      & $9.14 \times 10^{-3}$   &  1.19   \\
			\midrule
			\multicolumn{2}{c}{$ \min\{ 2-\alpha, \gamma \iota \} $ }   
			&       & 0.80       &    & 1.20 &   & 1.20 \\
			\bottomrule
	\end{tabular*}}
    \vspace{-5pt}
\end{table}

\subsection{Unconditional preservation of MBP and energy stability }\label{Ex4_2}
In this subsection, we numerically verify the discrete MBP and energy stability of the proposed $L$1-sESAV scheme by simulating the spinodal decomposition of a homogeneous mixture into two coexisting phases governed by the tFAC model \eqref{Model:tAC} with $ \alpha = 0.5 $, $\mm = 1$, $ \varepsilon = 0.01 $ in $ \Omega =  (0,1)^2 $. For the forthcoming simulation, we set $ M = 128 $, and the initial phase field is generated using uniformly distributed random data in the range $ [-0.8, 0.8] $, which is highly oscillating, see Figure \ref{figEx2_0}. 

\begin{figure}[!ht]
	\centering  
	\includegraphics[width=0.45\textwidth]{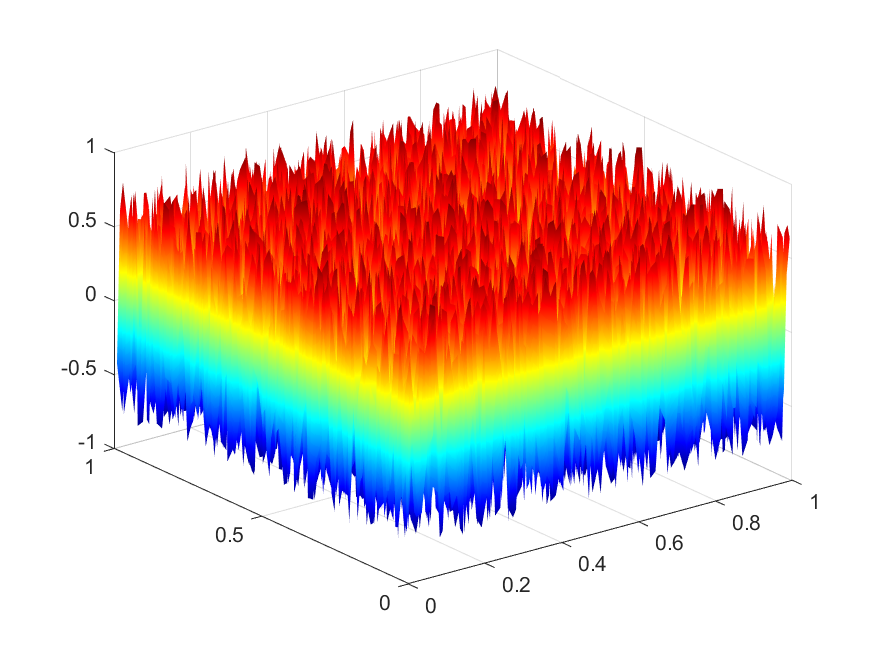}  
	\setlength{\abovecaptionskip}{0.0cm} 
	\setlength{\belowcaptionskip}{0.0cm}
	\caption{The initial highly-oscillated phase field}	\label{figEx2_0}
    \vspace{-10pt}
\end{figure}

Although a modified energy is introduced as an approximation of the original one to facilitate the proof of energy stability, we primarily focus on the behavior of the original (discrete) energy, which reflects the real physical mechanism of the dynamic process. In Figure \ref{figEx2_1}, we show the time evolutions of the maximum norms and original energies of the numerical solutions computed using the proposed $L$1-sESAV scheme with various time steps $ \tau = 2, 0.2, 0.02, 0.002 $ for the case of double-well potential. It shows that the $L$1-sESAV scheme perfectly preserves both the MBP and the energy stability, even for large time steps (e.g., $\tau = 2$ and $0.2$), which indicates the unconditional preservation of the discrete energy stability and MBP as proved in Theorems \ref{thm:energy_L1}--\ref{thm:MBP_L1}. Next, we also consider the simulation of Flory--Huggins potential case and corresponding numerical results are displayed in Figure \ref{figEx2_2}. Similar conclusions can also be observed. 
\begin{figure}[!htbp]
	\vspace{-12pt}
	\centering
	\subfigure[maximum norm of $\phi$]
	{
		\includegraphics[width=0.45\textwidth]{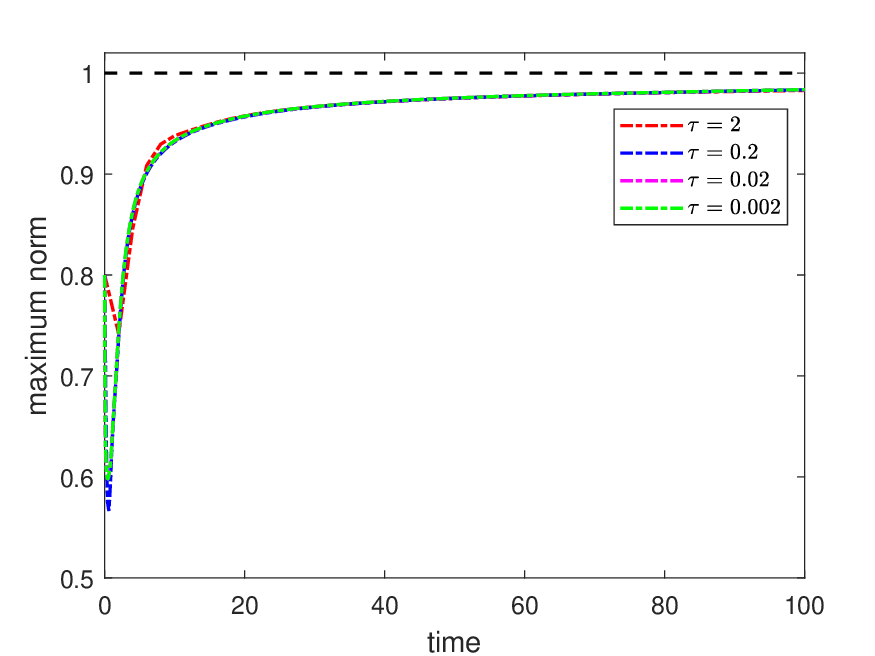}		\label{figEx2_1a}
	}%
	\subfigure[energy]
	{
		\includegraphics[width=0.45\textwidth]{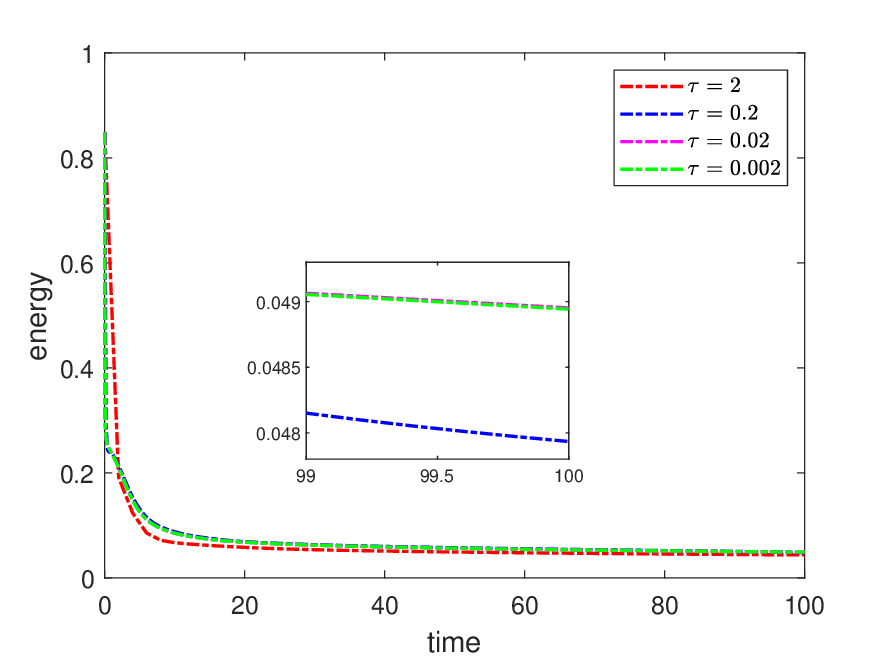}	\label{figEx2_1b}
	}%
	\setlength{\abovecaptionskip}{0.0cm} 
	\setlength{\belowcaptionskip}{0.0cm}
	\caption{Time evolutions of the maximum norm and energy of simulated solutions computed by the $L$1-sESAV scheme with different time steps: the double-well potential}	
	\label{figEx2_1}
\end{figure}
\begin{figure}[!htbp]
	\vspace{-12pt}
	\centering
	\subfigure[maximum norm of $\phi$]
	{
		\includegraphics[width=0.45\textwidth]{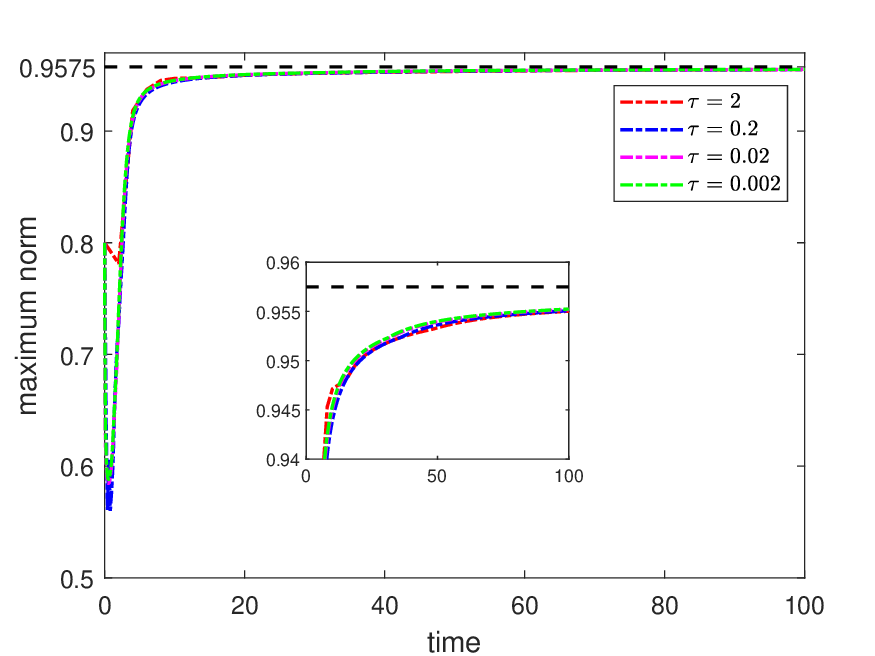}		\label{figEx2_2a}
	}%
	\subfigure[energy]
	{
		\includegraphics[width=0.45\textwidth]{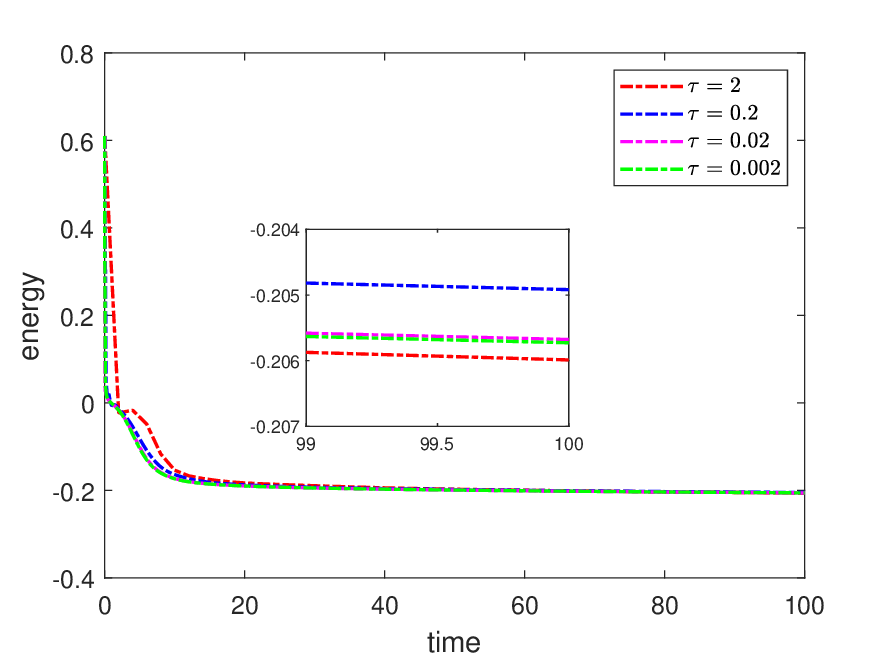}	\label{figEx2_2b}
	}%
	\setlength{\abovecaptionskip}{0.0cm} 
	\setlength{\belowcaptionskip}{0.0cm}
	\caption{Time evolutions of the maximum norm and energy of simulated solutions computed by the $L$1-sESAV scheme with different time steps: the Flory--Huggins potential}	
	\label{figEx2_2}
    \vspace{-10pt}
\end{figure}

Furthermore, we compare the modified and original energies yielded by the $L1$-sESAV scheme and $L1$-sESAV-EOP scheme, respectively, with $ \tau = 0.002 $ for the double-well potential. As seen from Figure \ref{figEx2_3}, a discrepancy exists between the modified energy and the original energy for the original $L1$-sESAV scheme, however, by applying the EOP technique introduced in Remark~\ref{rem:eop}, this discrepancy is substantially reduced for the $L1$-sESAV-EOP scheme. Moreover, Figure~\ref{figEx2_4} demonstrates that the $L1$-sESAV-EOP scheme continues to preserve both the discrete energy stability and MBP.
\begin{figure}[!htbp]
	\vspace{-12pt}
	\centering
	\subfigure[$L1$-sESAV scheme]
	{
		\includegraphics[width=0.45\textwidth]{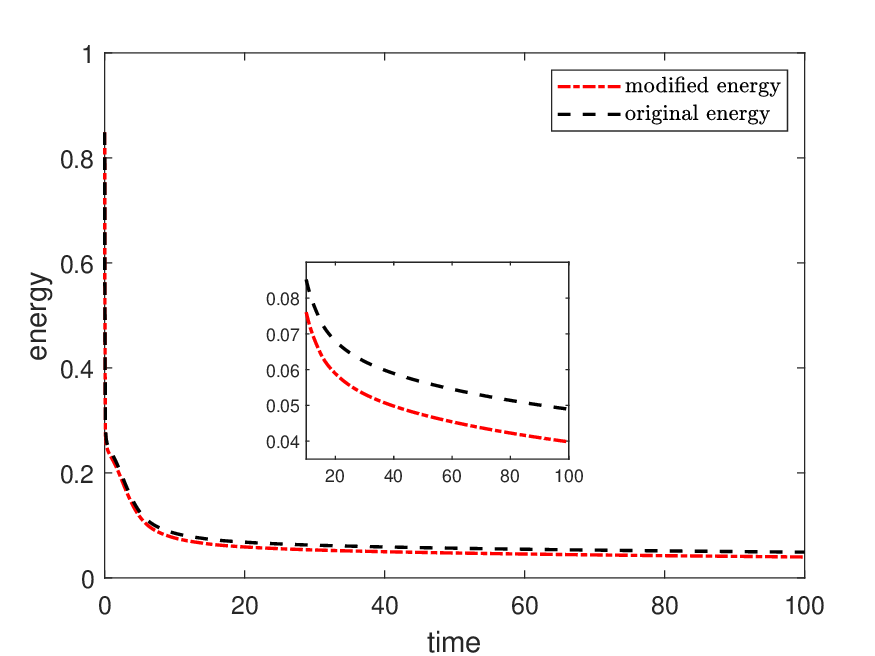}		\label{figEx2_3a}
	}%
	\subfigure[$L1$-sESAV-EOP scheme]
	{
		\includegraphics[width=0.45\textwidth]{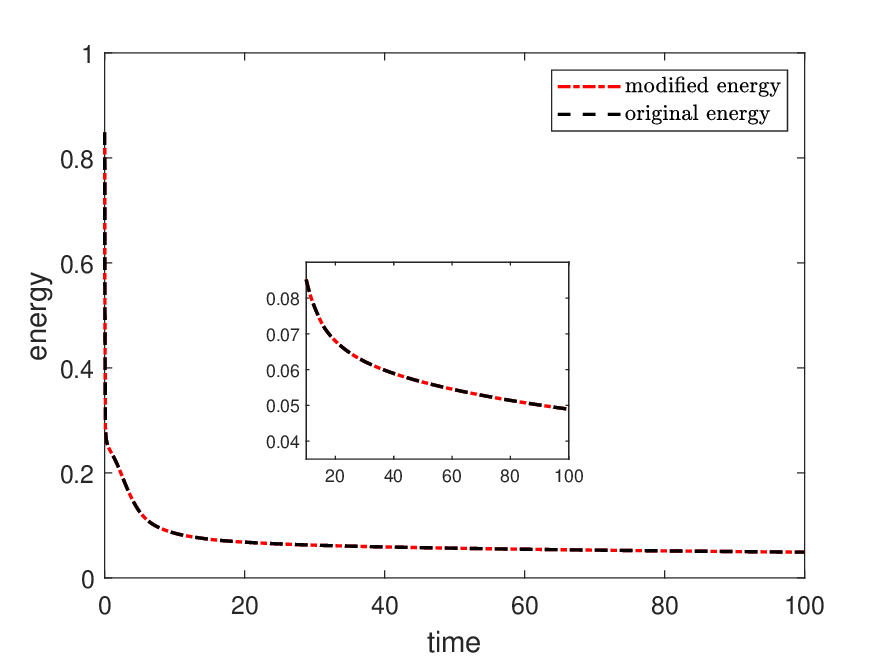}	\label{figEx2_3b}
	}%
	\setlength{\abovecaptionskip}{0.0cm} 
	\setlength{\belowcaptionskip}{0.0cm}
	\caption{Comparisons of the modified and original energies generated by the $L$1-sESAV and $L$1-sESAV-EOP schemes with $\tau = 0.002$: the double-well potential}	
	\label{figEx2_3}
        \vspace{-10pt}
\end{figure}
\begin{figure}[!htbp]
	\vspace{-12pt}
	\centering
	\subfigure[maximum norm of $\phi$]
	{
		\includegraphics[width=0.45\textwidth]{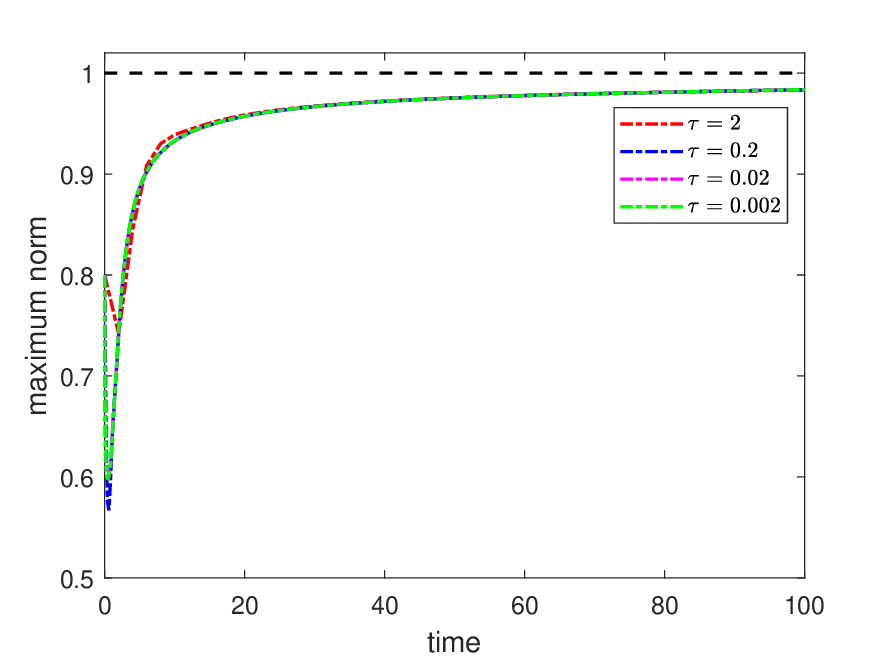}		\label{figEx2_4a}
	}%
	\subfigure[energy]
	{
		\includegraphics[width=0.45\textwidth]{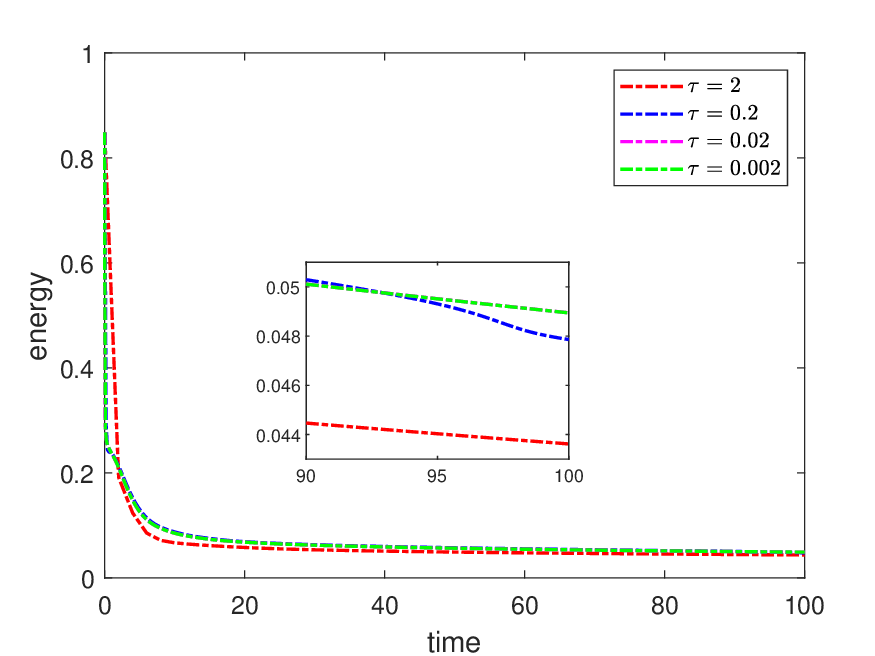}	\label{figEx2_4b}
	}%
	\setlength{\abovecaptionskip}{0.0cm} 
	\setlength{\belowcaptionskip}{0.0cm}
	\caption{Time evolutions of the maximum norm and energy of simulated solutions computed by the $L$1-sESAV-EOP scheme with different time steps: the double-well potential}	\label{figEx2_4}
    \vspace{-10pt}
\end{figure}

\subsection{Application of adaptive time-stepping strategy}
It is well known that the dynamics process governed by the tFAC model involves multiple time scales, and typically requires a long time to reach the steady state. Hence, an adaptive time-stepping strategy (e.g., \eqref{Alg1:adaptive}) is an effective means for improving the computational efficiency without sacrificing accuracy. In our computations, the graded mesh with $\hat{T} = 0.5$, $ \hat{N} = 30 $ and $ \gamma = (2-\alpha)/\alpha $ is employed in the initial interval $ [0,\hat{T}] $, and the remainder $ [\hat{T}, T] $ is partitioned using three different types of temporal meshes, i.e., uniform large time step $\tau=2$ and small time step $\tau=0.02$, and adaptive time step based on \eqref{Alg1:adaptive}, with the parameters $ \tau_{\max} = 2, \tau_{\min} = 0.02, \eta = 10^{6}$. The other settings are the same as those used in Example \ref{Ex4_2}.

Figure \ref{figEx3_1} presents a comparison of the solution snapshots obtained using three types of temporal meshes up to $ T = 500 $ for the model with double-well potential. It is observed that using a large uniform time step $ \tau = 2 $ results in an inaccurate solution $ \phi $, while the adaptive time-stepping strategy yields coarsening patterns consistent with those obtained from the small uniform time step $ \tau = 0.02 $. Besides, the time evolutions of the maximum-norm of $\phi$ and the energy are depicted in Figures \ref{figEx3_2a}--\ref{figEx3_2b}, from which we can observe that (i) the proposed $L$1-sESAV scheme is energy-stable and MBP-preserving; (ii) the evolution curves of the maximum-norm of $ \phi $ and energy obtained by the adaptive strategy match very well with those generated by the small uniform time step. Moreover, Table \ref{tab1_Ex3} indicates the high efficiency of the proposed scheme combined with the adaptive strategy \eqref{Alg1:adaptive}. For instance, it takes more than one and a half hours for the implementation with uniform time step $ \tau = 0.02 $, whereas the adaptive method requires only about 12 seconds! In fact, the total number of adaptive time steps is only 577, which is significantly fewer than in the small uniform time step case. This improvement is mainly due to the frequent use of large time steps in the adaptive method. As shown in Figure \ref{figEx3_2c}, only a few small time steps are employed when the energy decays rapidly. For the Flory--Huggins potential, similar conclusions can also be drawn from Figures \ref{figEx3_3}--\ref{figEx3_4} and Table \ref{tab1_Ex3}.
\begin{figure}[!htbp]
\vspace{-10pt}
	\centering
	\subfigure[$t=5$]
	{
		\begin{minipage}[t]{0.24\linewidth}
			\centering
			\includegraphics[width=1.5in]{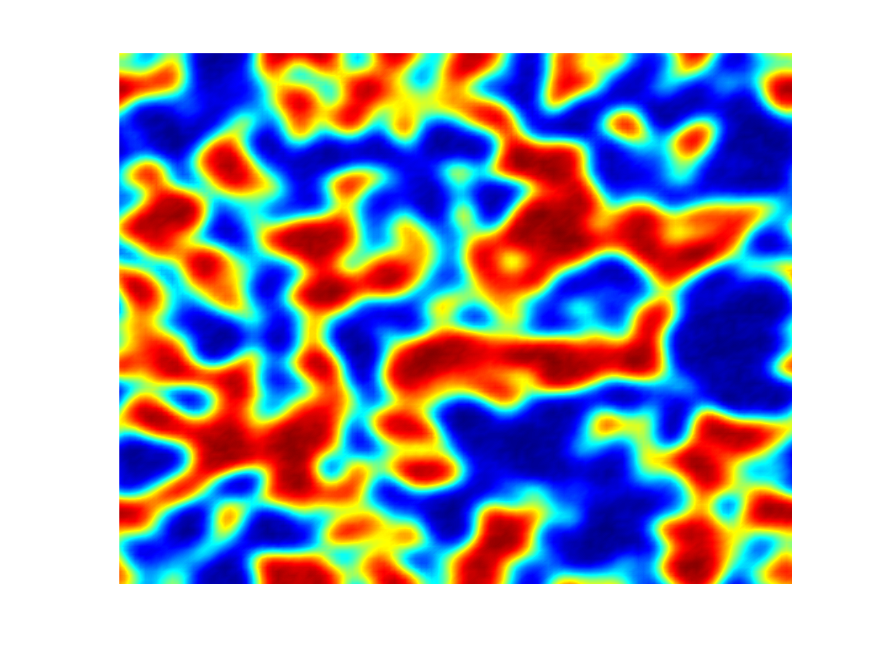}
			\includegraphics[width=1.5in]{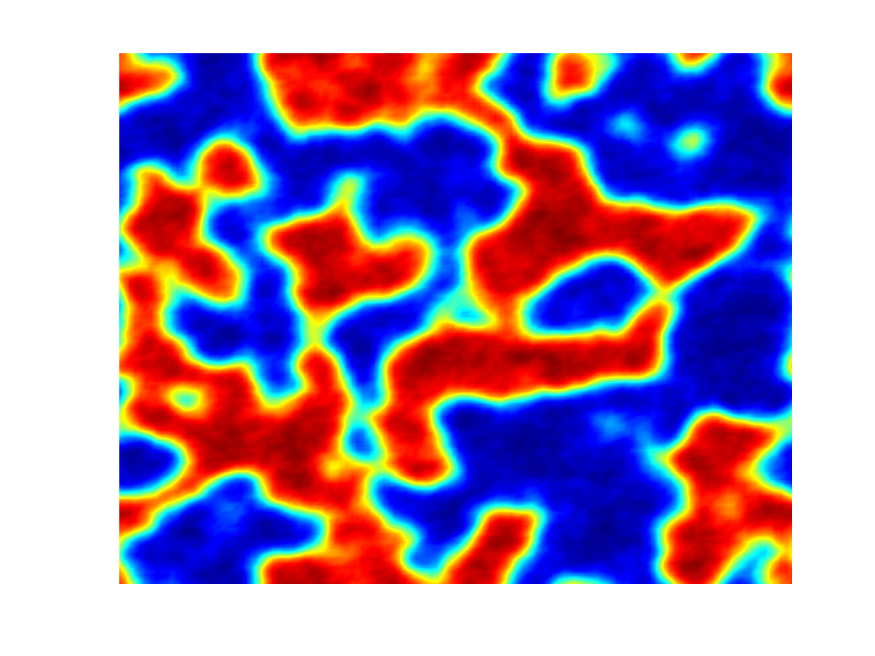}
			\includegraphics[width=1.5in]{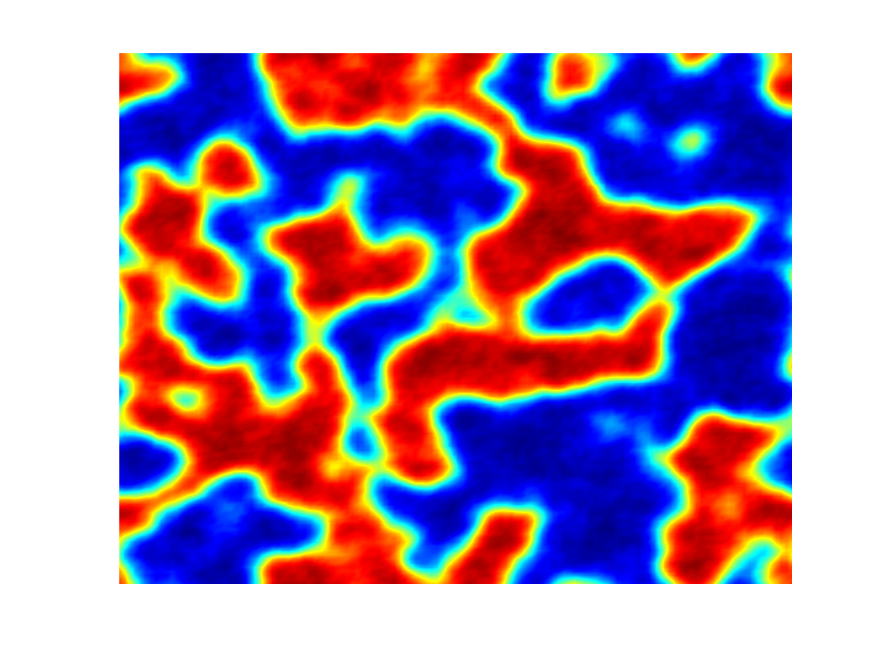}
		\end{minipage}%
	}%
	\subfigure[$t=20$]
	{
		\begin{minipage}[t]{0.24\linewidth}
			\centering
			\includegraphics[width=1.5in]{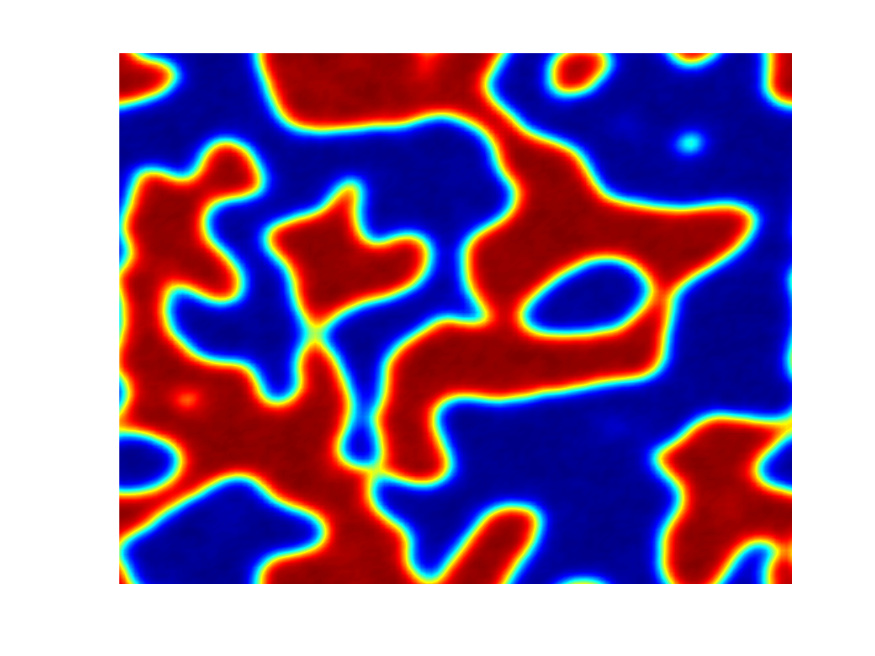}
			\includegraphics[width=1.5in]{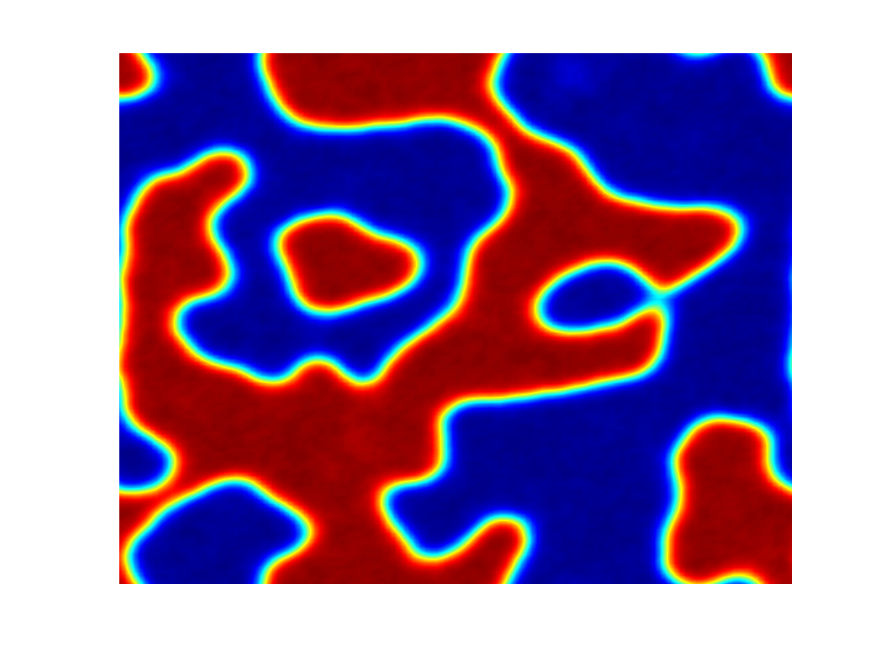}
			\includegraphics[width=1.5in]{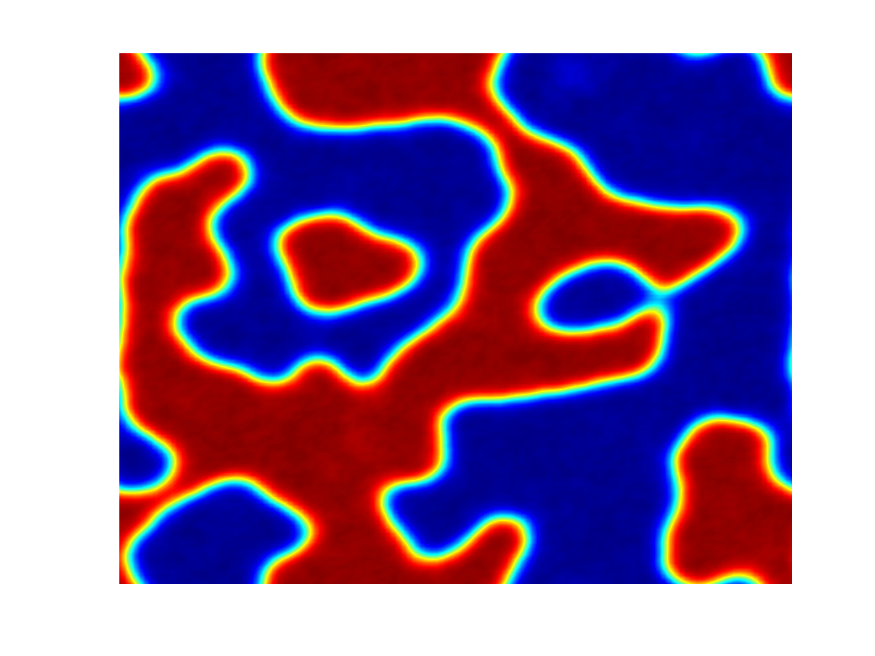}
		\end{minipage}%
	}%
	\subfigure[$t=100$]{
		\begin{minipage}[t]{0.24\linewidth}
			\centering
			\includegraphics[width=1.5in]{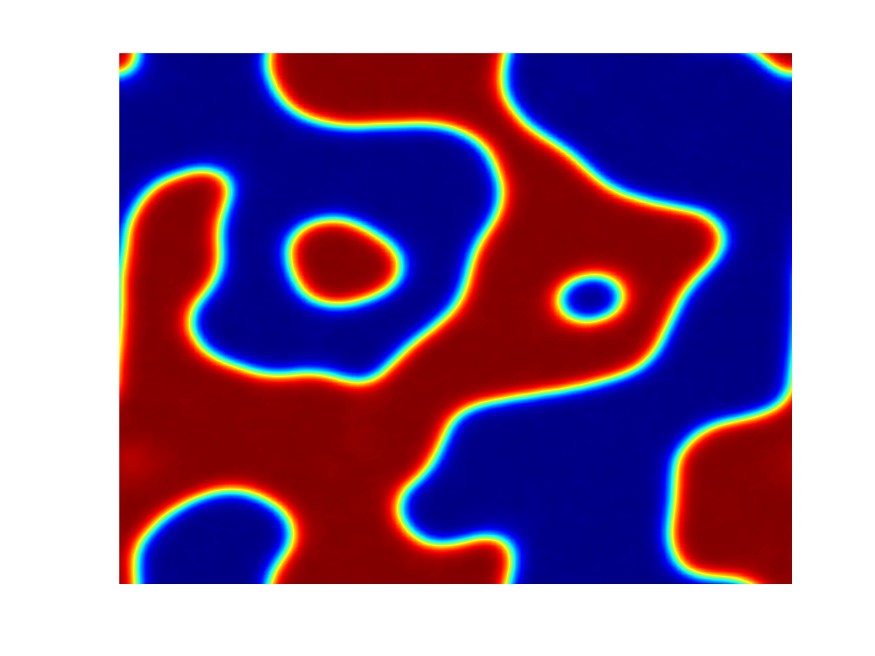}
			\includegraphics[width=1.5in]{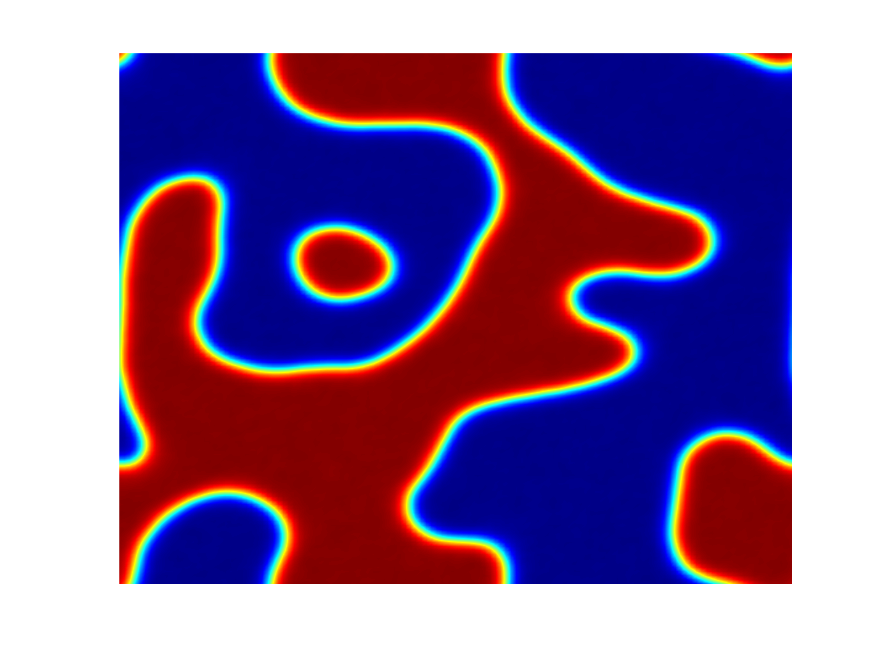}
			\includegraphics[width=1.5in]{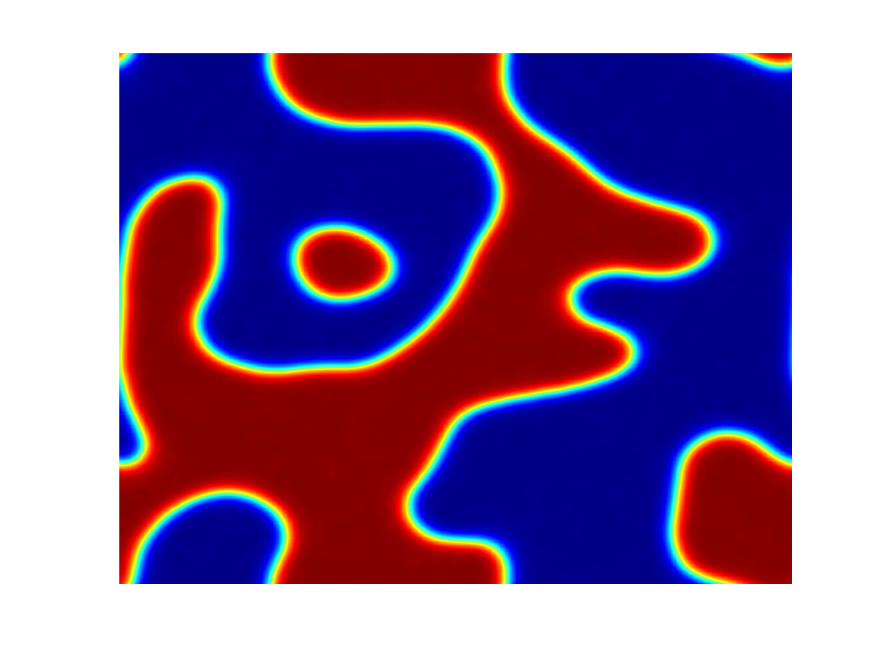}
		\end{minipage}%
	}%
	\subfigure[$t=500$]
	{
		\begin{minipage}[t]{0.24\linewidth}
			\centering
			\includegraphics[width=1.5in]{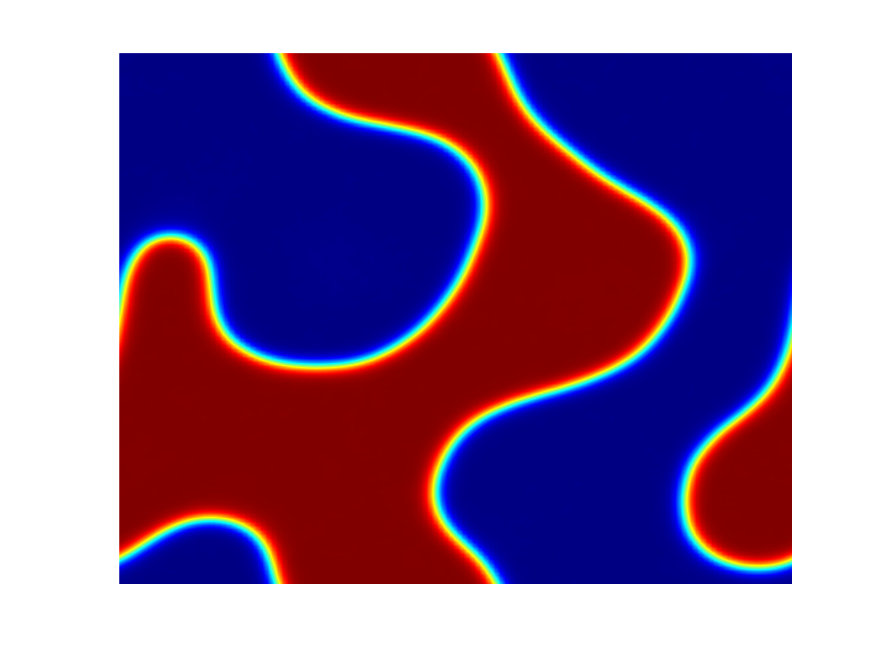}
			\includegraphics[width=1.5in]{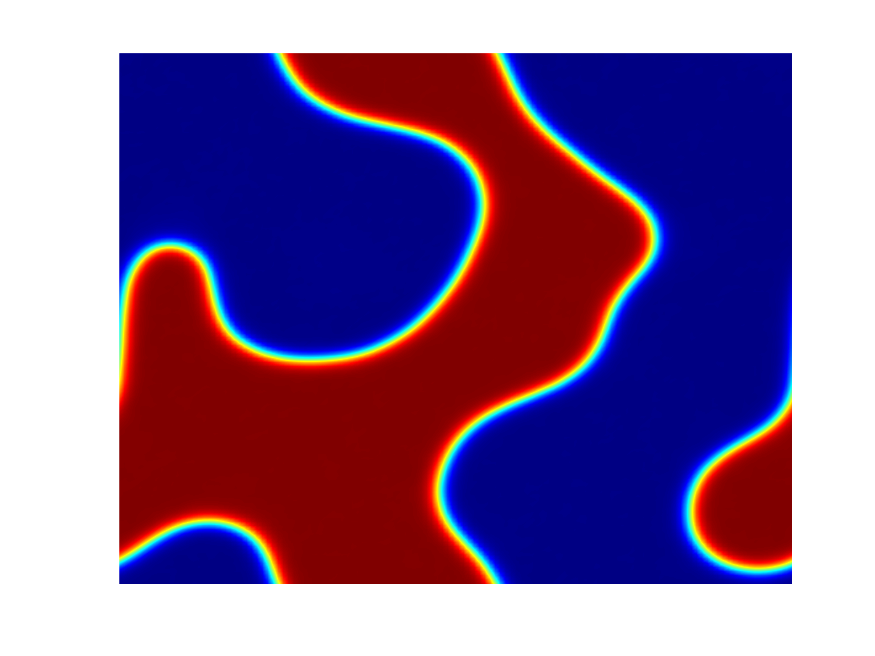}
			\includegraphics[width=1.5in]{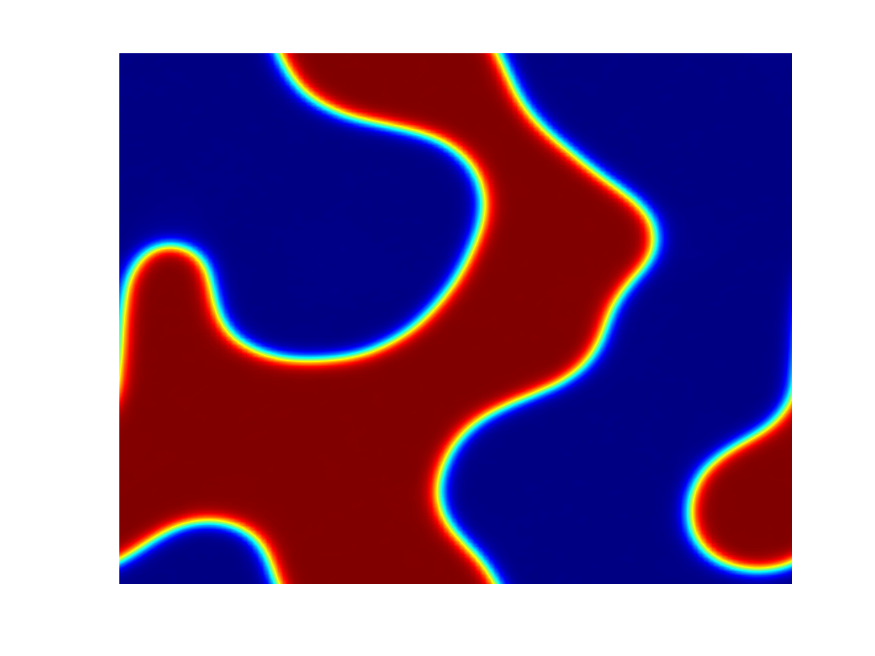}
		\end{minipage}%
	}%
	\setlength{\abovecaptionskip}{0.0cm} 
	\setlength{\belowcaptionskip}{0.0cm}
	\caption{The dynamic snapshots of the numerical solution $\phi$ obtained by the $L$1-sESAV scheme with the uniform  (top, $\tau = 2$; bottom, $\tau = 0.02$) and adaptive (middle) time steps: the double-well potential}	\label{figEx3_1}
        \vspace{-10pt}
\end{figure}
\begin{figure}[!htbp]
	\vspace{-12pt}
	\centering
	\subfigure[maximum-norm of $\phi$]
	{
		\includegraphics[width=0.32\textwidth]{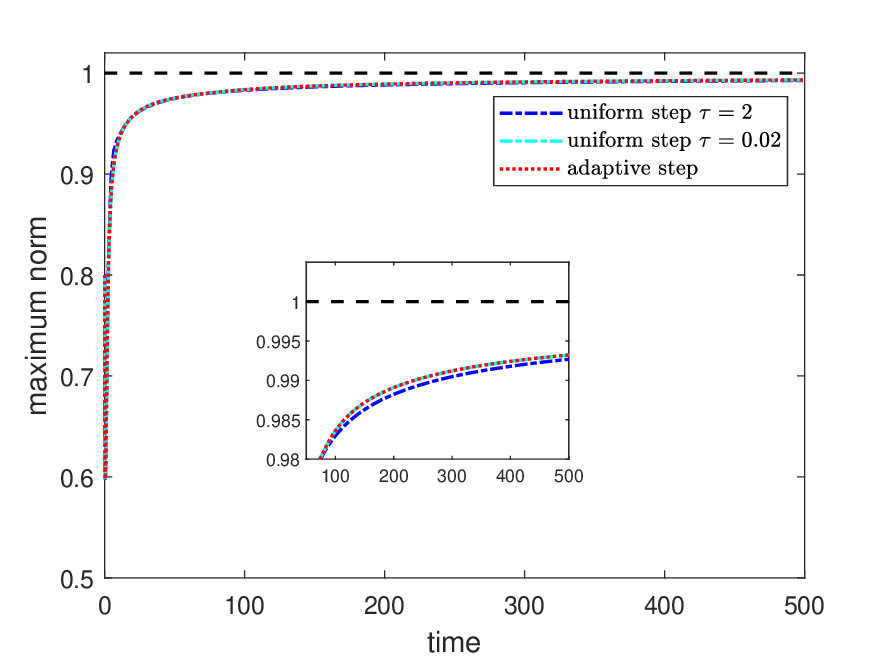}	\label{figEx3_2a}
	}%
	\subfigure[energy]
	{
		\includegraphics[width=0.32\textwidth]{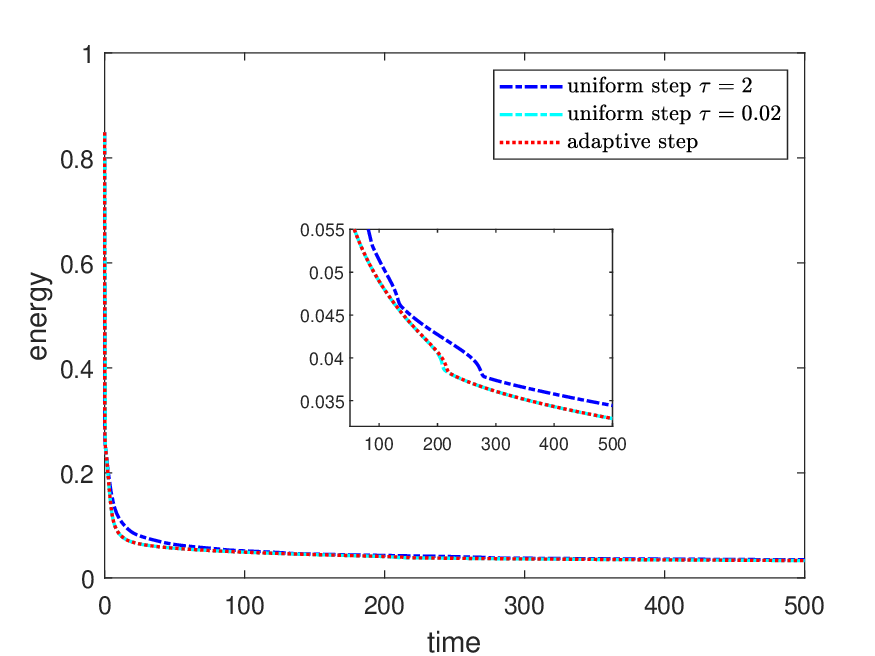}	\label{figEx3_2b}
	}%
	\subfigure[time steps]
	{
		\includegraphics[width=0.32\textwidth]{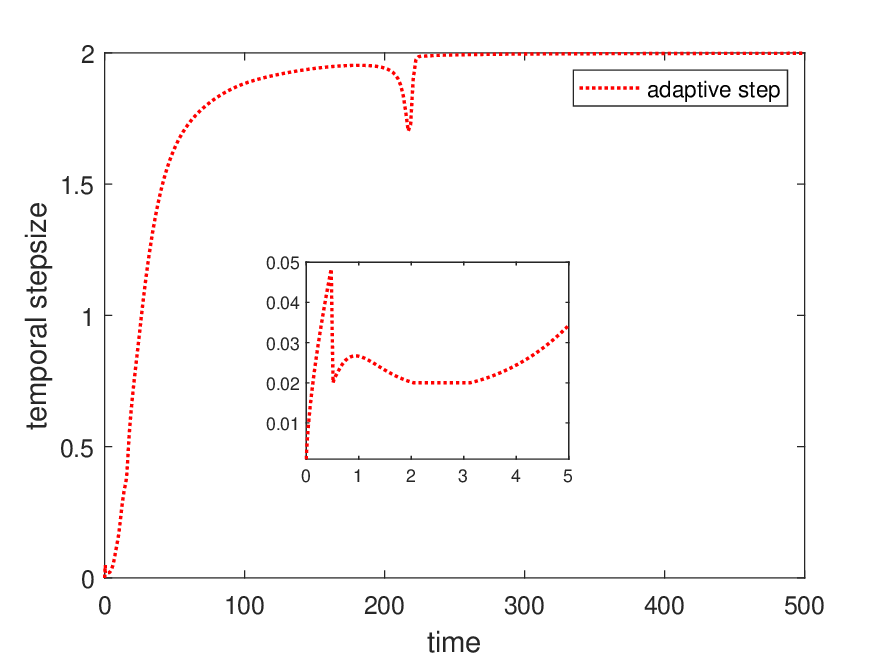}	\label{figEx3_2c}
	}%
	\setlength{\abovecaptionskip}{0.0cm} 
	\setlength{\belowcaptionskip}{0.0cm}
	\caption{Time evolutions of the maximum norm (left), energy (middle), and time steps (right) for the $L$1-sESAV scheme: the double-well potential}	\label{figEx3_2}
    \vspace{-10pt}
\end{figure}
\begin{table}[!htbp]
	\vspace{-12pt}
	\caption{CPU times and the total number of time steps yielded by the $L$1-sESAV scheme}\label{tab1_Ex3}
	{\footnotesize\begin{tabular*}{\columnwidth}{@{\extracolsep\fill}ccccc@{\extracolsep\fill}}
			\toprule
			 & \multicolumn{2}{c}{double-well potential } & \multicolumn{2}{c}{Flory--Huggins potential  }\\
			\midrule
			time-stepping strategy  & $N$ &  CPU times & $N$ & CPU times \\
			\midrule
			uniform step  $ \tau = 2$      &   281   &  4.81 s   &  281   &  4.82 s          \\ 
			adaptive step    &   577   &  11.17 s   &  598   &  12.05 s           \\ 
			uniform step  $ \tau = 0.02 $     &   25007  &  1 h 39 m 24 s  &  25007  &  1 h 40 m 24 s  \\ 
			\bottomrule
	\end{tabular*}}
   	\vspace{-10pt}
\end{table}
\begin{figure}[!thbp]
	\centering
	\subfigure[$t=5$]
	{
		\begin{minipage}[t]{0.24\linewidth}
			\centering
			\includegraphics[width=1.5in]{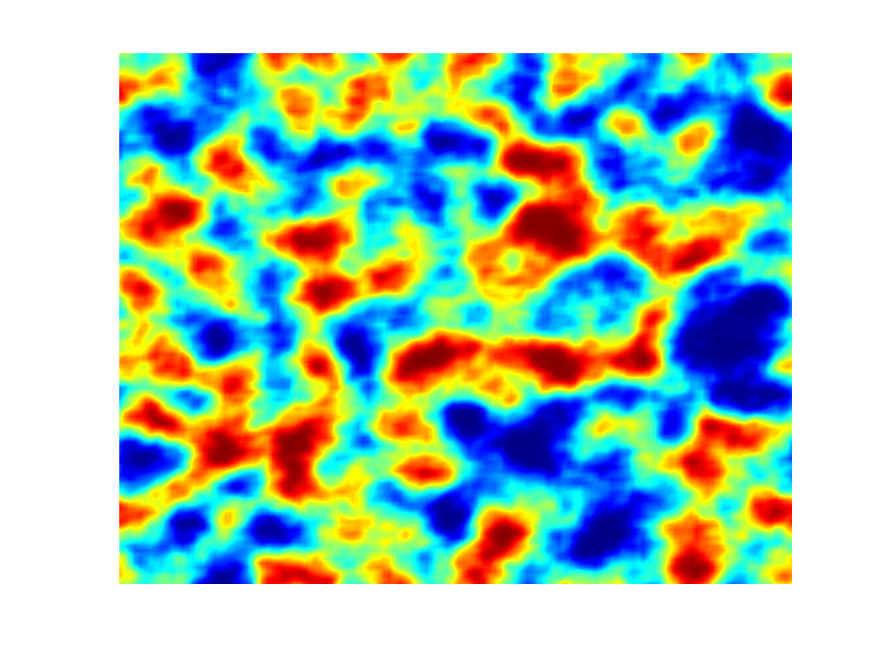}
			\includegraphics[width=1.5in]{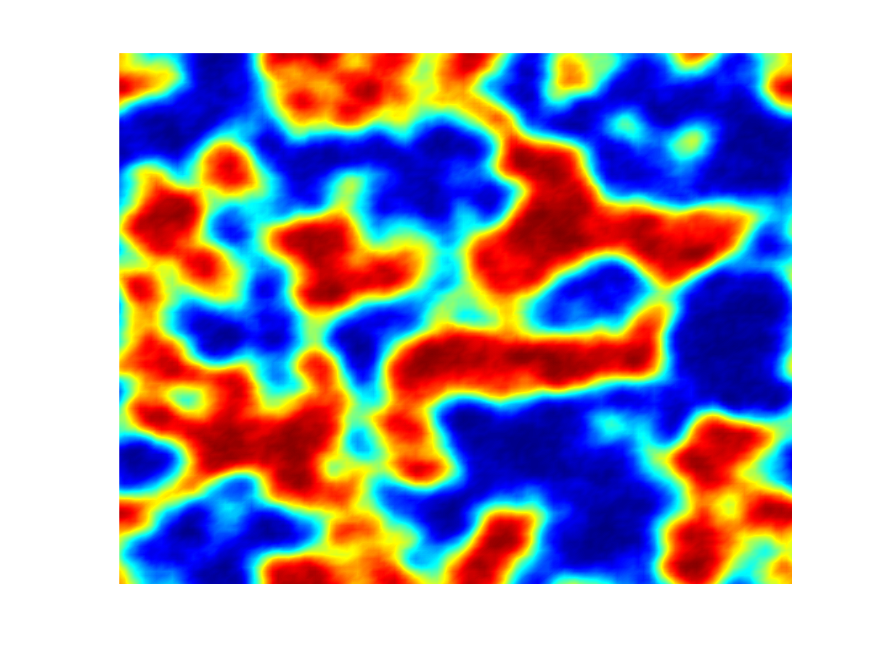}
			\includegraphics[width=1.5in]{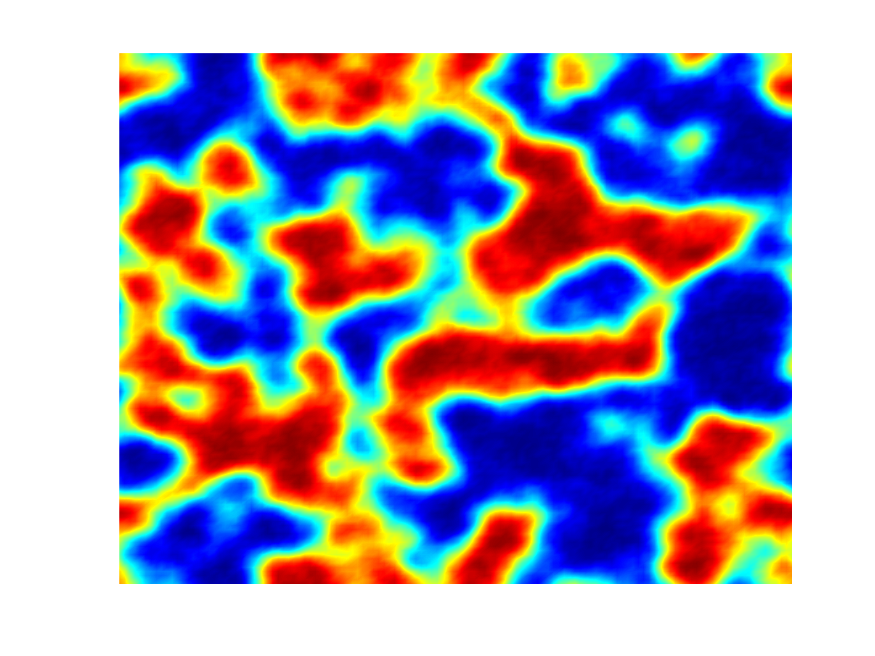}
		\end{minipage}%
	}%
	\subfigure[$t=20$]
	{
		\begin{minipage}[t]{0.24\linewidth}
			\centering
			\includegraphics[width=1.5in]{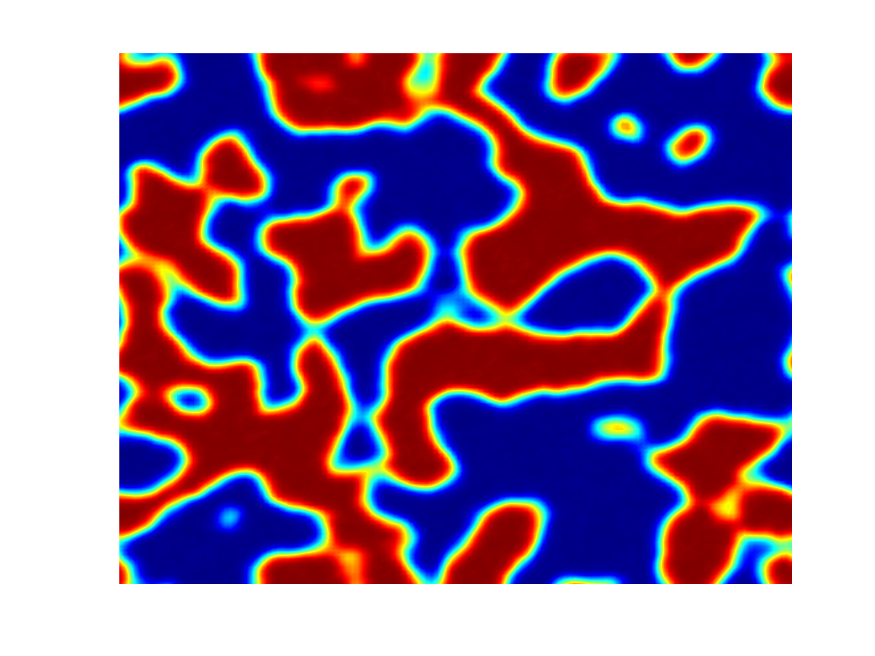}
			\includegraphics[width=1.5in]{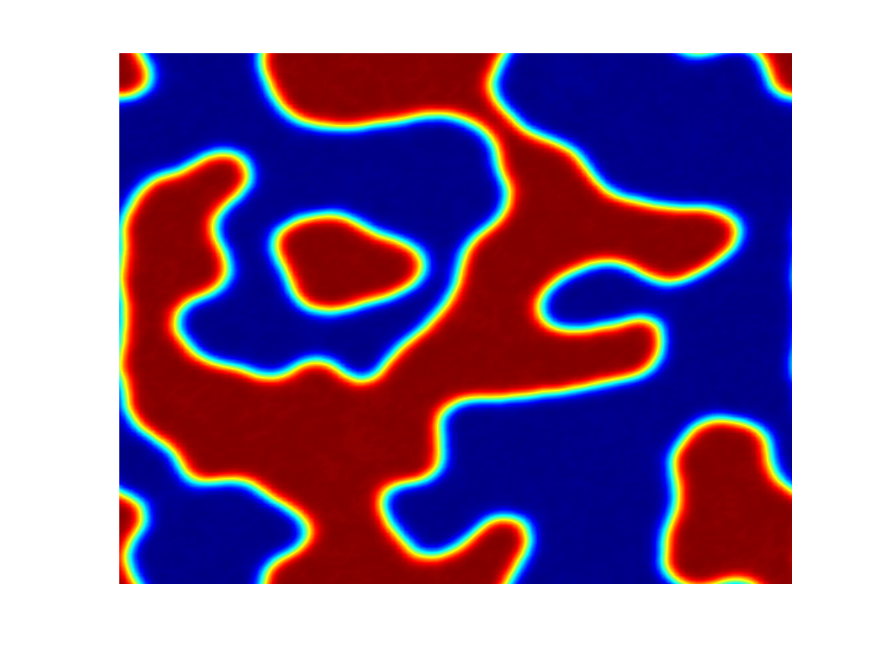}
			\includegraphics[width=1.5in]{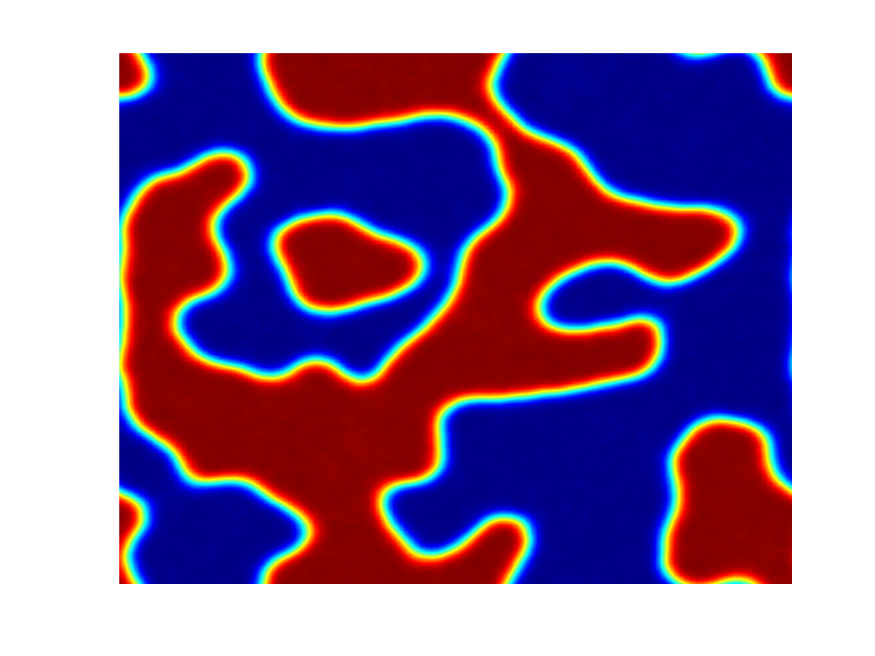}
		\end{minipage}%
	}%
	\subfigure[$t=100$]{
		\begin{minipage}[t]{0.24\linewidth}
			\centering
			\includegraphics[width=1.5in]{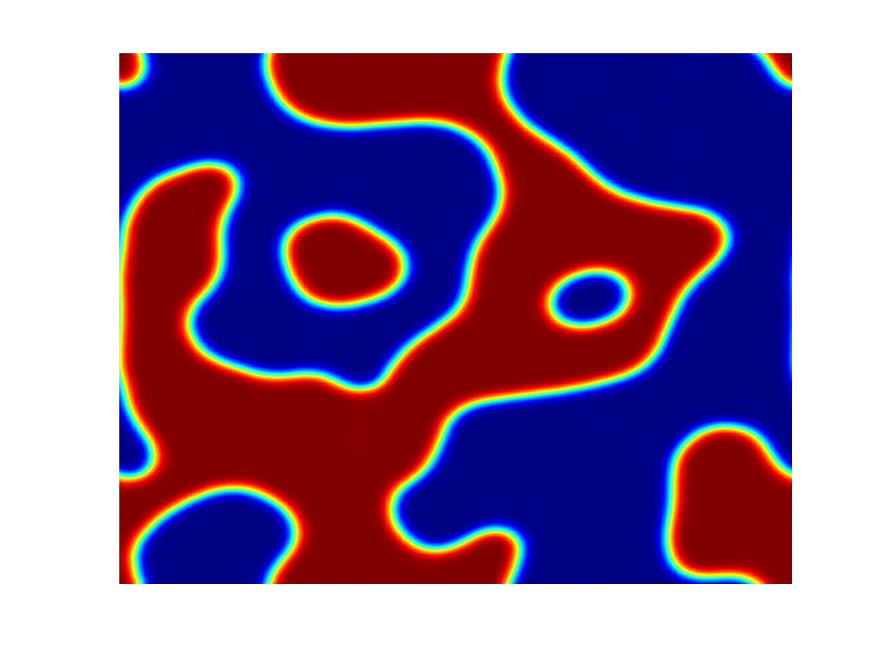}
			\includegraphics[width=1.5in]{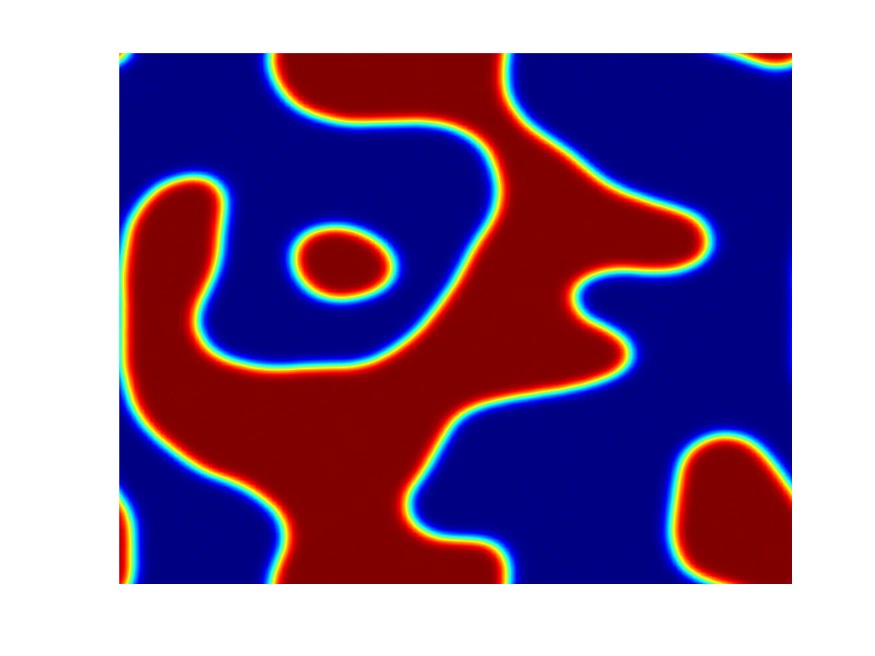}
			\includegraphics[width=1.5in]{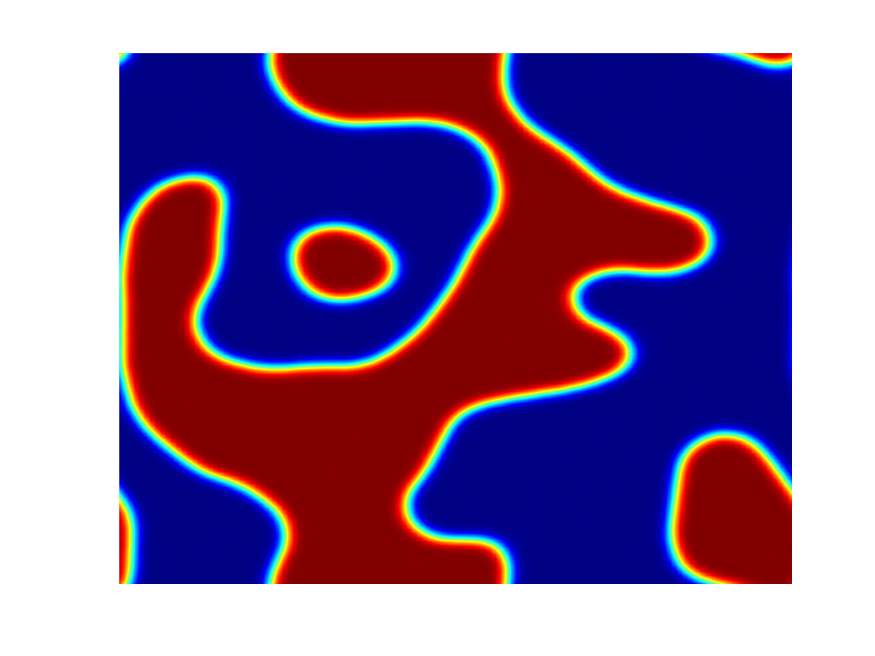}
		\end{minipage}%
	}%
	\subfigure[$t=500$]
	{
		\begin{minipage}[t]{0.24\linewidth}
			\centering
			\includegraphics[width=1.5in]{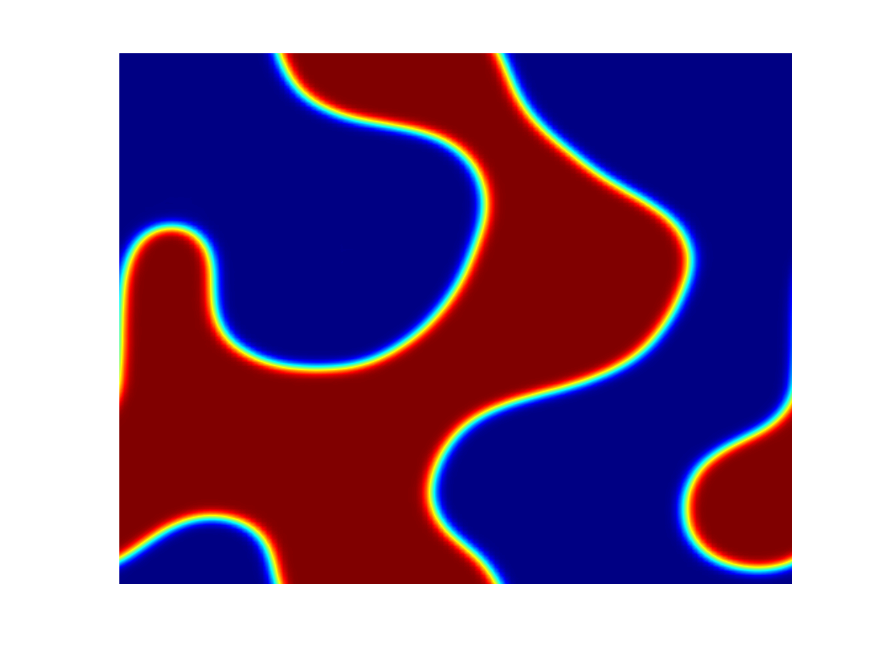}
			\includegraphics[width=1.5in]{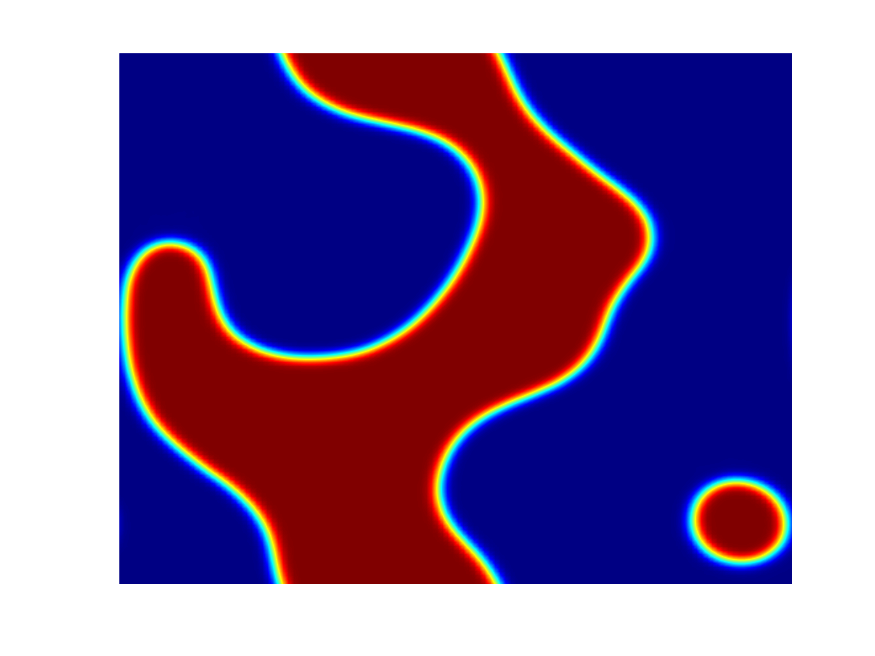}
			\includegraphics[width=1.5in]{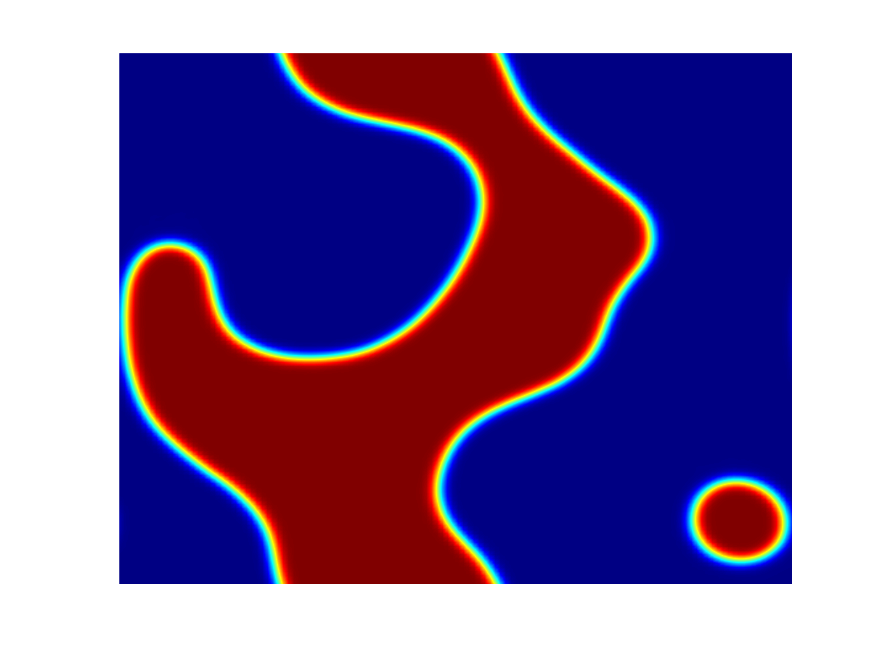}
		\end{minipage}%
	}%
	\setlength{\abovecaptionskip}{0.0cm} 
	\setlength{\belowcaptionskip}{0.0cm}
	\caption{The dynamic snapshots of the numerical solution $\phi$ obtained by the $L$1-sESAV scheme with the uniform  (top, $\tau = 2$; bottom, $\tau = 0.02$) and adaptive (middle) time steps: the Flory--Huggins potential}	\label{figEx3_3}
\end{figure}
\begin{figure}[!th]
	\vspace{-12pt}
	\centering
	\subfigure[maximum norm of $\phi$]
	{
		\includegraphics[width=0.32\textwidth]{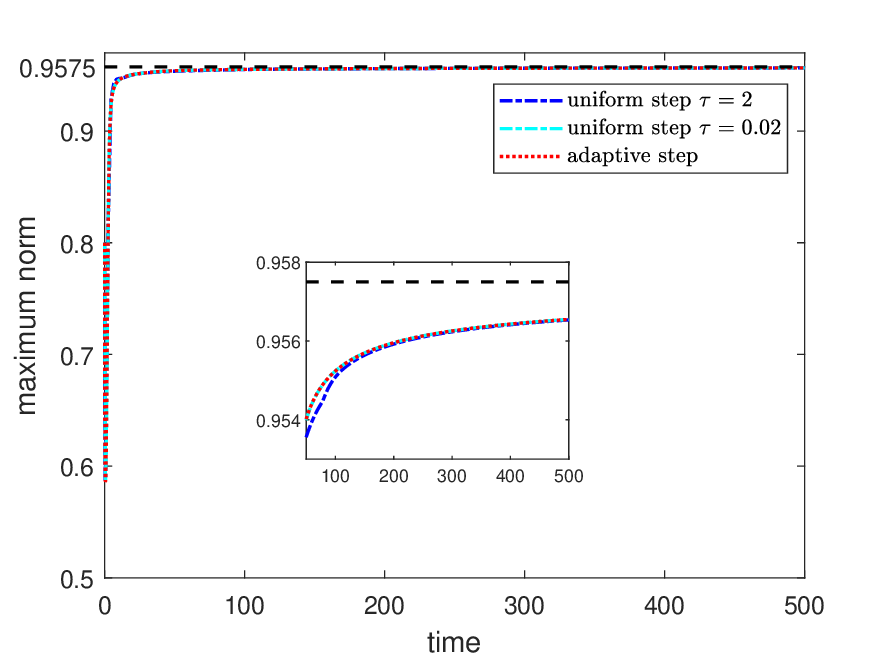}
		\label{figEx3_4a}
	}%
	\subfigure[energy]
	{
		\includegraphics[width=0.32\textwidth]{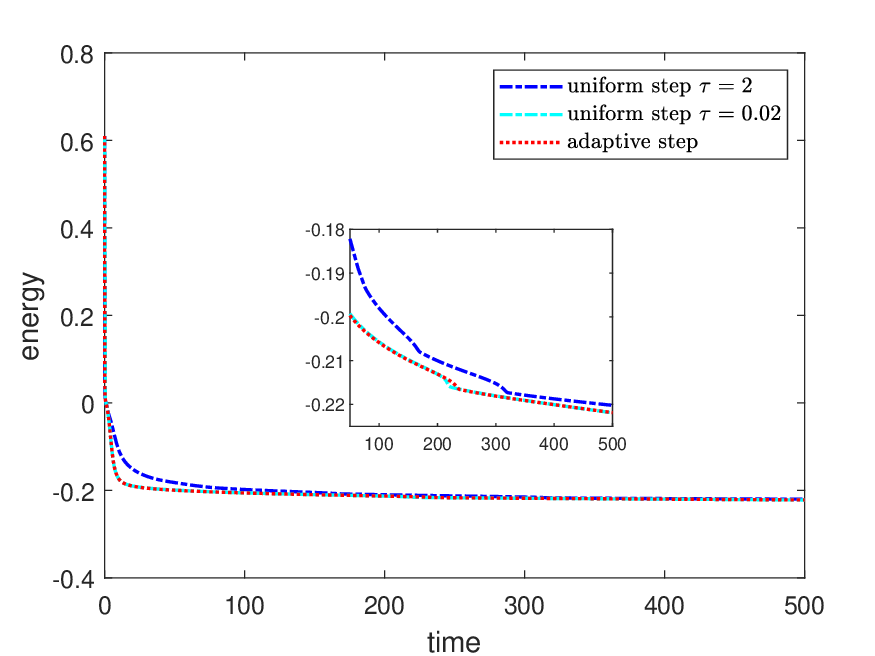}
		\label{figEx3_4b}
	}%
	\subfigure[time steps]
	{
		\includegraphics[width=0.32\textwidth]{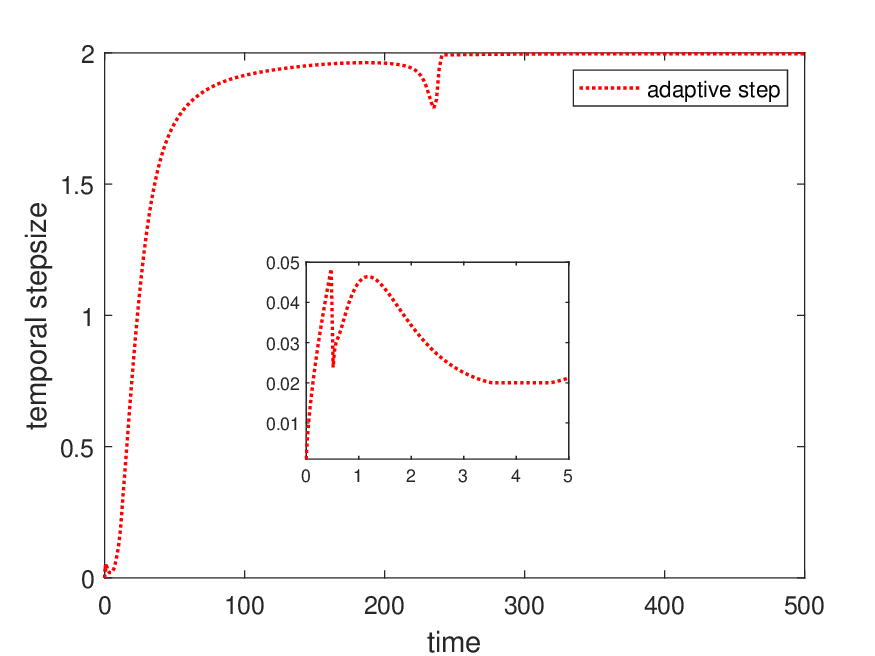}
		\label{figEx3_4c}
	}%
	\setlength{\abovecaptionskip}{0.0cm} 
	\setlength{\belowcaptionskip}{0.0cm}
	\caption{Time evolutions of the maximum norm (left), energy (middle), and time steps (right) for the $L$1-sESAV scheme: the Flory--Huggins potential}\label{figEx3_4}
  	\vspace{-5pt}
\end{figure}

\subsection{ Dynamical behaviour governed by the tFAC model}
In order to discover the influence of fractional order $\alpha$ on the dynamical behaviour, we consider the 2D tFAC model \eqref{Model:tAC} with $ \mm = 1 $, $ \varepsilon = 0.01 $, and double-well potential. The initial state is chosen as
$$
	\phi_{\text{init}}(x,y) = \tanh \f{ 1.5 + 1.2\cos( 6\vartheta ) - 2\pi r }{ \sqrt{2 \lambda } },~
	\vartheta = \arctan \f{ y - 0.5 }{ x - 0.5 }, ~ r = \sqrt{ \left( x - 0.5 \right)^2 + \left( y - 0.5 \right)^2 }.
$$
The computational domain $ \Omega = (0,1)^{2} $ is uniformly divided into 128 elements along each spatial direction.

In the adaptive time-stepping strategy \eqref{Alg1:adaptive}, we set $ \tau_{\max} = 2 $, $ \tau_{\min} = 0.02 $, and the parameter $ \eta = 10^{7} $. The time evolutions of the phase-field function with different fractional orders $ \alpha = 0.9, 0.7 $ and $0.4$ at different time instants are shown in Figure \ref{figEx4_1}. It is clearly seen that the dynamical behaviour is significantly affected by the fractional order: the bigger the $ \alpha $ is, the faster the dynamics evolutes. Thus, smaller $ \alpha $ requires a longer time to reach equilibrium. This observation is further confirmed by the energy curves shown in Figure \ref{figEx4_2}. In fact, for $ \alpha = 0.9 $, the dynamical process reaches the steady state around $ t = 600 $ and the corresponding energy is nearly zero; while for $\alpha=0.7$ and $ 0.4 $, even at $ t = 700 $, it has not reached the steady state.
\begin{figure}[!htbp]
  \vspace{-10pt}
	\centering
	\subfigure[$t=10$]
	{
		\begin{minipage}[t]{0.18\linewidth}
			\centering
			\includegraphics[width=1.2in]{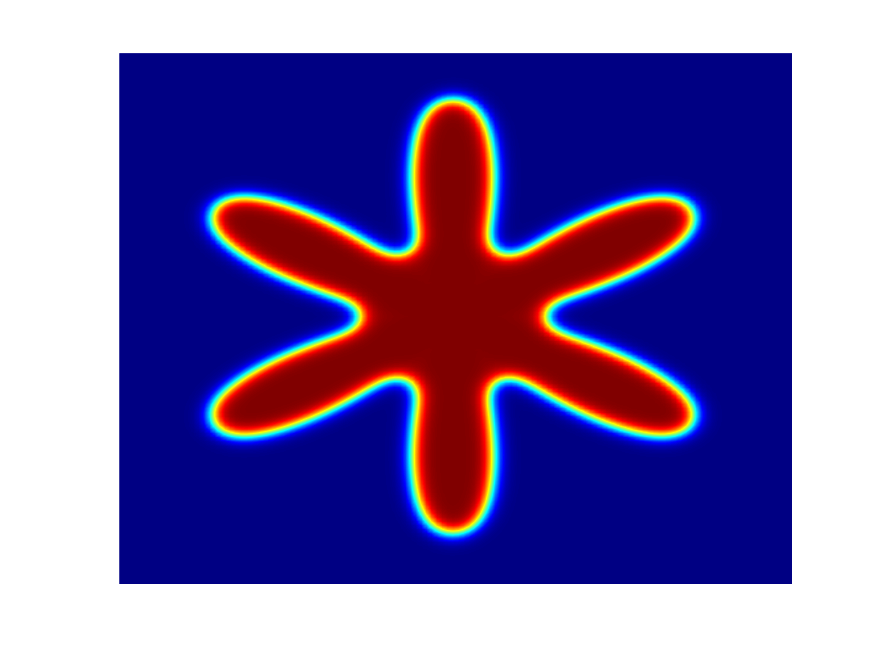}
			\includegraphics[width=1.2in]{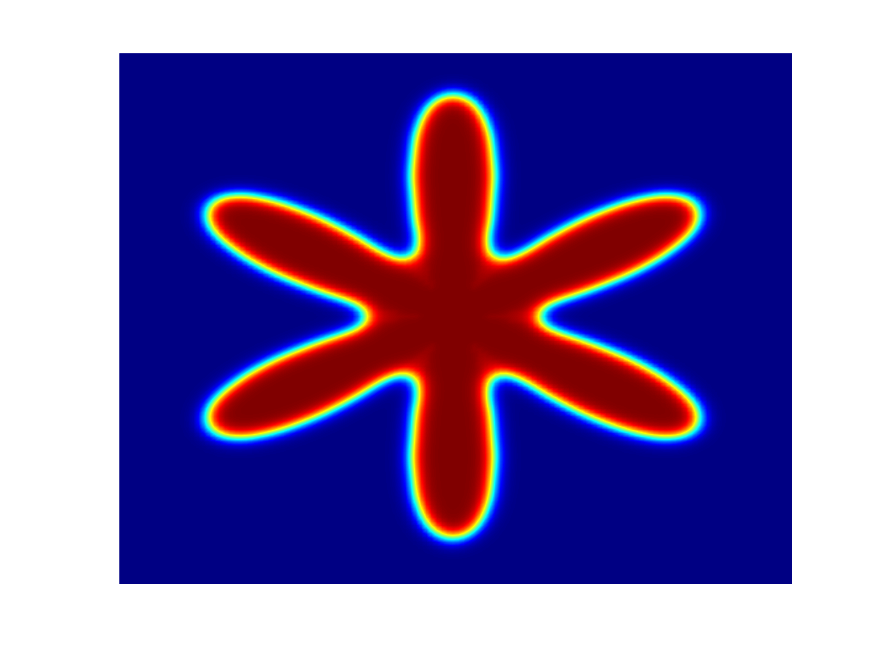}
			\includegraphics[width=1.2in]{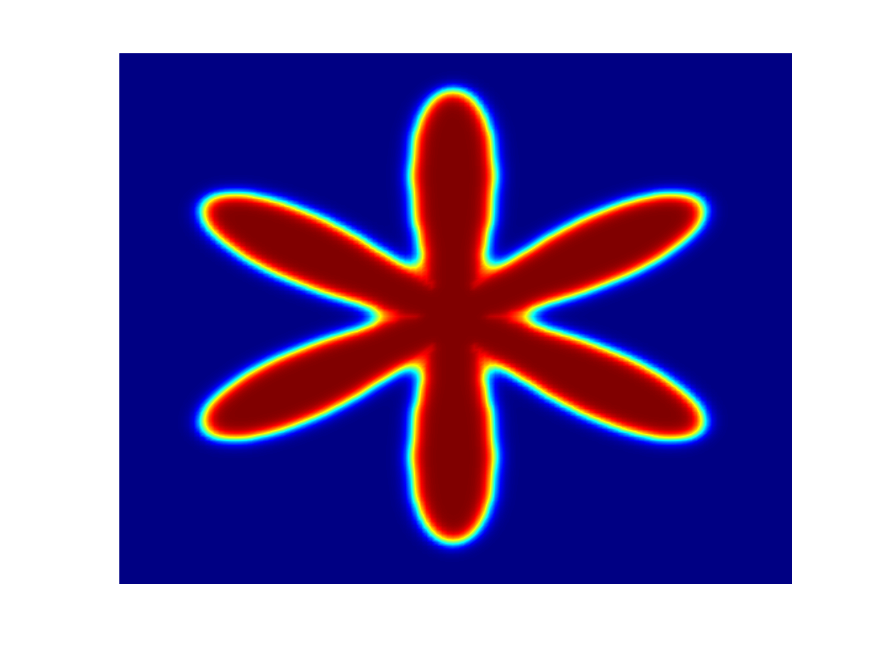}
		\end{minipage}%
	}%
	\subfigure[$t=50$]
	{
		\begin{minipage}[t]{0.18\linewidth}
			\centering
			\includegraphics[width=1.2in]{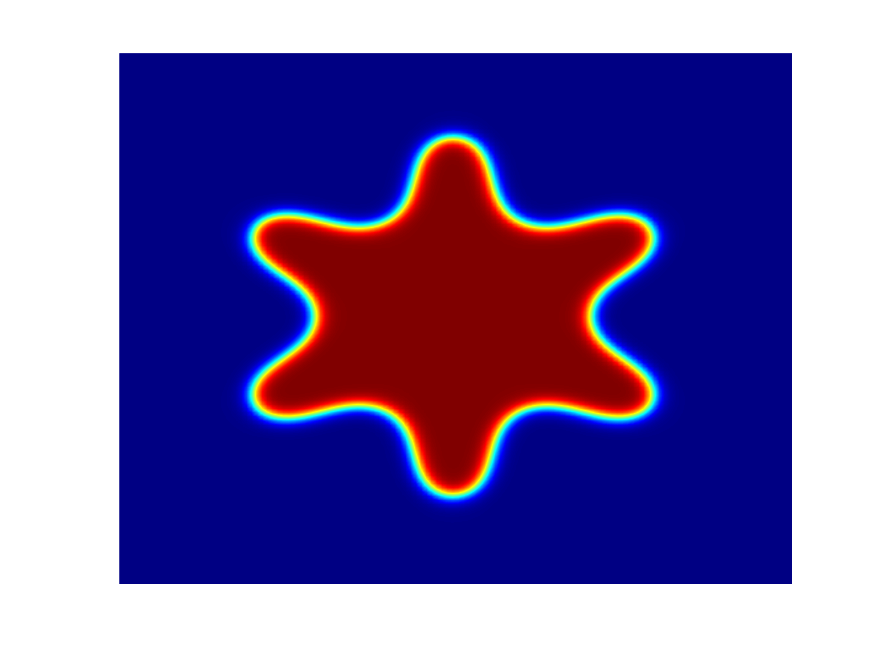}
			\includegraphics[width=1.2in]{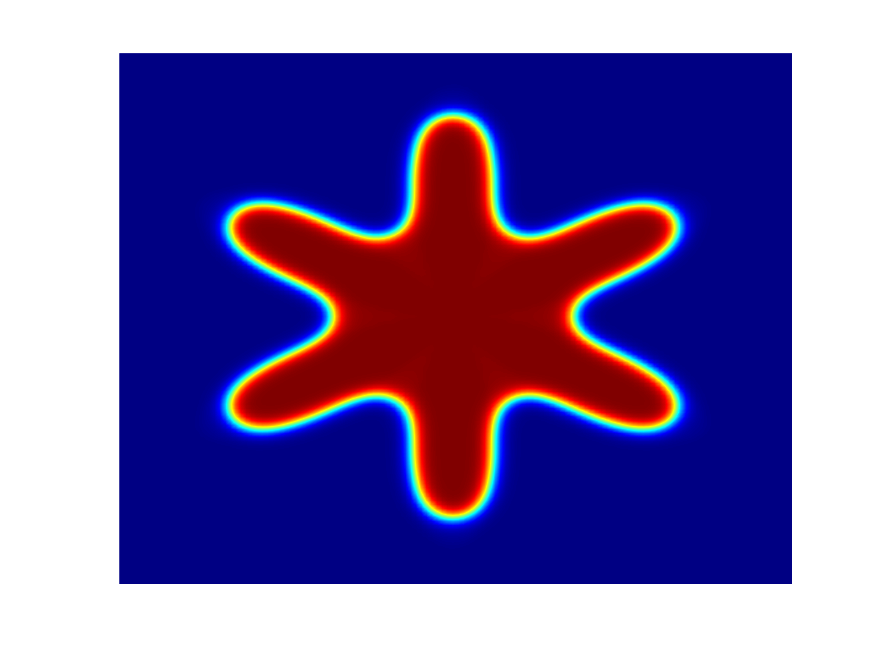}
			\includegraphics[width=1.2in]{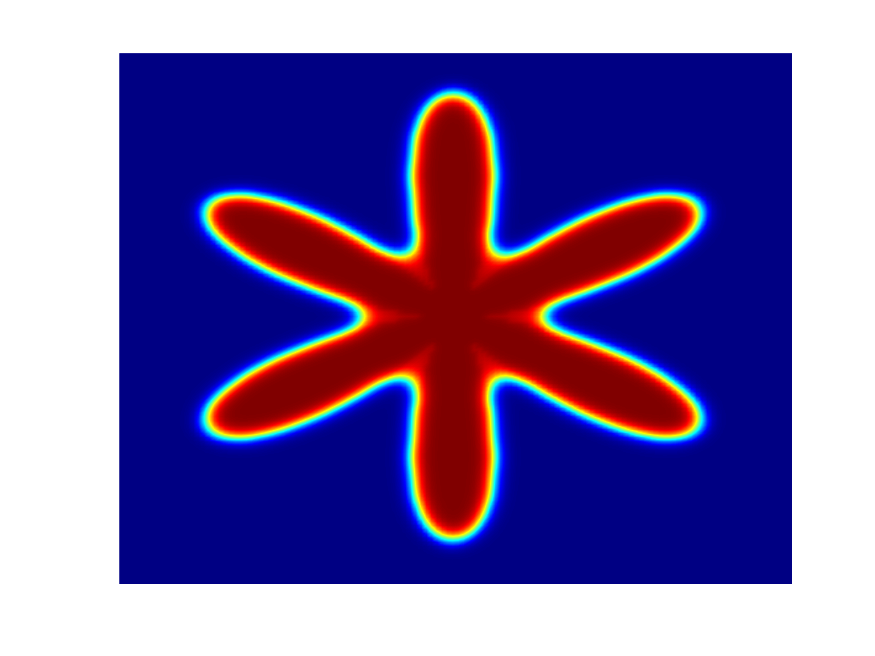}
		\end{minipage}%
	}%
	\subfigure[$t=100$]{
		\begin{minipage}[t]{0.18\linewidth}
			\centering
			\includegraphics[width=1.2in]{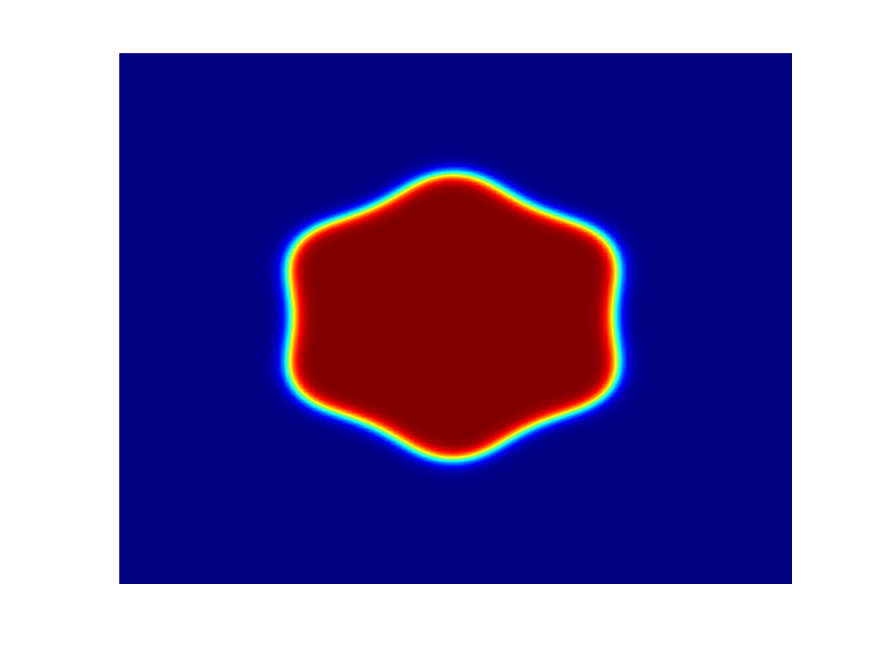}
			\includegraphics[width=1.2in]{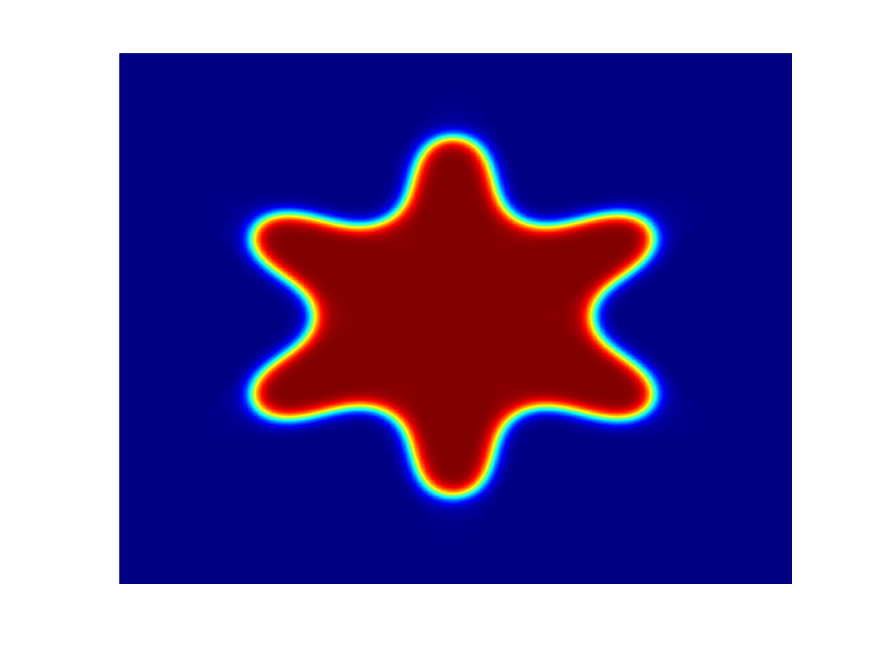}
			\includegraphics[width=1.2in]{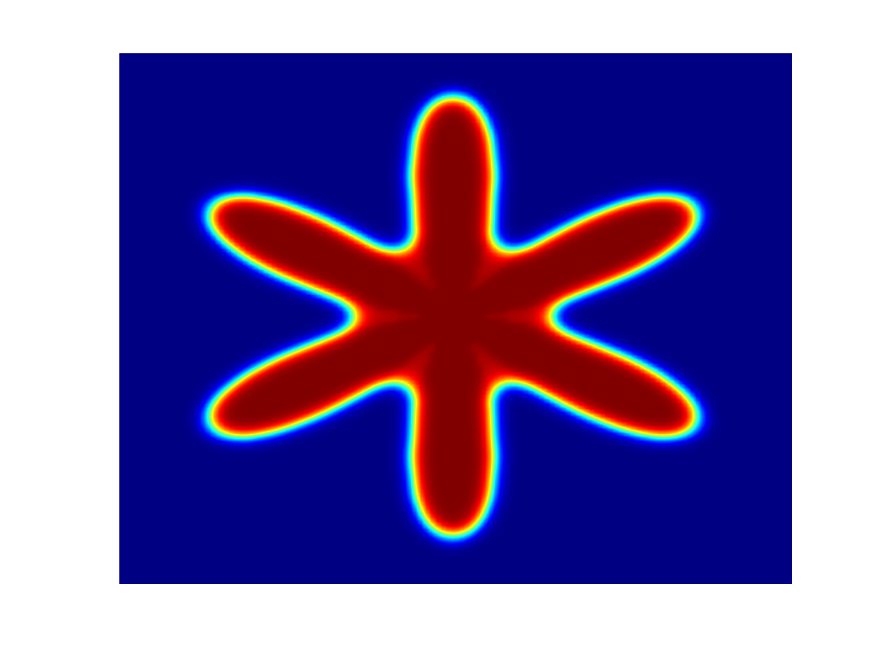}
		\end{minipage}%
	}%
	\subfigure[$t=300$]
	{
		\begin{minipage}[t]{0.18\linewidth}
			\centering
			\includegraphics[width=1.2in]{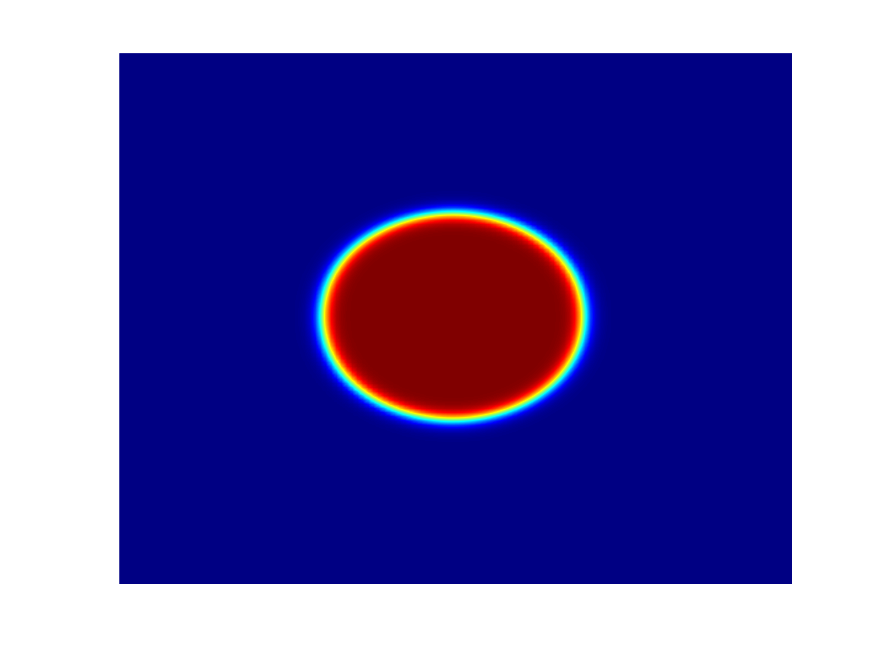}
			\includegraphics[width=1.2in]{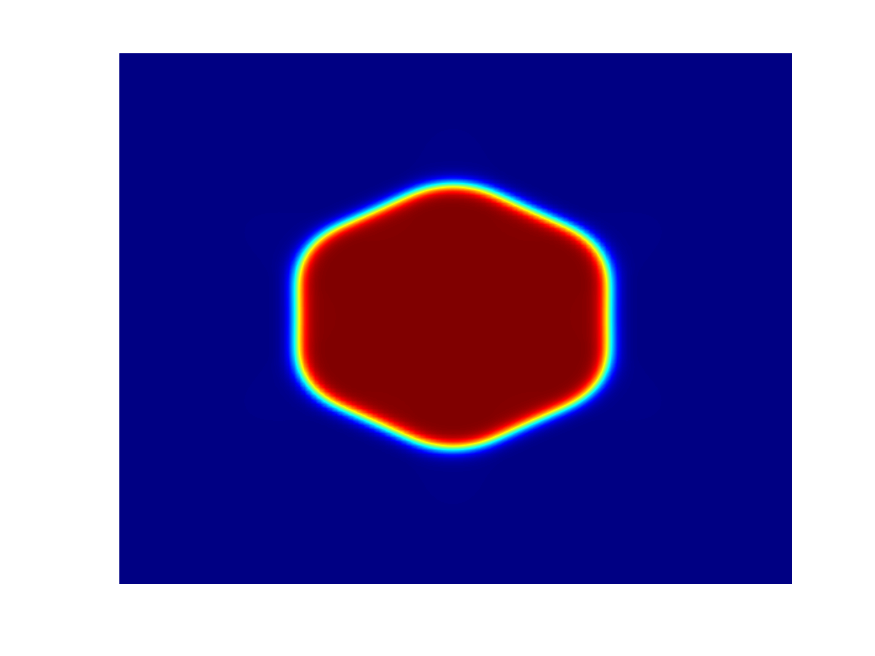}
			\includegraphics[width=1.2in]{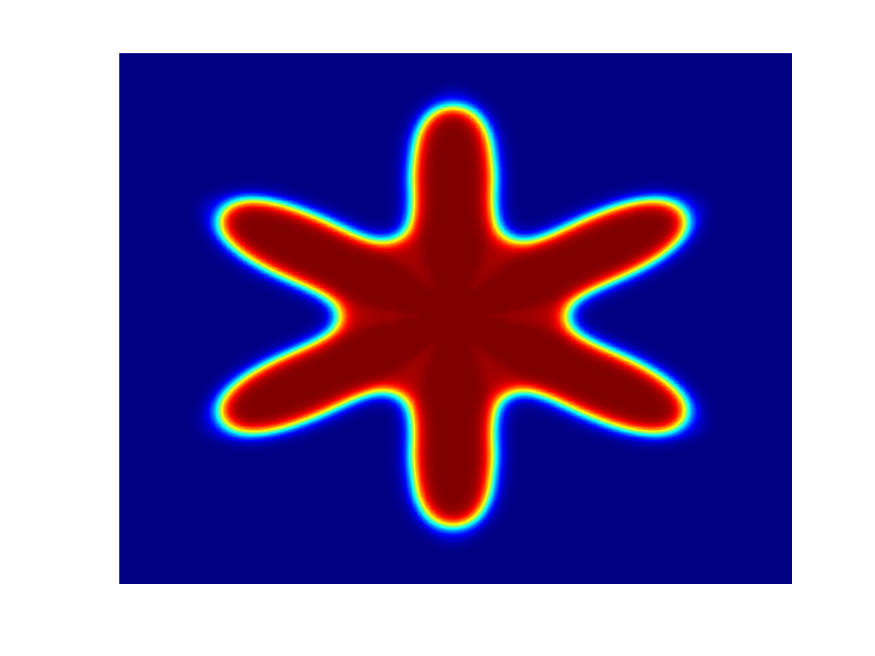}
		\end{minipage}%
	}%
	\subfigure[$t=700$]
	{
		\begin{minipage}[t]{0.18\linewidth}
			\centering
			\includegraphics[width=1.2in]{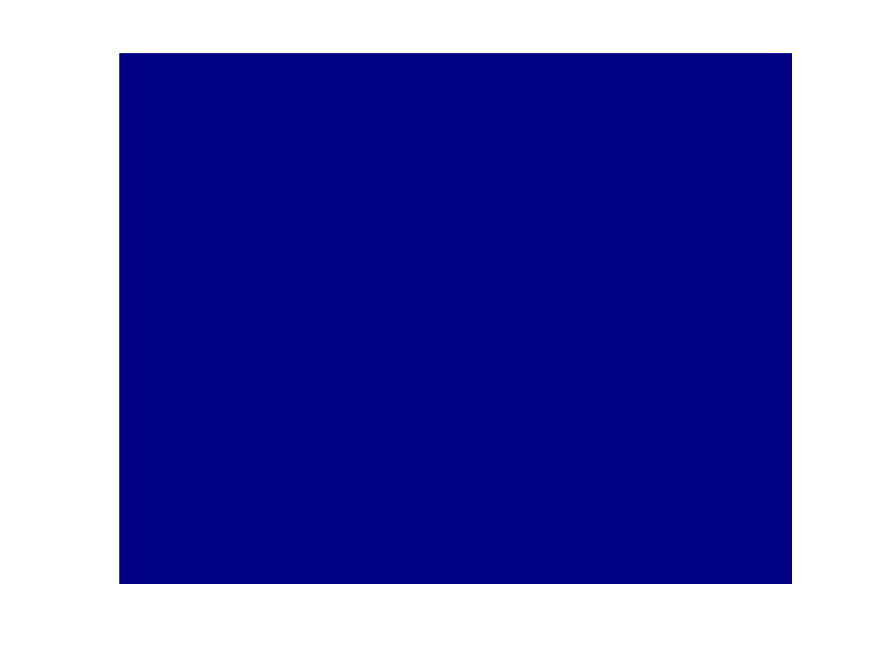}
			\includegraphics[width=1.2in]{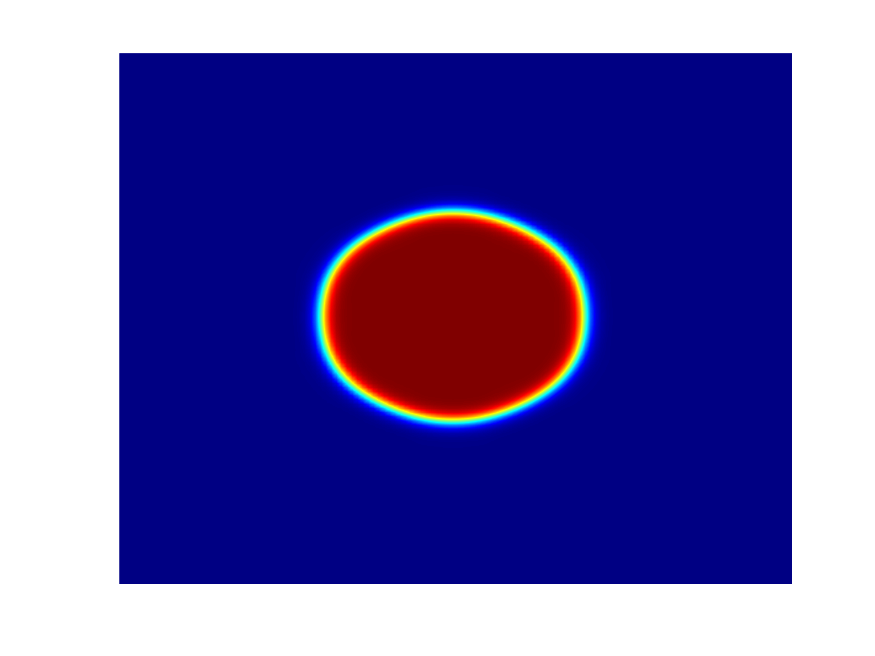}
			\includegraphics[width=1.2in]{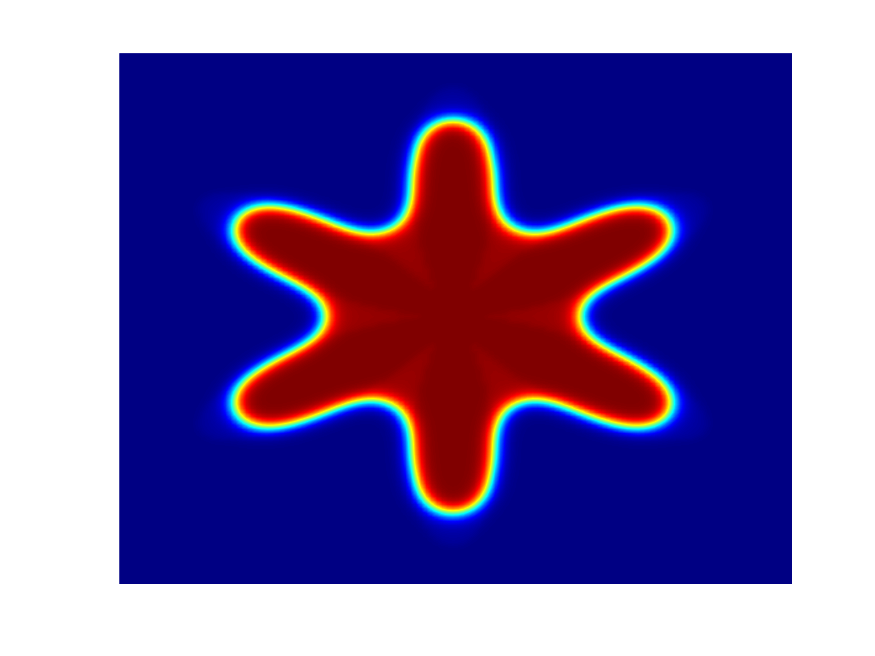}
		\end{minipage}%
	}%
	\setlength{\abovecaptionskip}{0.0cm} 
	\setlength{\belowcaptionskip}{0.0cm}
	\caption{The dynamic snapshots of the numerical solution $\phi$ obtained by the $L$1-sESAV scheme with $\alpha=0.9,0.7,0.4$ (from top to bottom, respectively)}	\label{figEx4_1}
\end{figure}
\begin{figure}[!ht]
	\vspace{-12pt}
	\centering
	\subfigure[energy]
	{
		\includegraphics[width=0.45\textwidth]{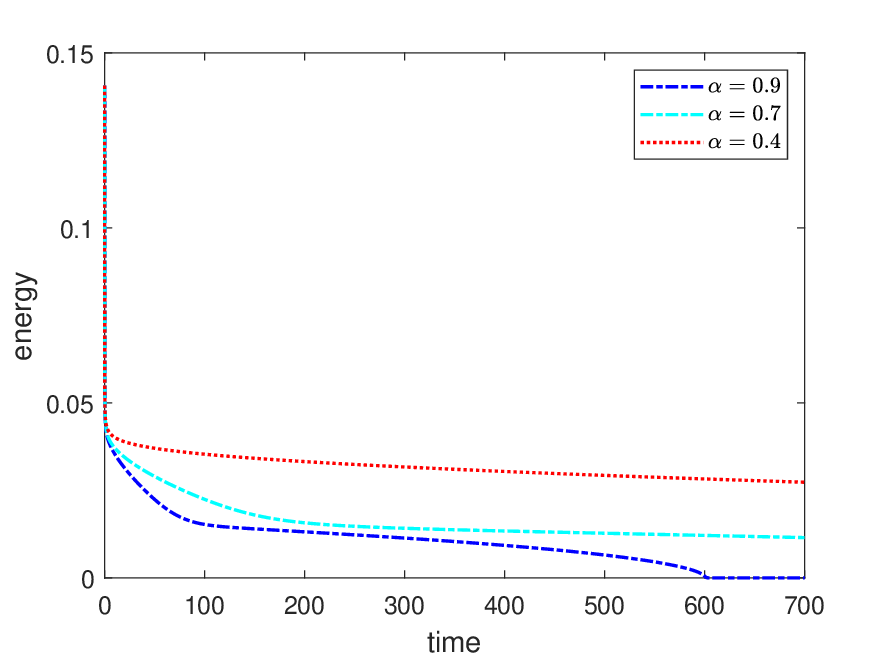}		\label{figEx4_2a}
	}%
	\subfigure[time steps]
	{
		\includegraphics[width=0.45\textwidth]{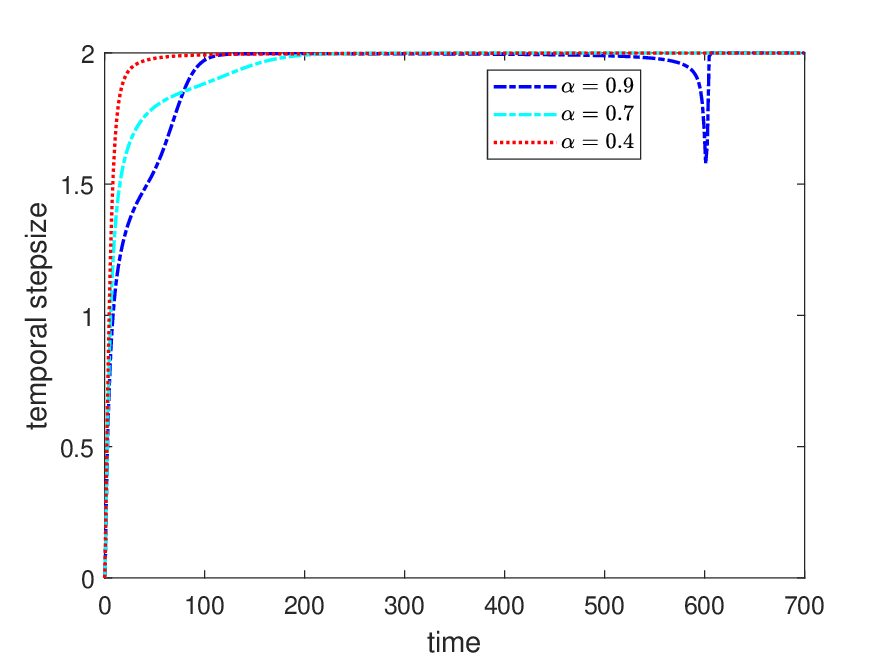}	\label{figEx4_2b}
	}
	\setlength{\abovecaptionskip}{0.0cm} 
	\setlength{\belowcaptionskip}{0.0cm}
	\caption{Time evolutions of the energy (left) and time steps (right) of the $L$1-sESAV scheme with $\alpha=0.9,0.7,0.4$}	\label{figEx4_2}
    	\vspace{-5pt}
\end{figure}

\subsection{3D bubble merging}
We finally consider the 3D tFAC model with $ \mm = 1 $, $ \varepsilon = 0.03 $, and $ \alpha = 0.5 $. 
In this example, the double-well potential \eqref{poten:dw} is considered again. 
The adaptive $L$1-sESAV scheme is employed to simulate the bubble merging with an initial condition $ \phi_{\text{init}}( x, y, z ) = \max\{ \phi_{1}, \phi_{2} \} $  such that
$$
\phi_{i} = \tanh\Bigl( \frac{ 0.2 - \sqrt{ ( x \pm 0.14 )^2 + y^2 + z^2 } }{ \varepsilon } \Bigr), ~ i=1,2.
$$

\begin{figure}[!htbp]
    \vspace{-10pt}
	\centering
	\subfigure[$t=0$]
	{
		\begin{minipage}[t]{0.24\linewidth}
			\centering
			\includegraphics[width=1.5in]{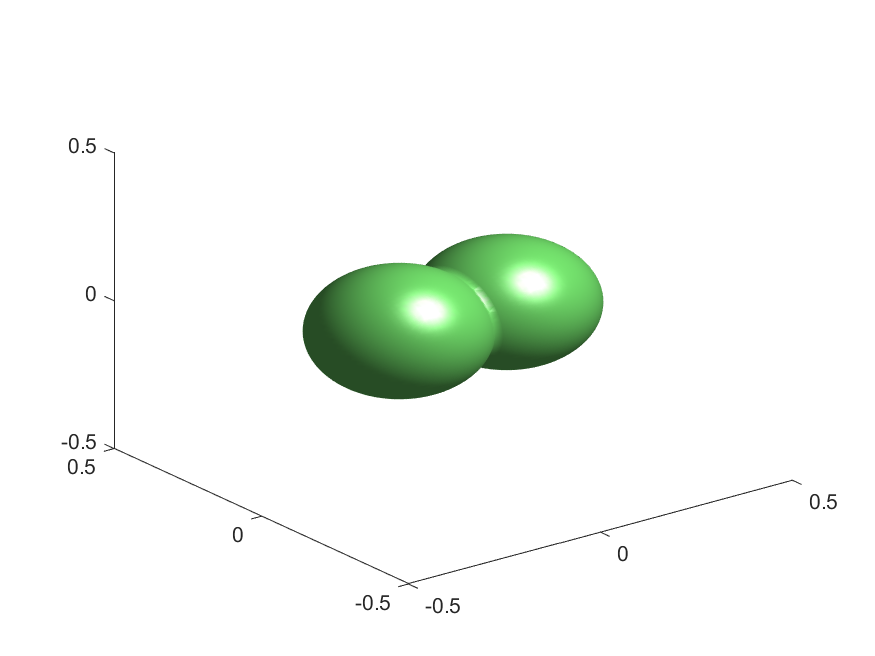}
		\end{minipage}%
	}%
	\subfigure[$t=10.71$]
	{
		\begin{minipage}[t]{0.24\linewidth}
			\centering
			\includegraphics[width=1.5in]{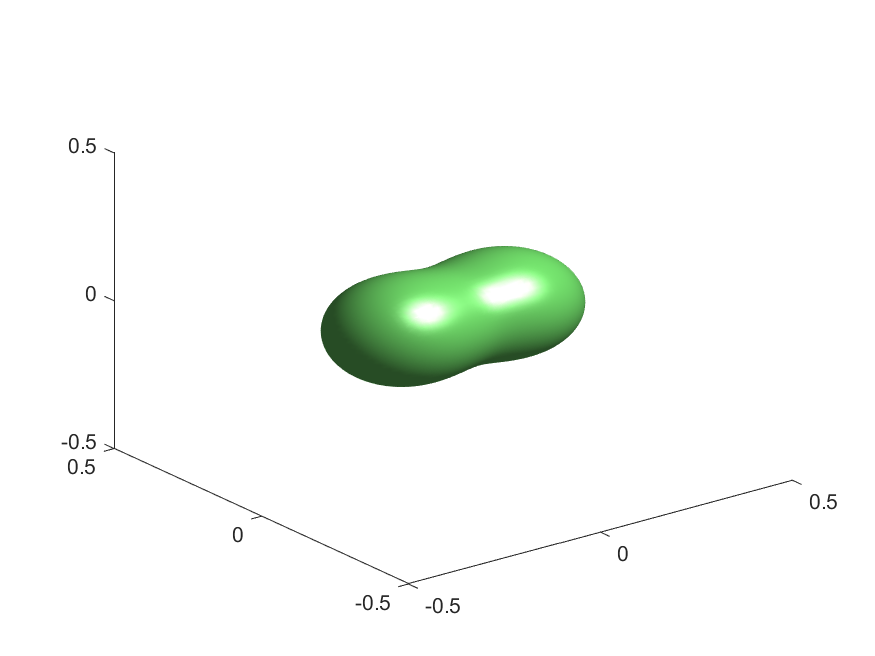}
		\end{minipage}%
	}%
	\subfigure[$t=30.80$]{
		\begin{minipage}[t]{0.24\linewidth}
			\centering
			\includegraphics[width=1.5in]{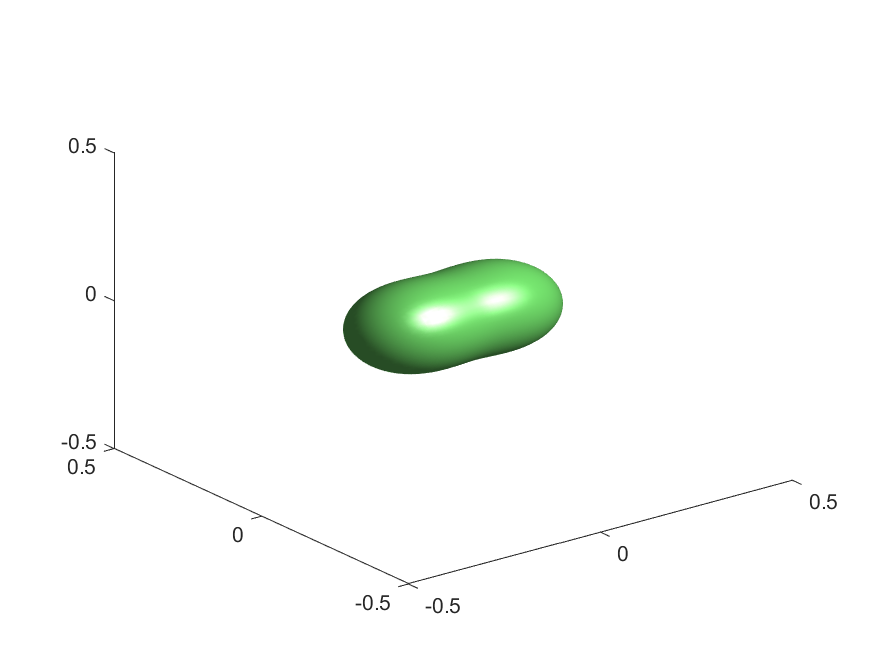}
		\end{minipage}%
	}%
	\subfigure[$t=50.48$]
	{
		\begin{minipage}[t]{0.24\linewidth}
			\centering
			\includegraphics[width=1.5in]{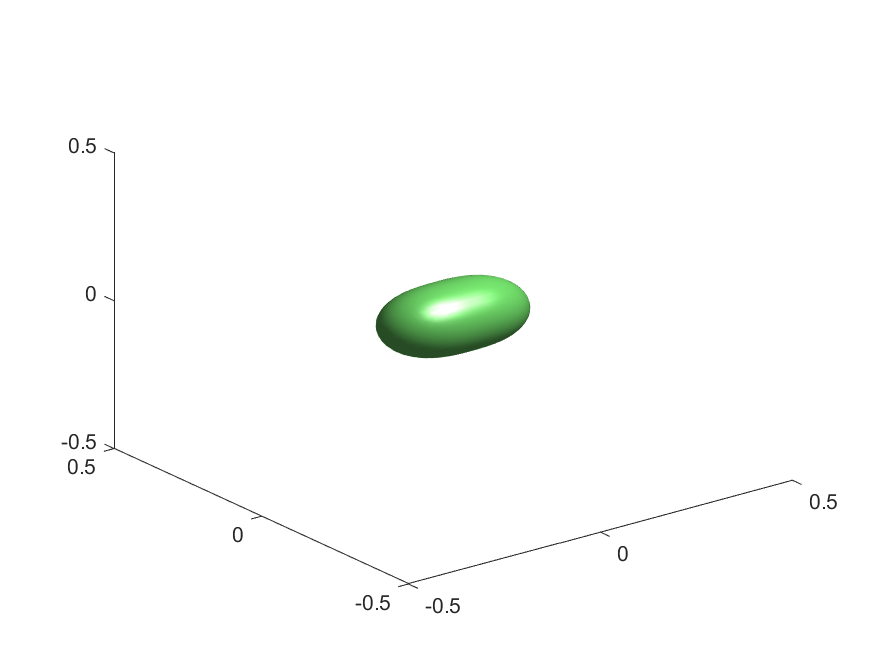}
		\end{minipage}%
	}%
	\setlength{\abovecaptionskip}{0.0cm} 
	\setlength{\belowcaptionskip}{0.0cm}
	\caption{Plots of the iso-surfaces (value $0$) of the numerical solution at different time instants obtained by the $L$1-sESAV scheme with $ \alpha = 0.5 $}	\label{figEx3D_1}
\end{figure}
In this test, we use $ 80 \times 80 \times 80 $ uniform meshes in space to discretize the computational domain $ \Omega =  ( -0.5, 0.5 )^3 $, and choose $ \tau_{\max} = 1 $, $ \tau_{\min} = 0.01 $, and $ \eta = 10^{7} $ in the adaptive time-stepping strategy \eqref{Alg1:adaptive}.
Figure \ref{figEx3D_1} shows the iso-surfaces (value $0$) of the numerical solution obtained by the adaptive $L$1-sESAV scheme at different time instants, clearly illustrating that the two balls gradually shrink and merge into one smaller ball. Moreover, the discrete energy stability and MBP are well preserved at all times, as shown in Figures \ref{figEx3D_2a}--\ref{figEx3D_2b}. Finally, Figure \ref{figEx3D_2c} displays the history curves of the adaptive time steps, demonstrating the high efficiency of the time-adaptivity technique.

\begin{figure}[!htbp]
	\vspace{-10pt}
	\centering
	\subfigure[maximum-norm of $\phi$]
	{
		\includegraphics[width=0.32\textwidth]{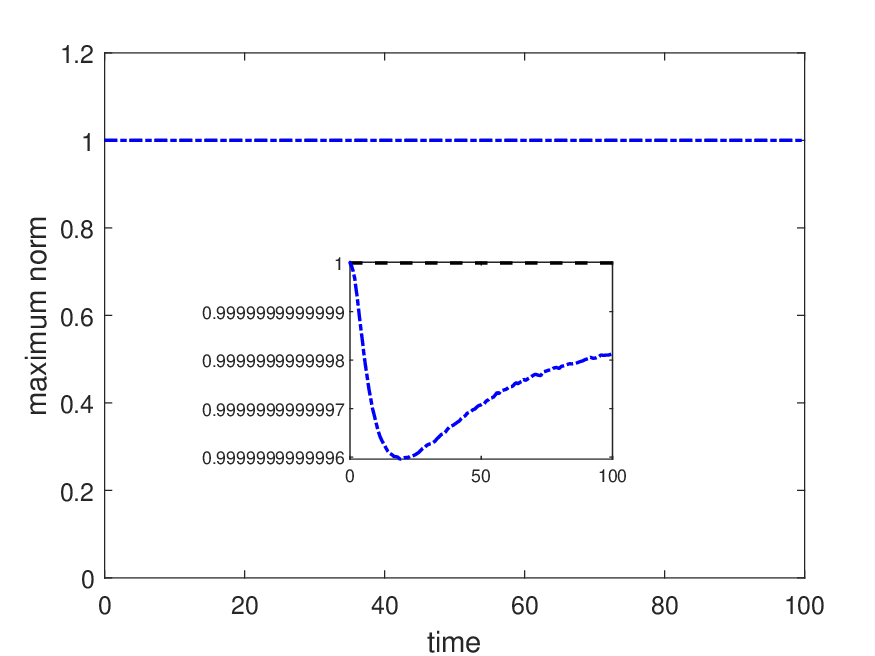}
		\label{figEx3D_2a}
	}%
	\subfigure[energy]
	{
		\includegraphics[width=0.32\textwidth]{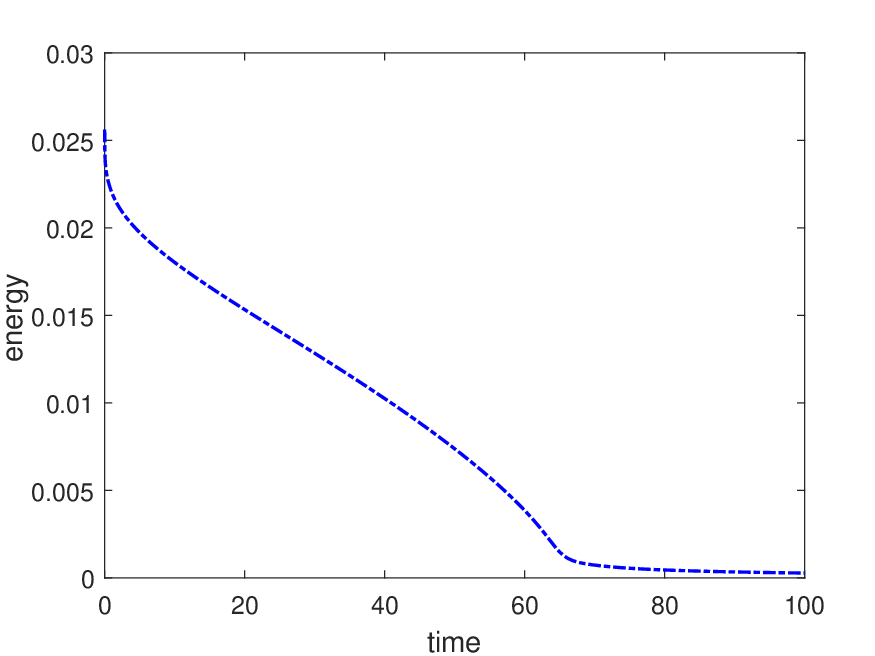}
		\label{figEx3D_2b}
	}%
	\subfigure[time steps]
	{
		\includegraphics[width=0.32\textwidth]{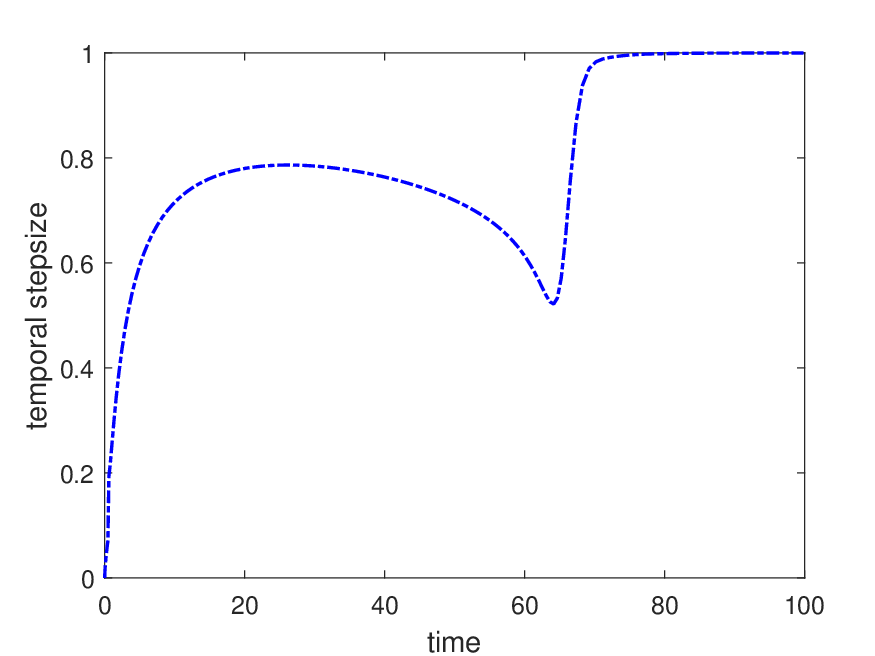}
		\label{figEx3D_2c}
	}%
	\setlength{\abovecaptionskip}{0.0cm} 
	\setlength{\belowcaptionskip}{0.0cm}
	\caption{Time evolutions of the maximum norm (left), energy (middle), and time steps (right) for the $L$1-sESAV scheme with $ \alpha = 0.5 $}
	\label{figEx3D_2}
    \vspace{-10pt}
\end{figure}

\section{ Stabilized $L2$-$1_{\sigma}$ type ESAV schemes }\label{Sec:L21}
In this section, we extend the ideas and derivations presented in Section \ref{Sec:L1} to develop a second-order in time MBP-preserving scheme using the nonuniform $L2$-$1_{\sigma}$ formula \cite{JCP_Alikhanov_2015} for the Caputo derivative. 
For a given grid function $\{ w^{k} \}$ with $ k \geq 1 $ and $ 0 < \varsigma < 1 $, we define the off-set time level $ t_{k-\varsigma} := ( 1 - \varsigma ) t_{k} + \varsigma t_{k-1} $ and the weighted operator $ w^{k-\varsigma} := ( 1 - \varsigma ) w^{k} + \varsigma w^{k-1} $. 
Following \cite{JCP_Alikhanov_2015,CiCP_Liao_2021}, we introduce the $L2$-$1_{\sigma}$ formula on a general nonuniform grid to approximate the Caputo derivative at $ t = t_{n- \varsigma}  $ with $ \varsigma = \alpha/2 $:
\begin{equation}\label{Formula:L21}
	\begin{aligned}
		{}_0^C D^{\alpha}_t w ( t_{n- \varsigma} ) \approx \widetilde{\mathbb{D}}_\tau^\alpha w^n 
		:=  \sum_{k=1}^n B_{n-k}^{(n)} \nabla_\tau w^k,
	\end{aligned}
\end{equation}
where the discrete convolution kernels $ \{B_{n-k}^{(n)}\} $ are defined as follows: $ B^{(1)}_{0} = a_{0}^{(1)} $ if $ n = 1 $, and 
\begin{equation}\label{L21:kernel}
	B^{(n)}_{n-k} := \left\{
	\begin{aligned}
		& a^{(n)}_{0} + b^{(n)}_{1}/r_{n}, \qquad \qquad \qquad \qquad \qquad \    k = n,\\
		& a^{(n)}_{n-k} + b^{(n)}_{n-k+1}/r_{k} - b^{(n)}_{n-k}, \qquad \quad   2 \leq k \leq n-1, \\
		& a^{(n)}_{n-1} - b^{(n)}_{n-1}, \qquad  \qquad \qquad \qquad \qquad  \  \   \  \ k = 1,
	\end{aligned}
	\right.
\end{equation}
for $ n \geq 2 $, with the discrete coefficients given by
\begin{equation*}
	\begin{aligned}
		a^{(n)}_{n-k} := \frac{1}{\tau_{k}} \int^{\min\{t_{k},t_{n- \varsigma}\}}_{t_{k-1}} \omega_{1-\alpha}(t_{n- \varsigma}-s) ds, \quad 1 \leq k \leq n,
	\end{aligned}
\end{equation*}
\begin{equation*}
	\begin{aligned}
		b^{(n)}_{n-k} := \frac{2}{\tau_{k}\left(\tau_{k}+\tau_{k+1}\right)} \int^{t_{k}}_{t_{k-1}} (s-t_{k-\frac{1}{2}})\omega_{1-\alpha}(t_{n- \varsigma}-s) ds, \quad 1 \leq k \leq n-1.
	\end{aligned}
\end{equation*}
It is easy to check that $a_{0}^{(n)} = \frac{1}{\tau_{n}} \int^{t_{n- \varsigma}}_{t_{n-1}} \omega_{1-\alpha}(t_{n- \varsigma}-s) ds = \frac{ (1-\varsigma)^{1-\alpha} }{ \Gamma(2-\alpha) \tau_{n}^{\alpha} } > 0 $.
Besides, it is shown in \cite{CiCP_Liao_2021,JCP_Liao_2020} that the discrete convolution kernels in \eqref{L21:kernel} satisfy the following properties.
\begin{lemma}[\cite{JCP_Liao_2020}]\label{lem:L21_DC}
	Assume that the time-step ratio $ r_k \geq 4/7 $ for $2 \leq k \leq n$ holds. Then  
	\begin{itemize}
		\item[(i)] the discrete kernels $B_{n-k}^{(n)}$ fulfill $B_0^{(n)} \leq \frac{24}{11 \tau_n} \int_{t_{n-1}}^{t_n} \omega_{1-\alpha}\left(t_n-s\right)ds $ and
		$$
		B_{n-k}^{(n)} \geq \frac{4}{11 \tau_n} \int_{t_{n-1}}^{t_n} \omega_{1-\alpha}\left(t_n-s\right) \mathrm{d} s = \frac{4}{11\Gamma(2-\alpha) \tau_{n}^{\alpha} },\quad   1 \leq k \leq n;
		$$
		
		\item[(ii)] the discrete kernels $B_{n-k}^{(n)}$ are monotone, i.e., $ B_{n-k+1}^{(n)} < B_{n-k}^{(n)} $ for $1 \leq k \leq n$;
		
		\item[(iii)] and the first kernel $B_0^{(n)}$ is appropriately larger than the second one, i.e.,
		$$
		\frac{1-2 \varsigma}{1-\varsigma} B_0^{(n)} - B_1^{(n)}>0, \quad   n \geq 2.
		$$	
	\end{itemize}
\end{lemma}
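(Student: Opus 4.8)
\emph{Overall strategy.} The three properties concern the kernels $a^{(n)}_{n-k}$ and $b^{(n)}_{n-k}$ defined through explicit integrals of the weight $s\mapsto\omega_{1-\alpha}(t_{n-\varsigma}-s)$, so the entire argument reduces to elementary analytic facts about this weight together with sharp comparisons between the two coefficient families, made possible by the step-ratio constraint $r_k\ge 4/7$. (The statement is quoted from \cite{JCP_Liao_2020}; what follows is a sketch of how one would establish it.) First I would record that on each subinterval $(t_{k-1},t_k)$ with $t_k\le t_{n-\varsigma}$ the weight $\omega_{1-\alpha}(t_{n-\varsigma}-s)=(t_{n-\varsigma}-s)^{-\alpha}/\Gamma(1-\alpha)$ is positive, strictly decreasing and strictly convex in $s$, and that $a_0^{(n)}=(1-\varsigma)^{1-\alpha}/(\Gamma(2-\alpha)\tau_n^{\alpha})$ is evaluated directly.

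\emph{Building-block estimates.} The next step is to analyse the two coefficient families. Monotonicity of the weight immediately gives that $a^{(n)}_{n-k}$ is positive and strictly decreasing as the index gap $n-k$ grows. For the correction terms I would exploit that the factor $(s-t_{k-1/2})$ in the definition of $b^{(n)}_{n-k}$ is antisymmetric about the midpoint of $(t_{k-1},t_k)$: pairing it with the monotone, convex weight simultaneously fixes the sign of $b^{(n)}_{n-k}$ and bounds its magnitude by a controlled fraction of the neighbouring gap $a^{(n)}_{n-k}-a^{(n)}_{n-k+1}$. This quantitative comparison $|b^{(n)}_{n-k}|\le c\,(a^{(n)}_{n-k}-a^{(n)}_{n-k+1})$, with $c$ tied to $r_k$, is the single ingredient feeding all three claims.

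\emph{Assembling (i)--(iii).} For (i), substituting these bounds into \eqref{L21:kernel} and using $r_n\ge 4/7$ converts the admissible range of $b_1^{(n)}/(r_n a_0^{(n)})$ into the upper factor $24/11$ for $B_0^{(n)}$; the uniform lower bound $B^{(n)}_{n-k}\ge\frac{4}{11}\tau_n^{-\alpha}/\Gamma(2-\alpha)$ follows analogously after telescoping the interior kernels. For (ii), I would form the consecutive difference $B^{(n)}_{n-k}-B^{(n)}_{n-k+1}$ from \eqref{L21:kernel}, separate its $a$- and $b$-parts, and invoke the building-block comparison under $r_k\ge 4/7$ to conclude strict positivity. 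For (iii), only the top two kernels enter; expressing $B_0^{(n)}$ and $B_1^{(n)}$ through $a_0^{(n)},a_1^{(n)},b_1^{(n)}$ reduces the claim to a scalar inequality in $\varsigma=\alpha/2$ and $r_n$, to be checked on the admissible range.

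\emph{Main obstacle.} The genuinely delicate point is the sharp two-sided control of the correction coefficients $b^{(n)}_{n-k}$ against the $a^{(n)}_{n-k}$: the exact constants $24/11$ and $4/11$ in (i) and the strict inequality in (iii) all hinge on this being tight, and $r_k\ge 4/7$ is precisely the threshold at which the estimates survive. Carrying out these integral estimates carefully---as done in \cite{JCP_Liao_2020,CiCP_Liao_2021}---is where essentially all the effort concentrates, while the remaining algebra in (i)--(iii) is routine once the comparison is secured.
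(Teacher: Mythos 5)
First, a point of context: the paper itself gives no proof of this statement. Lemma~\ref{lem:L21_DC} is imported verbatim from \cite{JCP_Liao_2020} (see also \cite{CiCP_Liao_2021}) and used as a black box in the analysis of the $L2$-$1_{\sigma}$-sESAV schemes, so the only meaningful benchmark is the proof in that reference. Your outline follows the same broad architecture as the literature (separate analysis of the families $a^{(n)}_{n-k}$ and $b^{(n)}_{n-k}$, control of the correction terms by the gaps of the $a$'s, then assembly through \eqref{L21:kernel}), but as a proof it is incomplete in exactly the place you flag yourself: the estimate $|b^{(n)}_{n-k}|\le c\,(a^{(n)}_{n-k}-a^{(n)}_{n-k+1})$ with an explicit $c$ tied to the step ratios is asserted, never derived, and it is precisely that computation which produces $24/11$, $4/11$, the threshold $r_k\ge 4/7$, and the strict inequality in (iii). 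With it deferred to \cite{JCP_Liao_2020}, none of (i)--(iii) is actually established by your argument.

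Beyond the missing core estimate, there are two concrete errors. First, your building block starts from a false premise: on $(t_{k-1},t_k)$ the weight $s\mapsto\omega_{1-\alpha}(t_{n-\varsigma}-s)=(t_{n-\varsigma}-s)^{-\alpha}/\Gamma(1-\alpha)$ is strictly \emph{increasing} in $s$ (it blows up as $s\to t_{n-\varsigma}^{-}$), not decreasing as you state; the positivity of $b^{(n)}_{n-k}$ comes from the positive correlation of this increasing weight with the antisymmetric factor $(s-t_{k-1/2})$, and running your sign analysis with a decreasing weight would yield the opposite (wrong) sign, which then corrupts every comparison downstream. Second, the uniform lower bound you propose to obtain ``after telescoping the interior kernels,'' namely $B^{(n)}_{n-k}\ge \frac{4}{11\Gamma(2-\alpha)\tau_n^{\alpha}}$ for all $1\le k\le n$, cannot be proved because, as transcribed in this paper, it is false: on a uniform mesh ($r_k=1\ge 4/7$) one has $B^{(n)}_{n-1}\le a^{(n)}_{n-1}\le \omega_{1-\alpha}(t_{n-\varsigma}-t_1)\to 0$ as $n\to\infty$, while the right-hand side is a fixed positive constant. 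What is actually proved in \cite{JCP_Liao_2020} is a lower bound that is local in $k$, of the form $B^{(n)}_{n-k}\ge \frac{4}{11\tau_k}\int_{t_{k-1}}^{\min\{t_k,t_{n-\varsigma}\}}\omega_{1-\alpha}(t_{n-\varsigma}-s)\,ds$, which coincides in strength with the displayed bound only in the case $k=n$ --- incidentally the only case this paper ever invokes (in Theorem~\ref{thm:MBP_L21} and \ref{App:B}). Any correct proof attempt must target that local statement; no telescoping argument can rescue the uniform-in-$k$ version.
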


The following lemma is presented in \cite[Lemmas 2.1--2.2 \& Theorem 2.1]{JSC_Liao_2024}, which is used to prove the discrete energy stability of the scheme proposed in next subsection. 
\begin{lemma}[\cite{JSC_Liao_2024}]\label{lem:L21_DC_positive}
	 Let $ r_{*} = r_{*} ( \alpha ) $ be the unique positive root of the nonlinear equation
	\begin{equation*}\label{lemC:root}
		2\sqrt{ \frac{ 2 ( 1 - \alpha/2 ) r_{*} }{ 1 + \alpha + ( 1 - \alpha/2 ) r_{*} } + \frac{ r_{*} }{ 1 + r_{*} } } + 3 - \frac{1}{ r_{*}^{2} ( 1 + r_{*} ) } = 0 \quad \text{for} \  \alpha \in (0,1).
	\end{equation*}
	Then, it holds that $ 0.3865 \approx r_{*}(0) < r_{*}( \alpha ) < r_{*}(1) \approx 0.4037 $ for $ \alpha \in (0,1) $. Furthermore, assume that the adjacent time-step ratio $ r_k \geq r_{*} ( \alpha ) $ for $ 2 \leq k \leq n$, then it holds
$$
	2 (\nabla_\tau w^n)\, \widetilde{\mathbb{D}}_\tau^\alpha w^n 
	    \ge \mg[\nabla_\tau w^n ] - \mg[\nabla_\tau w^{n-1} ],   \quad    n \geq 1,
	$$
where the nonnegative functionals $\mg$ is defined by
	$$
	\mg[v^n ] := \sum_{j=1}^{n-1}\bigl(C_{n-j-1}^{(n)}-C_{n-j}^{(n)}\bigr) \Bigl(\sum_{\ell=j+1}^n v^{\ell}\Bigr)^2
	+   C_{n-1}^{(n)} \Bigl(\sum_{\ell=1}^n v^{\ell}\Bigr)^2, ~ n \geq 1;\quad \mg [v^0 ] = 0,
	$$
with $ C_{0}^{(1)} = 4(1-\alpha)a_{0}^{(1)}/(2-\alpha)$, $ C_{0}^{(n)} = 4(1-\alpha)a_{0}^{(n)}/(2-\alpha)  + 2 b_{1}^{(n)}/r_{n} $ for $n \ge 2$, and $ C_{n-k}^{(n)} = B_{n-k}^{(n)} $ for $ 1 \leq k \leq n-1 $.
\end{lemma}

\subsection{ Stabilized $L2$-$1_{\sigma}$-ESAV scheme } \label{subsec:L21-esav}
Following the concept of the $L$1-sESAV scheme developed in Section \ref{Sec:L1}, we apply the nonuniform $L2$-$1_{\sigma}$ formula \eqref{Formula:L21} for the approximation of the Caputo derivative in \eqref{sav_1} to derive the stabilized second-order ESAV scheme (denoted as $L2$-$1_{\sigma}$-sESAV):

\paragraph{\indent \bf Step 1}   Let $\hat{\phi}^{0} = \phi^{0}$.
For $ n = 1 $, first solve a predicted solution $ \hat{\phi}^{1} $ from the fully-implicit difference scheme:
\begin{equation}\label{sch:L21_non_3}
	\begin{aligned}
		\widetilde{ \mathbb{D} }_{\tau}^{\alpha} \hat{\phi}^{1} := B^{(1)}_{0} ( \hat{\phi}^{1} - \phi^{0} ) = \mm \bigl(  \varepsilon^2 \Delta_h \hat{\phi}^{1-\varsigma} + f(\hat{\phi}^{1-\varsigma}) \bigr),
	\end{aligned}
\end{equation}
where $ \hat{\phi}^{1-\varsigma} := ( 1-\varsigma ) \hat{\phi}^{1} + \varsigma \phi^{0} $, 
and let $ V^{1-\varsigma} := V( g_{h}(\hat{\phi}^{1-\varsigma}, R^{0}) ) $, then find $ \{ \phi^{1}, R^{1} \} \in \mathbb{V}_{h} \times \mathbb{R} $ by 
\begin{equation}\label{sch:L21_1_1}
	\widetilde{\mathbb{D}}^{\alpha}_{\tau} \phi^{1} = \mm \bigl( \varepsilon^2 \Delta_h \phi^{1-\varsigma} + V^{1-\varsigma} f(\hat{\phi}^{1-\varsigma}) - \kappa V^{1-\varsigma} ( \phi^{1-\varsigma} - \hat{\phi}^{1-\varsigma} ) \bigr),
\end{equation}
\begin{equation}\label{sch:L21_1_2}
	\begin{aligned}
		\mathbb{D}_{\tau} R^{1} = V^{1-\varsigma} \big\langle - f(\hat{\phi}^{1-\varsigma}) + \kappa( \phi^{1-\varsigma} - \hat{\phi}^{1-\varsigma} ), \mathbb{D}_{\tau} \phi^{1} \big\rangle.
	\end{aligned}
\end{equation}

\paragraph{\indent \bf Step 2} For $ n \geq 2 $,  given $ \phi^{n-1}$ and $ \phi^{n-2} $, first predict a MBP-preserving solution $ \hat{\phi}^{n} $ by \eqref{sch:L1_0} and let $ \hat{\phi}^{n-\varsigma} := ( 1-\varsigma ) \hat{\phi}^{n} + \varsigma \phi^{n-1} $, $ V^{n-\varsigma} := V( g_{h}(\hat{\phi}^{n-\varsigma}, R^{n-1}) ) $, then find $ \{ \phi^{n}, R^{n} \} \in \mathbb{V}_{h} \times \mathbb{R} $ such that
\begin{equation}\label{sch:L21_n_1}
	\begin{aligned}
		\widetilde{\mathbb{D}}^{\alpha}_{\tau} \phi^{n} = \mm  \bigl(  \varepsilon^2 \Delta_h \phi^{n-\varsigma} + V^{n-\varsigma} f(\hat{\phi}^{n-\varsigma}) - \kappa V^{n-\varsigma} ( \phi^{n-\varsigma} - \hat{\phi}^{n-\varsigma} ) \bigr),
	\end{aligned}
\end{equation}
\begin{equation}\label{sch:L21_n_2}
	\begin{aligned}
		\mathbb{D}_{\tau} R^{n} = V^{n-\varsigma} \big\langle - f(\hat{\phi}^{n-\varsigma}) + \kappa( \phi^{n-\varsigma} - \hat{\phi}^{n-\varsigma} ), \mathbb{D}_{\tau} \phi^{n} \big\rangle.
	\end{aligned}
\end{equation}

\begin{remark}
As mentioned in Remark \ref{rem:iter}, the nonlinear scheme \eqref{sch:L21_non_3} in \textbf{Step 1} of the $L2$-$1_{\sigma}$-sESAV scheme can be solved by performing the following simple iterative scheme with a given termination error $ tol $:
\begin{equation}\label{sch:L21_non_2}
	\begin{aligned}
		B^{(1)}_{0} ( \hat{\phi}^{1}_{(\mathfrak{s})} - \phi^{0} ) = \mm \bigl(  \varepsilon^2 \Delta_h \hat{\phi}^{1-\varsigma}_{(\mathfrak{s})} + f(\hat{\phi}^{1-\varsigma}_{(\mathfrak{s}-1)}) - \kappa ( \hat{\phi}^{1-\varsigma}_{(\mathfrak{s})} - \hat{\phi}^{1-\varsigma}_{(\mathfrak{s}-1)} ) \bigr) \quad \text{for} \  \mathfrak{s} \geq 1,
	\end{aligned}
\end{equation}
where $ \hat{\phi}^{1-\varsigma}_{(\ell)} = ( 1-\varsigma ) \hat{\phi}^{1}_{(\ell)} + \varsigma \phi^{0} $ with $ \ell = \mathfrak{s}-1, \mathfrak{s} $, and the initial iteration value is set as $ \hat{\phi}^{1}_{(0)} = \phi^{0} $. 
Therefore, the practical implementation of the $L2$-$1_{\sigma}$-sESAV scheme can refer to that of the $L1$-sESAV scheme described in Algorithm \ref{alg:L1}. The following Theorem \ref{lem:L21_iter} demonstrates that the nonlinear scheme \eqref{sch:L21_non_3} is MBP-preserving and uniquely solvable. Following similar mathematical induction argument as in Theorem \ref{thm:cover_L1_iter}, we can prove Theorem \ref{lem:L21_iter}, see \ref{App:C} for details.
\end{remark}

\begin{theorem}\label{lem:L21_iter}
 Assume that the condition in Lemma \ref{lem:L21_DC} holds and $\kappa \geq\left\|f^{\prime}\right\|_{C[-\beta, \beta]}$. If the first level time-step $ \tau_{1} $ is sufficiently small such that
	\begin{equation}\label{MBP:L21_inner_tau}
		\tau_{1} \leq \sqrt[\alpha]{4/(11 \varsigma \mm ( \kappa + 4 \varepsilon^{2}/h^{2} ) \Gamma( 2 - \alpha ) )},
	\end{equation}
	the iterative scheme \eqref{sch:L21_non_2} preserves the MBP for $\{ \hat{\phi}^1_{(\mathfrak{s})} \}$.
Furthermore, if $ \tau_{1} $ satisfies
	\begin{equation}\label{Cover:L21_tau}
		\tau_{1} \leq \sqrt[\alpha]{4/(11\kappa (1-\varsigma) \mm \Gamma( 2 - \alpha ) )},
	\end{equation}
	then the iterative scheme \eqref{sch:L21_non_2} converges to the unique solution $ \hat{\phi}^{1} $ of the nonlinear scheme \eqref{sch:L21_non_3} in the maximum norm satisfying $ \| \hat{\phi}^{1} \|_{\infty} \leq \beta $.
\end{theorem}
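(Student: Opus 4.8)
The plan is to reproduce, for the off-set $L2$-$1_{\sigma}$ setting, the two-stage argument of Theorem \ref{thm:cover_L1_iter}: an induction on the iteration index $\mathfrak{s}$ for the MBP, followed by a Banach fixed-point argument on $\mathbb{V}_{\beta}$ for convergence and unique solvability. The indispensable preliminary step is to eliminate the weighted notation by substituting $\hat{\phi}^{1-\varsigma}_{(\ell)} = (1-\varsigma)\hat{\phi}^{1}_{(\ell)} + \varsigma\phi^{0}$ into \eqref{sch:L21_non_2} and collecting the unknown $\hat{\phi}^{1}_{(\mathfrak{s})}$ on the left. This yields the equivalent form
\[
\begin{aligned}
\bigl[(B^{(1)}_{0} + \kappa m(1-\varsigma))I - m\varepsilon^{2}(1-\varsigma)\Delta_{h}\bigr]\hat{\phi}^{1}_{(\mathfrak{s})} &= \bigl[(B^{(1)}_{0} - \kappa m\varsigma)I + m\varepsilon^{2}\varsigma\Delta_{h}\bigr]\phi^{0} \\
&\quad + m\bigl(f(\hat{\phi}^{1-\varsigma}_{(\mathfrak{s}-1)}) + \kappa\hat{\phi}^{1-\varsigma}_{(\mathfrak{s}-1)}\bigr),
\end{aligned}
\]
where, crucially, the stabilization pairs $f$ with $+\kappa\hat{\phi}^{1-\varsigma}_{(\mathfrak{s}-1)}$ at the \emph{same} argument, so Lemma \ref{lem:MBP_right} gives $\|f(\hat{\phi}^{1-\varsigma}_{(\mathfrak{s}-1)}) + \kappa\hat{\phi}^{1-\varsigma}_{(\mathfrak{s}-1)}\|_{\infty} \leq \kappa\beta$ as soon as the inductive hypothesis $\|\hat{\phi}^{1}_{(\mathfrak{s}-1)}\|_{\infty} \leq \beta$ holds (whence $\|\hat{\phi}^{1-\varsigma}_{(\mathfrak{s}-1)}\|_{\infty} \leq \beta$ by convexity). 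Lemma \ref{lem:MBP_left} applied to the left-hand operator, with $\lambda = B^{(1)}_{0} + \kappa m(1-\varsigma)$ and $m\varepsilon^{2}(1-\varsigma)>0$, bounds its inverse in the maximum norm by $\lambda^{-1}$.

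The main obstacle, and the origin of the factor $4\varepsilon^{2}/h^{2}$ in \eqref{MBP:L21_inner_tau}, is the explicit block $[(B^{(1)}_{0} - \kappa m\varsigma)I + m\varepsilon^{2}\varsigma\Delta_{h}]\phi^{0}$, which—unlike the $L1$ case—carries an explicit discrete Laplacian of $\phi^{0}$ coming from the off-set evaluation. I would not bound $\|\Delta_{h}\phi^{0}\|_{\infty}$ crudely; instead I would treat $(B^{(1)}_{0} - \kappa m\varsigma)I + m\varepsilon^{2}\varsigma\Delta_{h}$ as a single five-point operator whose diagonal entry is $B^{(1)}_{0} - \kappa m\varsigma - 4m\varepsilon^{2}\varsigma/h^{2}$ and whose four off-diagonals equal $m\varepsilon^{2}\varsigma/h^{2}$. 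Provided this diagonal is nonnegative—precisely $B^{(1)}_{0} \geq m\varsigma(\kappa + 4\varepsilon^{2}/h^{2})$, which is equivalent to \eqref{MBP:L21_inner_tau} via the lower bound $B^{(1)}_{0} \geq \frac{4}{11\Gamma(2-\alpha)\tau_{1}^{\alpha}}$ of Lemma \ref{lem:L21_DC}(i)—the maximum absolute row sum of this operator equals $B^{(1)}_{0} - \kappa m\varsigma$, so its $\infty$-norm is exactly $B^{(1)}_{0} - \kappa m\varsigma$ and $\|[(B^{(1)}_{0} - \kappa m\varsigma)I + m\varepsilon^{2}\varsigma\Delta_{h}]\phi^{0}\|_{\infty} \leq (B^{(1)}_{0} - \kappa m\varsigma)\beta$. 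Combining the three estimates gives $\lambda\|\hat{\phi}^{1}_{(\mathfrak{s})}\|_{\infty} \leq (B^{(1)}_{0} - \kappa m\varsigma)\beta + \kappa m\beta = \lambda\beta$, closing the induction; the base case $\hat{\phi}^{1}_{(0)} = \phi^{0}$ is immediate from $\|\phi^{0}\|_{\infty}\le\beta$.

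For the convergence part I would introduce the iteration map $v \mapsto w$ defined by the same rearranged equation with $f(\hat{\phi}^{1-\varsigma}_{(\mathfrak{s}-1)})$ replaced by $f(v^{1-\varsigma})$, which maps $\mathbb{V}_{\beta}$ into itself by the MBP just established (this is why the first condition \eqref{MBP:L21_inner_tau} is needed here too). Subtracting the equations for two inputs $v_{1}, v_{2}$, all $\phi^{0}$ and off-set constants cancel, leaving
\[
\bigl[\lambda I - m\varepsilon^{2}(1-\varsigma)\Delta_{h}\bigr]\delta w = m\bigl(f(v_{1}^{1-\varsigma}) - f(v_{2}^{1-\varsigma})\bigr) + \kappa m(1-\varsigma)\delta v,
\]
with $\delta w = w_{1}-w_{2}$, $\delta v = v_{1}-v_{2}$. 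Since $v_{i}^{1-\varsigma}\in[-\beta,\beta]$ and $v_{1}^{1-\varsigma} - v_{2}^{1-\varsigma} = (1-\varsigma)\delta v$, the mean value theorem with $\kappa \geq \|f'\|_{C[-\beta,\beta]}$ yields $\|f(v_{1}^{1-\varsigma}) - f(v_{2}^{1-\varsigma})\|_{\infty} \leq \kappa(1-\varsigma)\|\delta v\|_{\infty}$, and Lemma \ref{lem:MBP_left} again gives the contraction estimate $\|\delta w\|_{\infty} \leq \frac{2\kappa m(1-\varsigma)}{B^{(1)}_{0} + \kappa m(1-\varsigma)}\|\delta v\|_{\infty}$.

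Finally, the contraction factor is strictly less than one iff $\kappa m(1-\varsigma) < B^{(1)}_{0}$. Condition \eqref{Cover:L21_tau} rearranges to $\kappa m(1-\varsigma) \leq \frac{4}{11\Gamma(2-\alpha)\tau_{1}^{\alpha}}$, while the explicit value $B^{(1)}_{0} = a^{(1)}_{0} = (1-\varsigma)^{1-\alpha}/(\Gamma(2-\alpha)\tau_{1}^{\alpha})$ with $(1-\varsigma)^{1-\alpha} \geq (1/2)^{1-\alpha} > 4/11$ (recall $\varsigma=\alpha/2$) forces $B^{(1)}_{0} > \frac{4}{11\Gamma(2-\alpha)\tau_{1}^{\alpha}}$, hence $\kappa m(1-\varsigma) < B^{(1)}_{0}$ strictly. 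Banach's fixed-point theorem on the complete set $\mathbb{V}_{\beta}$ then delivers a unique fixed point $\hat{\phi}^{1}\in\mathbb{V}_{\beta}$, so that $\|\hat{\phi}^{1}\|_{\infty}\le\beta$, the iteration \eqref{sch:L21_non_2} converges to it in the maximum norm, and $\hat{\phi}^{1}$ solves \eqref{sch:L21_non_3}; this completes the proof. I expect the second step—correctly identifying that the explicit Laplacian of $\phi^{0}$ must be absorbed into a single diagonally dominant five-point operator rather than bounded termwise—to be the genuinely delicate point, since it is what distinguishes the analysis from the $L1$ case and dictates the sharper restriction \eqref{MBP:L21_inner_tau}.
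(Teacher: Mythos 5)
Your proposal is correct and, for the MBP part, is exactly the paper's proof (Appendix B): the same rearrangement isolating the operator $\mathbf{M}_{1} = (B^{(1)}_{0} - \kappa\varsigma m)I + \varsigma m\varepsilon^{2}\Delta_{h}$ acting on $\phi^{0}$, the same observation that \eqref{MBP:L21_inner_tau} combined with the lower bound $B^{(1)}_{0} \geq 4\tau_{1}^{-\alpha}/(11\Gamma(2-\alpha))$ from Lemma \ref{lem:L21_DC}(i) makes every entry of this five-point operator nonnegative so that $\|\mathbf{M}_{1}\|_{\infty} = B^{(1)}_{0} - \kappa\varsigma m$, and the same application of Lemmas \ref{lem:MBP_left}--\ref{lem:MBP_right} to close the induction on $\mathfrak{s}$.

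In the convergence part you deviate slightly, and the comparison is worth recording. You define the contraction map in the \emph{unweighted} variable $\hat{\phi}^{1}$, so the self-mapping of $\mathbb{V}_{\beta}$ must be imported from the MBP part and hence requires \eqref{MBP:L21_inner_tau} in addition to \eqref{Cover:L21_tau}; the paper instead defines the map in the \emph{weighted} variable $\hat{\phi}^{1-\varsigma}$, writing the iteration as $\frac{B^{(1)}_{0}}{1-\varsigma}(w-\phi^{0}) = m\big(\varepsilon^{2}\Delta_{h}w + f(v) - \kappa(w-v)\big)$, in which the $\mathbf{M}_{1}\phi^{0}$ block never appears, so invariance of $\mathbb{V}_{\beta}$ and contraction follow from \eqref{Cover:L21_tau} alone (with the same factor $2\kappa m(1-\varsigma)/(B^{(1)}_{0}+\kappa m(1-\varsigma))$ after rescaling). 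Since the theorem's hypotheses are read cumulatively — the paper itself invokes both \eqref{MBP:L21_inner_tau}--\eqref{Cover:L21_tau} when citing this result in the proof of Theorem \ref{thm:MBP_L21} — your extra reliance on \eqref{MBP:L21_inner_tau} is harmless, just marginally less economical. On the other hand, your write-up is sharper on one point the paper glosses over with ``it is straightforward to verify'': you note explicitly that $B^{(1)}_{0} = (1-\varsigma)^{1-\alpha}/(\Gamma(2-\alpha)\tau_{1}^{\alpha})$ with $(1-\varsigma)^{1-\alpha} > 4/11$, which gives the \emph{strict} inequality $\kappa m(1-\varsigma) < B^{(1)}_{0}$ and hence a contraction factor strictly below $1$ even when \eqref{Cover:L21_tau} holds with equality.
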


\begin{remark} The unique solvability of the $L2$-$1_{\sigma}$-sESAV scheme can be discussed similarly as Theorem \ref{thm:cover_L1_iter}, by equivalently rewriting \eqref{sch:L21_1_1}--\eqref{sch:L21_1_2} and \eqref{sch:L21_n_1}--\eqref{sch:L21_n_2} in a unified form:
\begin{equation}\label{sch:L21_3}
	\begin{aligned}
		 &\bigl[ ( B^{(n)}_{0} + \kappa ( 1 - \varsigma ) \mm V^{n-\varsigma} ) I - ( 1 - \varsigma ) \mm \varepsilon^2  \Delta_h \bigr] \phi^{n} \\
       &  = \bigl[ ( B^{(n)}_{0} - \kappa \varsigma \mm V^{n-\varsigma} ) I 
             + \varsigma \mm \varepsilon^2  \Delta_h \bigr] \phi^{n-1} + \sum^{n-1}_{k=1} B^{(n)}_{n-k} \nabla_{\tau} \phi^{k} 
             + \mm V^{n-\varsigma} \big( f(\hat{\phi}^{n-\varsigma}) + \kappa \hat{\phi}^{n-\varsigma} \big),
	\end{aligned}
\end{equation}
\begin{equation}\label{sch:L21_4}
		R^{n}  = R^{n-1} + V^{n-\varsigma} \big\langle - f(\hat{\phi}^{n-\varsigma}) + \kappa( \phi^{n-\varsigma} - \hat{\phi}^{n-\varsigma} ), \phi^{n} - \phi^{n-1} \big\rangle.
\end{equation}
Since $ B^{(n)}_{0} > 0 $, $ 0<\varsigma <1$ and assumption (\textbf{A2}), it can be verified that the coefficient matrix of \eqref{sch:L21_3} is symmetric and positive definite. In fact, it is also strictly diagonally dominant. 
\end{remark}

\subsection{ Discrete energy stability and MBP }
The following two theorems establish the discrete energy stability and MBP for the proposed $L2$-$1_{\sigma}$-sESAV scheme \eqref{sch:L21_non_3}--\eqref{sch:L21_n_2}.

\begin{theorem}\label{thm:energy_L21}
    Assume that the conditions in Lemma \ref{lem:L21_DC_positive} and Theorem \ref{lem:L21_iter} hold, the $L2$-$1_{\sigma}$-sESAV scheme \eqref{sch:L21_non_3}--\eqref{sch:L21_n_2} is unconditionally energy-stable in the sense that $\me_h[\phi^{n}, R^{n}] \leq \me_h[\phi^0, R^0]$ for $ n \geq 1 $.
\end{theorem}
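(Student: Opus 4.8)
The plan is to follow the route used for the $L1$ case in Theorem~\ref{thm:energy_L1}, but to replace the elementary positive semi-definiteness of the kernels (Lemma~\ref{lem:DC_positive}) by the stronger discrete gradient structure of Lemma~\ref{lem:L21_DC_positive}, which is precisely what makes the non-positive-definite $L2$-$1_{\sigma}$ kernel amenable to a stability argument. Concretely, for each fixed $n \geq 1$ I would take the discrete inner product of the phase equation \eqref{sch:L21_n_1} with $\nabla_\tau \phi^n = \phi^n - \phi^{n-1}$, obtaining
\begin{equation*}
\langle \tilde{\mathbb{D}}^{\alpha}_{\tau} \phi^{n}, \nabla_\tau \phi^n \rangle = m\varepsilon^2 \langle \Delta_h \phi^{n-\varsigma}, \nabla_\tau \phi^n \rangle + m V^{n-\varsigma} \langle f(\hat{\phi}^{n-\varsigma}) - \kappa(\phi^{n-\varsigma} - \hat{\phi}^{n-\varsigma}), \nabla_\tau \phi^n \rangle .
\end{equation*}
The two terms on the right are then handled as in the integer-order energy arguments, while the left-hand side is controlled by the convolution-kernel lemma.

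For the diffusion term I would use discrete summation by parts (periodic boundary conditions) to write $\langle \Delta_h \phi^{n-\varsigma}, \nabla_\tau \phi^n\rangle = -\langle \nabla_h \phi^{n-\varsigma}, \nabla_h \nabla_\tau \phi^n\rangle$ and then exploit the offset structure $\phi^{n-\varsigma} = (1-\varsigma)\phi^n + \varsigma \phi^{n-1}$. A short computation gives the identity
\begin{equation*}
\langle \nabla_h \phi^{n-\varsigma}, \nabla_h \nabla_\tau \phi^n\rangle = \tfrac{1}{2}\big(\|\nabla_h \phi^n\|^2 - \|\nabla_h \phi^{n-1}\|^2\big) + \big(\tfrac{1}{2} - \varsigma\big)\|\nabla_h \nabla_\tau \phi^n\|^2 ,
\end{equation*}
and since $\varsigma = \alpha/2 < \tfrac12$ the last term is nonnegative, so the diffusion term dominates $\tfrac12(\|\nabla_h\phi^n\|^2 - \|\nabla_h\phi^{n-1}\|^2)$. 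For the nonlinear potential term I would invoke the update \eqref{sch:L21_4} for $R^n$, which shows $m V^{n-\varsigma}\langle f(\hat{\phi}^{n-\varsigma}) - \kappa(\phi^{n-\varsigma} - \hat{\phi}^{n-\varsigma}), \nabla_\tau\phi^n\rangle = -m\nabla_\tau R^n$; this is exactly the cancellation engineered by the sESAV construction. Combining these two facts yields the one-step estimate
\begin{equation*}
\me_h[\phi^n, R^n] - \me_h[\phi^{n-1}, R^{n-1}] \leq -\tfrac{1}{m}\langle \tilde{\mathbb{D}}^{\alpha}_{\tau}\phi^n, \nabla_\tau\phi^n\rangle .
\end{equation*}

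It remains to bound the right-hand side from below. Since the kernels act identically at every grid node, I would apply the scalar inequality of Lemma~\ref{lem:L21_DC_positive} to each sequence $\{\phi^k_{ij}\}$, multiply by $h^2$, and sum over $i,j$, giving
\begin{equation*}
\langle \tilde{\mathbb{D}}^{\alpha}_{\tau}\phi^n, \nabla_\tau\phi^n\rangle \geq \mg_h[\nabla_\tau\phi^n] - \mg_h[\nabla_\tau\phi^{n-1}] + \tfrac{\alpha a_0^{(n)}}{2-\alpha}\|\nabla_\tau\phi^n\|^2 ,
\end{equation*}
where $\mg_h[\cdot] := h^2\sum_{i,j}\mg[\,\cdot\,_{ij}]$ is a nonnegative grid functional vanishing at the initial level. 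Substituting this bound and introducing the augmented discrete energy $\tilde{\me}_h[\phi^n,R^n] := \me_h[\phi^n, R^n] + \tfrac{1}{m}\mg_h[\nabla_\tau\phi^n]$, the one-step estimate becomes $\tilde{\me}_h[\phi^n, R^n] - \tilde{\me}_h[\phi^{n-1}, R^{n-1}] \leq -\tfrac{\alpha a_0^{(n)}}{m(2-\alpha)}\|\nabla_\tau\phi^n\|^2 \leq 0$, so $\tilde{\me}_h$ is nonincreasing. Telescoping from $1$ to $n$ and using $\mg_h \geq 0$ together with its vanishing at $n=0$ then gives $\me_h[\phi^n, R^n] \leq \tilde{\me}_h[\phi^n, R^n] \leq \tilde{\me}_h[\phi^0, R^0] = \me_h[\phi^0, R^0]$, which is the asserted stability. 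The main obstacle, relative to the $L1$ argument, lies here: the $L2$-$1_{\sigma}$ kernels are not positive semi-definite in the simple sense, so one cannot merely sum nonnegative quadratic forms. The crux is therefore the correct use of Lemma~\ref{lem:L21_DC_positive} to produce the telescoping functional $\mg_h$, verifying that the offset-level diffusion term still yields a clean $\tfrac12(\|\nabla_h\phi^n\|^2 - \|\nabla_h\phi^{n-1}\|^2)$ difference (this is exactly where $\varsigma = \alpha/2 < \tfrac12$ enters), and checking that the $n=1$ step with kernel $B_0^{(1)}$ fits the same telescoping with $\mg_h$ vanishing initially.
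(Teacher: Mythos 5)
Your proposal is correct and follows essentially the same route as the paper: testing \eqref{sch:L21_n_1} with $\nabla_\tau\phi^n$, using the weighted identity (the paper's \eqref{identy:e1} with $\varsigma=\alpha/2<\tfrac12$) to extract the telescoping gradient-energy difference, cancelling the nonlinear term via the $R$-update \eqref{sch:L21_4}, and then invoking Lemma \ref{lem:L21_DC_positive} nodewise (the paper writes this as $\langle \mg[\nabla_\tau\phi^k],1\rangle$, which is exactly your $\mg_h$) to telescope the fractional-derivative terms. Your packaging of the sum into an augmented nonincreasing energy $\tilde{\me}_h$ is only a cosmetic reorganization of the paper's direct summation from $k=1$ to $n$.
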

\begin{proof}
Let  $ n = k $ in \eqref{sch:L21_n_1} and take the inner product with $\nabla_\tau \phi^k=\phi^{k}-\phi^{k-1}$. We then combine the resulting equation with \eqref{sch:L21_4} to derive
\begin{equation}\label{energy21_f1}
	\begin{aligned}
		R^{k} - R^{k-1} - \varepsilon^2 \big\langle \Delta_h \phi^{k-\varsigma}, \phi^{k}-\phi^{k-1} \big\rangle  = - \frac{1}{\mm} \big\langle \widetilde{\mathbb{D}}^{\alpha}_{\tau} \phi^{k}, \phi^{k}-\phi^{k-1} \big\rangle.
	\end{aligned}
\end{equation}
It is straightforward to verify that
\begin{equation}\label{identy:e1}
\left( (1-\sigma) a + \sigma b \right) ( a - b ) = a ( a - b ) - \sigma ( a - b )^2 = \frac{1}{2} a^2 - \frac{1}{2} b^2 + \frac{1 - 2\sigma}{2} ( a - b )^2,~ \forall \sigma\in \mathbb{R}.
\end{equation}
Thus, by taking $\sigma=\varsigma = \alpha/2 $ with $ \alpha \in (0,1) $ in \eqref{identy:e1}, the third left-hand side term of \eqref{energy21_f1} can be estimated below by
\begin{equation}\label{energy21_f2}
	\begin{aligned}
- \varepsilon^2 \big\langle \Delta_h \phi^{k-\varsigma}, \phi^{k}-\phi^{k-1} \big\rangle \geq \frac{\varepsilon^2}{2} \| \nabla_{h} \phi^{k} \|^2 - \frac{\varepsilon^2}{2} \| \nabla_{h} \phi^{k-1} \|^2.
	\end{aligned}
\end{equation}

Inserting \eqref{energy21_f2} into \eqref{energy21_f1}, we get
\begin{equation}\label{energy21_f3}
\begin{aligned}
	\me_h[\phi^{k}, R^{k}]-\me_h[\phi^{k-1}, R^{k-1}] \leq & - \frac{1}{\mm} \big\langle \widetilde{\mathbb{D}}^{\alpha}_{\tau} \phi^{k}, \phi^{k}-\phi^{k-1} \big\rangle, \quad k \geq 2.
\end{aligned}
\end{equation}
Similarly, it follows from \eqref{sch:L21_1_1} and \eqref{sch:L21_4} that \eqref{energy21_f3} also holds for $ k = 1 $.

Now, we sum the above inequality \eqref{energy21_f3} from $ k = 1 $ to $ n $ to derive 
\begin{equation*}
	\me_h[\phi^{n}, R^{n}]-\me_h[\phi^{0}, R^{0}] 
	\le  - \frac{1}{2\mm} \sum_{k=1}^{n} \left( \big\langle \mg[\nabla_\tau \phi^k], 1 \big\rangle - \big\langle \mg[\nabla_\tau \phi^{k-1}], 1 \big\rangle   \right) 
	 =  - \frac{1}{2\mm} \big\langle \mg[\nabla_\tau \phi^{n}], 1 \big\rangle \leq 0,
\end{equation*}
where Lemma \ref{lem:L21_DC_positive} has been used in the first inequality. The proof is completed.
\end{proof}

\begin{theorem}\label{thm:MBP_L21}
	Assume that the conditions in Theorem \ref{lem:L21_iter} hold. Furthermore, if the time-step satisfies
\begin{equation}\label{MBP:L21_tau}
	\tau_{n} \leq \sqrt[\alpha]{ 4/( 11 ( 1 - \varsigma ) \mm ( 4 \varepsilon^{2}/h^2 + \kappa K_{2} ) \Gamma( 2 - \alpha ) ) }, \quad n \geq 1,
 \end{equation}
    where $K_{2}$ is the upper bound of $ V(\cdot) $, the $L2$-$1_{\sigma}$-sESAV scheme \eqref{sch:L21_non_3}--\eqref{sch:L21_n_2} preserves the MBP for $\left\{\phi^n\right\}$, that is,
   \begin{equation*} 
	\text{if}~~\|\phi^{0} \|_{\infty} \leq \beta \quad  \Longrightarrow \quad  \|\phi^n \|_{\infty} \leq \beta, \quad \forall n \geq 1.
   \end{equation*}
\end{theorem}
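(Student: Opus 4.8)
The plan is to argue by complete mathematical induction on $n$, following the scheme of Theorem~\ref{thm:MBP_L1} but tracking carefully the extra diffusion contribution at level $n-1$ produced by the off-set evaluation $\phi^{n-\varsigma}=(1-\varsigma)\phi^{n}+\varsigma\phi^{n-1}$. Assume $\|\phi^{k}\|_{\infty}\le\beta$ for all $0\le k\le n-1$. For $n\ge 2$ the prediction formula \eqref{sch:L1_0} gives $\|\hat{\phi}^{n}\|_{\infty}\le\beta$, and for $n=1$ the same bound follows from Theorem~\ref{lem:L21_iter}; since $\hat{\phi}^{n-\varsigma}$ is a convex combination of $\hat{\phi}^{n}$ and $\phi^{n-1}$, the hypothesis $\|\phi^{n-1}\|_\infty\le\beta$ also yields $\|\hat{\phi}^{n-\varsigma}\|_{\infty}\le\beta$.

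I then work from the one-step form \eqref{sch:L21_3}. Factoring the left operator as $(1-\varsigma)m\varepsilon^{2}\big[\lambda I-\Delta_h\big]$ with $\lambda=\big(B_{0}^{(n)}+\kappa(1-\varsigma)mV^{n-\varsigma}\big)/\big((1-\varsigma)m\varepsilon^{2}\big)>0$ (positivity using $B_0^{(n)}>0$ and $V^{n-\varsigma}\ge0$ from (\textbf{A2})), Lemma~\ref{lem:MBP_left} gives
$$\|\phi^{n}\|_{\infty}\le\frac{\|\mathrm{RHS}\|_{\infty}}{B_{0}^{(n)}+\kappa(1-\varsigma)mV^{n-\varsigma}},$$
so it is enough to show $\|\mathrm{RHS}\|_{\infty}\le\big(B_{0}^{(n)}+\kappa(1-\varsigma)mV^{n-\varsigma}\big)\beta$. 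The predicted nonlinear term $mV^{n-\varsigma}\big(\kappa\hat{\phi}^{n-\varsigma}+f(\hat{\phi}^{n-\varsigma})\big)$ is bounded by $\kappa mV^{n-\varsigma}\beta$ via Lemma~\ref{lem:MBP_right}, using $\kappa\ge\|f'\|_{C[-\beta,\beta]}$ and $\|\hat{\phi}^{n-\varsigma}\|_{\infty}\le\beta$.

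The heart of the argument is the block formed by the discrete-memory sum together with the level-$(n-1)$ diffusion and stabilization. I apply Abel summation to $B_{0}^{(n)}\phi^{n-1}-\sum_{k=1}^{n-1}B_{n-k}^{(n)}\nabla_\tau\phi^{k}$ exactly as in \eqref{Formu:L1_MBP_2}, rewriting it as $\sum_{k=1}^{n-1}\big(B_{n-k-1}^{(n)}-B_{n-k}^{(n)}\big)\phi^{k}+B_{n-1}^{(n)}\phi^{0}$ with nonnegative weights by the monotonicity in Lemma~\ref{lem:L21_DC}(ii). Collecting all grid-point contributions, the coefficient of $\phi^{n-1}_{ij}$ reduces to $\big(B_{0}^{(n)}-B_{1}^{(n)}\big)-\kappa\varsigma mV^{n-\varsigma}-4\varsigma m\varepsilon^{2}/h^{2}$, every remaining weight (including the four neighbour weights $\varsigma m\varepsilon^{2}/h^{2}$ coming from $\Delta_h\phi^{n-1}$) is manifestly nonnegative, and the coefficients of all grid values sum to $B_{0}^{(n)}-\kappa\varsigma mV^{n-\varsigma}$. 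Hence, provided this single coefficient is nonnegative, the whole block is bounded by $\big(B_{0}^{(n)}-\kappa\varsigma mV^{n-\varsigma}\big)\beta$, and adding the nonlinear bound produces exactly $\big(B_{0}^{(n)}+\kappa(1-\varsigma)mV^{n-\varsigma}\big)\beta$, closing the induction.

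The main obstacle—and the feature that forces a time-step restriction here, in contrast with the unconditional $L1$ result—is securing the nonnegativity of that $\phi^{n-1}$-coefficient, i.e. $B_{0}^{(n)}-B_{1}^{(n)}\ge\varsigma m(\kappa V^{n-\varsigma}+4\varepsilon^{2}/h^{2})$. I would extract this from Lemma~\ref{lem:L21_DC}(iii): with $\varsigma=\alpha/2$ the inequality $\tfrac{1-2\varsigma}{1-\varsigma}B_{0}^{(n)}>B_{1}^{(n)}$ rearranges to $B_{0}^{(n)}-B_{1}^{(n)}>\tfrac{\varsigma}{1-\varsigma}B_{0}^{(n)}$, so together with $V^{n-\varsigma}\le K_{2}$ from (\textbf{A2}) it suffices to have $B_{0}^{(n)}\ge(1-\varsigma)m(\kappa K_{2}+4\varepsilon^{2}/h^{2})$. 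The lower bound $B_{0}^{(n)}\ge\tfrac{4}{11\Gamma(2-\alpha)\tau_{n}^{\alpha}}$ of Lemma~\ref{lem:L21_DC}(i) combined with \eqref{MBP:L21_tau} delivers precisely this; for $n=1$ the memory sum and $B_1^{(1)}$ are absent, and the simpler requirement $B_{0}^{(1)}\ge\varsigma m(\kappa K_{2}+4\varepsilon^{2}/h^{2})$, again implied by \eqref{MBP:L21_tau}, completes the base case. Thus the off-set diffusion term $\varsigma m\varepsilon^{2}\Delta_h\phi^{n-1}$ is what must be dominated by the kernel gap $B_0^{(n)}-B_1^{(n)}$, and property (iii) is exactly the ingredient that converts this gap into a usable multiple of $B_0^{(n)}$, thereby yielding the stated form of \eqref{MBP:L21_tau}.
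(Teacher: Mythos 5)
Your proposal is correct and takes essentially the same route as the paper's proof: the same complete induction, the same reduction via Lemmas \ref{lem:MBP_left}--\ref{lem:MBP_right}, the same Abel summation with kernel monotonicity, and exactly the same chain $B_0^{(n)}-B_1^{(n)}>\tfrac{\varsigma}{1-\varsigma}B_0^{(n)}\geq \tfrac{4\varsigma\tau_n^{-\alpha}}{11(1-\varsigma)\Gamma(2-\alpha)}\geq \kappa\varsigma m V^{n-\varsigma}+\tfrac{4\varsigma m\varepsilon^2}{h^2}$ from Lemma \ref{lem:L21_DC}(i),(iii) and \eqref{MBP:L21_tau}. The only difference is presentational: you bound the memory sum and the level-$(n-1)$ block as one nonnegative weighted combination whose coefficients sum to $B_0^{(n)}-\kappa\varsigma m V^{n-\varsigma}$, whereas the paper bounds $\|\mathbf{M}_2^n\phi^{n-1}\|_\infty$ and $\|\Xi^{n-2}(\phi)\|_\infty$ separately, which amounts to the same estimate.
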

\begin{proof}
For $ n = 1 $, it follows from \eqref{sch:L21_3} that
\begin{equation}\label{Formu:L21SAV_MBP_0}
	\bigl[ ( B^{(1)}_{0} + \kappa ( 1 - \varsigma )  \mm V^{1-\varsigma} ) I - ( 1 - \varsigma ) \mm  \varepsilon^{2} \Delta_{h} \bigr] \phi^{1} 
    = \mathbf{M}_{2}^{1} \phi^{0} + \mm V^{1-\varsigma} \big( \kappa \hat{\phi}^{1-\varsigma} + f( \hat{\phi}^{1-\varsigma} ) \big),
\end{equation}
with $ \mathbf{M}_{2}^{1} := ( B^{(1)}_{0} - \kappa \varsigma \mm V^{1-\varsigma} ) I + \varsigma \mm \varepsilon^{2} \Delta_{h} $. Theorem \ref{lem:L21_iter} demonstrates that  
$ \| \hat{\phi}^{1} \|_{\infty} \le \beta $ when $\tau_{1}$ satisfies \eqref{MBP:L21_inner_tau}--\eqref{Cover:L21_tau}, thereby ensuring that $ \| \hat{\phi}^{1-\varsigma} \|_{\infty} \leq \beta $. Then, the application of Lemmas \ref{lem:MBP_left}--\ref{lem:MBP_right} leads to
\begin{equation}\label{Formu:L21SAV_MBP_1}
	( B^{(1)}_{0} + \kappa ( 1 - \varsigma )  \mm V^{1-\varsigma} ) \| \phi^{1} \|_{\infty} 
     \le  \| \mathbf{M}_{2}^{1} \phi^{0}  \|_{\infty} + \kappa \mm V^{1-\varsigma} \beta.
\end{equation}
Moreover, if \eqref{MBP:L21_tau} holds for $n=1$, all elements of $\mathbf{M}_2^{1}$ are nonnegative, and thus
$$
 \|\mathbf{M}_2^{1} \|_{\infty} = B_0^{(1)} - \kappa \varsigma \mm V^{1-\varsigma},
$$
which yields $
 \|\mathbf{M}_2^{1} \phi^{0} \|_{\infty} 
   \le \|\mathbf{M}_2^{1} \|_{\infty} \|\phi^{0} \|_{\infty} 
   \le  B_0^{(1)} \beta - \kappa \varsigma \mm V^{1-\varsigma} \beta
$.
Inserting this estimate into \eqref{Formu:L21SAV_MBP_1}, we obtain
$ \| \phi^{1} \|_{\infty} \leq \beta $.

For $ 2 \leq n \leq N $, suppose that $ \|\phi^k\|_{\infty} \leq \beta $ for $ 0 \leq k \leq n-1 $.
It follows from \eqref{sch:L1_0} and the definition of $ \hat{\phi}^{n-\varsigma} $ that $ \| \hat{\phi}^{n-\varsigma} \|_{\infty} \leq \beta $. Thus, it remains to verify that $ \|\phi^n\|_{\infty} \leq \beta $. 
Noting that \eqref{Formula:L21} can be expressed as
\begin{equation*}\label{Formu:L21SAV_MBP_3}
	\begin{aligned}
		\widetilde{\mathbb{D}}^{\alpha}_{\tau} \phi^{n} 
        = B_{0}^{(n)} \phi^{n} - ( B_{0}^{(n)} - B_{1}^{(n)} ) \phi^{n-1} - \Xi^{n-2} (\phi), ~~\Xi^{n-2}(\phi) := \sum_{k=1}^{n-2} ( B_{n-k-1}^{(n)} - B_{n-k}^{(n)} ) \phi^k + B_{n-1}^{(n)} \phi^0.
	\end{aligned}
\end{equation*}
Then, equation \eqref{sch:L21_n_1} can be rewritten as
\begin{equation}\label{Formu:L21SAV_MBP_4}
	\begin{aligned}
		& \bigl[ ( B^{(n)}_{0} + \kappa ( 1 - \varsigma ) \mm V^{n-\varsigma} ) I - ( 1 - \varsigma ) \mm \varepsilon^2  \Delta_h \bigr] \phi^{n} \\
		& \quad = \mathbf{M}_{2}^{n} \phi^{n-1} + \Xi^{n-2} (\phi) + \mm V^{n-\varsigma} \bigl( f(\hat{\phi}^{n-\varsigma})+ \kappa \hat{\phi}^{n-\varsigma}  \bigr) ,
	\end{aligned}
\end{equation}
with
$
\mathbf{M}_{2}^{n} := ( B_{0}^{(n)} - B_{1}^{(n)} - \kappa \varsigma \mm V^{n-\varsigma} ) I + \varsigma \mm \varepsilon^2 \Delta_h.
$

Under the time-step condition \eqref{MBP:L21_tau} and Lemma \ref{lem:L21_DC} (i) and (iii), we see
$$
B_0^{(n)} - B_1^{(n)}  > \frac{ \varsigma }{ 1- \varsigma } B_0^{(n)} \geq \frac{ 4 \varsigma \tau_{n}^{-\alpha} }{ 11(1- \varsigma) \Gamma(2-\alpha) } \geq \kappa \varsigma \mm V^{n-\varsigma} + \frac{4 \varsigma \mm \varepsilon^2}{h^2},
$$
which further means that all elements of $\mathbf{M}_2^{n}$ are nonnegative and
$$
 \|\mathbf{M}_2^{n} \|_{\infty} = B_0^{(n)} - B_1^{(n)} - \kappa \varsigma \mm V^{n-\varsigma}.
$$
Consequently, the hypothesis $ \|\phi^{n-1}\|_{\infty} \leq \beta $ yields
\begin{equation}\label{Formu:L21SAV_MBP_6}
	 \|\mathbf{M}_2^{n} \phi^{n-1} \|_{\infty} \le \|\mathbf{M}_2^{n} \|_{\infty} \|\phi^{n-1} \|_{\infty} 
     \le ( B_0^{(n)} - B_1^{(n)} - \kappa \varsigma \mm V^{n-\varsigma} ) \beta.
\end{equation}
Moreover, the positivity and monotonicity of the discrete kernels $ \{B^{(n)}_{n-k}\} $, along with the assumption that $\|\phi^{k}\|_{\infty} \leq \beta$ for $0 \leq k \leq n-1$, lead to
\begin{equation}\label{Formu:L21SAV_MBP_7}
	\begin{aligned}
		 \| \Xi^{n-2} (\phi)  \|_{\infty} \leq \sum^{n-2}_{k=1} ( B^{(n)}_{n-k-1} - B^{(n)}_{n-k} ) \| \phi^{k} \|_{ \infty } + B^{(n)}_{n-1} \| \phi^{0} \|_{ \infty } \leq B_{1}^{(n)} \beta.
	\end{aligned}
\end{equation}

Therefore, collecting the estimates \eqref{Formu:L21SAV_MBP_6}--\eqref{Formu:L21SAV_MBP_7} and using Lemmas \ref{lem:MBP_left}--\ref{lem:MBP_right}, it follows from \eqref{Formu:L21SAV_MBP_4} that
\begin{equation}\label{Formu:L21SAV_MBP_8}
	\begin{aligned}
		 ( B^{(n)}_{0} + \kappa ( 1 - \varsigma ) \mm V^{n-\varsigma} ) \| \phi^{n} \|_{\infty} 
		& \leq \big\| \big[ ( B^{(n)}_{0} + \kappa ( 1 - \varsigma ) \mm V^{n-\varsigma} ) I - ( 1 - \varsigma ) \mm \varepsilon^2  \Delta_h \big] \phi^{n} \big\|_{\infty} \\
		& = \big\| \mathbf{M}_{2}^{n} \phi^{n-1} + \Xi^{n-2} (\phi) + \mm V^{n-\varsigma} \bigl(  f(\hat{\phi}^{n-\varsigma}) +\kappa \hat{\phi}^{n-\varsigma} \bigr) \big\|_{\infty} \\
		& \leq B_0^{(n)} \beta + \kappa ( 1 - \varsigma ) \mm V^{n-\varsigma} \beta.
	\end{aligned}
\end{equation}
This immediately implies $ \| \phi^{n} \|_{\infty} \leq \beta $ and completes the proof.
\end{proof}

\subsection{ Novel unbalanced stabilized $L2$-$1_{\sigma}$-sESAV scheme }\label{sub:unbalanced}
In cases that the auxiliary functional $ V(\cdot) $ satisfies the assumptions (\textbf{A1})--(\textbf{A3}) with $ K_{2} = 1 $,  we can further improve the $L2$-$1_{\sigma}$-sESAV scheme by introducing a novel unbalanced stabilization term 
\begin{equation}\label{def:stab_term}
	\kappa\, ( \phi^{n-\varsigma} - V^{n-\varsigma} \hat{\phi}^{n-\varsigma} ), \quad n \geq 1.
\end{equation}
The imbalance, reflected by $ V^{n-\varsigma} \neq 1 $ for $ n \geq 1 $ generally, is beneficial for the preservation of MBP especially for long-term simulations with a large time step, see Remark \ref{rem:MBP} and Figures \ref{figEx6_1}--\ref{figEx6_2} in Example \ref{subsub:unba}. Using the modified stabilization term \eqref{def:stab_term}, a novel unbalanced $L2$-$1_{\sigma}$-sESAV scheme is proposed as follows:

\paragraph{\indent \bf Step 1} Let $\hat{\phi}^{0} = \phi^{0}$.
For $ n = 1 $, first solve a predicted solution $ \hat{\phi}^{1} $ from the fully-implicit difference scheme \eqref{sch:L21_non_3}, then find $ \{ \phi^{1}, R^{1} \} \in \mathbb{V}_{h} \times \mathbb{R} $ by 
\begin{equation}\label{sch:L21_un_1_1}
	\begin{aligned}
		\widetilde{\mathbb{D}}^{\alpha}_{\tau} \phi^{1}  = \mm \bigl( \varepsilon^2 \Delta_h \phi^{1-\varsigma} + V^{1-\varsigma} f(\hat{\phi}^{1-\varsigma}) - \kappa ( \phi^{1-\varsigma} - V^{1-\varsigma} \hat{\phi}^{1-\varsigma} ) \bigr),
	\end{aligned}
\end{equation}
\begin{equation}\label{sch:L21_un_1_2}
	\begin{aligned}
		\mathbb{D}_{\tau} R^{1}  = - V^{1-\varsigma} \big\langle f(\hat{\phi}^{1-\varsigma}), \mathbb{D}_{\tau} \phi^{1} \rangle + \kappa \langle \phi^{1-\varsigma} - V^{1-\varsigma} \hat{\phi}^{1-\varsigma} , \mathbb{D}_{\tau} \phi^{1} \big\rangle.
	\end{aligned}
\end{equation}

\paragraph{\indent \bf Step 2} For $ n \geq 2 $,  given $ \phi^{n-1}$ and $ \phi^{n-2} $, first predict a MBP-preserving solution $ \hat{\phi}^{n} $ by \eqref{sch:L1_0}, then find $ \{ \phi^{n}, R^{n} \} \in \mathbb{V}_{h} \times \mathbb{R} $ such that
\begin{equation}\label{sch:L21_un_n_1}
	\begin{aligned}
		\widetilde{\mathbb{D}}^{\alpha}_{\tau} \phi^{n}  = \mm \bigl( \varepsilon^2 \Delta_h \phi^{n-\varsigma} + V^{n-\varsigma} f(\hat{\phi}^{n-\varsigma}) - \kappa ( \phi^{n-\varsigma} - V^{n-\varsigma} \hat{\phi}^{n-\varsigma} ) \bigr),
	\end{aligned}
\end{equation}
\begin{equation}\label{sch:L21_un_n_2}
	\begin{aligned}
		\mathbb{D}_{\tau} R^{n}  = - V^{n-\varsigma} \big\langle f(\hat{\phi}^{n-\varsigma}), \mathbb{D}_{\tau} \phi^{n} \big\rangle + \kappa \big\langle \phi^{n-\varsigma} - V^{n-\varsigma} \hat{\phi}^{n-\varsigma} , \mathbb{D}_{\tau} \phi^{n} \big\rangle.
	\end{aligned}
\end{equation}

Following the proofs of Theorems \ref{thm:energy_L21}--\ref{thm:MBP_L21}, together with Lemmas \ref{lem:L21_DC}--\ref{lem:L21_iter}, the preservation of discrete energy stability and MBP for the unbalanced $L2$-$1_{\sigma}$-sESAV scheme can be established in a similar manner.
\begin{theorem}\label{thm:energy_un_L21}
	 Assume that the conditions in Lemma \ref{lem:L21_DC_positive} and Theorem \ref{lem:L21_iter} hold, the unbalanced $L2$-$1_{\sigma}$-sESAV scheme \eqref{sch:L21_un_1_1}--\eqref{sch:L21_un_n_2} is unconditionally energy-stable in the sense that $\me_h[\phi^{n}, R^{n}] \leq \me_h[\phi^0, R^0]$ for $ n \geq 1 $.
\end{theorem}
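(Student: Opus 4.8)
The plan is to mirror the proof of Theorem \ref{thm:energy_L21} almost verbatim, since the unbalanced scheme is engineered precisely so that the redefined auxiliary-variable update absorbs the modified stabilization term and reproduces the very same energy identity \eqref{energy21_f1}. The crux is to verify that combining the discrete $\phi$-equation \eqref{sch:L21_un_n_1} with the discrete $R$-equation \eqref{sch:L21_un_n_2} still yields the clean balance \eqref{energy21_f1}; once this is in hand, the remainder coincides literally with the balanced case.

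First I would fix $n=k$ in \eqref{sch:L21_un_n_1} and take the discrete inner product with $\nabla_\tau \phi^k=\phi^k-\phi^{k-1}=\tau_k\mathbb{D}_\tau\phi^k$, obtaining
$$\big\langle \tilde{\mathbb{D}}^{\alpha}_{\tau}\phi^k,\nabla_\tau\phi^k\big\rangle = m\varepsilon^2\big\langle \Delta_h\phi^{k-\varsigma},\nabla_\tau\phi^k\big\rangle + m\big\langle V^{k-\varsigma}f(\hat{\phi}^{k-\varsigma})-\kappa(\phi^{k-\varsigma}-V^{k-\varsigma}\hat{\phi}^{k-\varsigma}),\nabla_\tau\phi^k\big\rangle.$$
Next I would multiply the update \eqref{sch:L21_un_n_2} by $\tau_k$, so that $\tau_k\mathbb{D}_\tau R^k=R^k-R^{k-1}$ and $\tau_k\mathbb{D}_\tau\phi^k=\nabla_\tau\phi^k$, yielding exactly the bracketed inner product above with opposite sign:
$$R^k-R^{k-1}=\big\langle -V^{k-\varsigma}f(\hat{\phi}^{k-\varsigma})+\kappa(\phi^{k-\varsigma}-V^{k-\varsigma}\hat{\phi}^{k-\varsigma}),\nabla_\tau\phi^k\big\rangle.$$
Substituting eliminates both the nonlinear potential term and the entire unbalanced stabilization term, and after dividing by $m$ and rearranging I recover precisely \eqref{energy21_f1}. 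For $k=1$ the identical identity follows from \eqref{sch:L21_un_1_1}--\eqref{sch:L21_un_1_2}.

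From here the argument is identical to Theorem \ref{thm:energy_L21}. I would apply the elementary identity \eqref{identy:e1} with $\sigma=\varsigma=\alpha/2$ to bound the Laplacian contribution from below, writing (by discrete summation by parts under the periodic boundary conditions) $\langle\Delta_h\phi^{k-\varsigma},\phi^k-\phi^{k-1}\rangle=-\langle\nabla_h\phi^{k-\varsigma},\nabla_h(\phi^k-\phi^{k-1})\rangle$ and exploiting $1-2\varsigma>0$ to discard the nonnegative remainder, which gives \eqref{energy21_f2}. This delivers the one-step decay \eqref{energy21_f3}, namely $\me_h[\phi^k,R^k]-\me_h[\phi^{k-1},R^{k-1}]\le-\frac{1}{m}\langle\tilde{\mathbb{D}}^{\alpha}_{\tau}\phi^k,\nabla_\tau\phi^k\rangle$. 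Summing over $k=1,\dots,n$ and invoking Lemma \ref{lem:L21_DC_positive}, whose nonnegative functional $\mg$ telescopes the right-hand side down to $-\frac{1}{m}\langle\mg[\nabla_\tau\phi^n],1\rangle\le0$, establishes the claimed unconditional stability $\me_h[\phi^n,R^n]\le\me_h[\phi^0,R^0]$.

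The only genuinely new point, and the step to check with care, is the cancellation in the second paragraph: one must confirm that the redefined update \eqref{sch:L21_un_n_2} pairs exactly against the unbalanced term $\kappa(\phi^{n-\varsigma}-V^{n-\varsigma}\hat{\phi}^{n-\varsigma})$ in \eqref{sch:L21_un_n_1}, so that no residual stabilization contribution survives in \eqref{energy21_f1}. I expect this to be where the specific form of the discrete $R_t$ matters; note that the energy law is insensitive to the sign or boundedness of $V^{n-\varsigma}$ and hinges only on the matched structure of the two equations, which is precisely why the stabilization may be made unbalanced without destroying energy stability. Everything downstream of this identity is routine, coinciding with the balanced proof.
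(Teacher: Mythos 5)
Your proposal is correct and follows exactly the route the paper intends: the paper omits a detailed proof of Theorem \ref{thm:energy_un_L21}, stating only that it follows the proof of Theorem \ref{thm:energy_L21}, and your argument supplies precisely that, with the key cancellation correctly verified — the $R$-update \eqref{sch:L21_un_n_2} is built to equal the negative of the full nonlinear-plus-stabilization bracket in \eqref{sch:L21_un_n_1}, so the identity \eqref{energy21_f1} holds verbatim and the remaining steps (identity \eqref{identy:e1} with $1-2\varsigma>0$, then telescoping via Lemma \ref{lem:L21_DC_positive} with $\mg[\nabla_\tau\phi^0]=0$) go through unchanged. Your closing observation that energy stability is insensitive to the sign or size of $V^{n-\varsigma}$ and depends only on the matched structure of the two equations is also exactly right.
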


\begin{theorem}\label{thm:MBP_un_L21} 
	Assume that the conditions in Theorem \ref{lem:L21_iter} hold. If the time-step satisfies
	\begin{equation}\label{MBP:un_L21_tau}
		\tau_{n} \leq \sqrt[\alpha]{ 4/( 11 ( 1 - \varsigma ) \mm ( 4 \varepsilon^{2}/h^2 + \kappa ) \Gamma( 2 - \alpha ) ) }, \quad n \geq 1,
	\end{equation}
	the unbalanced $L2$-$1_{\sigma}$-sESAV scheme \eqref{sch:L21_un_1_1}--\eqref{sch:L21_un_n_2} preserves the MBP for $\left\{\phi^n\right\}$, that is,
   \begin{equation*}\label{MBP:L1}
	\text{if}~~\|\phi^{0} \|_{\infty} \leq \beta \quad  \Longrightarrow \quad  \|\phi^n \|_{\infty} \leq \beta, \quad \forall n \geq 1.
   \end{equation*}
\end{theorem}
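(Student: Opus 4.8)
The plan is to follow the complete-induction argument of the proof of Theorem~\ref{thm:MBP_L21} almost verbatim, the sole structural change being that the two stabilization coefficients no longer carry the factor $V^{n-\varsigma}$. First I would substitute $\phi^{n-\varsigma} = (1-\varsigma)\phi^n + \varsigma\phi^{n-1}$ and $\hat{\phi}^{n-\varsigma} = (1-\varsigma)\hat{\phi}^n + \varsigma\phi^{n-1}$ into \eqref{sch:L21_un_n_1} and use the expansion $\tilde{\mathbb{D}}^{\alpha}_{\tau}\phi^n = B_0^{(n)}\phi^n - (B_0^{(n)}-B_1^{(n)})\phi^{n-1} - \Xi^{n-2}(\phi)$ to recast the update as
\begin{equation*}
\big[(B_0^{(n)} + \kappa(1-\varsigma)m)I - (1-\varsigma)m\varepsilon^2\Delta_h\big]\phi^n = \mathbf{M}_2^n\phi^{n-1} + \Xi^{n-2}(\phi) + mV^{n-\varsigma}\big(\kappa\hat{\phi}^{n-\varsigma} + f(\hat{\phi}^{n-\varsigma})\big),
\end{equation*}
with $\mathbf{M}_2^n := (B_0^{(n)} - B_1^{(n)} - \kappa\varsigma m)I + \varsigma m\varepsilon^2\Delta_h$. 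Compared with \eqref{Formu:L21SAV_MBP_4}, the diagonal coefficient on the left and the shift inside $\mathbf{M}_2^n$ now read $\kappa(1-\varsigma)m$ and $\kappa\varsigma m$ rather than $\kappa(1-\varsigma)mV^{n-\varsigma}$ and $\kappa\varsigma mV^{n-\varsigma}$; this is exactly what replaces $\kappa K_2$ by $\kappa$ in the admissible time-step bound.

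Next, arguing by induction I would assume $\|\phi^k\|_\infty \le \beta$ for $0 \le k \le n-1$; the clamping \eqref{sch:L1_0} together with $\hat{\phi}^{n-\varsigma} = (1-\varsigma)\hat{\phi}^n + \varsigma\phi^{n-1}$ then gives $\|\hat{\phi}^{n-\varsigma}\|_\infty \le \beta$ (for $n=1$ this is supplied by Theorem~\ref{lem:L21_iter}). To control $\mathbf{M}_2^n$ I would invoke Lemma~\ref{lem:L21_DC}(i) and (iii) exactly as in \eqref{Formu:L21SAV_MBP_6}, obtaining $B_0^{(n)} - B_1^{(n)} > \tfrac{\varsigma}{1-\varsigma}B_0^{(n)} \ge \tfrac{4\varsigma\tau_n^{-\alpha}}{11(1-\varsigma)\Gamma(2-\alpha)}$; under the restriction \eqref{MBP:un_L21_tau} the right-hand side dominates $\varsigma m(\kappa + 4\varepsilon^2/h^2)$, so every entry of $\mathbf{M}_2^n$ is nonnegative and $\|\mathbf{M}_2^n\|_\infty = B_0^{(n)} - B_1^{(n)} - \kappa\varsigma m$. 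The remaining two pieces are routine: $\|\mathbf{M}_2^n\phi^{n-1}\|_\infty \le (B_0^{(n)} - B_1^{(n)} - \kappa\varsigma m)\beta$ and, by the telescoping of \eqref{Formu:L21SAV_MBP_7}, $\|\Xi^{n-2}(\phi)\|_\infty \le B_1^{(n)}\beta$, while Lemma~\ref{lem:MBP_right} gives $\|\kappa\hat{\phi}^{n-\varsigma} + f(\hat{\phi}^{n-\varsigma})\|_\infty \le \kappa\beta$, so with $0 \le V^{n-\varsigma} \le K_2 = 1$ the nonlinear term is bounded by $\kappa m V^{n-\varsigma}\beta$.

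Applying Lemma~\ref{lem:MBP_left} to the operator on the left and collecting the three bounds, I arrive at
\begin{equation*}
(B_0^{(n)} + \kappa(1-\varsigma)m)\|\phi^n\|_\infty \le \big(B_0^{(n)} - \kappa\varsigma m + \kappa m V^{n-\varsigma}\big)\beta = \big(B_0^{(n)} + \kappa m(V^{n-\varsigma} - \varsigma)\big)\beta.
\end{equation*}
The main obstacle — and the step that genuinely distinguishes this proof from the balanced one — is closing this last inequality. In the balanced case the $V^{n-\varsigma}$ factors cancel identically; here they do not, and I must instead exploit the boundedness $V^{n-\varsigma} \le 1$ from assumption (\textbf{A2}) with $K_2 = 1$, which yields $V^{n-\varsigma} - \varsigma \le 1 - \varsigma$, whence the right-hand side is at most $(B_0^{(n)} + \kappa(1-\varsigma)m)\beta$ and $\|\phi^n\|_\infty \le \beta$ follows. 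The base case $n=1$ is handled by the identical chain with $\mathbf{M}_2^1 = (B_0^{(1)} - \kappa\varsigma m)I + \varsigma m\varepsilon^2\Delta_h$ (no $B_1^{(1)}$ or $\Xi$ term), its nonnegativity following from $B_0^{(1)} = a_0^{(1)}$ and \eqref{MBP:un_L21_tau} at $n=1$, and the same use of $V^{1-\varsigma} \le 1$.
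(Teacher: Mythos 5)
Your proposal is correct and follows essentially the same route as the paper: the paper's proof simply refers back to the balanced derivation \eqref{Formu:L21SAV_MBP_0}--\eqref{Formu:L21SAV_MBP_8} and lands on the inequality $( B^{(n)}_{0} + \kappa ( 1 - \varsigma ) m ) \| \phi^{n} \|_{\infty} \leq ( B_0^{(n)} + \kappa ( 1 - \varsigma ) m ) \beta + \mathcal{P}^{n}$ with $\mathcal{P}^{n} = \kappa m ( V^{n-\varsigma} - 1 ) \beta$, which is algebraically identical to your bound $\big(B_0^{(n)} + \kappa m(V^{n-\varsigma} - \varsigma)\big)\beta$. Your closing step, using $V^{n-\varsigma} \le K_2 = 1$ from assumption (\textbf{A2}) so that $V^{n-\varsigma} - \varsigma \le 1 - \varsigma$, is exactly the paper's observation that $\mathcal{P}^{n} \le 0$, so the two proofs coincide, with yours merely spelling out the details the paper leaves implicit.
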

\begin{proof}
Together with a similar treatment to \eqref{Formu:L21SAV_MBP_0}--\eqref{Formu:L21SAV_MBP_8}, we conclude from  \eqref{sch:L21_un_1_1} and \eqref{sch:L21_un_n_1} that
\begin{equation}\label{Formu:unL21SAV_MBP_1}
	( B^{(n)}_{0} + \kappa ( 1 - \varsigma ) \mm ) \| \phi^{n} \|_{\infty}  \leq B_0^{(n)} \beta + \kappa ( 1 - \varsigma ) \mm  ) \beta + \mathcal{P}^{n}, \quad n\geq 1,
\end{equation}
under the ime-step condition \eqref{MBP:un_L21_tau},
where $ \mathcal{P}^{n} := \kappa \mm ( V^{n-\varsigma} - 1 ) \beta $. The assumption (\textbf{A2}) with $ K_{2} = 1 $ ensures that $ \mathcal{P}^{n} \leq 0 $, thereby further guarantees that the unbalanced $L2$-$1_{\sigma}$-sESAV scheme preserves the discrete MBP.
\end{proof}

\begin{remark}\label{rem:MBP} The result \eqref{Formu:unL21SAV_MBP_1} directly leads to
	\begin{equation*}
			\| \phi^n \|_{\infty} \leq \beta +  \frac{ \mathcal{P}^n }{ B^{(n)}_{0} + \kappa ( 1 - \varsigma ) \mm }.
	\end{equation*}
From assumption (\textbf{A3}), we know that $ g_h(\hat{\phi}^{n-\varsigma}, R^{n-1}) $ and $ V^{n-\varsigma} $ may deviate significantly from $ 1 $ for a relatively larger time step. In this sense, the negative term $ { \mathcal{P}^n }/{ (B^{(n)}_{0} + \kappa ( 1 - \varsigma ) \mm )} $ can be considered as a penalty term to generate solutions with smaller maximum-norm. However, no such term exists in the standard $L2$-$1_{\sigma}$-sESAV scheme, indicating that, compared to the standard one, the proposed unbalanced $L2$-$1_{\sigma}$-sESAV scheme is more effective in preserving the discrete MBP, see also the numerical results in Figures \ref{figEx6_1}--\ref{figEx6_2} for comparisons.
\end{remark}

\subsection{ Numerical experiments }
This subsection is devoted to numerical tests of both the $L2$-$1_{\sigma}$-sESAV and unbalanced $L2$-$1_{\sigma}$-sESAV schemes. Note that for the $L2$-$1_{\sigma}$ type methods, a time-step ratio constraint $ r_{k} \geq 4/7 $ is required to preserve the discrete energy stability (cf. Theorems \ref{thm:energy_L21} and \ref{thm:energy_un_L21}) and MBP (cf. Theorems \ref{thm:MBP_L21} and \ref{thm:MBP_un_L21}). Thus, in the numerical simulations, the following modified adaptive time-stepping strategy 
\begin{equation}\label{Alg2:adaptive}
		\tau_{k} = \max \Bigl\{ \max \Bigl\{ \tau_{\min} , \f{\tau_{\max}}{ \sqrt{ 1 + \eta \vert \p_{\tau} E^{k-1} \vert^{2} } } \Bigr\}, r_{\min} \tau_{k-1} \Bigr\} 
\end{equation}
is adopted, where $ r_{\min} \geq 4/7 $ is a user-defined constant. 

\subsubsection{Temporal convergence}\label{subsec:Ls_TimeCon}
First, we verify the temporal accuracy of both the balanced and unbalanced $L2$-$1_{\sigma}$-sESAV schemes, and  the numerical results are obtained by solving the exterior-forced problem \eqref{Model:tAC_source} with a fixed fractional order $ \alpha = 0.8 $ on $ 200 \times 200 $ uniform spatial grids. The exact solution is prescribed as $ \phi( \mathbf{x}, t ) = 0.5\, \omega_{1+\iota} (t) \sin x \sin y$ through an appropriate choice of the linear term $ g( \mathbf{x}, t ) $. For different regularity parameters $ \iota = 0.3, 0.5 $ and $ 0.8 $, the mesh grading parameter is set to be $ \gamma = 2/\iota $ such that the theoretical convergence order is $ \min\{ 2, \gamma \iota \}= 2$. The other setup is the same as in Example \ref{subsec:L1_TimeCon}. Numerical results for different $ \iota$ and potentials are summarized in Tables \ref{Ex5:tab2}--\ref{Ex5:tab3}, from which we can clearly observe that (i) both methods exhibit second-order temporal accuracy as expected; and (ii) they generate numerical solutions with the same magnitude accuracy. 
\begin{table}[!ht]
	\vspace{-12pt}
	\caption{Time accuracy of the two $L2$-$1_{\sigma}$-sESAV schemes with $ \alpha = 0.8 $ and $ \gamma = 2/\iota $: the double-well potential} \label{Ex5:tab2}
	{\footnotesize\begin{tabular*}{\columnwidth}{@{\extracolsep\fill}cccccccc@{\extracolsep\fill}}
			\toprule
			\multicolumn{1}{c}{\multirow{2}{*}{method}} & \multicolumn{1}{c}{\multirow{2}{*}{$ N $}} & \multicolumn{2}{c}{$\iota=0.3$} & \multicolumn{2}{c}{$\iota=0.5$} & \multicolumn{2}{c}{$\iota=0.8$} \\
			\cmidrule{3-4} 
			\cmidrule{5-6}
			\cmidrule{7-8}
			&      & $ e(N) $ & Order & $ e(N) $ & Order & $ e(N) $ & Order \\
			\midrule
            &  20     & $9.74 \times 10^{-3}$   &  ---   &  $2.67 \times 10^{-3}$ & ---   & $3.35 \times 10^{-4}$   &  ---   \\
			  \multicolumn{1}{c}{\multirow{1}{*}{$L2$-$1_{\sigma}$-sESAV}} 
            &  40     & $2.49 \times 10^{-3}$   &  1.97  &  $6.88 \times 10^{-4}$ & 1.96   & $8.46 \times 10^{-5}$   & 1.98   \\ 
			\multicolumn{1}{c}{\multirow{1}{*}{scheme}} 
            &  80     & $6.25 \times 10^{-4}$   &  2.00   &  $1.73 \times 10^{-4}$ & 1.99   & $2.21 \times 10^{-5}$   &  1.94   \\ 
			&  160    & $1.59 \times 10^{-4}$   &  1.98   &  $4.33 \times 10^{-5}$ & 2.00   & $5.54 \times 10^{-6}$   & 1.99   \\
			\midrule 
            &  20     & $9.74 \times 10^{-3}$   &  ---   &  $2.67 \times 10^{-3}$ & ---      & $3.54 \times 10^{-4}$   &  ---   \\
			\multicolumn{1}{c}{\multirow{1}{*}{unbalanced $L2$-$1_{\sigma}$}}  
            &  40     & $2.49 \times 10^{-3}$   & 1.97   &  $6.88 \times 10^{-4}$ & 1.96      & $8.46 \times 10^{-5}$   &  2.06   \\
			\multicolumn{1}{c}{\multirow{1}{*}{sESAV scheme}} 
            &  80     & $6.25 \times 10^{-4}$   &  2.00   &  $1.73 \times 10^{-4}$ & 1.99      & $2.21 \times 10^{-5}$   &  1.94  \\
			&  160    & $1.59 \times 10^{-4}$   &  1.98   &  $4.33 \times 10^{-5}$ & 2.00     & $5.54 \times 10^{-6}$   &  1.99   \\
			\midrule
			\multicolumn{2}{c}{$ \min\{ 2, \gamma \iota \} $ }   
			&       & 2.00       &    & 2.00 &   & 2.00 \\
			\bottomrule
	\end{tabular*}}
\end{table}
\begin{table}[!ht]
	\vspace{-10pt}
	\caption{Time accuracy of the two $L2$-$1_{\sigma}$-sESAV schemes with $ \alpha = 0.8 $ and $ \gamma = 2/\iota $: the Flory--Huggins potential} \label{Ex5:tab3}
	{\footnotesize\begin{tabular*}{\columnwidth}{@{\extracolsep\fill}cccccccc@{\extracolsep\fill}}
			\toprule
			\multicolumn{1}{c}{\multirow{2}{*}{method}} & \multicolumn{1}{c}{\multirow{2}{*}{$ N $}} & \multicolumn{2}{c}{$\iota=0.3$} & \multicolumn{2}{c}{$\iota=0.5$} & \multicolumn{2}{c}{$\iota=0.8$} \\
			\cmidrule{3-4} 
			\cmidrule{5-6}
			\cmidrule{7-8}
			&      & $ e(N) $ & Order & $ e(N) $ & Order & $ e(N) $ & Order \\
			\midrule
            &  20     & $9.59 \times 10^{-3}$   &  ---   &  $2.59 \times 10^{-3}$ & ---   & $2.95 \times 10^{-4}$   &  ---   \\
			\multicolumn{1}{c}{\multirow{1}{*}{$L2$-$1_{\sigma}$-sESAV}} 
            &  40     & $2.49 \times 10^{-3}$   &  1.95   &  $6.82 \times 10^{-4}$ & 1.93   & $8.26 \times 10^{-5}$   &  1.84   \\ 
			\multicolumn{1}{c}{\multirow{1}{*}{scheme}} 
            &  80     & $6.25 \times 10^{-4}$   &  1.99   &  $1.73 \times 10^{-4}$ & 1.98   & $2.18 \times 10^{-5}$   &  1.92   \\ 
			&  160    & $1.59 \times 10^{-4}$   &  1.98   &  $4.33 \times 10^{-5}$ & 2.00   & $5.53 \times 10^{-6}$   &  1.98   \\
			\midrule 
            &  20     & $9.64 \times 10^{-3}$   &  ---   &  $2.59 \times 10^{-3}$ & ---      & $2.95 \times 10^{-4}$   &  ---   \\
			\multicolumn{1}{c}{\multirow{1}{*}{unbalanced $L2$-$1_{\sigma}$}} 
            &  40     & $2.49 \times 10^{-3}$   &  1.95   &  $6.82 \times 10^{-4}$ & 1.93      & $8.26 \times 10^{-5}$   &  1.84   \\
			\multicolumn{1}{c}{\multirow{1}{*}{sESAV scheme}} 
            &  80     & $6.25 \times 10^{-4}$   &  1.99   &  $1.73 \times 10^{-4}$ & 1.98      & $2.18 \times 10^{-5}$   &  1.92   \\
			&  160    & $1.59 \times 10^{-4}$   &  1.98   &  $4.33 \times 10^{-5}$ & 2.00     & $5.53 \times 10^{-6}$   &  1.98   \\
			\midrule
			\multicolumn{2}{c}{$ \min\{ 2, \gamma \iota \} $ }   
			&       & 2.00       &    & 2.00 &   & 2.00 \\
			\bottomrule
	\end{tabular*}}
\end{table}

\subsubsection{Effectiveness of the unbalanced stabilization term}\label{subsub:unba}
The coarsening dynamics of the tFAC model \eqref{Model:tAC} is examined with $ \alpha = 0.9$, $\mm = 1 $, and $ \varepsilon = 0.01 $. We adopt a $ 128\times 128 $ uniform spatial mesh to discretize the spatial domain $ \Omega = (0,1)^{2} $, and give the initial value by random numbers uniformly distributed between $ -0.8 $ and $ 0.8 $ at each grid point, see Figure \ref{figEx2_0}.

For special choice $V(\cdot)$ in \eqref{f_cut}, it deduces from Theorems \ref{thm:MBP_L21} and \ref{thm:MBP_un_L21} that $ \tau \leq 0.1044 $ for the double-well potential and $ \tau \leq 0.0646 $ for the Flory--Huggins potential are required to preserve the discrete MBP on uniform temporal grids. Note that these conditions are only sufficient but not necessary. 
Therefore, in the following tests, three different types of temporal girds are employed for each potential, two of which violate the time-step restriction while the other satisfies it. Specifically, we take $ \tau = 1, 0.5, 0.1 $
for the double-well potential and $ \tau = 1/3, 1/6, 1/30 $ for the Flory–Huggins potential. The coarsening dynamics are tested using the two $L2$-$1_{\sigma}$-sESAV schemes with different stabilization terms. Numerical results for the double-well and Flory–Huggins potentials are depicted in Figures \ref{figEx6_1} and \ref{figEx6_2}, respectively. From these results, we observe that: (i) when the time steps satisfy the restriction \eqref{MBP:un_L21_tau} (or \eqref{MBP:L21_tau} with $K_2=1$), both schemes preserve the discrete MBP, see Figures \ref{figEx6_1c} and \ref{figEx6_2c}; (ii) when the time steps moderately violate the constraint, both schemes can still preserve the discrete MBP, indicating that the conditions in Theorems \ref{thm:MBP_L21} and \ref{thm:MBP_un_L21} are only sufficient but not necessary, see Figures \ref{figEx6_1b} and \ref{figEx6_2b}; (iii) however, when the given largest time steps are used for both schemes, the unbalanced $L2$-$1_{\sigma}$-sESAV scheme can still preserve the discrete MBP, whereas the standard $L2$-$1_{\sigma}$-sESAV scheme yields solutions exceeding the corresponding maximum-bound $\beta$, see Figures \ref{figEx6_1a} and \ref{figEx6_2a}. This demonstrates the effectiveness and excellence of the proposed unbalanced stabilization term in preserving the discrete MBP.  
\begin{figure}[!htbp]
	\vspace{-12pt}
	\centering
	\subfigure[$\tau = 1$]
	{
		\includegraphics[width=0.32\textwidth]{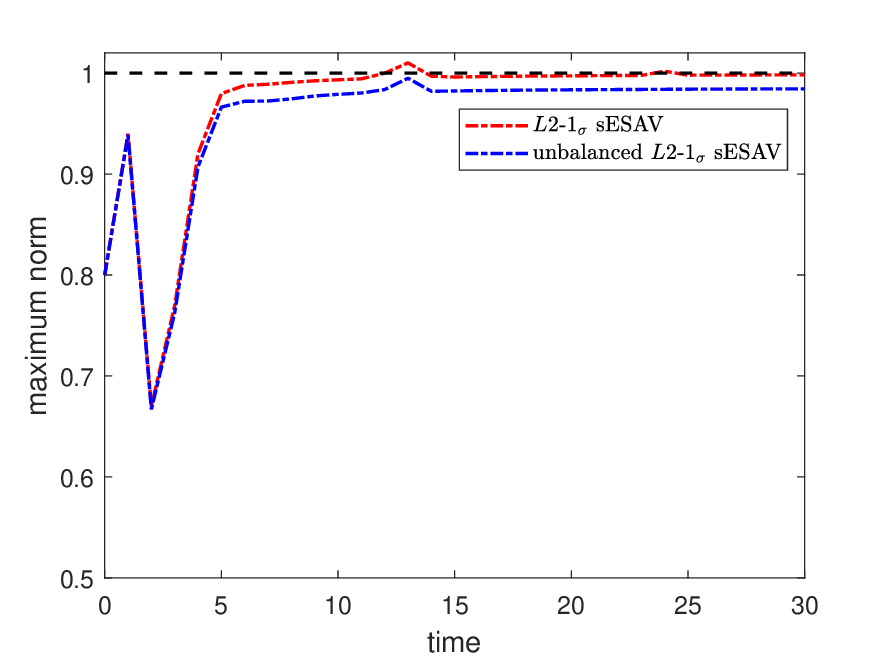}		\label{figEx6_1a}
	}%
	\subfigure[$\tau = 0.5$]
	{
		\includegraphics[width=0.32\textwidth]{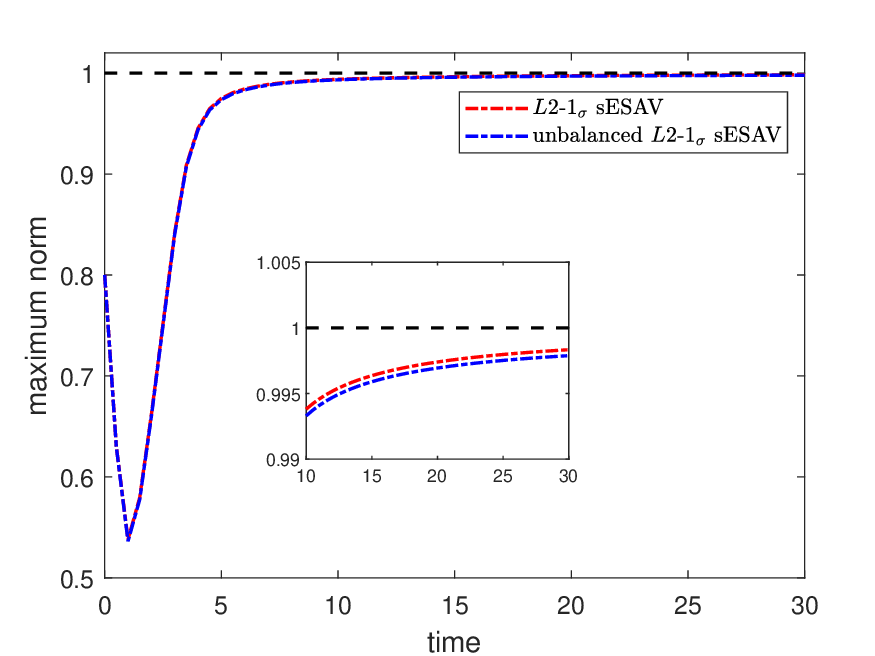}		\label{figEx6_1b}
	}%
	\subfigure[$\tau = 0.1$]
	{
		\includegraphics[width=0.32\textwidth]{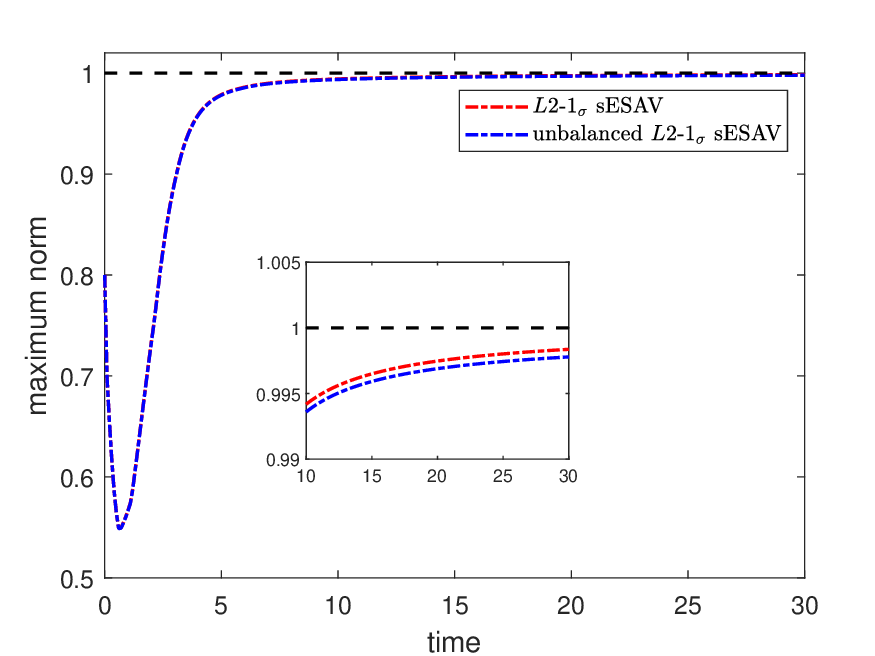}	\label{figEx6_1c}
	}%
	\setlength{\abovecaptionskip}{0.0cm} 
	\setlength{\belowcaptionskip}{0.0cm}
	\caption{Time evolutions of the maximum norm and energy of simulated solutions computed by the $L2$-$1_{\sigma}$-sESAV scheme with different stabilization terms: the double-well potential}	
	\label{figEx6_1}
\end{figure}
\begin{figure}[!ht]
	\vspace{-12pt}
	\centering
	\subfigure[$\tau = 1/3$]
	{
		\includegraphics[width=0.32\textwidth]{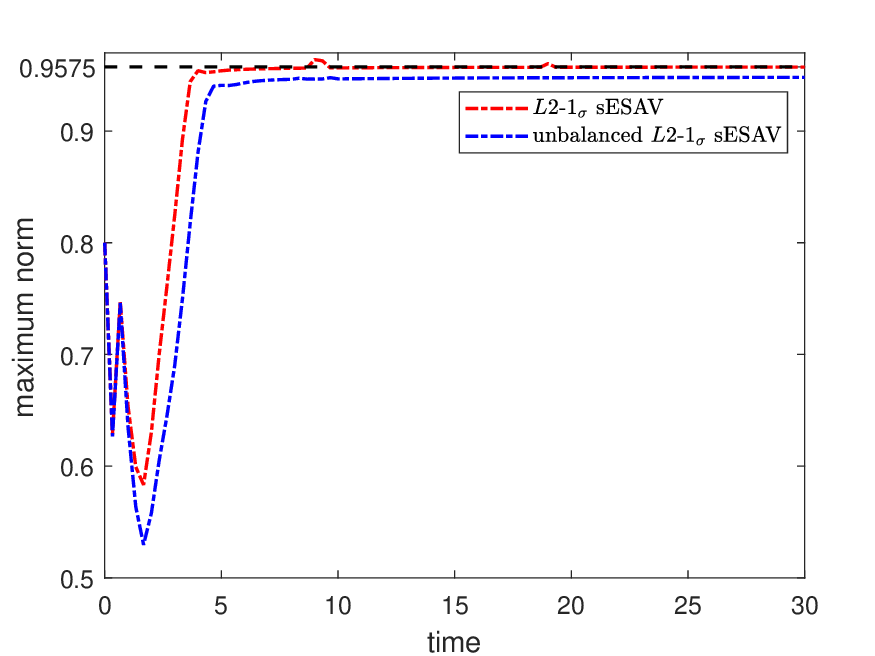}		\label{figEx6_2a}
	}%
	\subfigure[$\tau = 1/6$]
	{
		\includegraphics[width=0.32\textwidth]{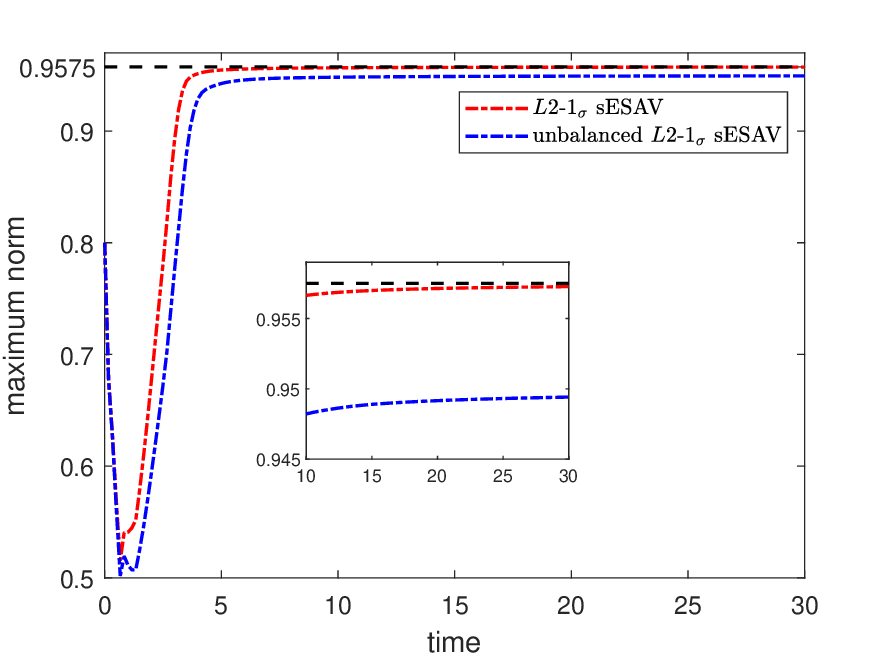}		\label{figEx6_2b}
	}%
	\subfigure[$\tau = 1/30$]
	{
		\includegraphics[width=0.32\textwidth]{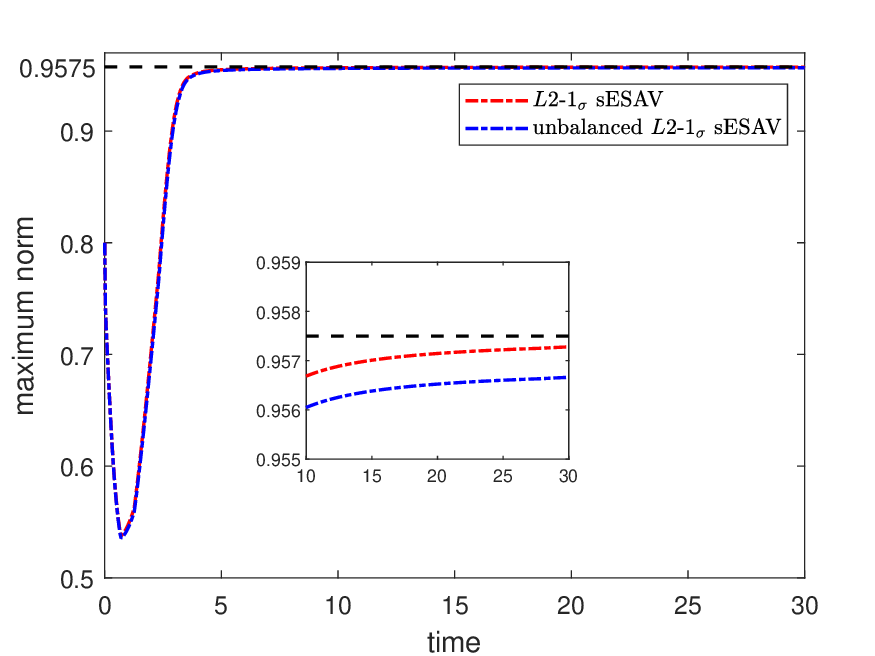}	\label{figEx6_2c}
	}%
	\setlength{\abovecaptionskip}{0.0cm} 
	\setlength{\belowcaptionskip}{0.0cm}
	\caption{Time evolutions of the maximum norm and energy of simulated solutions computed by the $L2$-$1_{\sigma}$-sESAV scheme with different stabilization terms: the Flory--Huggins potential}		\label{figEx6_2}
    \vspace{-10pt}
\end{figure}

\subsubsection{Long-term coarsening dynamics simulations}
The unbalanced $L2$-$1_{\sigma}$-sESAV scheme has demonstrated superior reliability in preserving the MBP compared to the standard scheme, making it the preferred choice for simulating long-term coarsening dynamics governed by the tFAC model \eqref{Model:tAC}, particularly for extended time scales up to $T = 2000$ in the following test. Moreover, to improve the computational efficiency without sacrificing accuracy in the simulation, a mixed adaptive time-stepping strategy \eqref{mesh:graded} and \eqref{Alg2:adaptive} with $ \hat{T} = 0.5 $ and $ \hat{N} = 30 $ is employed. Here we set $\mm= 1$, $\varepsilon = 0.01$, $ r_{\min} = 4/7 $, $ \eta = 10^{7} $, and $ ( \tau_{\min}, \tau_{\max} ) = ( 0.1, 1 ) $ for the double-well potential and $ ( \tau_{\min}, \tau_{\max} ) = ( 1/30, 1/3 ) $ for the Flory–Huggins potential. The random initial values and spatial partition are the same as aforementioned.

\begin{figure}[!htb]
	\vspace{-10pt}
	\centering
	\subfigure[$t=20$]
	{
		\begin{minipage}[t]{0.24\linewidth}
			\centering
			\includegraphics[width=1.5in]{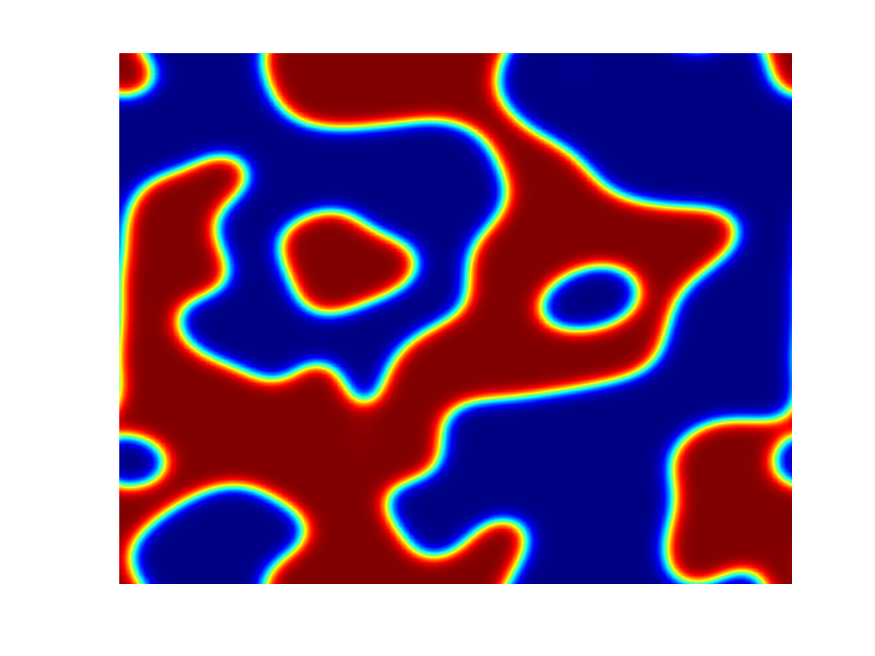}
			\includegraphics[width=1.5in]{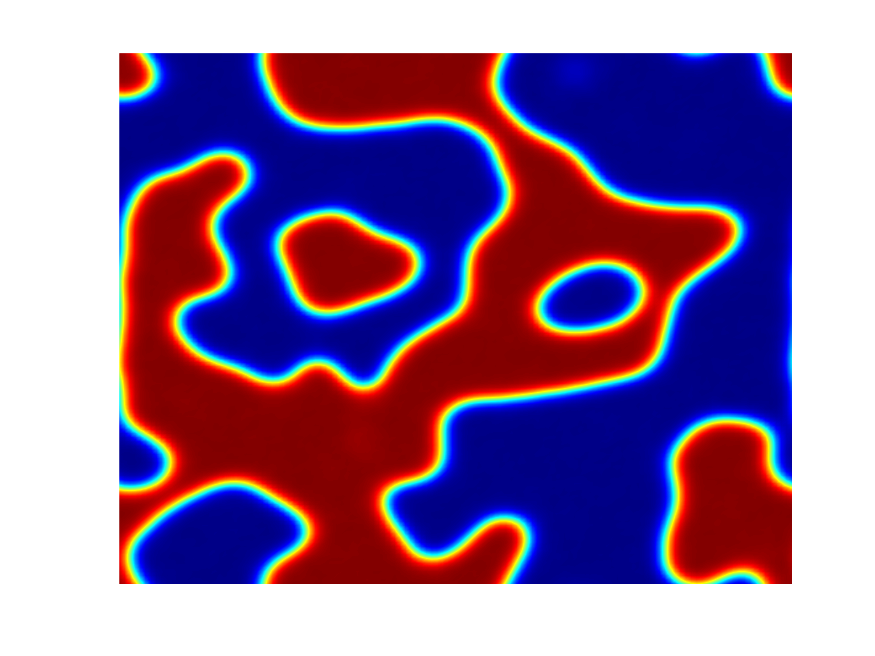}
			\includegraphics[width=1.5in]{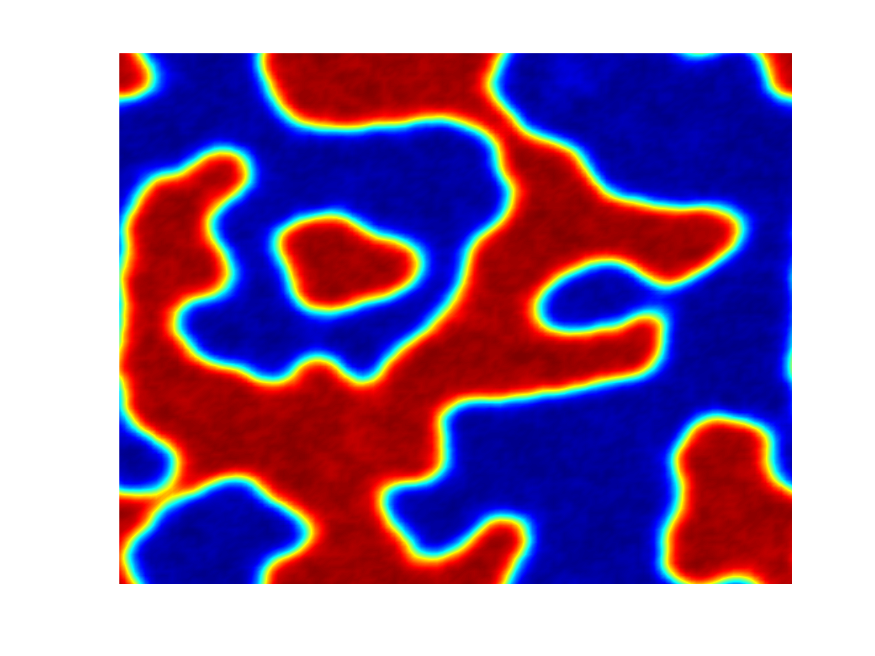}
		\end{minipage}%
	}%
	\subfigure[$t=100$]
	{
		\begin{minipage}[t]{0.24\linewidth}
			\centering
			\includegraphics[width=1.5in]{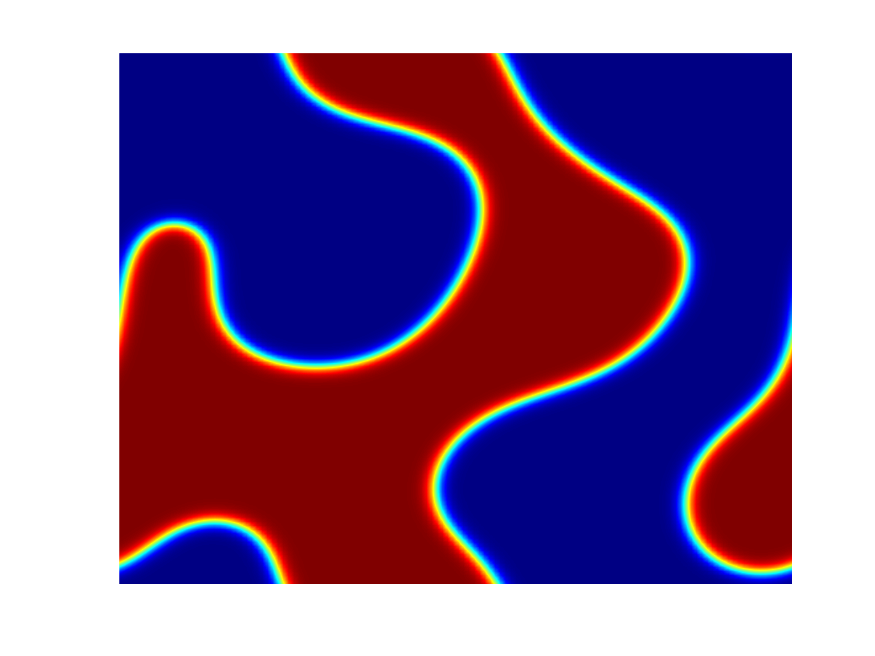}
			\includegraphics[width=1.5in]{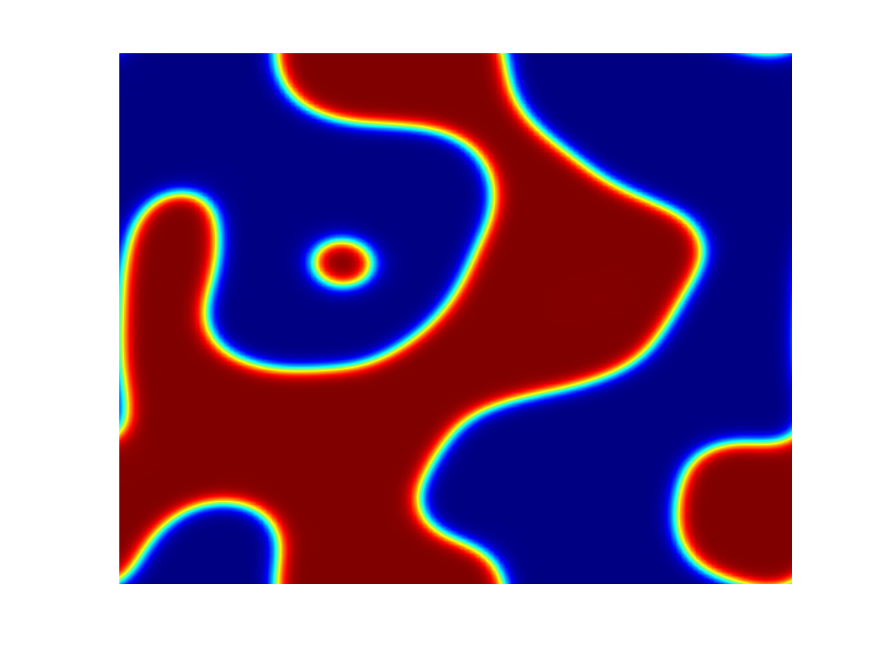}
			\includegraphics[width=1.5in]{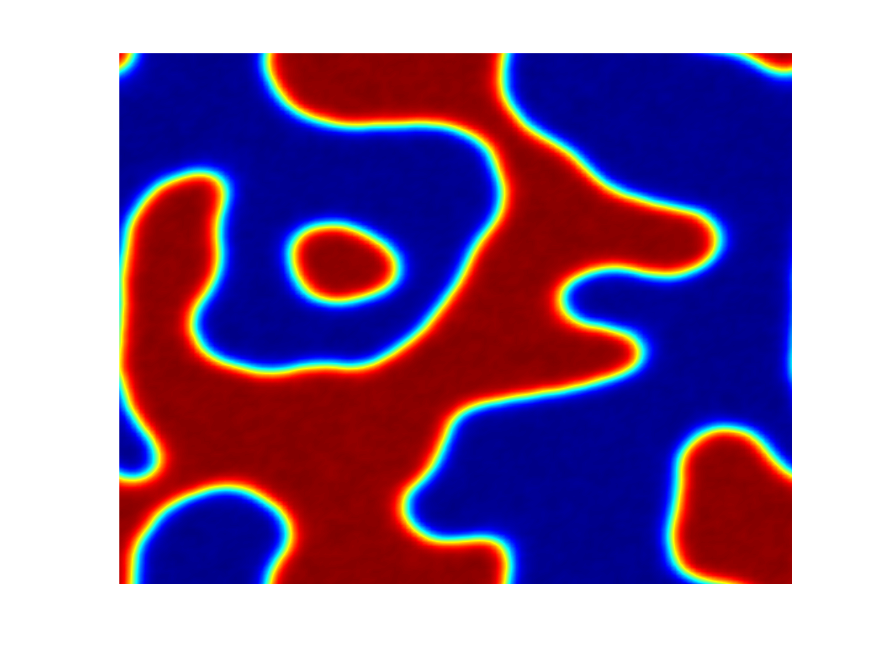}
		\end{minipage}%
	}%
	\subfigure[$t=500$]{
		\begin{minipage}[t]{0.24\linewidth}
			\centering
			\includegraphics[width=1.5in]{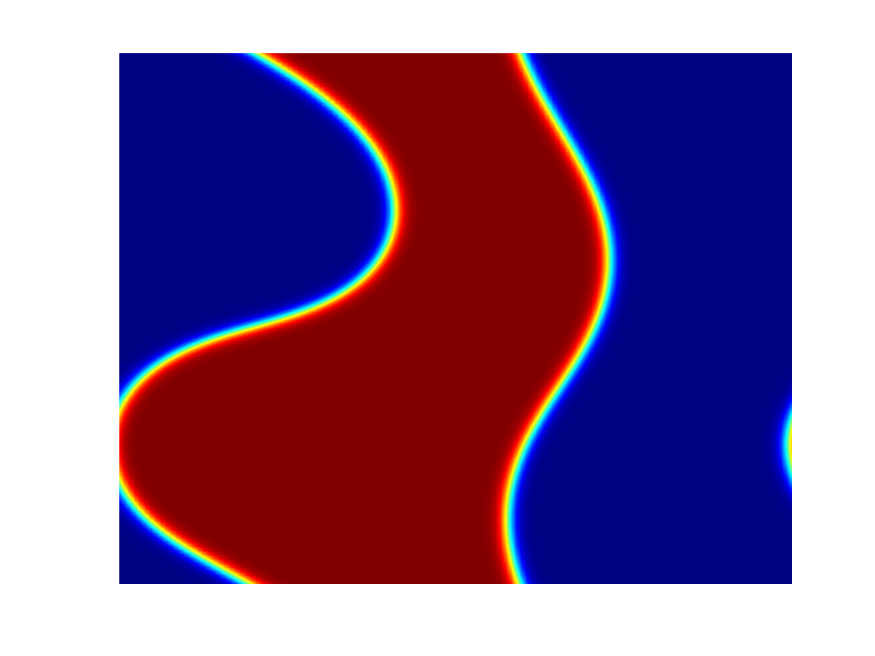}
			\includegraphics[width=1.5in]{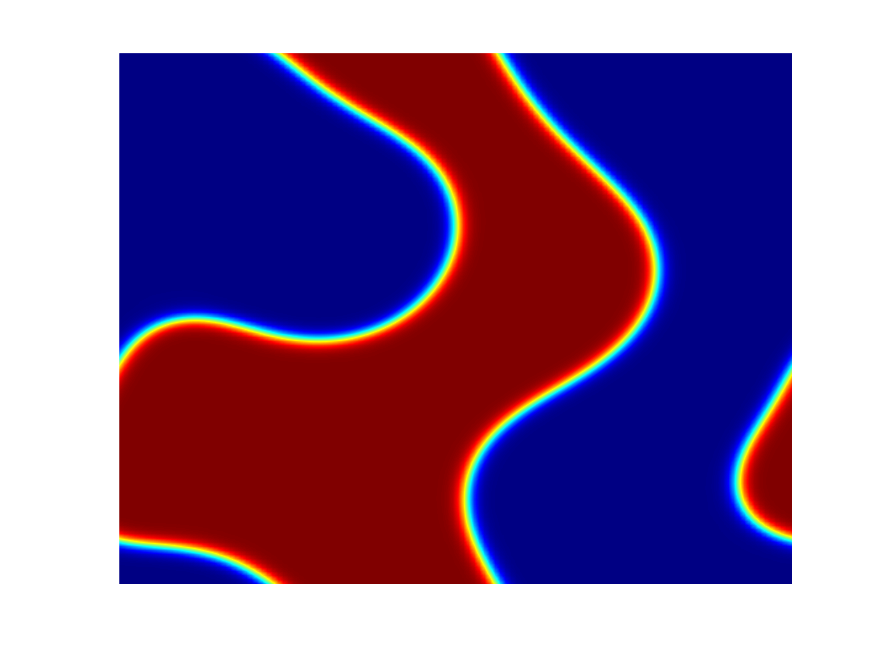}
			\includegraphics[width=1.5in]{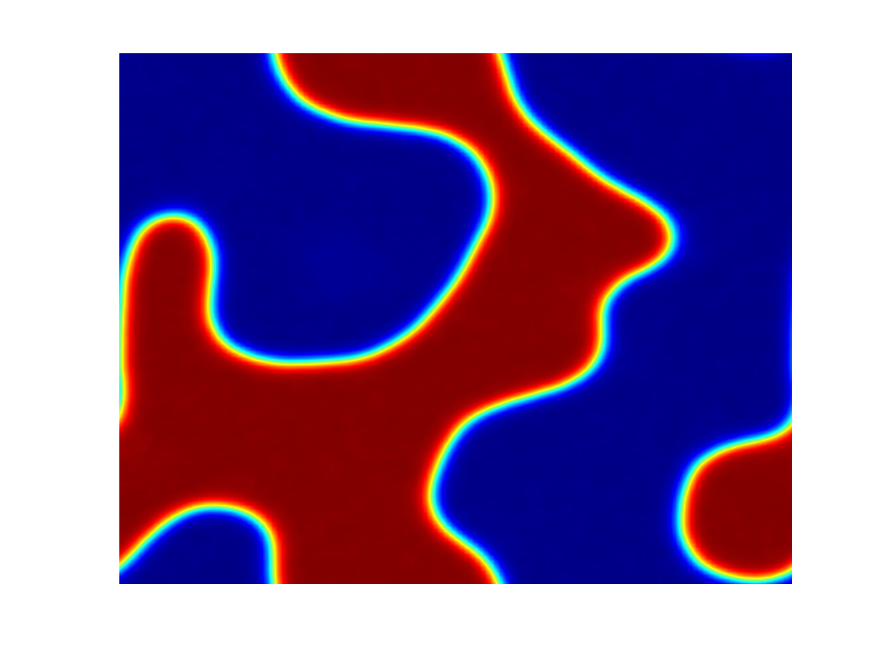}
		\end{minipage}%
	}%
	\subfigure[$t=2000$]
	{
		\begin{minipage}[t]{0.24\linewidth}
			\centering
			\includegraphics[width=1.5in]{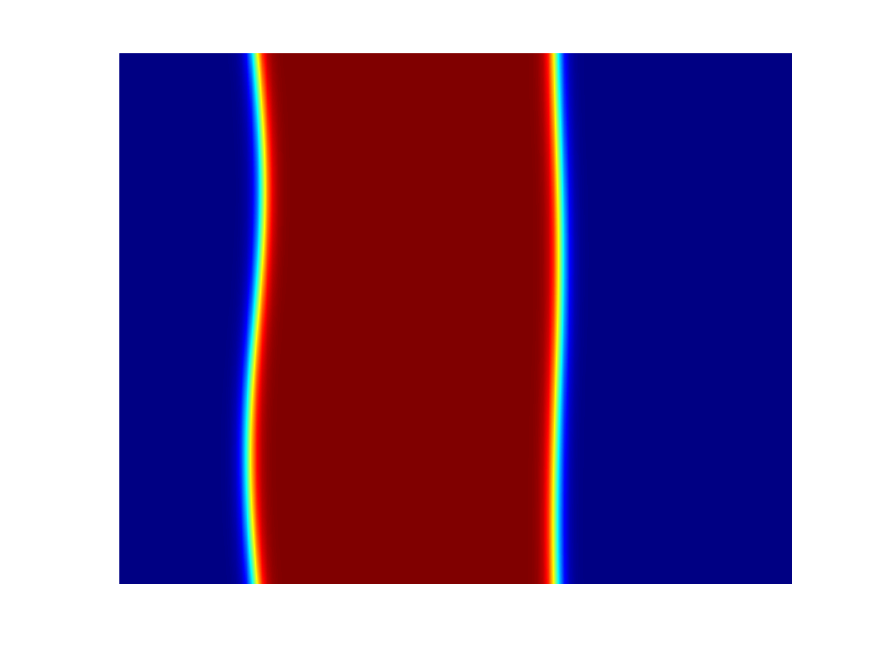}
			\includegraphics[width=1.5in]{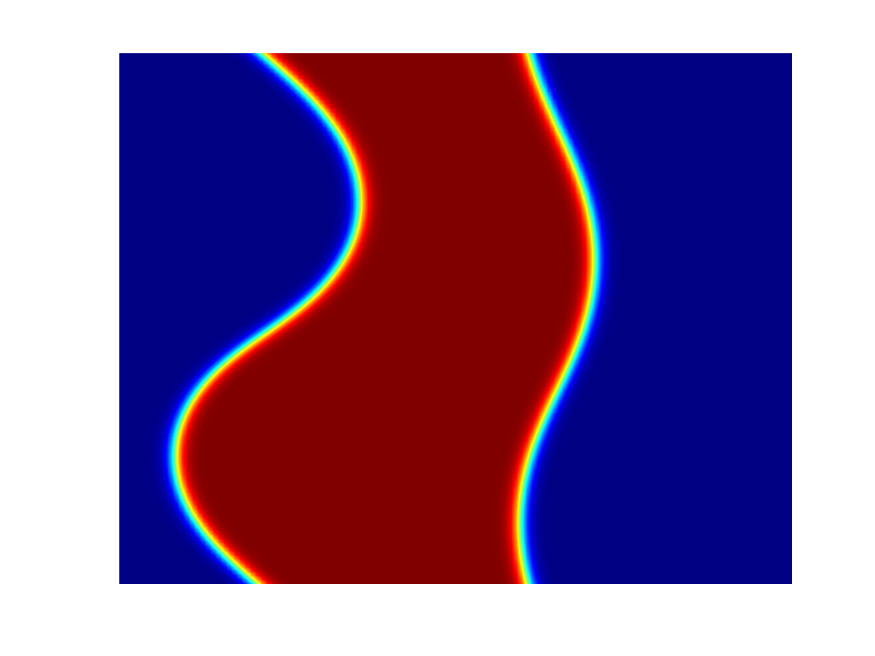}
			\includegraphics[width=1.5in]{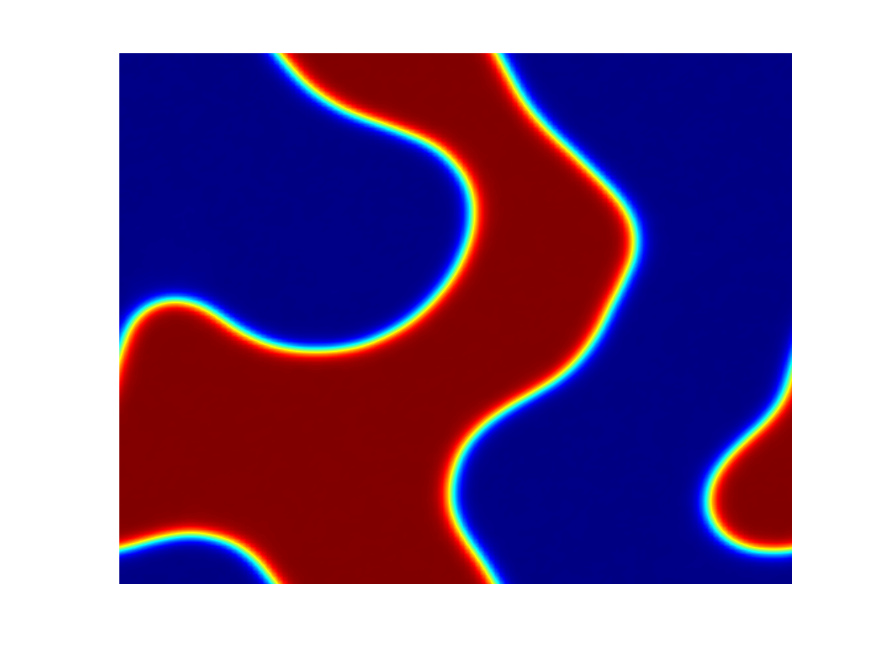}
		\end{minipage}%
	}%
	\setlength{\abovecaptionskip}{0.0cm} 
	\setlength{\belowcaptionskip}{0.0cm}
	\caption{The dynamic snapshots of the numerical solution $\phi$ obtained by the unbalanced $L2$-$1_{\sigma}$-sESAV scheme with $\alpha=0.9,0.7,0.4$ (from top to bottom, respectively): the double-well potential}	\label{figEx7_1}
\end{figure}
\begin{figure}[!htb]
	\vspace{-12pt}
	\centering
	\subfigure[maximum norm of $\phi$]
	{
		\includegraphics[width=0.32\textwidth]{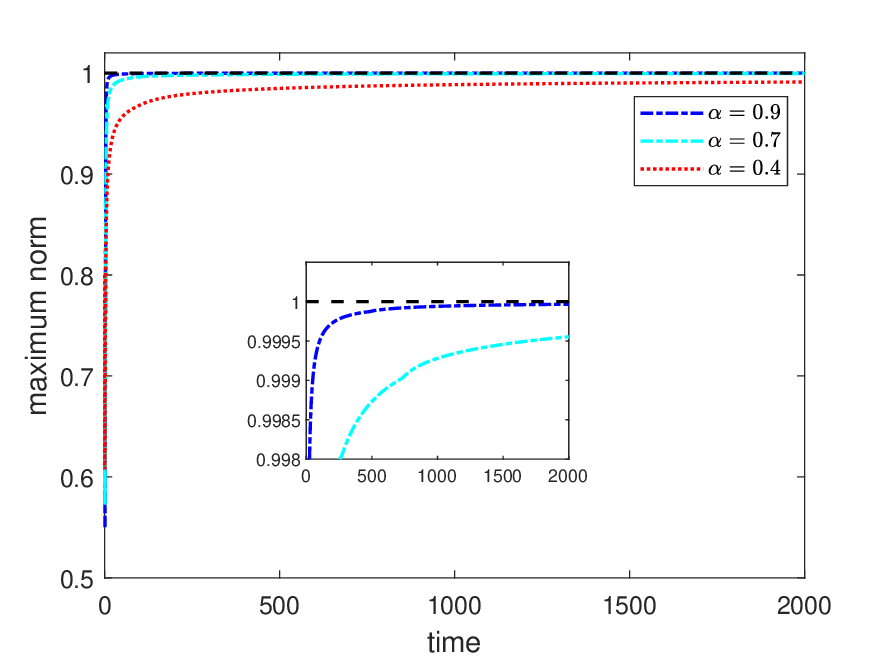}
		\label{figEx7_2a}
	}%
	\subfigure[energy]
	{
		\includegraphics[width=0.32\textwidth]{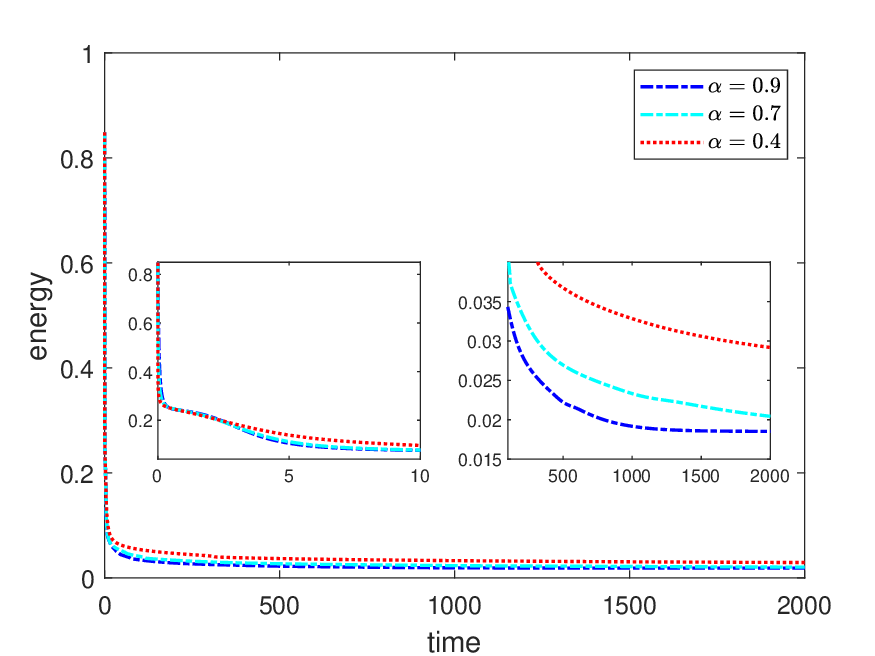}
		\label{figEx7_2b}
	}%
	\subfigure[time steps]
	{
		\includegraphics[width=0.32\textwidth]{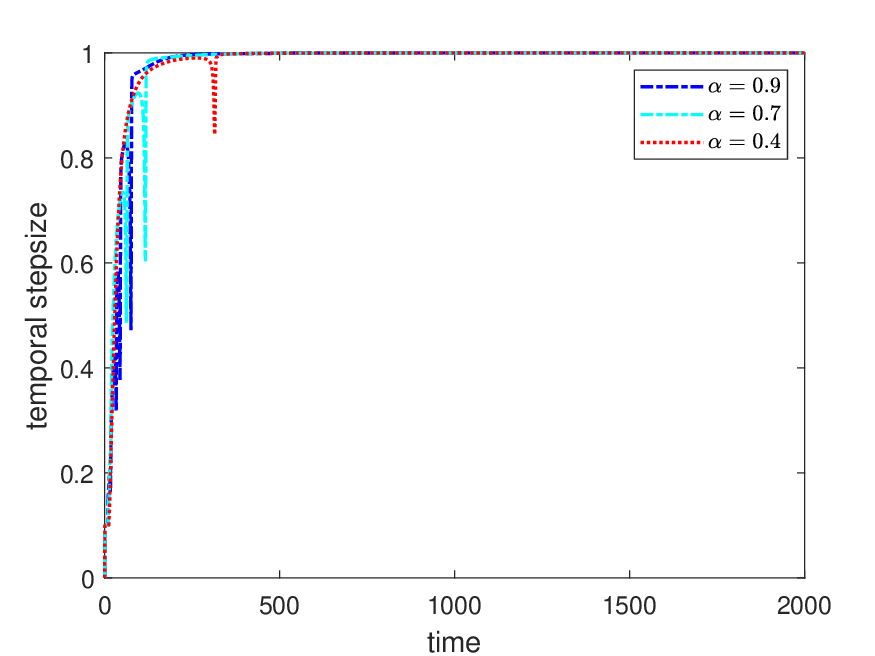}
		\label{figEx7_2c}
	}%
	\setlength{\abovecaptionskip}{0.0cm} 
	\setlength{\belowcaptionskip}{0.0cm}
	\caption{Time evolutions of the maximum norm (left), energy (middle), and time steps (right) for the unbalanced $L2$-$1_{\sigma}$-sESAV scheme with $\alpha=0.9,0.7,0.4$: the double-well potential}	\label{figEx7_2}
\end{figure}
For the double-well potential, the profiles of coarsening dynamics with different fractional orders $ \alpha = 0.9, 0.7 $ and $ 0.4 $ are depicted in Figure \ref{figEx7_1}, where snapshots are taken at times $ t = 20, 100, 500 $, and $2000$, respectively. It is clearly observed that the coarsening speed governed by the tFAC equation depends heavily on the fractional order $\alpha$ and smaller values of $\alpha$ require much longer time to reach the steady state. 
Moreover, Figure \ref{figEx7_2a}--\ref{figEx7_2b} demonstrate that the proposed unbalanced $L2$-$1_{\sigma}$-sESAV scheme with the adaptive time-stepping strategy successfully preserves both the discrete energy stability and MBP, and Figure \ref{figEx7_2c} further shows that the time steps are accurately and effectively selected to capture the energy variations in the adaptive method. For the Flory--Huggins potential, the corresponding numerical results are presented in Figures \ref{figEx7_3}--\ref{figEx7_4}, and similar conclusions can also be drawn.

\begin{figure}[!ht]
	\vspace{-10pt}
	\centering
	\subfigure[$t=20$]
	{
		\begin{minipage}[t]{0.24\linewidth}
			\centering
			\includegraphics[width=1.5in]{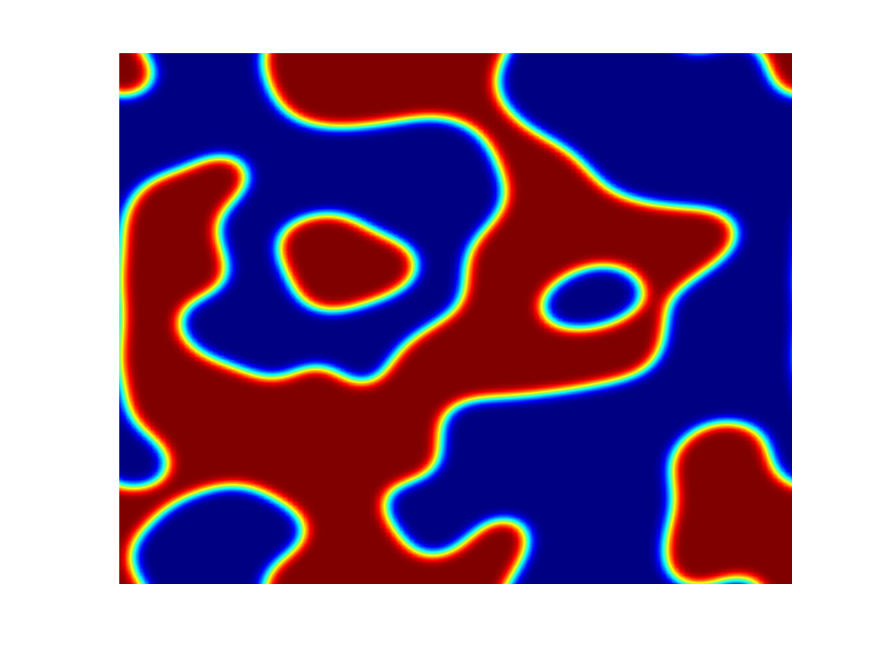}
			\includegraphics[width=1.5in]{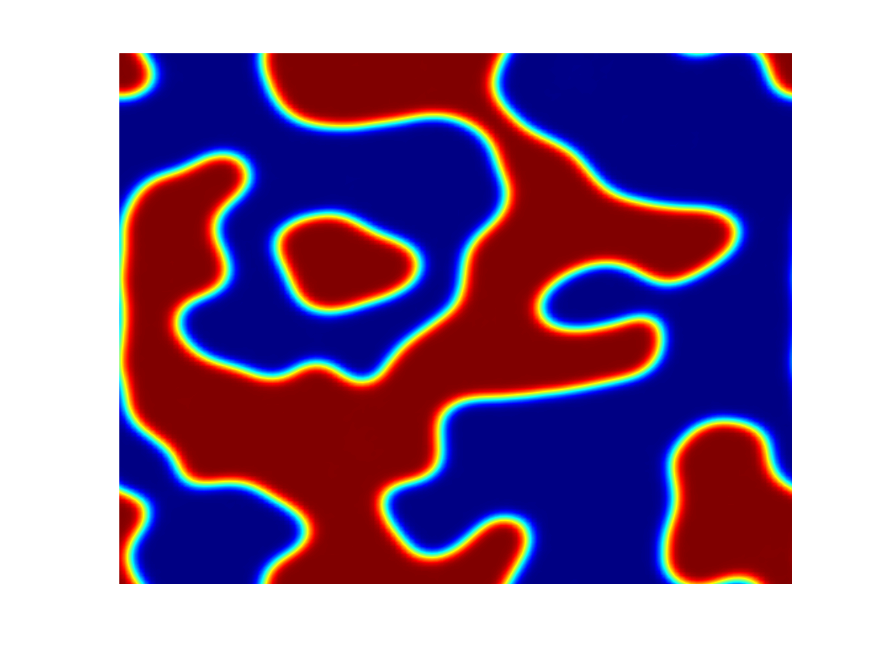}
			\includegraphics[width=1.5in]{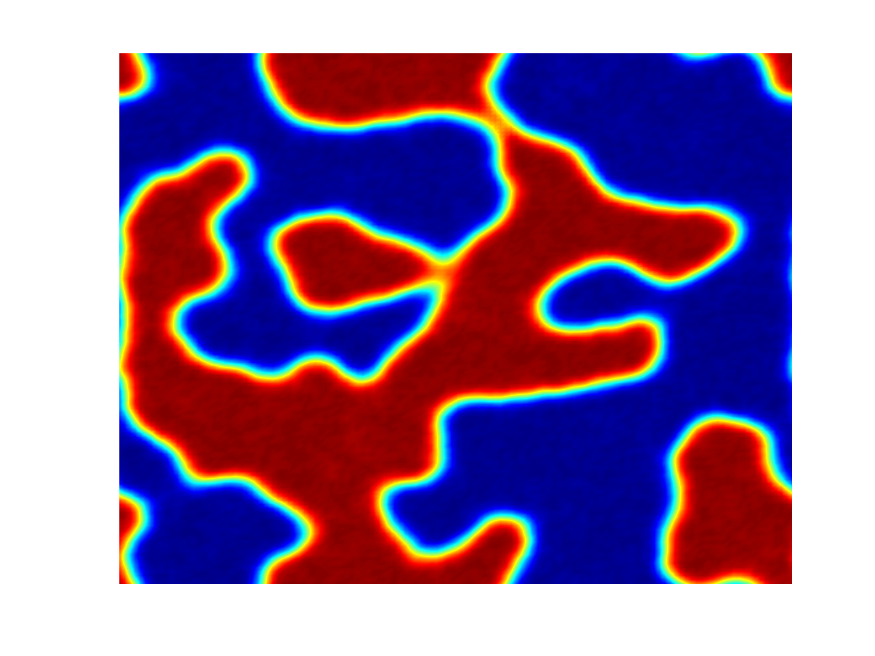}
		\end{minipage}%
	}%
	\subfigure[$t=100$]
	{
		\begin{minipage}[t]{0.24\linewidth}
			\centering
			\includegraphics[width=1.5in]{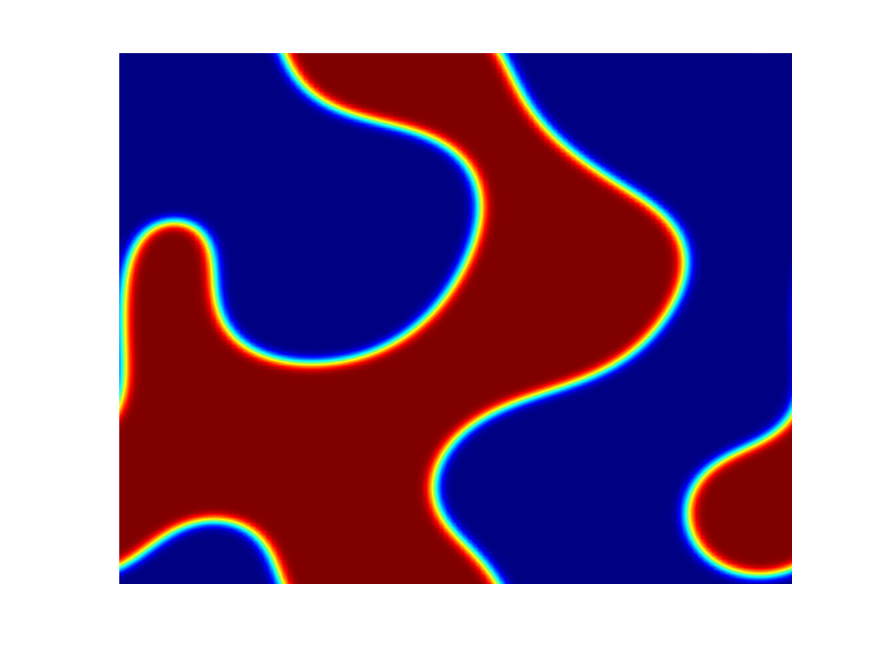}
			\includegraphics[width=1.5in]{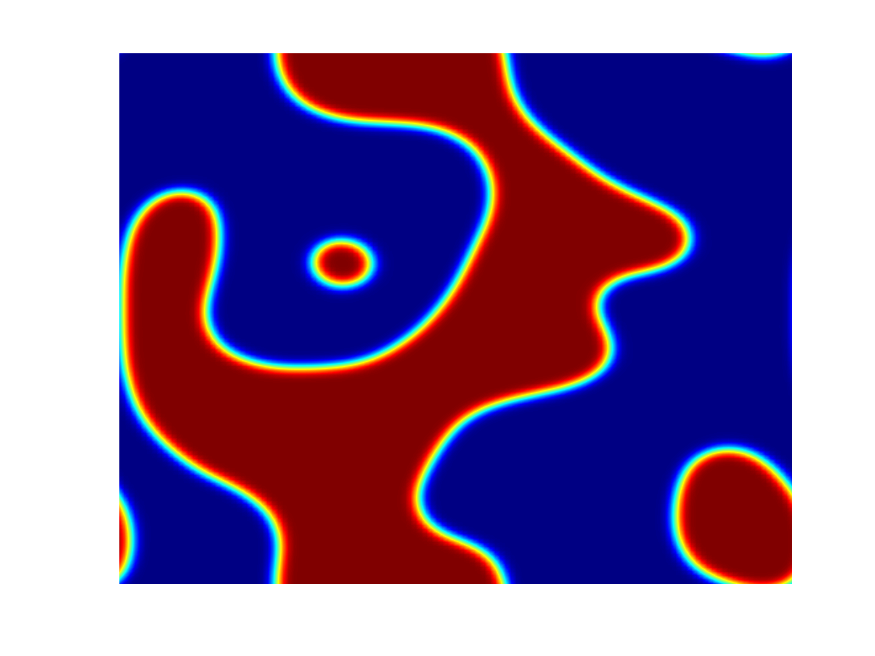}
			\includegraphics[width=1.5in]{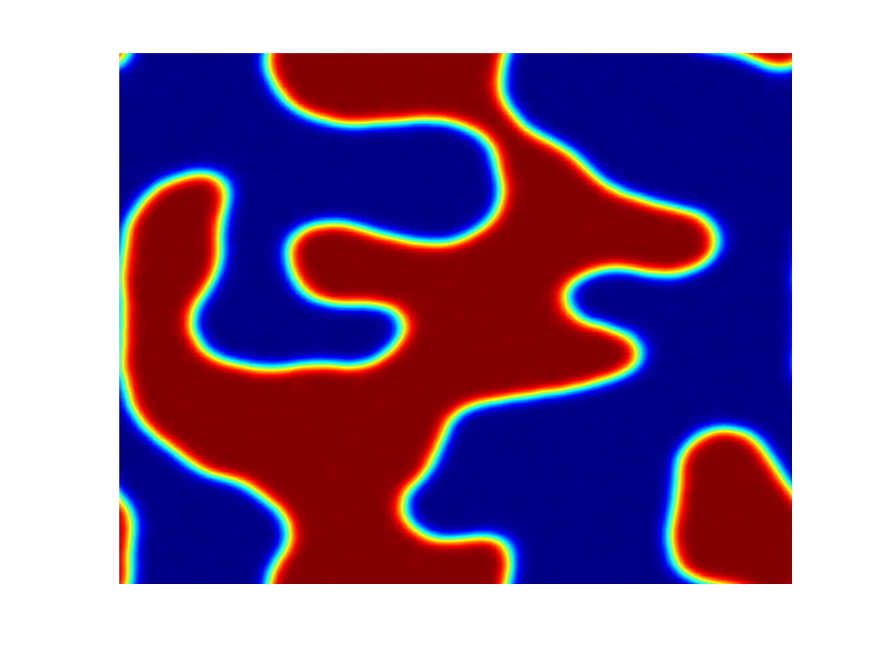}
		\end{minipage}%
	}%
	\subfigure[$t=500$]{
		\begin{minipage}[t]{0.24\linewidth}
			\centering
			\includegraphics[width=1.5in]{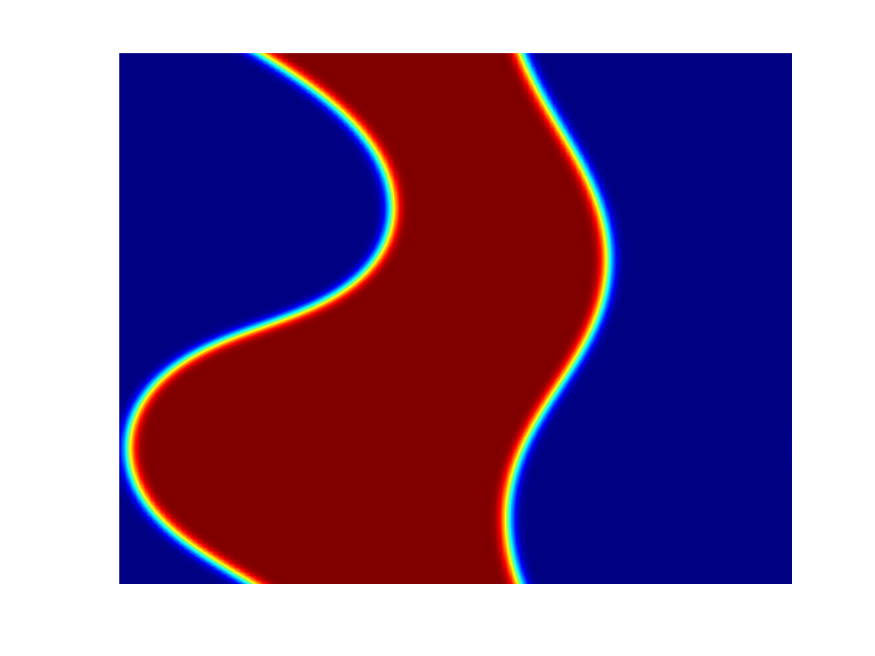}
			\includegraphics[width=1.5in]{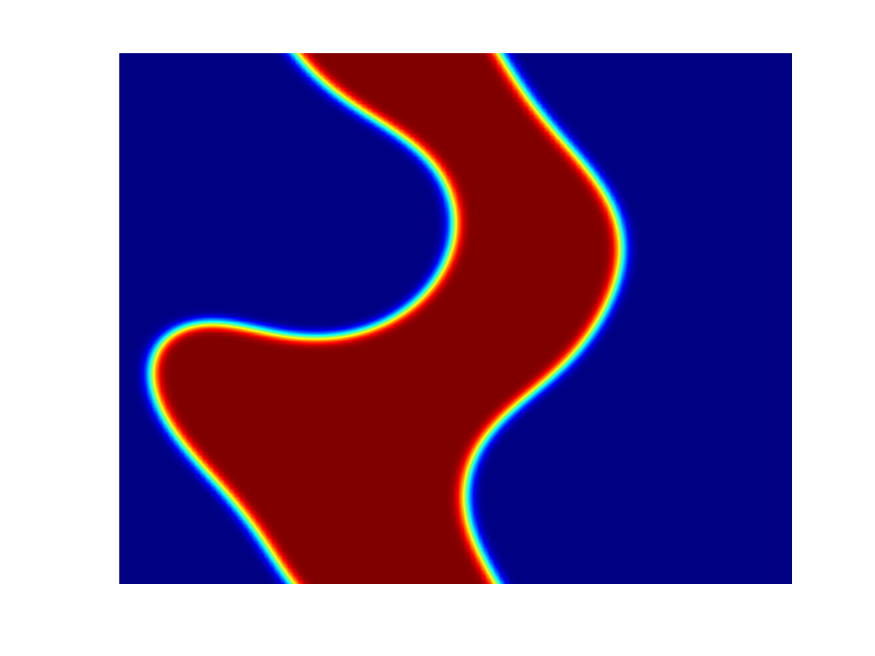}
			\includegraphics[width=1.5in]{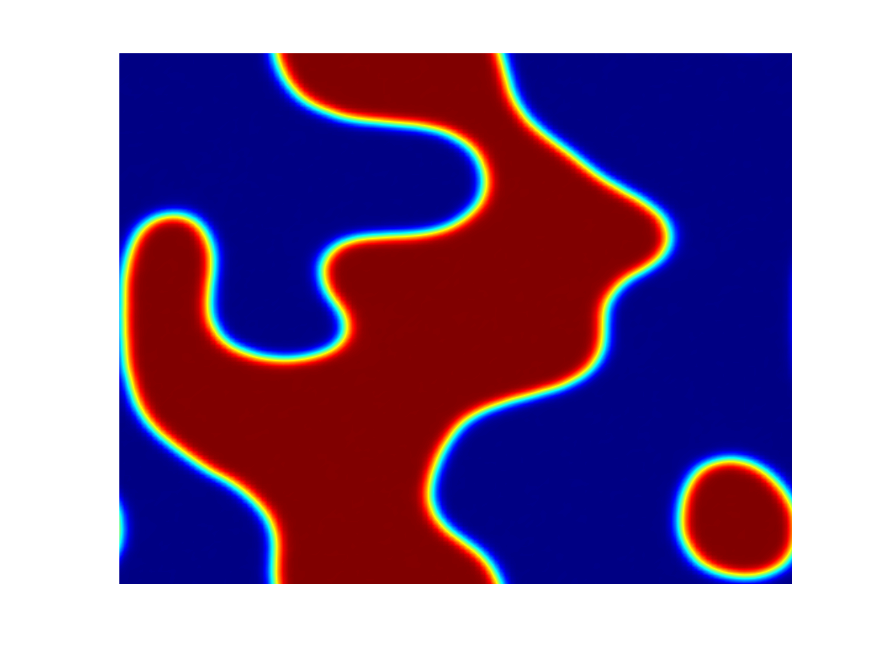}
		\end{minipage}%
	}%
	\subfigure[$t=2000$]
	{
		\begin{minipage}[t]{0.24\linewidth}
			\centering
			\includegraphics[width=1.5in]{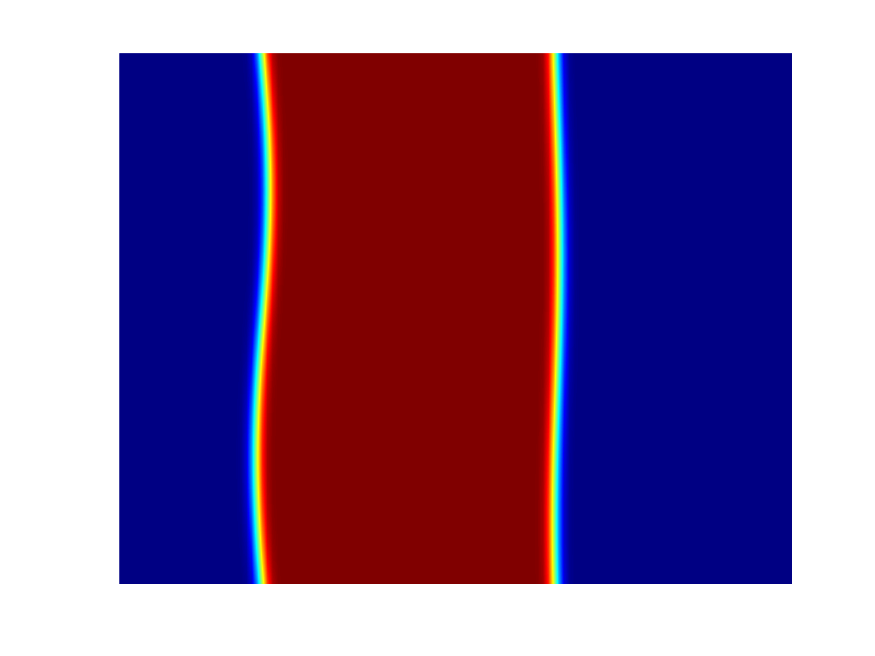}
			\includegraphics[width=1.5in]{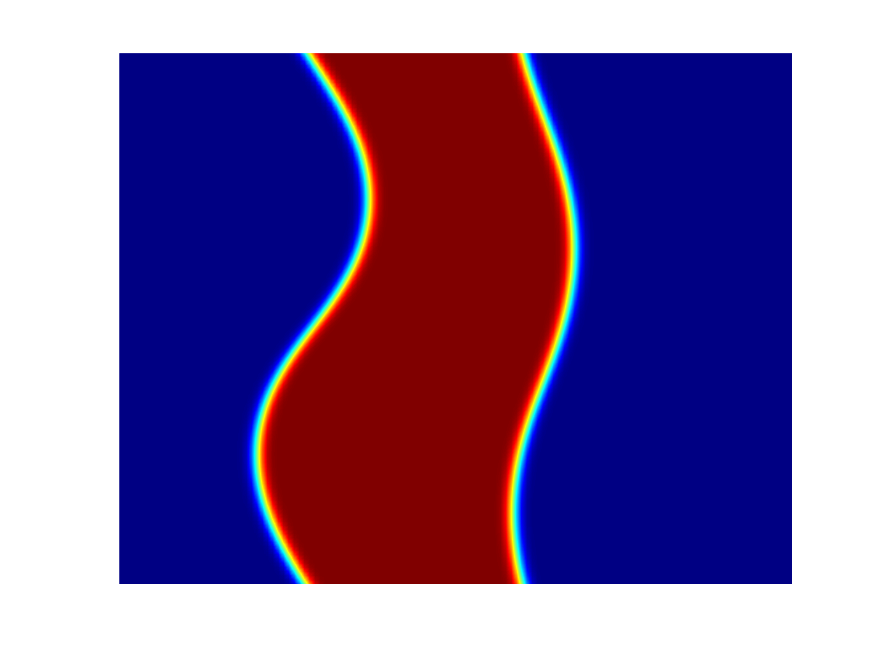}
			\includegraphics[width=1.5in]{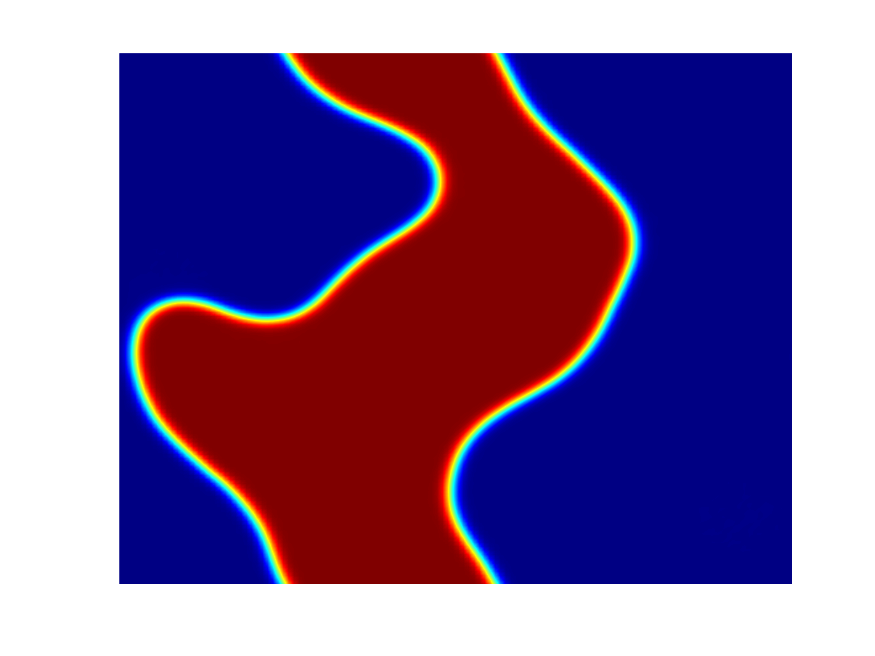}
		\end{minipage}%
	}%
	\setlength{\abovecaptionskip}{0.0cm} 
	\setlength{\belowcaptionskip}{0.0cm}
	\caption{The dynamic snapshots of the numerical solution $\phi$ obtained by the unbalanced $L2$-$1_{\sigma}$-sESAV scheme with $\alpha=0.9,0.7,0.4$ (from top to bottom, respectively): the Flory--Huggins potential}	\label{figEx7_3}
\end{figure}
\begin{figure}[!ht]
	\vspace{-12pt}
	\centering
	\subfigure[maximum norm of $\phi$]
	{
		\includegraphics[width=0.32\textwidth]{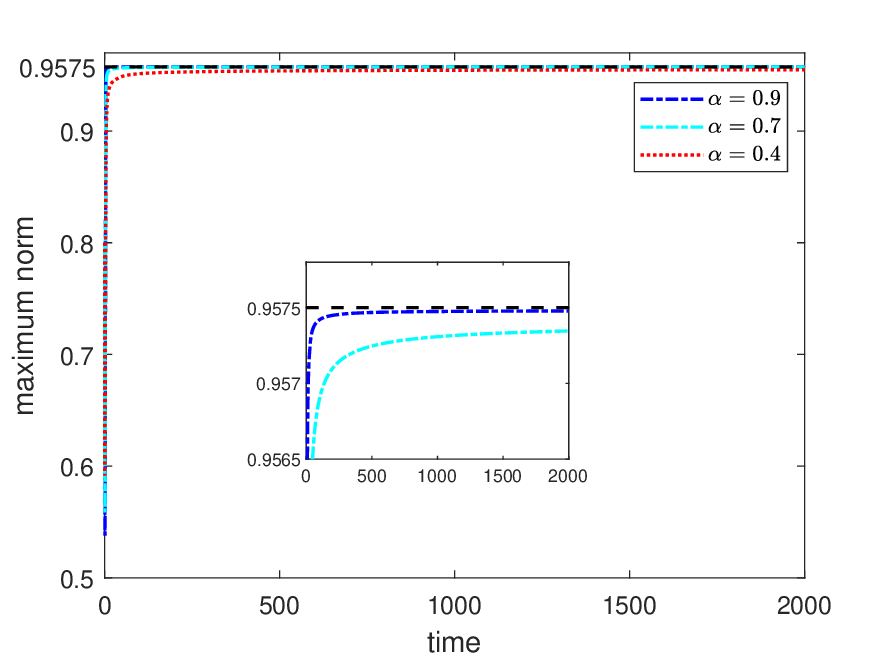}
		\label{figEx7_4a}
	}%
	\subfigure[energy]
	{
		\includegraphics[width=0.32\textwidth]{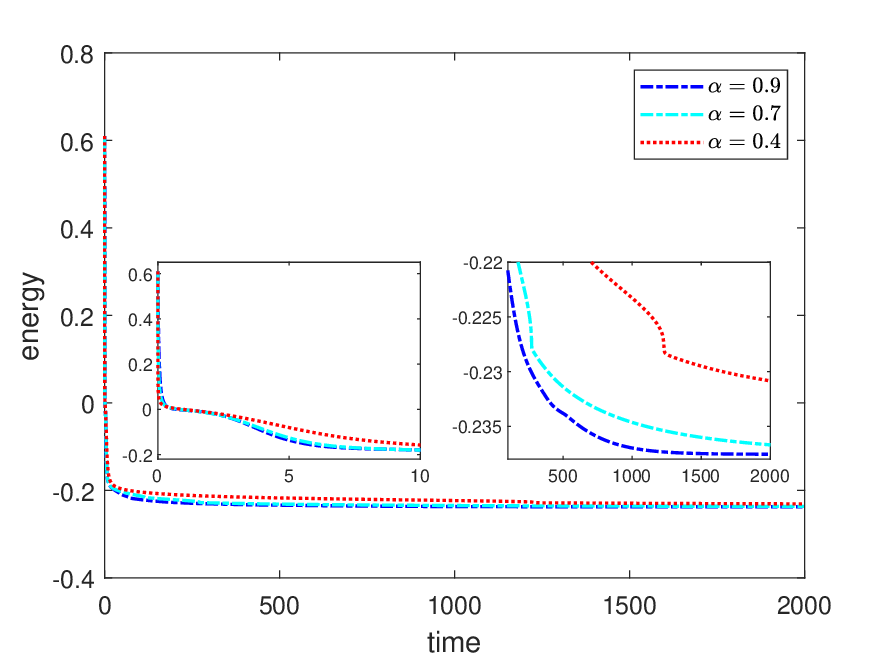}
		\label{figEx7_4b}
	}%
	\subfigure[time steps]
	{
		\includegraphics[width=0.32\textwidth]{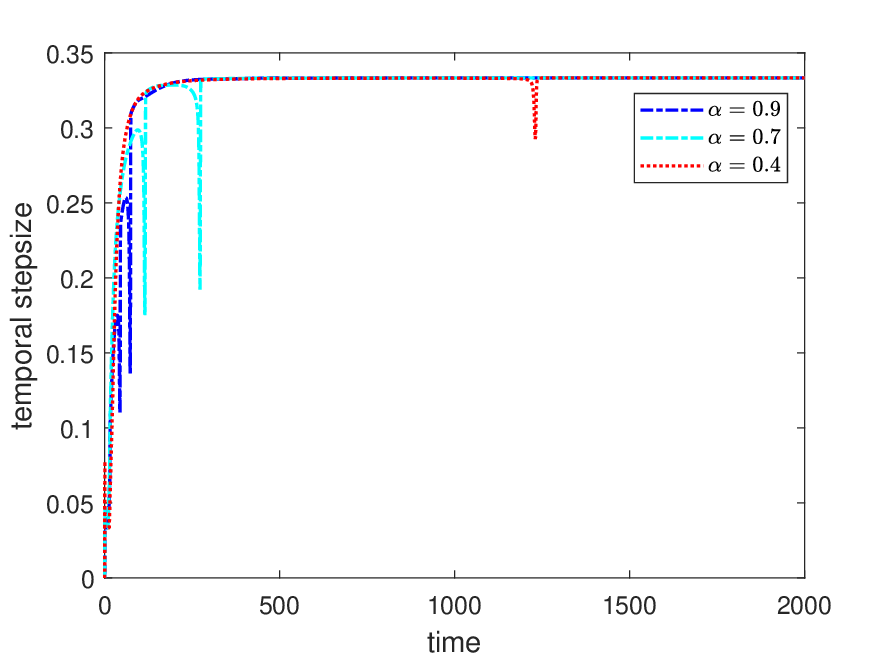}
		\label{figEx7_4c}
	}%
	\setlength{\abovecaptionskip}{0.0cm} 
	\setlength{\belowcaptionskip}{0.0cm}
	\caption{Time evolutions of the maximum norm (left), energy (middle), and time steps (right) for the unbalanced $L2$-$1_{\sigma}$-sESAV scheme with $\alpha=0.9,0.7,0.4$: the Flory--Huggins potential}	\label{figEx7_4}
        \vspace{-10pt}
\end{figure}

\section{Concluding remarks }
In this paper, we first established the MBP for the tFAC model \eqref{Model:tAC} with two different nonlinear potentials. Subsequently, we developed and analyzed a linear stabilized nonuniform time-stepping scheme, referred to as the $L1$-sESAV scheme, in which an essential auxiliary functional was introduced to ensure that the first-order approximation of the SAV does not compromise the temporal accuracy of the phase function. This treatment is critical for deriving unconditional energy stability of nonuniform time-stepping linear scheme. Moreover, the non-negativity of this novel auxiliary functional also allows for the design of a stabilization term that effectively controls the nonlinear term, which together with a newly developed prediction strategy guarantees the discrete MBP-preservation. To the best of our knowledge, this is the first $(2-\alpha)$th order $L1$-type linear scheme with unconditional preservation of the energy stability and MBP. Building on these ideas, we further established a linear second-order scheme, termed the $L2$-$1_{\sigma}$-sESAV scheme, which can preserve the energy stability unconditionally and  MBP conditionally. To enhance the discrete MBP preservation for large time steps, we also introduced an improved unbalanced stabilization term by imposing appropriate boundedness and monotonicity assumptions on the auxiliary functional, and then an unbalanced $L2$-$1_{\sigma}$-sESAV scheme was developed and discussed. Numerical simulations for the 2D and 3D tFAC models with either the double-well potential or the Flory--Huggins potential suggest the satisfactory and high effectiveness of the proposed methods. Moreover, the ideas and derivations presented in this paper enable the development of MBP-preserving numerical methods based on other discrete Caputo derivatives, such as the $L1_{R}$ formula \cite{SISC_Liao_2021} and the fast versions of $L1$ \cite{ACM_Ji_2020} and $L2$-$1_{\sigma}$ \cite{JCP_Liao_2020} formulas. Currently, rigorous error analyses of SAV-type methods for time-fractional phase-field models under weak singularity assumptions remain scarce,  representing an important direction for future research.

\section*{CRediT authorship contribution statement}
\textbf{Bingyin Zhang:} Methodology, Formal analysis, Software, Writing-Original draft, Funding acquisition.
\textbf{Hongfei Fu:} Methodology, Conceptualization, Supervision, Writing-Reviewing and Editing, Funding acquisition.

\section*{Data availability}  A free github repository contains the Matlab code employed, which can be accessed through the following link: \url{https://github.com/HFu20/time-Fractional-Allen-Cahn/tree/sESAV-MBP}.

\section*{Declaration of competing interest} The authors declare that they have no known competing financial interests or personal relationships that could have appeared to influence the work reported in this paper.

\section*{Acknowledgements}
B. Zhang was supported in part by the Fundamental Research Funds for the Central Universities (No. 202461103). H. Fu was supported in part by the National Natural Science Foundation of China (Nos. 11971482, 12131014), by the Shandong Provincial Natural Science Foundation (No. ZR2024MA023), by the Fundamental Research Funds for the Central Universities (No. 202264006), by the OUC Scientific Research Program for Young Talented Professionals.

\appendix
\section{Proof of Lemma \ref{lem:frac}}\label{App:A}
\setcounter{equation}{0}
\renewcommand\theequation{A.\arabic{equation}}
For convenience, we set $ w(t) = v(t) - v( t_{*} ) $. Then, we have $ \partial_t w(t) = \partial_t v(t) $ and $ w( t_{*} ) = 0 $, which further gives
\begin{equation*}
	\begin{aligned}
		{}_{0}^{C}D^{\alpha}_{t} v ( t_{*} ) = \frac{1}{ \Gamma(1-\alpha) } \int^{t_{*}}_{0} ( t_{*} - s )^{-\alpha} \partial_s w (s) ds.
	\end{aligned}
\end{equation*}
Moreover, since $ v \in C[0,T] \cap C^{1} ( 0, T ]  $, one can easily verify that $ \lim\limits_{s\rightarrow t_{*} } ( t_{*} - s )^{-\alpha} w(s) = 0$. Thus, applying integration by parts yields
\begin{equation*}
	\begin{aligned}
		{}_{0}^{C}D^{\alpha}_{t} v (t_{*})    = - \frac{1}{ \Gamma(1-\alpha) } \left( t_{*}^{-\alpha} w(0) + \alpha \int^{t_{*}}_{0} \frac{w (s)}{ ( t_{*} - s )^{1+\alpha} } ds \right).
	\end{aligned}
\end{equation*}
If $v$ attains its minimum at $ t_{*} \in ( 0, T ] $, then $ w(s) \geq 0 $ for all $ s \in [ 0, T ] $. Consequently, we obtain
$
{}_{0}^{C}D^{\alpha}_{t} v (t_{*}) \leq 0 .
$
An analogous argument yields the corresponding inequality when $v$ attains its maximum.

\section{Proof of Lemma \ref{lem:Trun_L1}}\label{App:B}
\setcounter{equation}{0}
\renewcommand\theequation{B.\arabic{equation}}
The integral version of Taylor’s theorem gives
\begin{equation}\label{TrunL1:1}
	\mathcal{R}^{n} [v] =  ( 1 + r_{n} ) \int^{ t_{n} }_{ t_{n-1} } v^{\prime\prime}(s) ( t_{n-1} - s ) ds + r_{n} \int^{ t_{n} }_{ t_{n-2} } v^{\prime\prime}(s) ( s- t_{n-2} ) ds.
\end{equation}
Under the regularity assumption, one has
\begin{equation*}
	\begin{aligned}
		\Big\vert ( 1 + r_{n} ) \int^{ t_{n} }_{ t_{n-1} } v^{\prime\prime}(s) ( t_{n-1} - s ) ds \Big\vert \leq C_{v} t_{n-1}^{\iota-2} \tau_{n}^2, \quad 2 \leq n \leq N,\\
		\Big\vert r_{n} \int^{ t_{n} }_{ t_{n-2} } v^{\prime\prime}(s) ( s - t_{n-2} ) ds \Big\vert \leq C_{v} t_{n-2}^{\iota-2} ( \tau_{n-1} + \tau_{n} )^2, \quad 3 \leq n \leq N.
	\end{aligned}
\end{equation*}
Specifically, when $n=2$, the second right-hand side term of \eqref{TrunL1:1} can be estimated by
\begin{equation*}
	\Big\vert r_{2} \int^{ t_{2} }_{ t_{0} } s v^{\prime\prime}(s) ds \Big\vert \leq C_{v} \int^{ t_{2} }_{ t_{0} } ( s + s^{\iota-1}) ds \leq C_{v}  ( \tau_{1} + \tau_{2} )^{\iota} / \iota.
\end{equation*}
Inserting the above estimates into \eqref{TrunL1:1} completes the proof of \eqref{TrunL1:00}.

In particular, when the graded temporal grids are used, we recall the conclusion in \cite{SINUM_Mustapha_2014} that there exists a constant $ C_{\gamma} > 0 $ such that $ \tau_{k} \leq C_{\gamma} t_{k}^{1-1/\gamma} N^{-1} $ and $ t_{k} \leq C_{\gamma} t_{k-1} $ for $2 \leq k \leq N  $,
which further implies $ \tau_{1} + \tau_{2} \leq 2 \tau_{2} \leq C_{\gamma}' N^{-\gamma} $ and thus $ ( \tau_{1} + \tau_{2} )^{\iota} \leq C_{\gamma}' N^{-\gamma\iota} $. Denote $ \zeta = \min\{2,\gamma\iota\} $, then we have $ \tau_{\ell}^{\zeta} \leq C_{\gamma} t_{\ell}^{\zeta-\zeta/\gamma} N^{-\zeta} $ with $ \ell = n, n-1 $. Thus, it holds that
$$
\begin{aligned}
	t_{n-2}^{\iota-2} ( \tau_{n-1} + \tau_{n} )^{2} 
	& \leq C_{\gamma} \bigl( t_{n-1}^{\iota-2+\zeta-\zeta/\gamma} \tau_{n-1}^{2-\zeta} N^{-\zeta} + t_{n}^{\iota-2+\zeta-\zeta/\gamma} \tau_{n}^{2-\zeta} N^{-\zeta} \bigr) \\
	& = C_{\gamma} \bigl( t_{n-1}^{\iota-\zeta/\gamma} ( \tau_{n-1}/t_{n-1} )^{2-\zeta} N^{-\zeta} + t_{n}^{\iota-\zeta/\gamma} ( \tau_{n}/t_{n} )^{2-\zeta} N^{-\zeta} \bigr) \\
	& \leq C_{\gamma} t_{n}^{\max\{0,\iota-2/\gamma\}} N^{-\zeta}.
\end{aligned}
$$
Finally, by inserting the above estimates into \eqref{TrunL1:00}, we complete the proof of \eqref{TrunL1:02}.

\section{ Proof of Theorem \ref{lem:L21_iter} }\label{App:C}
\setcounter{equation}{0}
\renewcommand\theequation{C.\arabic{equation}}

For any $ \mathfrak{s} \geq 1 $, we assume that $ \| \hat{\phi}^{1}_{ (\mathfrak{s} -1) } \|_{\infty} \leq \beta $, which further implies $ \| \hat{\phi}^{1-\varsigma}_{ (\mathfrak{s}-1) } \|_{\infty} \leq \beta $. Equivalently, equation \eqref{sch:L21_non_2} can be rewritten as
\begin{equation*}\label{MBP:2L1_inner_2}
	\begin{aligned}
		\big( ( B^{(1)}_{0} + \kappa ( 1 - \varsigma ) \mm  ) I - (1-\varsigma) \mm \varepsilon^{2} \Delta_h \big) \hat{\phi}^{1}_{(\mathfrak{s})} & = \mathbf{M}_{1} \phi^{0} + \mm \bigl(  f(\hat{\phi}^{1-\varsigma}_{(\mathfrak{s}-1)}) +\kappa \hat{\phi}^{1-\varsigma}_{(\mathfrak{s}-1)} \bigr),
	\end{aligned}
\end{equation*}
where $ \mathbf{M}_{1} := ( B^{(1)}_{0} - \kappa \varsigma \mm  ) I + \varsigma \mm \varepsilon^{2} \Delta_h $. Due to $B_{0}^{(1)} \geq \frac{4}{11} \int_{t_{0}}^{t_1} \frac{\omega_{1-\alpha}\left(t_1-s\right)}{\tau_1} \mathrm{~d} s = \frac{ 4 \tau_{1}^{-\alpha} }{ 11 \Gamma( 2 - \alpha ) }$ (see Lemma \ref{lem:L21_DC} (i)), it then follows from \eqref{MBP:L21_inner_tau} that all elements of $ \mathbf{M}_{1} $ are nonnegative, and thus, we have
\begin{equation*}\label{MBP:2L1_inner_3}
		\| \mathbf{M}_{1} \|_{\infty} \leq B^{(1)}_{0} - \kappa \varsigma \mm,
\end{equation*}
which together with Lemmas \ref{lem:MBP_left}--\ref{lem:MBP_right} gives us
\begin{equation*}
	( B^{(1)}_{0} + \kappa ( 1 - \varsigma ) \mm  ) \| \hat{\phi}^{1}_{(\mathfrak{s})} \|_{\infty} \leq ( B^{(1)}_{0} - \kappa \varsigma \mm ) \| \phi^{0} \|_{\infty} + \kappa \mm \beta
    = ( B^{(1)}_{0} + \kappa ( 1 - \varsigma ) \mm  ) \beta.
\end{equation*}
Consequently, $ \| \hat{\phi}^{1}_{(\mathfrak{s})} \|_{\infty} \leq \beta $ holds for any $ \mathfrak{s} \geq 1 $.

Furthermore, equation \eqref{sch:L21_non_2} is also equivalent to
\begin{equation*}\label{Cover:L21_0}
	\begin{aligned}
		\frac{B^{(1)}_{0}}{ 1 - \varsigma } ( \hat{\phi}^{1-\varsigma}_{(\mathfrak{s})} - \phi^{0} )  = -\mm  \bigl( - \varepsilon^2 \Delta_h \hat{\phi}^{1-\varsigma}_{(\mathfrak{s})} - f(\hat{\phi}^{1-\varsigma}_{(\mathfrak{s}-1)}) + \kappa ( \hat{\phi}^{1-\varsigma}_{(\mathfrak{s})} - \hat{\phi}^{1-\varsigma}_{(\mathfrak{s}-1)} ) \bigr), \quad \mathfrak{s} \geq 1,
	\end{aligned}
\end{equation*}
which defines a mapping $ \mt [v] = w $ from $ \mathbb{V}_{\beta} $ to $\mathbb{V}_{\beta} $, i.e.,
\begin{equation*}
	\begin{aligned}
		\frac{B^{(1)}_{0}}{ 1 - \varsigma } ( w - \phi^{0} ) = -\mm  ( - \varepsilon^2 \Delta_h w - f(v) + \kappa ( w - v ) ), \quad v \in \mathbb{V}_{\beta}.
	\end{aligned}
\end{equation*}
Similar as in Theorem \ref{thm:cover_L1_iter}, it is straightforward to verify that $ \mt $ is a contractive mapping under the time-step condition \eqref{Cover:L21_tau}. Consequently, the iterative scheme \eqref{sch:L21_non_2} admits a unique fixed point in $ \mathbb{V}_{\beta} $, denoted by $ \hat{\phi}^{1-\varsigma} $, 
and $\hat{\phi}^{1} = \frac{\phi^{1-\varsigma} - \varsigma \phi^{0}}{1-\varsigma}$ is just the unique solution of the nonlinear scheme \eqref{sch:L21_non_3}. The proof of Theorem \ref{lem:L21_iter} is completed.  


\end{document}